\tikzset{snake it/.style={decorate, decoration=snake}}
\newtheorem{thm}{Theorem}[section]
\newtheorem{cor}[thm]{Corollary}
\newtheorem{lem}[thm]{Lemma}
\newtheorem{prop}[thm]{Proposition}
\newtheorem{theorem}{Theorem}
\newtheorem{corollary}[theorem]{Corollary}
\theoremstyle{definition}
\newtheorem{defn}[thm]{Definition}
\newtheorem*{defn*}{Definition}
\newtheorem*{conj*}{Conjecture}
\newtheorem{example}[thm]{Example}
\newtheorem{rem}[thm]{Remark}
\newtheorem*{obs}{Key Observation}
\newenvironment{entry}%
{\begin{list}{}{%
\setlength{\labelwidth}{25pt}%
\setlength{\leftmargin}%
{\labelwidth+\labelsep}}}%
{\end{list}}
\newcommand{\DesLabel}[1]%
{\raisebox{-20pt}[1em][0pt]{%
\makebox[\labelwidth][l]%
{\parbox[t]{\labelwidth}{%
\hspace{4pt}\textbf{($\mathcal{#1}$)}}}}}
\newenvironment{Des}%
{\begin{entry}}%
{\end{entry}}
\numberwithin{equation}{section}
\newcommand{\scal}[1]{\langle #1 \rangle}
\newcommand{\pd}[1]{{\frac{\partial}{\partial #1}}}
\newcommand{\tpd}[1]{{\frac{\partial}{\partial #1}}}
\newcommand{\PD}[2]{{\frac{\partial #1}{\partial #2}}}
\newcommand{\stack}[2]{\genfrac{}{}{0pt}{0}{#1}{#2}}
\DeclareMathOperator{\Diff}{Diff}
\DeclareMathOperator{\im}{Im}
\DeclareMathOperator{\Ker}{Ker}
\DeclareMathOperator{\Span}{span}
\DeclareMathOperator{\End}{End}
\DeclareMathOperator{\ch}{ch}
\DeclareMathOperator{\Hom}{Hom}
\DeclareMathOperator{\Pf}{Pf}
\newcommand{\Ad}{\operatorname{Ad}}
\newcommand\tr{\operatorname{tr}}
\newcommand\Vol{{\operatorname{vol}}}
\newcommand\sign{\operatorname{sign}}
\newcommand{\SO}{\mathrm{SO}}
\renewcommand{\O}{\mathrm{O}}
\newcommand{\Spin}{\mathrm{Spin}}
\newcommand{\SU}{\mathrm{SU}}
\newcommand{\U}{\mathrm{U}}
\newcommand{\Sp}{\mathrm{Sp}}
\newcommand{\GL}{\mathrm{GL}}
\newcommand{\Pin}{\mathrm{Pin}}
\newcommand{\Pjn}{\mathrm{Pjn}}
\newcommand{\sph}{\mathbf{S}}
\newcommand{\RP}{\mathbf{RP}}
\newcommand{\HP}{\mathbf{HP}}
\newcommand{\Id}{\mathsf{1}}
\newcommand{\Q}{\mathbb{Q}}
\renewcommand{\leq}{\leqslant}
\renewcommand{\geq}{\geqslant}
\def\ol{\overline}
\def\ul{\underline}
\def\wt{\widetilde}
\def\x{\times}
\def\ox{\otimes}
\def\lra{\longrightarrow}
\def\hra{\hookrightarrow}
\def\In{\subseteq}
\def\R{\mathbb{R}}
\def\Z{\mathbb{Z}}
\def\C{\mathbb{C}}
\def\N{\mathbb{N}}
\def\DD{\mathbf{D}}
\def\D{\mathfrak{D}}
\def\HH{\mathbb{H}}
\newcommand{\B}{\mathfrak{B}}
\def\<{\langle}
\def\>{\rangle}
\def\mc{\mathcal}
\def\mf{\mathfrak}
\def\V{\mc{V}}
\def\H{\mc{H}}
\def\eps{\epsilon}
\def\ve{\varepsilon}
\def\vphi{\varphi}
\def\vtheta{\vartheta}
\def\vg{\check g}
\def\pr{\mathrm{pr}}
\def\wh{\widehat}
\def\wt{\widetilde}
\def\bq{/\!\!/}
\def\Mab{M^7_{\ul{a}, \ul{b}}}
\def\Pab{P^{10}_{\ul{a}, \ul{b}}}
\def\hPab{\widehat P^{13}_{\ul{a}, \ul{b}}}
\def\Pina{\mathrm{Pin}(2)_{\ul{a}}}
\def\Pjnb{\mathrm{Pjn}(2)_{\ul{b}}}
\def\Bab{B^4_{\ul{a}, \ul{b}}}
\def\id{\mathrm{id}}
\newcommand{\so}{\mf{so}}
\def\bpm{\begin{pmatrix}}
\def\epm{\end{pmatrix}}
\def\bvm{\begin{vmatrix}}
\def\evm{\end{vmatrix}}
\def\bsm{\left(\begin{smallmatrix}}
\def\esm{\end{smallmatrix}\right)}
\def\beq{\begin{equation}}
\def\eeq{\end{equation}}
\begin{document}



\title[Highly connected $7$-manifolds and non-negative curvature]{Highly connected $7$-manifolds and non-negative sectional curvature}



\author[S.\ Goette]{S.\ Goette$^\dagger$}
\address[Goette]{
Mathematisches Institut, Universit\"at Freiburg, Germany.}
\email{sebastian.goette@math.uni-freiburg.de}


\author[M.\ Kerin]{M.\ Kerin$^\dagger$$^*$}
\address[Kerin]{School of Mathematics, Statistics and Applied Mathematics, NUI Galway, Ireland.}
\email{martin.kerin@nuigalway.ie}
\thanks{$^\dagger$ Received partial support from DFG Priority Program \emph{Geometry at Infinity}} 
\thanks{$^*$ Received support from SFB 878: \emph{Groups, Geometry \& Actions} at WWU M\"unster.}


\author[K.\ Shankar]{K.\ Shankar$^*$}
\address[Shankar]{Department of Mathematics, University of Oklahoma, U.S.A.}
\email{Krishnan.Shankar-1@ou.edu}
\thanks{}

\date{\today}

\dedicatory{It is our pleasure to dedicate this article to Karsten Grove, Wolfgang Meyer and Wolfgang Ziller on their respective $70^{th}\!$, $80^{th}\!$ and $65^{th}\!$ birthdays.}


\subjclass[2010]{primary: 53C20, secondary: 57R20, 57R55, 58J28}
\keywords{highly connected $7$-manifold, non-negative curvature, exotic sphere, Eells-Kuiper invariant}


\begin{abstract}
In this article, a six-parameter family of highly connected $7$-manifolds which admit an $\SO(3)$-invariant metric of non-negative sectional curvature is constructed and the Eells-Kuiper invariant of each is computed.  In particular, it follows that all exotic spheres in dimension $7$ admit  an $\SO(3)$-invariant metric of non-negative curvature.
\end{abstract}

\maketitle





A manifold $M$ of dimension $2n+1$ or $2n$ is called \emph{highly connected} if it is $(n-1)$-connected, that is, if the homotopy groups $\pi_i(M)$ are trivial for all $i \leq n-1$.  As the topology of such manifolds is relatively simple, they have received much attention: see, for example, \cite{Ba, Cr, CE, CN, KiSh, Sm2, KM, Wa1, Wa2, DWi}.  In fact, it was Milnor's quest to understand such manifolds which led to the discovery of $7$-dimensional manifolds which are homeomorphic, but not diffeomorphic, to the standard sphere $\sph^7$ \cite{Mi1, Mi3}.  Just as in the case of $\sph^7$, these \emph{exotic spheres} occur as the total spaces of $\sph^3$-bundles over $\sph^4$.  By a combination of the work of Milnor \cite{Mi2} (cf.\ \cite{KM}), Smale \cite{Sm1} and Eells and Kuiper \cite{EK}, it was subsequently shown that there are $28$ possible oriented differentiable structures on the $7$-dimensional (topological) sphere, $16$ of which are \emph{Milnor spheres}, that is, obtained as $\sph^3$-bundles over $\sph^4$.  If one forgets the orientation, there are $15$ possible diffeomorphism types, $11$ occurring as Milnor spheres.

\begin{theorem}
\label{T:thmA}
All exotic $7$-spheres admit an $\SO(3)$-invariant Riemannian metric of non-negative sectional curvature.
\end{theorem}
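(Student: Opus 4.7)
The plan is to use the six-parameter family $\{\Mab\}$ of highly connected $7$-manifolds constructed in the body of the paper, each carrying by construction an $\SO(3)$-invariant metric of non-negative sectional curvature. To prove the theorem it suffices to show that every oriented diffeomorphism class of homotopy $7$-sphere is represented in this family: reversing orientation preserves the existence of an invariant non-negatively curved metric, so the $15$ unoriented exotic types are automatically covered once the $28$ oriented ones are.

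I would proceed in three steps. First, identify the subfamily of parameters for which $\Mab$ is a homotopy sphere. Since each $\Mab$ is $2$-connected and $7$-dimensional, Poincar\'e duality reduces this to requiring that $H^4(\Mab;\Z)$ vanish, which should be expressible as an explicit Diophantine condition $n(\ul a, \ul b) = \pm 1$ in the parameters. Second, for each such parameter choice, compute the Eells-Kuiper invariant $\mu(\Mab) \in \Q/\Z$ of \cite{EK}. Oriented homotopy $7$-spheres are classified by $\mu$, whose image is the cyclic subgroup $\frac{1}{28}\Z/\Z$, so this pins down the oriented diffeomorphism type completely. Third, verify that as $(\ul a,\ul b)$ varies over the homotopy-sphere locus identified in Step~1, the value of $\mu(\Mab)$ exhausts all $28$ residues modulo $28$.

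The technical heart of the argument, and the expected main obstacle, is Step~2. The Eells-Kuiper invariant is defined via an $8$-dimensional spin coboundary $W$ of $\Mab$, as a specific $\Q/\Z$-valued linear combination of $\int_W p_1(W)^2$ and the signature $\sigma(W)$. To make this computable one needs a natural compact spin $8$-manifold $W_{\ul a,\ul b}$ with $\partial W_{\ul a,\ul b} = \Mab$ built from the same parameters — for instance a linear disc-bundle or plumbing-type construction already implicit in the definition of $\Mab$, or perhaps a quotient of one of the higher-dimensional total spaces $\Pab$ or $\hPab$ appearing in the paper. One would then compute $\sigma$ and the relevant Pontryagin number as explicit polynomials in $(\ul a,\ul b)$, substitute them into the Eells-Kuiper formula, and check surjectivity of the resulting map from the homotopy-sphere locus into $\Z/28$.

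Obtaining sufficiently clean polynomial expressions in $(\ul a,\ul b)$ to push through the characteristic-number computation, together with the number-theoretic verification that the resulting formula for $\mu$ hits every residue modulo $28$ subject to the constraint $n(\ul a,\ul b) = \pm 1$, is where I expect the real work of the proof to lie. Once that surjectivity is established, Theorem~\ref{T:thmA} follows immediately.
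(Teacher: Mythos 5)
Your overall strategy is the right one and matches the paper's at a high level: cut down to the locus $n(\ul a,\ul b)=\pm 1$ where $\Mab$ is a homotopy sphere (this is exactly Corollary \ref{C:spheres}), compute the Eells--Kuiper invariant as a function of the parameters, and verify that enough residue classes mod $28$ are realised. One small economy the paper makes that you should note: it does not try to hit all $28$ residues from scratch. The $a_1=b_1=1$ subfamily recovers the Grove--Ziller bundles, so the Milnor spheres are already covered, and Corollary \ref{C:nonMilnor} only needs to exhibit the $8$ non-Milnor oriented classes $28\mu\in\{2,5,9,12,16,19,23,26\}$ on the explicit subfamily $(k,l,r,s)=(0,0,r,r)$.

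The real gap is in your Step~2, and it is not the one you anticipate. You propose to build an explicit smooth spin coboundary $W_{\ul a,\ul b}$ with $\partial W_{\ul a,\ul b}=\Mab$ (``a linear disc-bundle or plumbing-type construction\ldots or perhaps a quotient of\ldots $\Pab$ or $\hPab$'') and then evaluate $p_1(W)^2$ and $\sigma(W)$ classically. But the whole point of the construction is that, for $(a_1,b_1)\neq(1,1)$, the manifold $\Mab$ is \emph{not} the total space of an $\sph^3$-bundle over $\sph^4$; it is only a Seifert fibration $\pi:\Mab\to\Bab$ over a $4$-\emph{orbifold} $\Bab$ with singular set two copies of $\RP^2$ carrying cone angles $2\pi/|a_1|$ and $2\pi/|b_1|$ (Lemma \ref{L:orbi}). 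The natural coboundary candidate — the disc orbi-bundle of $W\to\Bab$ — is therefore an orbifold, not a manifold, with codimension-$6$ cyclic singular strata sitting over $\RP^2_\pm$, and its Pontrjagin numbers and signature are not given by integrating classical Chern--Weil forms; one would need Kawasaki's $V$-manifold index theorem, which introduces exactly the sort of Dedekind-sum contributions from the twisted sectors that you cannot avoid. Quotients of $\Pab$ by the relevant subgroups of $G\times\sph^3$ similarly fail to produce a smooth compact $8$-manifold bounding $\Mab$. This is why the paper does not use the coboundary definition at all: it instead rewrites $\mu(\Mab)$ intrinsically via the Atiyah--Patodi--Singer / Kreck--Stolz formula \eqref{E:EKeta}, chooses a metric $g_M$ adapted to the Seifert fibration (Proposition \ref{P:metric}) with totally geodesic, positively curved $\sph^3$ fibres, and evaluates the $\eta$-invariants by passing to the adiabatic limit over the orbifold base (Corollary \ref{C:AdLimEK}, Theorems \ref{T:PontTerm}, \ref{T:etaFormTerm}, \ref{T:VerySmallEVs}). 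The contributions of the twisted sectors $\sph^2_\pm\times\{s\}\subset\Lambda B$ are precisely the generalised Dedekind sums $D(\ul a)$, $D(\ul b)$ of Theorem \ref{T:thmC}. So the ``main obstacle'' is not getting clean polynomials in $(\ul a,\ul b)$; it is that there is no clean smooth coboundary, and circumventing this forces the orbifold/adiabatic-limit machinery that occupies most of Sections \ref{S:Prelim} and \ref{S:EK}.
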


In \cite{GM}, Gromoll and Meyer showed that one of the exotic Milnor spheres can be written as a biquotient and, hence, admits a Riemannian metric with non-negative sectional curvature.  Furthermore, by exploiting the biquotient structure, it was demonstrated in \cite{EsKe, FW} that the Gromoll-Meyer sphere can be equipped with a metric of non-negative curvature such that all sectional curvatures are positive on an open, dense set, while a further deformation of this metric to globally positive curvature has been proposed in \cite{PW}.  Unfortunately, the Gromoll-Meyer sphere is the only exotic sphere in any dimension which can be written as a biquotient \cite{KZ, To}.  Nevertheless, the remaining Milnor spheres were shown to admit non-negative curvature by Grove and Ziller \cite{GZ}, as a consequence of their investigation of cohomogeneity-one manifolds.  On the other hand, the fact that all exotic $7$-spheres admit a metric of positive Ricci curvature was established by Wraith \cite{Wr}, while Searle and Wilhelm \cite{SW} recently demonstrated that each admits a metric having, simultaneously, positive Ricci curvature and almost-non-negative sectional curvature.

Despite not being biquotients, Dur\'an, P\"uttmann and Rigas \cite{DPR} (see also \cite{FW1}) have shown that all exotic spheres in dimension $7$ can be constructed in a similar way to the Gromoll-Meyer sphere.  They tried, without success, to equip the four non-Milnor exotic spheres with a metric of non-negative curvature.  In the present work, we achieve this via a different construction.

\begin{theorem}
\label{T:thmB}
For all triples $\ul a = (a_1, a_2, a_3)$, $\ul b = (b_1, b_2, b_3) \in \Z^3$ of integers congruent to $1$ mod $4$ and satisfying $\gcd(a_1, a_2 \pm a_3) = \gcd(b_1, b_2 \pm b_3) = 1$, there is a $2$-connected, $7$-dimensional manifold $\Mab$ which admits an $\SO(3)$-invariant metric of non-negative sectional curvature and for which $H^4(\Mab; \Z) = \Z_{|n|}$, whenever 
$$
n = \frac{1}{8} \det \bpm  a_1^2 & b_1^2 \\ a_2^2 - a_3^2 & b_2^2 - b_3^2 \epm \neq 0.
$$
If $n = 0$, then $H^3(\Mab; \Z) = H^4(\Mab; \Z) = \Z$.
\end{theorem}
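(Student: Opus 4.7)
The plan is to realise $\Mab$ as the base of a Riemannian submersion from a non-negatively curved total space on which a residual $\SO(3)$ acts by isometries, and then to read off both the topology and the curvature of $\Mab$ from this submersion. Using the notation suggested by the paper, I would begin with an auxiliary manifold of dimension $10$ or $13$ — most naturally the $13$-dimensional $\hPab$ — built from products of round spheres or a homogeneous space such as $\Sp(2)$, equipped with a product-type metric of non-negative sectional curvature. The group $\Pina \times \Pjnb$, where the two $\Pin(2)$-subgroups of $\Sp(1)$ are embedded with integer weights $\ul a$ and $\ul b$ respectively (possibly together with an auxiliary $\Sp(1)$-factor to match dimensions), should act freely by isometries, and $\Mab$ is the resulting quotient. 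A diagonal adjoint $\SO(3) \cong \Sp(1)/\{\pm 1\}$-action on the ambient product commutes with this subgroup and descends to $\Mab$; non-negative curvature is then immediate from O'Neill's formula.

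The number-theoretic hypotheses correspond exactly to freeness of the action. The Eschenburg-style gcd conditions $\gcd(a_1, a_2 \pm a_3) = \gcd(b_1, b_2 \pm b_3) = 1$ eliminate isotropy on the identity component of each $\Pin$-factor, while the congruences $a_i, b_i \equiv 1 \pmod 4$ control the non-identity component, where the reflection element $j$ (or $i$) acts together with some phase that must be shown to have no fixed points. Verifying freeness is a routine but notation-heavy case analysis on possible isotropy subgroups.

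For the cohomology and homotopy groups of $\Mab$, I would exploit the Leray--Serre spectral sequence of the principal fibration, first reducing to the maximal torus $T^2 \In \Pina \times \Pjnb$ and then accounting for the component group $\Z_2 \times \Z_2$. The crucial input is the transgression $d_2 \colon H^1(T^2;\Z) \to H^2(\Mab;\Z)$, whose matrix on a natural basis of $H_2$ arising from the factors of the product should be
\[
\bpm a_1^2 & b_1^2 \\ a_2^2 - a_3^2 & b_2^2 - b_3^2 \epm,
\]
up to an overall factor of $8$, coming jointly from the $\Pin(2) \to \O(2)$ double cover and from the fact that $\SO(3) = \Sp(1)/\{\pm 1\}$ acts via the adjoint representation, so that integer weights of $\Sp(1)$ transgress as their squares. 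The cokernel then gives $H^4(\Mab;\Z) = \Z_{|n|}$ when $n \neq 0$; in the case $n = 0$ the transgression has rank one, so a free $\Z$-summand survives into $E_\infty$, yielding $H^3 = H^4 = \Z$. Two-connectedness follows from the homotopy long exact sequence, using $2$-connectedness of the total space together with the mod-$4$ conditions to rule out $\Z_2$-contributions to $\pi_1$.

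The main obstacle I expect is the precise identification of the factor $1/8$ and the correct integer normalisation of the transgression matrix: this demands a careful choice of generators in the cohomology of the classifying space, together with bookkeeping of orientations and signs coming from the two reflection components. A related subtlety is verifying that the differentials on the torsion part of the fibre cohomology (arising from the $\Z_2$-components) are isomorphisms, so that no extraneous $2$-torsion appears in $H^*(\Mab;\Z)$; this is what ultimately justifies the clean statement $H^4 \cong \Z_{|n|}$ without residual factors of $2$, and it is also what makes the mod-$4$ hypotheses essential rather than cosmetic.
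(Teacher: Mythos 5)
Your construction of $\Mab$ does not match the paper's, and the discrepancy is a genuine gap rather than a stylistic difference. In the paper, $\Pab$ and $\hPab$ are \emph{cohomogeneity-one} manifolds built from a group diagram $\Delta Q \subset \Pina, \Pjnb \subset G = (\sph^3)^3$ (respectively $G \times \sph^3$); neither is a product of round spheres or a homogeneous space, and the non-negatively curved metric on $\Pab$ comes from the Grove--Ziller theorem (codimension-two singular orbits), not a product metric. Moreover, the groups $\Pina$ and $\Pjnb$ are the \emph{singular isotropy} subgroups of the cohomogeneity-one $G$-action; they are one-dimensional subgroups of $G$, not subgroups of $\Sp(1)$, and they do not act freely on $\hPab$ with quotient $\Mab$. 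The free action producing $\Mab$ is by $\{1\} \times \Delta\sph^3 \subset G$ on $\Pab$ (Lemma~\ref{L:free}), a three-dimensional group giving $10 - 3 = 7$. Your proposed quotient of $\hPab$ by $\Pina \times \Pjnb$ (possibly with an extra $\Sp(1)$) comes out to dimension $11$ or $8$, not $7$.

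The cohomology argument is likewise not a repair of the paper's. There is no principal fibration with structure group $T^2 \subset \Pina \times \Pjnb$ over $\Mab$, so the transgression $d_2 \colon H^1(T^2) \to H^2(\Mab)$ you invoke has no place to live, and $H^2(\Mab) = 0$ anyway by $2$-connectedness. What the paper actually does is decompose $\Mab$ as a union of two $2$-disk bundles over the singular biquotients $(\sph^3 \times \sph^3) \bq \Pina$ and $(\sph^3 \times \sph^3) \bq \Pjnb$, glued along the regular leaf $(\sph^3 \times \sph^3) \bq \Delta Q$, and runs Mayer--Vietoris, with the maps $\pi_\pm^*$ computed via Eschenburg's method after passing through the index-$8$ covering $\sph^3 \times \sph^3 \to (\sph^3 \times \sph^3) \bq \Delta Q$ (Lemmas~\ref{L:singbiq}, \ref{L:regbiq}, Theorem~\ref{T:cohom}). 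The factor $1/8$ in $n$ is the index of $\im(\tau^*)$ in $H^3(\sph^3 \times \sph^3;\Z)$, coming from $|\Delta Q| = 8$, not from the $\Pin(2) \to \O(2)$ double cover or the $\Sp(1) \to \SO(3)$ projection. Finally, the mod-$4$ hypotheses are there simply to make $\Delta Q$ a subgroup of $\Pina$ and $\Pjnb$, so that the cohomogeneity-one group diagram is well-defined; they are not needed to control $\Z_2$-torsion in $H^*(\Mab)$.
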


The manifolds $\Mab$ with $a_1 = b_1 = 1$ are diffeomorphic to those studied by Grove and Ziller \cite{GZ} and consist of all $\sph^3$-bundles over $\sph^4$.   By choosing the parameters appropriately, one obtains the Milnor spheres.  Grove and Ziller constructed metrics of non-negative curvature by first showing that there is a four-parameter family of non-negatively curved, $10$-dimensional, cohomogeneity-one manifolds consisting of all principal $(\sph^3 \x \sph^3)$-bundles over $\sph^4$, and then taking the associated $\sph^3$-bundles over $\sph^4$ with their induced metrics.  

By observing that, in the Grove-Ziller case, the associated-bundle construction is equivalent to taking a quotient by a free $\sph^3$ action, it is natural to look for a more general collection of examples.  As it turns out, there is a larger six-parameter family of non-negatively curved, $10$-dimensional cohomogeneity-one manifolds $P^{10}_{\ul{a}, \ul{b}}$ which, under the $\gcd$ conditions of Theorem \ref{T:thmB}, admit a free, isometric action by $\sph^3$.  The manifolds $\Mab$ are precisely the quotients of the $P^{10}_{\ul{a}, \ul{b}}$ by this action, and are each equipped with the induced metric.  In general, the $\Mab$ are not $\sph^3$-bundles over $\sph^4$ in any obvious way.  For appropriate choices of the parameters $\ul a$, $\ul b \in \Z^3$, it is clear that one obtains $7$-dimensional manifolds which are homotopy spheres.

The major difficulty in this project is to determine the diffeomorphism type of the manifolds $\Mab$.  By the work of Crowley \cite{Cr}, it suffices to compute the Eells-Kuiper invariant $\mu(\Mab)$ \cite{EK} and $q$-invariant \cite{Cr} of these spaces.  In particular, for the homotopy spheres, the Eells-Kuiper invariant determines the diffeomorphism type.  In Theorem \ref{T:EKinv}, the general formula for $\mu(\Mab)$ below has been computed via a modification of the methods of \cite{Gojems}, in which the first named author determined the diffeomorphism type of a recently discovered example with positive curvature, see \cite{De, GVZ}.  For $q, p_1, p_2, p_3 \in \Z$ with $\gcd(q, p_i) = 1$ for all $i = 1,2,3$, let 
$$
\mc D(q; p_1, p_2, p_3) = \frac{1}{2^6 \cdot 7 \cdot q^2} \sum_{l=1}^{|q|-1} \ \ \sum_{\mathclap{\substack{(i,j,k) = \\ \circlearrowright (1,2,3)}}} p_i \! \left( 
\frac{14 \cos(\frac{p_i \pi l}{q}) + \cos(\frac{p_j \pi l}{q})\cos(\frac{p_k \pi l}{q})}
{\sin^2(\frac{p_i \pi l}{q})\sin(\frac{p_j \pi l}{q})\sin(\frac{p_k \pi l}{q})}
\right) 
$$
denote the corresponding \emph{generalised Dedekind sum}.  In particular, if $q = 1$, then $\mc D(q; p_1, p_2, p_3) = 0$.  Moreover, $\mc D(q; p_1, p_2, p_3)$ is invariant under permutations of the $p_i$ and satisfies $\mc D(q; p_1, p_2, - p_3) = - \mc D(q; p_1, p_2, p_3)$.  Note also that the generalised Dedekind sum of \cite[Definition 3.6]{Gojems} corresponds to $\mc D(p; 4,q,q)$, with $p,q \in \Z$ odd and relatively prime.

\begin{theorem}
\label{T:thmC}
Let $\Mab$ and $n \neq 0$ be as in Theorem \ref{T:thmB}, and set
$$
m = \frac{1}{8 a_1^2  b_1^2} \det \bpm  a_1^2 & b_1^2 \\ a_2^2 + a_3^2 + 8 & b_2^2 + b_3^2 + 8 \epm.
$$
Then the Eells-Kuiper invariant of $\Mab$ is given by
$$
\mu(\Mab) = \frac{|n| - a_1^2\, b_1^2\, m^2 }{2^5 \cdot 7 \cdot n} - D(\ul a) + D(\ul b)  \mod 1 \ \in \Q/\Z,
$$
where $D(\ul a) = \mc D(a_1; 4, a_2+a_3, a_2-a_3)$ and $D(\ul b) = \mc D(b_1; 4, b_2+b_3, b_2-b_3)$ are generalised Dedekind sums as defined above.  \end{theorem}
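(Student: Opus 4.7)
The plan is to compute $\mu(\Mab)$ by constructing a (possibly orbifold) spin coboundary whose signature and first Pontryagin class can be accessed, adapting the strategy developed in \cite{Gojems} for the recently discovered positively curved example. Recall that the Eells-Kuiper invariant of a spin $7$-manifold $M$ is defined, in the presence of a smooth spin coboundary $W$, as a specific rational combination of $p_1(W)^2[W]$ and $\sigma(W)$ reduced modulo $1$, with an orbifold refinement carrying additional equivariant defect contributions. Since $\Mab = \Pab / \sph^3$ for the free $\sph^3$-action of Theorem \ref{T:thmB}, I would first build a bounding object for $\Pab$ from its cohomogeneity-one data---essentially a mapping cylinder that caps off one of the singular orbits, equivalently a disk bundle constructed from the normal data of that orbit---and then descend along the free $\sph^3$-quotient to an $8$-dimensional (orbifold) coboundary $W$ for $\Mab$.

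The bulk contribution to $\mu(\Mab)$ would come from the global signature and $p_1(W)^2$ of $W$, which are accessible through the intersection form on $H^4(W, \partial W; \Z)$ and the Euler classes of the normal bundles of the two singular orbits. Since the $2 \times 2$ determinants in Theorem \ref{T:thmB} encode precisely these Euler numbers, a careful but essentially linear-algebraic computation should identify the resulting rational number with $(|n| - a_1^2 b_1^2 m^2)/(2^5 \cdot 7 \cdot n)$; here the integer $m$ records the difference between the natural spin structure on $W$ and a stable trivialisation of the orbit normal bundles, and the $a_1^2 b_1^2$ factor in the denominator of $m$ traces back to the Euler numbers at the two singular ends.

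The Dedekind-sum corrections arise from equivariant localisation at the singular strata. Along each singular orbit, the local isotropy produces cyclic-quotient singularities on the coboundary modelled on $\R^8/\Z_{a_1}$ and $\R^8/\Z_{b_1}$, with weights determined by the triples $(4, a_3 + a_2, a_3 - a_2)$ and $(4, b_3 + b_2, b_3 - b_2)$ respectively. The $4$ reflects a doubling coming from the $\Pin(2)$-structure used in constructing $\Pab$, while the mixed signs come from the $\pm$-pairing of $a_2, a_3$. Applying the Atiyah-Bott-Singer $G$-equivariant signature and $\wh A$-genus formulas in the combination dictated by the Eells-Kuiper invariant, and exploiting the cyclic symmetry among the three weights at each singular orbit, the local contributions should recombine into $D(\ul a)$ at one end and $-D(\ul b)$ at the other, the sign being forced by the opposite orientations induced on the two singular orbits by $W$.

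The main obstacle will be carrying out this equivariant localisation with the correct normalisations. First, one must verify that the chosen coboundary admits an honest orbifold spin structure, which is where the hypotheses $\ul a, \ul b \equiv 1 \pmod 4$ and the $\gcd$ conditions of Theorem \ref{T:thmB} enter. Then, one must reconcile the $\tfrac{1}{2^5 \cdot 7}$ appearing in the Eells-Kuiper formula with the $\tfrac{1}{2^6 \cdot 7 \cdot q^2}$ appearing in the definition of $\mc D(q; p_1, p_2, p_3)$, while simultaneously tracking the factor $|\Z_{a_1}| = |a_1|$ arising from the non-trivial cyclic isotropy at each singular orbit. A secondary challenge is confirming that the precise trigonometric combination produced by the fixed-point formula naturally yields, after cyclic symmetrisation in $(i,j,k)$, the specific rational expression defining $\mc D$, rather than some close but distinct variant.
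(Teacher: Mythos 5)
Your proposal takes a genuinely different route from the paper, but it also misattributes the method to \cite{Gojems}. What \cite{Gojems} actually does --- and what the present paper follows --- is the \emph{intrinsic} adiabatic-limit approach: using the Kreck--Stolz/APS formula \eqref{E:EKeta} to express $\mu(\Mab)$ purely in terms of $\eta$-invariants of Dirac-type operators and a Chern--Simons term on $\Mab$ itself, then evaluating the adiabatic limit as the Seifert fibres of $\pi\colon\Mab\to\Bab$ collapse. No coboundary is ever constructed; the coboundary dependence is eliminated from the outset. The paper's proof then consists of Proposition \ref{P:metric} (a hand-built metric $g_M$, \emph{distinct} from the non-negatively curved one, chosen so that $\pi$ is a Riemannian submersion with totally geodesic round fibres), plus the three ingredients of Corollary \ref{C:AdLimEK}: the Pontrjagin--Chern--Simons term (Theorem \ref{T:PontTerm}, producing $-a_1^2b_1^2m^2/(2^5\cdot7\cdot n)$), the orbifold $\eta$-form term integrated over the inertia orbifold $\Lambda B$ (Theorem \ref{T:etaFormTerm}, producing the Dedekind sums $D(\ul a)-D(\ul b)$ from the twisted sectors and a piece of the bulk), and the very-small-eigenvalue term coming from the Leray--Serre $d_4$-differential of the fibration (Theorem \ref{T:VerySmallEVs}, producing the $|n|/(2^5\cdot7\cdot n)$).

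Your orbifold-coboundary idea is not unreasonable in outline, and the identifications you guess at (the weight triple $(4,a_3+a_2,a_3-a_2)$ tied to the $\Pin(2)$ slice action, the determinants in $n$ and $m$ as Euler and Pontrjagin numbers of normal data) do point in the right direction. But there are concrete gaps. First, the coboundary you describe is not clearly constructed: ``capping off one of the singular orbits of $\Pab$'' does not give a bounding $11$-manifold for $\Pab$ (the disk bundle over a singular orbit is already part of $\Pab$ and has boundary a principal orbit, not $\Pab$), and the natural candidate you should be aiming at --- the total space of the $\DD^4$-orbi-bundle over $\Bab$ associated to the rank-$4$ vector orbi-bundle $W$ of the paper --- is never named. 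Second, the Eells--Kuiper invariant is defined via a \emph{smooth} spin coboundary; using an orbifold coboundary requires proving an orbifold analogue (Kawasaki-type) and showing that its twisted-sector defect terms reproduce exactly the equivariant $\eta$-contributions. You gesture at Atiyah--Bott--Singer localisation, but those fixed-point formulas apply to smooth $G$-manifolds, not directly to orbifolds; the passage through inertia-orbifold charts and the multiplicity factor $1/m(\gamma)$ in \eqref{E:orbAhat}, which the paper tracks carefully, is left entirely implicit. Third, your interpretation of $m$ as ``the difference between the natural spin structure on $W$ and a stable trivialisation'' is speculative and not what the quantity is: by Remark \ref{R:nmInvs}, $m=\int_B \tfrac{p_1}{2}(TB\oplus W,\nabla^{TB}\oplus\nabla^W)$ is a definite orbifold half-Pontrjagin number, and the term $a_1^2b_1^2m^2/n$ arises from an explicit Chern--Simons integral, not from a comparison of trivialisations. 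Finally, you never say where the $|n|/n$ piece would come from in your framework; in the paper it is a subtle very-small-eigenvalue contribution controlled by Dai's adiabatic-limit theorem and the spectral-sequence signature, and a coboundary proof would have to produce it from the intersection form of $W$ together with a rigorous comparison to the intrinsic invariant.
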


For some simple subfamilies it is easy to compute the generalised Dedekind sums $D(\ul a)$ and $D(\ul b)$ (see Example \ref{Ex:DedSum}), leading to a closed form for $\mu(\Mab)$ in these cases.  Recall that, following \cite{EK}, the non-Milnor exotic spheres have $28 \cdot \mu(\Mab) \in \{2, 5, 9, 12, 16, 19, 23, 26\}$.

\begin{corollary}
\label{C:nonMilnor}
Suppose $\ul a = (-3, 12 k - 3, 12 l + 1)$, $\ul b = (1, 4r + 1, 4 s + 1) \in \Z^3$.  Then 
$$
\mu(\Mab) =  \frac{|n| - a_1^2\, b_1^2\, m^2}{2^5 \cdot 7 \cdot n} - \frac{4l+1}{28} \in \Q/\Z.
$$
Moreover, the subfamily given by $(k,l,r,s) = (0, 0, r, r)$ has $H^4(\Mab; \Z) = 0$, hence consists of homotopy spheres, and the oriented diffeomorphism types of the non-Milnor exotic spheres are attained, for example, at $r \in \{-3, -1, 1, 2, 4, 8, 11, 15\}$.
\end{corollary}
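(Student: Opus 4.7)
The plan is: verify the hypotheses of Theorem~\ref{T:thmB}, compute $D(\ul a)$ and $D(\ul b)$ explicitly, substitute into Theorem~\ref{T:thmC}, specialise to the subfamily, and evaluate at the stated values of $r$. Each coordinate of $\ul a, \ul b$ is plainly $\equiv 1 \pmod 4$. The gcd condition is automatic for $\ul b$ since $b_1 = 1$; for $\ul a$, $a_2 + a_3 = 12(k+l) - 2$ and $a_2 - a_3 = 12(k-l) - 4$ are both coprime to $3$. Since $b_1 = 1$, immediately $D(\ul b) = 0$, so the only genuine computation is
$$
D(\ul a) = \mc D\bigl(-3;\; 4,\; 12(k+l) - 2,\; 12(l-k) + 4\bigr).
$$

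The key observation I would use is that each argument $p_i \in \{4,\, 12(k+l)-2,\, 12(l-k)+4\}$ is congruent to $1 \pmod 3$. Writing $p_i = 3q_i + 1$, one has $\frac{p_i \pi \ell}{-3} = -q_i \pi \ell - \frac{\pi \ell}{3}$, so the trigonometric values depend only on $\ell \in \{1,2\}$ and the parity of $q_i$. A direct check shows $q_1 = 1$, $q_2 = 4(k+l) - 1$ and $q_3 = 4(l-k) + 1$ are all odd, regardless of $k, l$. Consequently all three $p_i$ produce $(\sin, \cos) = (\tfrac{\sqrt 3}{2}, -\tfrac 12)$ at $\ell = 1$ and $(-\tfrac{\sqrt 3}{2}, -\tfrac 12)$ at $\ell = 2$. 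In both cases the bracketed expression in the definition of $\mc D$ reduces to
$$
\frac{14 \cdot (-\tfrac 12) + (-\tfrac 12)^2}{(\tfrac 34)^2} = -12,
$$
so the cyclic sum equals $-12(p_1 + p_2 + p_3) = -12(6 + 24l) = -72(4l+1)$ for each $\ell$. Summing over $\ell \in \{1,2\}$ and dividing by $2^6 \cdot 7 \cdot 9 = 4032$ yields $D(\ul a) = -(4l+1)/28$. Substitution into Theorem~\ref{T:thmC} gives the stated formula for $\mu(\Mab)$.

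For the subfamily $(k,l,r,s) = (0,0,r,r)$, one has $\ul a = (-3,-3,1)$ and $\ul b = (1, 4r+1, 4r+1)$, so the second column of the defining $2 \x 2$ determinant becomes $(1, 0)^T$ and $n = -1$; thus $H^4(\Mab;\Z) = 0$ and $\Mab$ is a homotopy sphere by Theorem~\ref{T:thmB}. Setting $u = 4r+1$, one computes $m = (u^2+3)/4$, and the $\mu$-formula collapses to
$$
\mu(\Mab) \equiv \frac{9(m^2 - 1)}{224} = \frac{9(u^2 - 1)(u^2 + 7)}{3584} \pmod 1,
$$
equivalently $28\mu(\Mab) \equiv 9(u^2-1)(u^2+7)/128 \pmod{28}$. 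Direct evaluation at $r \in \{-3, -1, 1, 2, 4, 8, 11, 15\}$ produces the residues $\{16, 9, 26, 19, 2, 12, 23, 5\}$, precisely the set $\{2, 5, 9, 12, 16, 19, 23, 26\}$ of $28\mu$-values for the non-Milnor exotic $7$-spheres. The main obstacle is the parity analysis behind the Dedekind-sum collapse; everything else is routine substitution.
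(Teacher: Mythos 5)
Your proof is correct and follows essentially the same route as the paper: the formula comes from substituting into Theorem~\ref{T:thmC}, and the Dedekind-sum collapse $D(\ul a)=-(4l+1)/28$, $D(\ul b)=0$ is exactly the content of Example~\ref{Ex:DedSum} (the paper exploits the evenness of $4,\,a_3+a_2,\,a_3-a_2$ and works with halved arguments mod~$3$; you equivalently write $p_i=3q_i+1$ with $q_i$ odd, which yields the same trigonometric values and the same constant $-12$ in the inner sum). The verification of the gcd hypotheses, the computation $n=-1$, $m=(u^2+3)/4$ for the subfamily, and the numerical check at the eight stated values of~$r$ are all correct.
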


The paper is organised as follows:  
In Section \ref{S:Prelim}, there can be found reviews of cohomogeneity-one manifolds, orbifolds and orbi-bundles, the Eells-Kuiper invariant and adiabatic limits.  Section \ref{S:CCC} begins with a review of the construction of Grove and Ziller, followed by the definition of the new non-negatively curved manifolds $\Mab$, before ending with the computation of the cohomology ring of $\Mab$.  In Section \ref{S:EK} a new metric is defined on $\Mab$ to facilitate the computation of the Eells-Kuiper invariant of $\Mab$.  With respect to this metric, various Chern-Weil characteristic forms and numbers are computed, as well as the individual terms in the adiabatic limit of the Eells-Kuiper invariant.

\ack{The authors wish to thank Diarmuid Crowley, Anand Dessai, Karsten Grove, Marco Radeschi and Wolfgang Ziller for important comments, for helpful and stimulating discussions, and for their interest in this work.  Particular thanks are extended to Burkhard Wilking for his suggestions and support, and for facilitating this collaboration with invitations to visit M\"unster.  K.\ Shankar wishes to thank the Mathematics Institute in M\"unster for its hospitality during his sabbatical in 2015-16, during which time this project was initiated.  Some of this work was completed while M.\ Kerin was a Research Member at MSRI under the Program on Differential Geometry and he wishes to thank the institute for its hospitality.  Finally, the authors would like to express their gratitude to the referees, for their careful reading of the paper and comments which led to several improvements.
}


\section{Preliminaries}
\label{S:Prelim}


\subsection{Cohomogeneity-one manifolds and principal bundles} \hspace*{1mm}\\
\label{SS:Cohom1}

Because of their importance in the construction of the manifolds $\Mab$, this subsection is devoted to a brief review of cohomogeneity-one manifolds.  A more elaborate discussion can be found in, for example, \cite{GZ}.

Let $G$ be a compact Lie group acting smoothly on a closed, connected, smooth manifold $M$ via $G \x M \to M,\ (g,p) \mapsto g \cdot p$.  For each $p \in M$, the \emph{isotropy group} at $p$ is the subgroup $G_p = \{g \in G \mid g \cdot p = p\} \In G$, and the \emph{orbit} through $p$ is the submanifold $G \cdot p = \{g \cdot p \in M \mid g \in G \} \In M$.  Since $G$ acts transitively on $G \cdot p$, there is a diffeomorphism $G \cdot p \cong G/G_p$, and there is  a foliation of $M$ by $G$-orbits. 

The action $G \x M \to M$ is said to be of \emph{cohomogeneity one} if there is an orbit of codimension one or, equivalently, if $\dim(M/G) = 1$.  In such a case, the manifold $M$ is called a \emph{cohomogeneity-one ($G$-)manifold}.  If, in addition, $\pi_1(M)$ is assumed to be finite, then the orbit space $M/G$ can be identified with a closed interval.  By fixing an appropriately normalised $G$-invariant metric on $M$, it may be assumed that $M/G = [-1,1]$.  Let $\pi: M \to M/G = [-1,1]$ denote the quotient map.  The orbits $\pi^{-1}(t)$, $t \in (-1,1)$, are called \emph{principal orbits} and the orbits $\pi^{-1}(\pm 1)$ are called \emph{singular orbits}.

Choose a point $p_0 \in \pi^{-1}(0)$ and consider a geodesic $c:\R \to M$ orthogonal to all the orbits, such that $c(0) = p_0$ and $\pi \circ c|_{[-1,1]} = \id_{[-1,1]}$.  Then, for every $t \in (-1,1)$, one has $G_{c(t)} = G_{p_0} \In G$, and this \emph{principal isotropy group} will be denoted by $H \In G$.  If $p_\pm = c(\pm 1) \in M$, denote the \emph{singular isotropy groups} $G_{p_\pm}$ by $K_\pm$ respectively.

By the slice theorem, $M$ can be decomposed as the union of two disk bundles, over the singular orbits $G/K_- = \pi^{-1}(- 1)$ and $G/K_+ = \pi^{-1}(+ 1)$ respectively, which are glued along their common boundary $G/H = \pi^{-1}(0)$:
$$
M = (G \x_{K_-} \DD^{l_-}) \cup_{G/H} (G \x_{K_+} \DD^{l_+}) \, .
$$
In particular, since the principal orbit $G/H$ is the boundary of both disk bundles, it follows that $K_\pm/H = \sph^{l_\pm-1}$, where $l_\pm$ are the respective codimensions of $G/K_\pm$ in $M$.

Conversely, given any chain $H \In K_\pm \In G$, with $K_\pm/H = \sph^{d_\pm}$, one can construct a cohomogeneity-one $G$-manifold $M$ with codimension $d_\pm + 1$ singular orbits.  For this reason, a cohomogeneity-one manifold is conveniently represented by its group diagram:
$$
\xymatrix{
& G & \\
K_- \ar@{-}[ur] & & K_+ \ar@{-}[ul] \\
&  H \ar@{-}[ur] \ar@{-}[ul] & 
}
$$

In \cite{GZ}, the authors determined a sufficient condition for a cohomogeneity-one manifold to admit non-negative curvature.

\begin{thm}[\cite{GZ}]
\label{T:GZ}
Let $G$ be a compact Lie group acting on a manifold $M$ with cohomogeneity one.  If the singular orbits are of codimension $2$, then $M$ admits a $G$-invariant metric of non-negative sectional curvature.
\end{thm}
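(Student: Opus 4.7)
The plan is to decompose $M$ via the slice theorem into two disk bundles over the singular orbits, equip each with an explicit $G$-invariant metric of non-negative sectional curvature via a Riemannian submersion, and glue the two halves along the principal orbit. The codimension-two hypothesis forces $K_\pm/H = \sph^1$, so the slice theorem yields $M = D_- \cup_{G/H} D_+$ with $D_\pm = G \times_{K_\pm} \DD^2$, where $K_\pm$ acts on $\DD^2$ by rotation via the quotient homomorphism $K_\pm \twoheadrightarrow K_\pm/H = \sph^1$.

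I would then fix a bi-invariant metric on $G$ and, on each $\DD^2$, a rotationally symmetric metric $ds^2 + f_\pm(s)^2\, d\theta^2$ of non-negative curvature (guaranteed by $f_\pm'' \leq 0$), arranged to be a flat cylinder $[0, L_\pm] \times \sph^1_{r_\pm}$ near the boundary; such a metric is built by smoothly attaching a spherical cap to a cylinder. The diagonal $K_\pm$-action on $G \times \DD^2$ (right translation on $G$, rotation on $\DD^2$) is free and isometric for the product metric, so the quotient $D_\pm$ inherits a $G$-invariant metric of non-negative sectional curvature by O'Neill's formula. Because the action is trivial along the $s$-direction in the cylindrical collar, $D_\pm$ is isometric to a Riemannian product $[0, L_\pm] \times (G/H, g_\pm)$ near its boundary, where $g_\pm$ is the submersion metric coming from $G \times \sph^1_{r_\pm} \to G \times_{K_\pm} \sph^1_{r_\pm} = G/H$.

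The principal difficulty is boundary matching. A direct computation shows that $g_\pm$ scales the one-dimensional fiber $\mf{k}_\pm \ominus \mf{h}$ of the homogeneous fibration $G/H \to G/K_\pm$ by a factor depending on $r_\pm$, while agreeing with the normal homogeneous metric induced from $G$ on the orthogonal complement. Since $\mf{k}_-$ and $\mf{k}_+$ generically share only $\mf{h}$, the two rescalings act on distinct one-dimensional subspaces of $\g/\mf{h}$, so $g_-$ and $g_+$ do not coincide a priori. The resolution is to absorb the discrepancy into a compensating Cheeger deformation of the bi-invariant metric on $G$ in the $\mf{k}_\mp$-direction when constructing $D_\pm$, tuning the radii $r_\pm$ (and the collar lengths $L_\pm$) so that the two boundary metrics agree. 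As Cheeger deformations, Riemannian submersions, Riemannian products and rescalings all preserve non-negative sectional curvature, gluing across the product collars then yields a smooth, $G$-invariant, non-negatively curved metric on $M$.
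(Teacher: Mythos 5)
This theorem is quoted from Grove and Ziller \cite{GZ}; the present paper does not reproduce the proof, so the comparison must be with the argument in \cite{GZ}. Your outline captures the skeleton of that argument correctly: the slice-theorem decomposition $M = D_- \cup_{G/H} D_+$ with $D_\pm = G \times_{K_\pm} \DD^2$, the product metric on $G \times \DD^2$ built from a bi-invariant metric $Q$ on $G$ and a rotationally symmetric, non-negatively curved disk metric with a flat cylindrical collar, and the induced non-negatively curved, $G$-invariant submersion metric on each $D_\pm$, which is a Riemannian product near the boundary. You also correctly identify the crux, namely that the two boundary metrics on $G/H$ do not automatically agree, since each shrinks only the one-dimensional direction $\mf m_\pm = \mf k_\pm \ominus \mf h$ and $\mf m_- \neq \mf m_+$ when $\mf k_- \cap \mf k_+ = \mf h$.

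The proposed fix, however, has a genuine gap, and it is precisely this gap that the Grove--Ziller proof has to bridge. A Cheeger deformation of $Q$ by the subgroup $K_\mp$ produces a left-$G$-invariant metric on $G$ that is right $K_\mp$-invariant but, in general, not right $K_\pm$-invariant: for the cohomogeneity-one action on $\sph^4$ underlying the constructions in this paper, $\Ad(e^{j\theta})$ with $e^{j\theta} \in \Pjn(2)$ does not preserve $\mf k_- = \R\,i$. Consequently the diagonal $K_\pm$-action on $G \times \DD^2$ fails to be isometric for the deformed product metric, so $D_\pm$ inherits no well-defined $G$-invariant submersion metric from it. Iterating Cheeger deformations by $K_-$ and then $K_+$ meets the same obstruction at the second step. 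One can also check directly that for \emph{any} metric $b\,Q$ on $G$ that is simultaneously $\Ad(K_-)$- and $\Ad(K_+)$-invariant, together with any rotationally symmetric disk metric, the boundary metric from $D_\pm$ equals $b\,Q$ on $\mf k_\pm^\perp$ and is a strictly smaller multiple of $Q$ on $\mf m_\pm$; using $\mf m_- \cap \mf m_+ = 0$ one then finds that no choice of $b$, cylinder radii or collar lengths makes the two boundary metrics coincide. The boundary matching in \cite{GZ} therefore requires a genuinely different construction of the metric near $G/H$ than tuning parameters in a product metric on $G \times \DD^2$, and supplying that construction is the essential content of their proof; your sketch leaves exactly this step unproved.
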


Given a cohomogeneity-one $G$-manifold $M$, let $j_\pm : K_\pm \to G$ and $i_\pm : H \to K_\pm$ denote the respective inclusion maps.  Suppose that $L$ is a compact Lie group and that there are homomorphisms $\vphi_\pm : K_\pm \to L$ such that $\vphi_- \circ i_- =  \vphi_+ \circ i_+$.  Then, by \cite[Prop.\ 1.6]{GZ}, one can construct a cohomogeneity-one $(G \x L)$-manifold $P$ with group diagram
$$
\xymatrix{
& G \x L & \\
K_- \ar@{-}[ur]^{(j_-, \vphi_-)} & & K_+ \ar@{-}[ul]_{(j_+, \vphi_+)} \\
&  H \ar@{-}[ur]_{i_+} \ar@{-}[ul]^{i_-} &
}
$$
such that the subaction by $\{e\} \x L \In G \x L$ is free and induces a principal $L$-bundle $L \to P \to M$.  In particular, under this construction the co\-dimensions of the singular orbits in $M$ and $P$ are equal.


\subsection{The Eells-Kuiper invariant} \hspace*{1mm}\\
\label{SS:EK-inv}

When Milnor discovered exotic spheres in \cite{Mi1}, he used an invariant based upon the Hirzebruch Signature Theorem (see \cite{Hi}) to establish that the $7$-manifolds he had constructed could not be diffeomorphic to $\sph^7$.  Soon afterwards, Eells and Kuiper \cite{EK} found an invariant, based upon the integrality of the $\hat A$-genus for spin manifolds \cite[Cor.\ 3.2]{BH}, which completely determines the diffeomorphism type of $7$-dimensional homotopy spheres.  For simplicity, the following review of the Eells-Kuiper invariant will focus on dimensions $7$ and $8$.

Suppose $X$ is a closed, smooth, $8$-dimensional manifold which is, in addition, oriented and spin, that is, the first and second Stiefel-Whitney classes $w_1(X) \in H^1(X; \Z_2)$ and $w_2(X) \in H^2(X; \Z_2)$ vanish.  Let $p_1(X) \in H^4(X; \Q)$ and $p_2(X) \in H^8(X; \Q)$ denote the rational Pontrjagin classes of (the tangent bundle of) $X$, and let $[X] \in H_8(X; \Z)$ denote the fundamental class of $X$.  Finally, let $\sigma(X)$ denote the signature of the quadratic form $\alpha \mapsto \alpha^2$ on $H^4(X; \Q)$.  From the Signature Theorem \cite{Hi} and Corollary 3.2 of \cite{BH}, it is known that both the signature
$$
\sigma(X) = \frac{1}{45}(-p_1(X)^2 + 7\, p_2(X))[X]
$$
and the $\hat A$-genus
$$
\hat A(X) = \frac{1}{2^7 \cdot 45} (7\, p_1(X)^2 - 4\, p_2(X)) [X]
$$
are integers.  By taking an appropriate linear combination, one can easily deduce that
\beq
\label{E:Ahat}
\hat A(X) = \frac{1}{2^7 \cdot 7} (p_1(X)^2[X] - 4\, \sigma(X)) \in \Z.
\eeq

Suppose now that $M$ is a $7$-dimensional, closed, oriented, smooth, $2$-connected manifold with $H^4(M; \Z)$ finite.  Notice that, in particular, $M$ is spin.  Since the spin cobordism group in dimension $7$ is trivial, one can always find a compact, oriented, smooth, $8$-dimensional, spin \emph{coboundary} $W$, that is, a manifold with boundary $\partial W = M$.

From the long exact sequence in cohomology for the pair $(W, M)$, one obtains an isomorphism
$$
j: H^4(W,M; \Q) \stackrel{\cong}{\lra} H^4(W; \Q). 
$$
Therefore, the rational Pontrjagin class $p_1(W) \in  H^4(W; \Q)$ can be pulled back to define a Pontrjagin class $j^{-1}(p_1(W)) \in  H^4(W, M; \Q)$ on $(W,M)$.  Moreover, there is a well-defined fundamental class $[W,M] \in H_8(W,M; \Q)$ for the pair $(W,M)$, and one can define the signature $\sigma(W,M)$ to be the signature of the quadratic form $\alpha \mapsto \alpha^2$ on $H^4(W,M; \Q)$.  By analogy with the expression for the $\hat A$-genus in \eqref{E:Ahat} above, this motivates the following definition.

\begin{defn}[\cite{EK}]
\label{D:EKinv}
Let $M$ be a $7$-dimensional, closed, oriented, smooth, $2$-connected manifold with $H^4(M; \Z)$ finite, and $W$ be a compact, oriented, smooth, $8$-dimensional, spin coboundary.  Then the \emph{Eells-Kuiper invariant} of $M$ is given by
\beq
\label{E:EKinv}
\mu(M) = \frac{1}{2^7 \cdot 7} \left((j^{-1}p_1(W))^2[W,M] - 4 \, \sigma(W,M)\right) \hspace{-3mm} \mod 1 \ \in \Q/\Z.
\eeq
\end{defn}

In particular, the Eells-Kuiper invariant measures the defect of the right-hand side from being the $\hat A$-genus of a closed, spin manifold, and has the following properties:
\begin{enumerate}
\item $\mu(M)$ is independent of the choice of coboundary $W$.\vspace{1mm}
\item $\mu(M)$ respects orientation, i.e. $\mu(-M) = -\mu(M)$.\vspace{1mm}
\item $\mu(M)$ is additive, i.e. $\mu(M_1 \# M_2) = \mu(M_1) + \mu(M_2)$.
\end{enumerate}

Although it is quite simple to define, the Eells-Kuiper invariant is difficult to compute in practice.  One approach is to appeal to the generalisation by Atiyah, Patodi and Singer of the Atiyah-Singer Index Theorem to manifolds with boundary \cite{APS}.  

In brief, equip $M$ with a Riemannian metric $g_M$, with Levi-Civita connection $\nabla^{TM}$, and extend $g_M$ to a Riemannian metric on $W$ which is product near the boundary.  Let $\D$ and $\B$ be the \emph{spin-Dirac operator} and \emph{odd signature operator} on $(M, g_M)$ respectively, that is, $\D$ is the usual Dirac operator on the spinor bundle of $(M, g_M)$ and $\B$ is the restriction of the operator $\pm( * d - d *)$ to differential forms on $M$ of even degree, where $*$ denotes the Hodge-$*$ operator.  Then, by applying the Atiyah-Patodi-Singer Index Theorem to both $\D$ and $\B$ and following the scheme laid out in \cite[Prop.\ 2.1]{KS} (cf.\ \cite{Do1}), one obtains
\begin{align}
\label{E:EKeta}
\mu(M) = \frac{h + \eta}{2}(\D) &+ \frac{1}{2^5 \cdot 7} \, \eta(\B) \\
\nonumber
& - \frac{1}{2^7 \cdot 7} \int_M p_1(TM, \nabla^{TM}) \wedge \hat p_1(TM, \nabla^{TM}) \ \in \Q/\Z.
\end{align}
The terms in this formula require some explanation.  The first term, $h(\D)$, is simply $\dim \Ker(\D)$, the dimension of the space of harmonic spinors.  The terms under the integral are differential forms, namely, 
$$
p_1(TM, \nabla^{TM}) = \frac{1}{8\pi^2} \, \tr ((\Omega^{TM})^2)
$$ 
is the Pontrjagin $4$-form on $M$ obtained from the curvature $2$-form $\Omega^{TM}$ via Chern-Weil theory, while $\hat p_1(TM, \nabla^{TM})$ is a $3$-form on $M$ such that
$$
d (\hat p_1(TM, \nabla^{TM})) = p_1(TM, \nabla^{TM}).
$$  
Such a $3$-form exists, since $H^4_{dR}(M) \cong H^4(M; \R) = 0$ by assumption.  

The terms $\eta(\D)$ and $\eta(\B)$ are the $\eta$-invariants of the operators $\D$ and $\B$.  Recall that, if $\mathsf D$ is a Dirac operator (that is, a first-order, self-adjoint differential operator, such that $\mathsf D^2$ is a Laplacian), then its eigenvalues are real numbers and the $\eta$-invariant of $\mathsf D$ is defined by 
$$
\eta(\mathsf D) = \eta_{\mathsf D}(0),\  \textrm{ where }  \eta_{\mathsf D}(z) = \sum_{\lambda \neq 0} \frac{\sign(\lambda)}{|\lambda|^z}, \ \  z \in \C,
$$
the sum being over the non-trivial eigenvalues of $\mathsf D$ (counting multiplicities).  Therefore, $\eta(\mathsf D)$ measures the asymmetry of the eigenvalues of $\mathsf D$ about $0$.  The invariant $\eta(\mathsf D)$ can also be thought of as the defect in the corresponding Atiyah-Singer Index Theorem due to $W$ not being a closed manifold.

The primary benefit of the formula \eqref{E:EKeta} for $\mu(M)$ is that the right-hand side is written entirely in terms of the geometry of $M$, that is, the coboundary $W$ no longer plays a role.  Thus, for an appropriate choice of metric $g_M$, it is reasonable to expect that \eqref{E:EKeta} can be used to compute $\mu(M)$.


\subsection{Orbifolds, orbi-bundles and invariants} \hspace*{1mm}\\
\label{SS:InertOrb}

As orbifolds will play a significant role in the rest of the article, it is useful to recall some definitions and notation (cf.\ \cite{Gojems}).

\begin{defn} 
\label{D:orb}
Let $G$ be a compact Lie group acting effectively on $\R^n$.  An \emph{$n$-dimensional smooth $G$-orbifold} is a second-countable, Hausdorff space $B$ such that:
\begin{enumerate}
\item For each point $b \in B$ there exists a neighbourhood $U \In B$ of $b$, an open subset $V \In \R^n$ invariant under the effective action $\rho : \Gamma \hra G \to \GL(n, \R)$ of a finite group $\Gamma$, and a homeomorphism
$$
\psi : \rho(\Gamma) \backslash V \to U \ \ \text{ with }\ \psi(0) = b.
$$
The homeomorphism $\psi$ is called an \emph{orbifold chart}, $\Gamma$ the \emph{isotropy group} of $b \in B$ and $\rho$ the \emph{isotropy representation} at $b$.  Let $\wt \psi : V \to U$ denote the composition of $\psi$ with the projection $V \to \rho(\Gamma) \backslash V$.

\item Let $b \in U \In B$ and $\psi : \rho(\Gamma) \backslash V \to U$ be as above.  For $b' \in U$, let $\psi' : \rho'(\Gamma') \backslash V' \to U'$ denote the corresponding orbifold chart.  Then there exists a smooth, open embedding $\vphi : (\wt \psi')^{-1}(U \cap U') \to V$ and a group homomorphism $\vtheta : \Gamma' \to \Gamma$ such that, for all $\gamma' \in \Gamma'$,
$$
\vphi \circ \rho'(\gamma') = \rho(\vtheta(\gamma')) \circ \vphi 
$$ 
and, for all $v' \in  (\tilde \psi')^{-1}(U \cap U') \In V'$,
$$
\wt \psi(\vphi(v')) = \wt \psi'(v').
$$
The map $\vphi$ is called a \emph{coordinate change} and $\vtheta$ an \emph{intertwining homomorphism}.
\end{enumerate}
If $G \In \O(n)$, then the term \emph{$n$-orbifold} will be used for brevity.  If $G \In \SO(n)$ and all coordinate changes are orientation preserving, then the $n$-orbifold $B$ is called \emph{oriented}.
\end{defn}

Notice that if $\vphi$ is a coordinate change as above with intertwining homomorphism $\vtheta$, then, for each $\gamma \in \Gamma$, the map $\rho(\gamma) \circ \vphi$ is another coordinate change with intertwining homomorphism $\gamma \cdot \vtheta$, where $(\gamma \cdot \vtheta)(\gamma') = \gamma \vtheta(\gamma')\gamma^{-1}$.  Moreover, the assumption that isotropy representations are effective ensures that intertwining homomorphisms are unique.

\begin{defn}
\label{D:orbibun}
Let $B$ be a smooth $G$-orbifold and $F$ a smooth manifold.  An \emph{orbi-bundle with fibre $F$} is a map $\pi$ from a topological space $M$ to $B$ such that:
\begin{enumerate}
\item For each $b \in B$, there exists an orbifold chart $\psi : \rho(\Gamma) \backslash V \to U \In B$ around $b$, a fibre-preserving, smooth action $\wh \rho$ of $\Gamma$ on $V \x F$ such that the projection $\pr_1: V \x F \to V$ is $\Gamma$-equivariant, and a homeomorphism $\wh \psi :  \wh\rho(\Gamma) \backslash (V \x F) \to \pi^{-1}(U)$ such that the diagram 
$$
\xymatrix{
V \x F \ar[r] \ar[d]_{\pr_1} & \wh \rho(\Gamma) \backslash (V \x F) \ar[r]^-{\wh \psi} \ar[d] & \pi^{-1}(U) \ar[d]^{\pi} \\
V \ar[r] & \rho(\Gamma) \backslash V \ar[r]^-{\psi} \ar[r] & U
}
$$
commutes. 
\item Let $\psi : \rho(\Gamma) \backslash V \to U$ and $\psi' : \rho'(\Gamma') \backslash V' \to U'$ be orbifold charts in $B$ as in Definition \ref{D:orb}, with associated coordinate change $\vphi$ and intertwining homomorphism $\vtheta$.  Let $\wh \rho$, $\wh \rho'$ be the corresponding actions and $\wh\psi$, $\wh \psi'$ the corresponding homeomorphisms as above.  Finally, let $q: V \x F \to \wh \rho(\Gamma) \backslash (V \x F)$ and $q': V' \x F \to \wh \rho'(\Gamma') \backslash (V' \x F)$ denote the quotient maps.  Then there is a smooth, open embedding $\wh \vphi : (\wh\psi' \circ q')^{-1}(U \cap U') \to V \x F$ (a coordinate change) such that, for all $\gamma' \in \Gamma'$,
$$
\wh \vphi \circ \wh\rho'(\gamma') = \wh\rho(\vtheta(\gamma')) \circ \wh\vphi 
$$
and, for all $(v', f) \in (\wh\psi' \circ q')^{-1}(U \cap U') \In V' \x F$,
$$
(\wh \psi \circ q) (\wh \vphi (v', f)) = (\wh \psi' \circ q')(v',f). 
$$
\end{enumerate}
If all of the fibre-preserving actions $\wh\rho$ are free, then the space $M$ carries the structure of a smooth manifold.  In this case, the map $\pi: M \to B$ is called a \emph{Seifert fibration}.

If the fibre $F$ is a vector space and if, in addition, all actions $\wh \rho$ and all coordinate changes $\wh \vphi$ are linear, then $\pi: M \to B$ is called a \emph{vector orbi-bundle}.

If $F$ is a Lie group $G$ and all actions $\wh \rho$ and all coordinate changes $\wh \vphi$ commute with the right action of $G$ on $F$, then $G$ acts on $M$ and $\pi: M \to B$ is called a \emph{principal $G$-orbi-bundle}.  
\end{defn}

Note that, given a principal $G$-orbi-bundle and a right action of $G$ on a manifold $F$, one can construct an \emph{associated orbi-bundle} with fibre $F$ and structure group $G$ with the properties above.
 
As discussed in \cite[Rem.\ 1.3]{Gojems}, a Seifert fibration can, equivalently, be described as a regular Riemannian foliation of $M$ with compact leaves.  The leaf space $B$ naturally has the structure of a smooth orbifold, while the generic leaves (which form an open, dense set in $M$) are each diffeomorphic to some fixed smooth manifold $F$.  The exceptional leaves are each finitely covered by $F$, and the projection map $\pi : M \to B$ has the properties listed in Definition \ref{D:orbibun}.

\begin{rem}
\label{R:Seifert}
If a compact Lie group $G$ acts almost freely on a manifold $M$ such that the sub-action of a closed subgroup $H \In G$ is free, then the quotient $M/G$ naturally inherits the structure of a smooth orbifold, the quotient $M/H$ is a smooth manifold, and the projection $\pi : M/H \to M/G$ is a Seifert fibration with fibre $G/H$.
\end{rem}

Using the local definitions above, an orbifold $B$ possesses a natural tangent orbi-bundle $TB \to B$ and can always be equipped with an (orbifold) Riemannian metric.

Furthermore, as all leaves of a Seifert fibration $\pi : M \to B$ are manifolds of a fixed dimension, it makes sense to talk about the vertical sub-bundle $\V$ of the tangent bundle $TM$, that is, the vector bundle given by vectors tangent to the leaves.  If $M$ is equipped with a Riemannian metric $g_M$, the horizontal sub-bundle $\H$ is defined as the bundle of all vectors orthogonal to the leaves.  Given a vector $w \in T_p M$, the vertical and horizontal components of $w$ will be denoted by $w^\V$ and $w^\H$ respectively.

At each point $p \in M$, the differential $d\pi_p|_{\H_p} : \H_p \to T_{\pi(p)} B$ is an isomorphism.  If $d\pi_p|_{\H_p}$ is, in addition, an isometry at each $p \in M$ with respect to the metrics $g_M$ and $g_B$, then, by a slight abuse of terminology, one may refer to $\pi : (M, g_M) \to (B, g_B)$ as a Riemannian submersion.

For vector orbi-bundles, it is possible to use the language of Definition \ref{D:orbibun} to define Whitney sums, tensor products, dual bundles and exterior products.  Similarly, spin vector orbi-bundles can be defined in a natural way analogous to that for vector bundles.  

There is also a natural notion of \emph{Dirac orbi-bundle} over a Riemannian orbifold that is analogous to the notion of a \emph{Dirac bundle} over a Riemannian manifold $(M,g_M)$, which consists of a complex vector bundle $E \to M$ equipped with a Hermitian metric $g_E$ and compatible connection $\nabla^E$, as well as a Clifford action $c:TM \to \End(E)$ that is skew-symmetric with respect to $g^E$ and satisfies a Leibniz rule with respect to $\nabla^E$ and the Levi-Civita connection $\nabla^{TM}$ of $(M, g_M)$.

Just as for manifolds, Chern-Weil theory can be applied to vector orbi-bundles.  Suppose that $E \to B$ is a vector orbi-bundle with connection $1$-form $\omega^E$ and curvature $2$-form $\Omega^E = d \omega^E + \omega^E \wedge \omega^E$.  Let $\nabla^E$ and $R^E = (\nabla^E)^2$ denote the induced connection and curvature, respectively.  Recall from Chern-Weil that the \emph{first Pontrjagin form} and \emph{Euler form} of $(E, \nabla^E)$ are defined by
\begin{align}
\begin{split}
\label{E:CWforms}
p_1(E, \nabla^E) &= \frac{1}{8 \pi^2} \tr((\Omega^E)^2) = \frac{1}{8 \pi^2} \tr((R^E)^2) \,, \\
e(E, \nabla^E) &= \frac{1}{4 \pi^2} \Pf(\Omega^E) = \frac{1}{4 \pi^2} \Pf(R^E) \,,
\end{split}
\end{align}
respectively, where $\Pf$ denotes the Pfaffian, that is, $\Pf = \det^{1/2}$.

\medskip
Associated to the base of a Seifert fibration $\pi : M \to B$ there is a further orbifold $\Lambda B$, which will be important in the computation of the Eells-Kuiper invariant.  

\begin{defn}  The \emph{inertia orbifold} $\Lambda B$ of an orbifold $B$ is the orbifold consisting of points $(b, [\gamma])$, where $b \in B$ and $[\gamma]$ denotes the $\Gamma$-conjugacy class of an element $\gamma$ of the isotropy group $\Gamma$ of $b$.
\end{defn}

In general, the inertia orbifold consists of several components.  In particular, the component of $\Lambda B$ corresponding to the identity element of each isotropy group is simply a copy of $B$ itself.  Other components are often called \emph{twisted sectors}.

The orbifold charts for $\Lambda B$ are obtained from those of $B$.  Suppose $\psi : \rho(\Gamma) \backslash V \to U$ is an orbifold chart around $b = \psi(0) \in B$.  For each $\gamma \in \Gamma$, let $V^\gamma$ denote the fixed-point set of the action of $\gamma$ on $V$ and let $Z_\Gamma (\gamma) \In \Gamma$ denote the centraliser of $\gamma$.  Then $Z_\Gamma (\gamma)$ acts on $V^\gamma$ via the restriction of $\rho$, although this action need not be effective.  The ineffective kernel of this action is a finite subgroup of $Z_\Gamma (\gamma)$ of order 
\beq
\label{E:multiplicity}
m(\gamma) = \#\{\sigma \in Z_\Gamma (\gamma) \mid \rho(\sigma)|_{V^\gamma} = \id_{V^\gamma}\}.
\eeq
In this way, $m(\gamma)$ defines a locally constant function on $\Lambda B$ and is called the \emph{multiplicity} of $(b,[\gamma]) \in \Lambda B$.  An orbifold chart for $\Lambda B$ around the point $(b, [\gamma])$ is given by the homeomorphism
$$
\psi_{[\gamma]} : Z_\Gamma (\gamma) \backslash V^\gamma \to \wt \psi(V^\gamma) \x \{[\gamma]\} \In \Lambda B.
$$
Note that the orbit space $Z_\Gamma (\gamma) \backslash V^\gamma$ is the same as that obtained by considering the effective action on $V^\gamma$ of the quotient of $Z_\Gamma(\gamma)$ by its ineffective kernel.

Suppose from now on that $B$ is an oriented Riemannian orbifold (as will be the case in the applications to follow).  Let $\mc N_\gamma \to V^\gamma$ denote the normal bundle to $V^\gamma$ in $V$.  Since $B$ is oriented, $\mc N_\gamma$ has even rank, say $2 k_\gamma$.  Since the action of $\gamma$ is effective on $V$, but trivial on $V^\gamma$, it follows that $\gamma$ must act effectively on the fibres of $\mc N_\gamma$ via (abusing notation) an element $\gamma \in \SO(2 k_\gamma)$.  Let $\tilde \gamma \in \Spin(2 k_\gamma)$ denote a lift of $\gamma$ under the natural projection $\Spin(2 k_\gamma) \to \SO(2 k_\gamma)$.  If the orbifold $B$ is also spin, such a lift is part of the orbifold spin structure.  Otherwise, the lift $\tilde \gamma$ is determined uniquely up to sign.  Consequently, the inertia orbifold $\Lambda B$ has a natural double cover 
$$
\wt{\Lambda B} = \{(b, [\tilde \gamma]) \mid \tilde \gamma \text{ is a lift of } \gamma\} \to \Lambda B,
$$
where orbifold charts for $\wt{\Lambda B}$ are constructed as for $\Lambda B$ and given by
$$
\psi_{[\tilde \gamma]} : Z_\Gamma (\gamma) \backslash V^\gamma \to \wt \psi(V^\gamma) \x \{[\tilde \gamma]\} \In \wt{\Lambda B}.
$$

In generalisations of index theorems to the orbifold setting, one has to take into account the local structure of the orbifold, that is, the action of the isotropy groups.  This is achieved by defining equivariant forms.  Recall that, for a Hermitian vector bundle $E$ with connection $\nabla^E$ and equipped with a parallel, fibre-preserving automorphism $g$, the equivariant Chern character is classically defined as
$$
\ch_g(E, \nabla^E) = \tr \left(g \, \exp\! \left( -\frac{\Omega^E}{2\pi i} \right) \right). 
$$

By the discussion above, the normal bundle $\mc N_\gamma \to V^\gamma$ is spin, hence has a principal $\Spin(2 k_\gamma)$-bundle $\Spin(\mc N_\gamma) \to V^\gamma$ associated to it.  There is a unique (complex) $\Spin(2 k_\gamma)$-representation $S$ of (complex) dimension $2^{k_\gamma}$, the \emph{spinor module}, which decomposes into irreducible, inequivalent $\Spin(2 k_\gamma)$-representations $S^\pm$ of dimension $2^{k_\gamma - 1}$ such that $S = S^+ \oplus S^-$.  Thus, there is a complex (local) \emph{spinor bundle} $\mc{S}(\mc N_\gamma) \to V^\gamma$, where 
$$
\mc{S}(\mc N_\gamma) = \Spin(\mc N_\gamma) \x_{\Spin(2 k_\gamma)} S
$$ 
and, given a local orientation of $\mc N_\gamma$, a natural splitting $\mc{S}(\mc N_\gamma) = \mc{S}^+(\mc N_\gamma) \oplus \mc{S}^-(\mc N_\gamma)$ of the spinor bundle, where $\mc{S}^\pm(\mc N_\gamma) = \Spin(\mc N_\gamma) \x_{\Spin(2 k_\gamma)} S^\pm$.

The Levi-Civita connection on $V$ induces a connection $\nabla^{\mc N_\gamma}$ on $\mc N_\gamma$, hence a connection $\nabla^{\mc{S}(\mc N_\gamma)}$ on the spinor bundle.  For a  choice of lift $\tilde \gamma$ and compatible orientations on $V^\gamma$ and $\mc N_\gamma$, it follows from \cite[Sec.\ 6.4]{BGV} that 
the equivariant Chern character for $(\mc{S}(\mc N_\gamma), \nabla^{\mc{S}(\mc N_\gamma)})$ is given by
\begin{align}
\label{E:ChChar}
\ch_{\tilde \gamma}(\mc{S}(\mc N_\gamma), \nabla^{\mc{S}(\mc N_\gamma)}) &= 
\ch_{\tilde \gamma}(\mc{S}^+(\mc N_\gamma) - \mc{S}^-(\mc N_\gamma), \nabla^{\mc{S}(\mc N_\gamma)}) \\
\nonumber
&= \pm i^{k_\gamma} \det \left(\id_{\mc N_\gamma} - \gamma \exp\left(-\frac{(\nabla^{\mc N_\gamma})^2}{2\pi i}  \right)\right)^{\!\frac{1}{2}}.
\end{align}
Notice that, since $1$ is not an eigenvalue of the action of $\gamma$ on $\mc N_\gamma$, the identity \eqref{E:ChChar} yields that the form $\ch_{\tilde \gamma}(\mc{S}^+(\mc N_\gamma) - \mc{S}^-(\mc N_\gamma), \nabla^{\mc{S}(\mc N_\gamma)})$ on $V^\gamma$ is invertible.  The \emph{equivariant $\hat{A}$-form} on $V^\gamma$ is now defined as
\beq
\label{E:eqAhat}
\hat A_{\tilde \gamma}(TV, \nabla^{TV}) = (-1)^{k_\gamma} 
\frac{\hat A(TV^\gamma, \nabla^{T V^\gamma})}{\ch_{\tilde \gamma}(\mc{S}^+(\mc N_\gamma) - \mc{S}^-(\mc N_\gamma), \nabla^{\mc{S}(\mc N_\gamma)})}\, ,
\eeq
where the $\hat A$-form for $(TV^\gamma, \nabla^{TV^\gamma})$ is given by
$$
\hat A(TV^\gamma, \nabla^{TV^\gamma}) = 
\det \left(\frac{\frac{1}{4\pi i} \Omega^{TV^\gamma}}{\sinh\left( \frac{1}{4\pi i} \Omega^{TV^\gamma} \right)}\right)^{\!\!\frac{1}{2}} .
$$

\begin{rem} The equivariant form $\hat A_{\tilde \gamma}(TV, \nabla^{TV})$ has the following properties:
\begin{enumerate}
\item $\hat A_{\tilde \gamma}(TV, \nabla^{TV})$ depends only on the conjugacy class of $\tilde \gamma$.
\item Choosing the other lift $-\tilde \gamma$ instead of $\tilde \gamma$ leads to a change of sign in $\hat A_{\tilde \gamma}(TV, \nabla^{TV})$.
\item If the orientation of $TV|_{V^\gamma}$ is fixed, then changing the orientation of $V^\gamma$ leads to a change in the orientation of $\mc N_\gamma$ and, hence, the subbundles $\mc{S}^+(\mc N_\gamma)$ and $\mc{S}^-(\mc N_\gamma)$ being swapped.  This in turn yields a sign change in $\hat A_{\tilde \gamma}(TV, \nabla^{TV})$.  On the other hand, the integral of $\hat A_{\tilde \gamma}(TV, \nabla^{TV})$ over the corresponding stratum of $\Lambda B$ depends only on the orientation of $V$, not on that of $V^\gamma$.
\end{enumerate}
\end{rem}

The oriented orbifold $B$ admits an open cover by orbifold charts compatible with its orientation.  The induced open cover of the inertia orbifold $\Lambda B$ by orbifold charts is compatible with the induced orientation on $\Lambda B$.  Hence, the local equivariant forms $\hat A_{\tilde \gamma}(TV, \nabla^{TV})$ can be used to construct a well-defined form $\hat A_{\Lambda B} (TB, \nabla^{TB})$ on $\Lambda B$ such that
\beq
\label{E:orbAhat}
(\psi_{[\gamma]} \circ \bar q)^*\hat A_{\Lambda B} (TB, \nabla^{TB}) = \frac{1}{m(\gamma)} \hat A_{\tilde \gamma}(TV, \nabla^{TV}),
\eeq
where $\bar q : V^\gamma \to Z_\Gamma(\gamma) \backslash V^\gamma$ denotes the quotient map and $m(\gamma)$ is the multiplicity of the point $(\psi(0), [\gamma]) \in \Lambda B$.

In a similar way, one can define a generalisation $\hat L_{\Lambda B}(TB, \nabla^{TB})$ of the $\hat L$-class (i.e.\ the rescaled $L$-class) on the inertia orbifold $\Lambda B$, where the $\hat L$-form for $(TV^\gamma, \nabla^{TV^\gamma})$ is given as usual by
\begin{align}
\label{E:orbLhat}
\hat L(T V^\gamma, \nabla^{TV^\gamma}) &= \hat A(TV^\gamma, \nabla^{TV^\gamma}) \ch(\mc S(V^\gamma), \nabla^{\mc S(V^\gamma)}) \\
\nonumber 
&= 2^{(\dim V^\gamma - \deg_\gamma)/2} L(T V^\gamma, \nabla^{TV^\gamma}),
\end{align}
where $\deg_\gamma : \Omega^*(V^\gamma) \to \N \cup \{0\}$ denotes the map taking a form $\xi \in \Omega^*(V^\gamma)$ to its degree $\deg(\xi) \in \N \cup \{0\}$, often called the number operator.

In particular,  on the component $B \In \Lambda B$ the forms $\hat A_{\Lambda B} (TB, \nabla^{TB})$ and $\hat L_{\Lambda B}(TB, \nabla^{TB})$ on $\Lambda B$ coincide with the usual version of the forms $\hat A (TB, \nabla^{TB})$ and $\hat L(TB, \nabla^{TB})$ on $B$, defined via charts for $B$.


\subsection{Adiabatic limits} \hspace*{1mm}\\
\label{SS:AdiLims}

The computation of the Eells-Kuiper invariant $\mu(M)$ of a $7$-manifold $M$ via \eqref{E:EKeta} sometimes becomes more tractable if $M$ is the total space of a fibre bundle.  As established in \cite{Gojems}, the same is true in the more general setting of Seifert fibrations.

Given the intended application of these methods to the manifolds $\Mab$, assume from now on that the following condition is satisfied:
\begin{Des}
\item[A]
The $7$-dimensional Riemannian manifold $(M, g_M)$ is $2$-connnected, smooth, closed and oriented with $H^4(M;\Z)$ finite, and there is a Riemannian submersion $\pi : (M, g_M) \to (B, g_B)$ onto a $4$-dimensional Riemannian orbifold $(B,g_B)$ with (generic) fibre $F=\sph^3$. 
\end{Des}

By blowing up the base $(B, g_B)$ by a factor $\ve^{-2}$, $\ve > 0$, one obtains a family of metrics $g_{M,\ve}$ on $M$ with the same horizontal sub-bundle $\H$ and given by
\beq
\label{E:geps}
g_{M,\ve}|_\V = g_M|_\V  \ \ \textnormal{and} \ \  g_{M,\ve}|_\H = \ve^{-2} g_M|_\H  .
\eeq
Of course, up to a global rescaling, this is equivalent to the often-used trick of shrinking the fibres by a factor $\ve^2$.  The limit of any geometric object on $(M, g_{M,\ve})$ as $\ve \to 0$ is called the \emph{adiabatic limit}.  In particular,  it is natural to investigate the adiabatic limit of geometric invariants; for example, the $\eta$-invariants of a family of Dirac operators $\mathsf D_{M,\ve}$ compatible with the metrics $g_{M,\ve}$.

In order to understand the adiabatic limit of the formula \eqref{E:EKeta} for the Eells-Kuiper invariant, it is necessary to establish quite a bit of notation.  A more complete and more general treatment can be found in \cite{Gojems}.

Let $e_1, \dots, e_4$ and $f_1, \dots, f_3$ denote local orthonormal frames for $TB$ and the vertical sub-bundle $\V$ respectively.  Let $\tilde v \in \H$ denote the horizonal lift of a vector field $v \in TB$.  A local orthonormal frame for the metric $g_{M,\ve}$ defined in \eqref{E:geps} is, therefore, given by
\beq
e_{1,\ve} = f_1, \ \dots, \ e_{3,\ve} = f_3, \ \ e_{4,\ve} = \ve \tilde e_1, \ \dots, \ e_{7,\ve} = \ve \tilde e_4.
\eeq

According to Definition 1.6 of \cite{Gojems}, an \emph{adiabatic family of Dirac bundles} for the Riemannian submersion $\pi : (M, g_M) \to (B, g_B)$ in ($\mc A$) consists of a Hermitian vector bundle $(E, g_E)$ over $(M, g_M)$, a Clifford multiplication $c: TM \to \End(E)$, and a family $(\nabla^{E, \ve})_{\ve \geq 0}$ of connections such that:
\begin{enumerate}
\item For all $\ve > 0$, the quadruple $(E, \nabla^{E, \ve}, g_E, c_\ve)$ is a Dirac bundle on $(M, g_{M, \ve})$, where the Clifford multiplication $c_\ve$ is given by $c_\ve(e_{i, \ve}) = c(e_{i, 1})$.
\item 
\label{Cond2}
The connection $\nabla^{E, \ve}$ is analytic in $\ve$ around $\ve = 0$.
\item The kernels of the fibrewise Dirac operators 
$$
\mathsf D_{\sph^3} = \sum_{i = 1}^3 c(f_i) \nabla^{E, 0}_{f_i}
$$
acting on $E|_{\pi^{-1}(b)}$, $b \in B$, form a vector orbi-bundle $\mc{K}_{\mathsf D} \to B$.
\end{enumerate}
The associated family $(\mathsf D_{M, \ve})_{\ve > 0}$, with
$$
\mathsf D_{M, \ve} = \sum_{i = 1}^7 c_\ve (e_{i, \ve}) \nabla^{E, \ve}_{e_{i, \ve}}
$$
is called an \emph{adiabatic family of Dirac operators} for $\pi$.

By Condition \eqref{Cond2} above, there exists an $\ve_0 > 0$ such that the dimension of the kernel of $\mathsf D_{M, \ve}$ is constant for all $\ve \in (0, \ve_0)$.  Furthermore, by Theorem 1.5 of \cite{Dai} (cf. \cite[Sec.\ 2.g]{Gojems}), there are finitely many \emph{very small eigenvalues} of $\mathsf D_{M, \ve}$, that is, eigenvalues $\lambda_\nu(\ve)$, counted with multiplicity, such that $\lambda_\nu(\ve) = O(\ve^2)$ and $\lambda_\nu(\ve) \neq 0$, for all $\ve \in (0, \ve_0)$.

Associated to the fibrewise Dirac operators $\mathsf D_{\sph^3}$ there is an \emph{orbifold $\eta$-form}, denoted $\eta_{\Lambda B}(\mathsf D_{\sph^3})$ by an abuse of notation.  Although $\eta_{\Lambda B}(\mathsf D_{\sph^3})$ can be defined in a similar way to the forms $\hat A_{\Lambda B}$ and $\hat L_{\Lambda B}$ and, consequently, depends only on the conjugacy class and sign of the (lifted) isotropy elements, the usual analytic definition will not be used in the computations to follow, hence will be omitted.  Instead, assume that the fibres of the Riemannian submersion $\pi : (M, g_M) \to (B, g_B)$ in ($\mc A$) are totally geodesic and recall that the isotropy groups act freely on the $\sph^3$ fibres of the Seifert fibration $M \to B$.  Moreover, let $W \to B$ denote the rank-$4$ vector orbi-bundle associated to $M \to B$.  

The horizontal sub-bundle $\H$ on $M$ determines a unique fibre-bundle connection $1$-form $\omega^\pi \in \Hom(TM, \V)$, which acts as the identity on the vertical sub-bundle $\V$ and vanishes on $\H$.  Let $\nabla^W$ be the connection induced on $W$ by $\omega^\pi$, and let $R^W$ be its curvature. 

As before, let $\psi_{[\gamma]}$ be an orbifold chart around $(b, [\gamma]) \in \Lambda B$ and $\bar q : V^\gamma \to Z_\Gamma(\gamma) \backslash V^\gamma$ be the quotient map.  Finally, let $R^W_\gamma$ denote the restriction of the curvature $R^W$ of $\omega^\pi$ to the bundle given by the restriction of $TM$ to $\pi^{-1}((\psi_{[\gamma]} \circ \bar q)(V^\gamma)) \In M$.

Then, by exploiting \cite[Thm.\ 1.14, Thm.\ 3.9]{Go} and \cite[Thm.\ 1.11]{Gojems}, for the case of the spin-Dirac operator $\D$, define $\eta_{\Lambda B}(\D_{\sph^3})$ to be the form on $\Lambda B$ such that
\beq
\label{E:orbetaD}
(\psi_{[\gamma]} \circ \bar q)^* \eta_{\Lambda B}(\D_{\sph^3}) = 
\begin{cases}
-\frac{1}{2^7 \cdot 3 \cdot 5} (\wt \psi)^* e(W, \nabla^W), & \gamma = \id,\\[1mm]
\frac{1}{2} \eta_{\gamma \exp(-R^W_\gamma/2\pi i)}(\D_{\sph^3}), & \gamma \neq \id,
\end{cases}
\eeq
and, for the case of the odd signature operator $\B$, define $\eta_{\Lambda B}(\B_{\sph^3})$ to be the form on $\Lambda B$ such that
\beq
\label{E:orbetaB}
(\psi_{[\gamma]} \circ \bar q)^* \eta_{\Lambda B}(\B_{\sph^3}) = 
\begin{cases}
-\frac{1}{2^2 \cdot 3 \cdot 5} (\wt \psi)^* e(W, \nabla^W), & \gamma = \id,\\[1mm]
\frac{1}{2} \eta_{\gamma \exp(-R^W_\gamma/2\pi i)}(\B_{\sph^3}), & \gamma \neq \id.
\end{cases}
\eeq
In particular, the form $\eta_{\gamma \exp(-R^W_\gamma/2\pi i)}(\mathsf D_{\sph^3})$ is the classical equivariant $\eta$-form as defined by Donnelly \cite{Do2} and, in the current special situation of totally geodesic $\sph^3$ fibres, formulae to compute this form for the operators $\D_{\sph^3}$ and $\B_{\sph^3}$ can be found in \cite{HR} and \cite[Proof of Prop.\ 2.12]{APS} respectively.

In the expression given in Theorem 0.1 of \cite{Gojems} for the limit of an adiabatic family of Dirac operators, there is a term involving the $\eta$-invariant of a self-adjoint operator $\mathsf D_B^\textnormal{eff}$, called the effective horizontal operator.  By definition, $\mathsf D_B^\textnormal{eff}$ is trivial whenever the fibrewise Dirac operators $\mathsf D_{\sph^3}$ are invertible.   Moreover, if the fibrewise Dirac operators $\mathsf D_{\sph^3}$ are invertible, then $\mathsf D_{M, \ve}$ is invertible for sufficiently small $\ve > 0$ and, hence, there are no very small eigenvalues.  In the case of the spin-Dirac operator $\D$, the Weizenb\"ock formula for its fibrewise Dirac operator $\D_{\sph^3}$ ensures invertibility whenever the fibres have positive scalar curvature. 

On the other hand, since the orbifold $B$ is $4$-dimensional, Corollary 1.10 of \cite{Gojems} ensures that for the odd signature operator $\B$ one has $\eta(\B_B^\textnormal{eff}) = 0$.  Additionally, by the work  of Dai \cite{Dai} (cf. \cite{MM}), the very small eigenvalues of $\B_{M, \ve}$ are related to the higher differentials of a natural differentiable Leray-Serre spectral sequence $(E_r, d_r)$ for $M \to B$.  Indeed, given that $M$ is a $2$-connected $7$-manifold, it follows that $\sum_\nu \sign (\lambda_\nu(\ve)) = \tau_\ve$, where $\tau_\ve \in \Z$ is the signature of the quadratic form on $E_4^{0,3}$ given by
\beq
\label{E:QuadForm}
\< \alpha, \beta \> = (\alpha \cdot d_4 \beta)[M].
\eeq

In light of these remarks, it is now possible to write down the adiabatic limits of the families $\D_{M,\ve}$ and $\B_{M,\ve}$.  Let $\nabla^{TB}$ be the Levi-Civita connection for the orbifold $(B, g_B)$.

\begin{thm}[{\cite[Thm 0.1, Cor. 1.10]{Gojems}}]
\label{T:AdiaLimD}
If $(M, g_M)$ is a Riemannian $7$-manifold satisfying Condition ($\mc A$), such that the fibres of the Riemannian submersion $\pi : (M, g_M) \to (B, g_B)$ are totally geodesic and have positive scalar curvature, then
\begin{align}
\lim_{\ve \to 0} \eta(\D_{M,\ve}) &= \int_{\Lambda B} \hat A_{\Lambda B}(TB, \nabla^{TB})\, 2 \, \eta_{\Lambda B}(\D_{\sph^3}). \\
\lim_{\ve \to 0} \eta(\B_{M,\ve}) &= \int_{\Lambda B} \hat L_{\Lambda B}(TB, \nabla^{TB})\, 2 \, \eta_{\Lambda B}(\B_{\sph^3}) + \lim_{\ve \to 0} \tau_\ve.
\end{align}
\end{thm}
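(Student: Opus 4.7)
The strategy is to apply the general adiabatic limit formula \cite[Thm.\ 0.1]{Gojems} to each of the two families $\D_{M,\ve}$ and $\B_{M,\ve}$ in turn, and then simplify using the specific geometric hypotheses of Condition~($\mc A$). In full generality, that formula decomposes $\lim_{\ve\to 0}\eta(\mathsf D_{M,\ve})$ into three contributions: (i) an integral over the inertia orbifold $\Lambda B$ of a local index form paired with the orbifold $\eta$-form of the fibrewise operator; (ii) the $\eta$-invariant of the effective horizontal operator $\mathsf D_B^\textnormal{eff}$ acting on $\Ker\mathsf D_{\sph^3}$; and (iii) the contribution $\sum_\nu \sign(\lambda_\nu(\ve))$ of the very small eigenvalues. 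The plan is to show that the last two pieces either vanish identically or reduce to the explicit terms stated.

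For the spin-Dirac operator $\D$, the plan is to invoke the Lichnerowicz--Weitzenb\"ock formula twice. Positive scalar curvature on the totally geodesic fibre $\sph^3$ forces $\Ker\D_{\sph^3}=0$, so $\mc K_{\D}$ is the zero orbi-bundle and $\mathsf D_B^\textnormal{eff}$ is trivial, eliminating (ii). By the Gray--O'Neill formulae for a Riemannian submersion with totally geodesic fibres, positive scalar curvature is preserved on $(M,g_{M,\ve})$ for $\ve$ sufficiently small, so $\D_{M,\ve}$ itself is invertible and (iii) vanishes as well. What remains is (i), whose integrand is designed so that the pairing $\hat A_{\Lambda B}(TB,\nabla^{TB})\cdot 2\,\eta_{\Lambda B}(\D_{\sph^3})$ reproduces the local expression of \cite[Thm.\ 1.14, Thm.\ 3.9]{Go} on the twisted sectors (the factor $2$ restoring the $\tfrac{1}{2}$ normalisation built into \eqref{E:orbetaD}) and the Euler-form contribution on the identity component.

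For the odd signature operator $\B$, term (ii) vanishes for a different reason: since the base orbifold $B$ is $4$-dimensional, \cite[Cor.\ 1.10]{Gojems} forces $\eta(\B_B^\textnormal{eff})=0$ on dimensional grounds. Term (iii), however, is genuinely present. Here the plan is to invoke the theorem of Dai \cite[Thm.\ 1.5]{Dai} (cf.\ \cite{MM}), which identifies the very small eigenvalues with the higher differentials of the associated Leray--Serre spectral sequence $(E_r,d_r)$ for $M\to B$. Because $M$ is $2$-connected, the only surviving higher differential is $d_4: E_4^{0,3}\to E_4^{4,0}$, and the corresponding contribution reduces to the signature $\tau_\ve$ of the quadratic form \eqref{E:QuadForm}. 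Combining this with (i), with $\hat A_{\Lambda B}$ replaced by $\hat L_{\Lambda B}$ as dictated by the index form of the signature operator, gives the second identity.

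The principal obstacle is matching the sign and normalisation conventions on the identity component of $\Lambda B$, where the equivariant $\eta$-form degenerates and must be replaced by the explicit multiples of $e(W,\nabla^W)$ appearing in \eqref{E:orbetaD} and \eqref{E:orbetaB}; the precise coefficients must be extracted from the regular-part analysis in \cite{Gojems}. A secondary technical issue is justifying Dai's spectral-sequence description in the Seifert-fibration setting with orbifold base, but since $M$ is itself a smooth manifold and the relevant differentials depend only on the fibration data, the standard argument adapts with only minor modifications.
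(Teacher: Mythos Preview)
Your proposal is correct and follows essentially the same route as the paper: the discussion immediately preceding Theorem~\ref{T:AdiaLimD} applies \cite[Thm.~0.1]{Gojems} to each operator, uses the Weitzenb\"ock formula and Gray--O'Neill to eliminate $\eta(\D_B^\textnormal{eff})$ and the very small eigenvalue term for $\D$, and then invokes \cite[Cor.~1.10]{Gojems} together with Dai's spectral-sequence identification for $\B$. Your identification of the normalisation issue on the identity component of $\Lambda B$ as the main bookkeeping point is also in line with how the paper sets things up via \eqref{E:orbetaD} and \eqref{E:orbetaB}.
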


In Section 2.a of \cite{Gojems}, it has been shown that the adiabatic limit of the family $\nabla^{TM, \ve}$ of Levi-Civita connections of the metrics $g_{M, \ve}$ is given by 
$$
\lim_{\ve \to 0} \nabla^{TM, \ve} = \nabla^\V \oplus \pi^* \nabla^{TB},
$$
where $\nabla^\V$ denotes the connection on the vertical bundle $\V$ induced by $\nabla^{TM}$.  Using this, it can be shown that the adiabatic limit of the Pontrjagin forms $p_1(TM, \nabla^{TM, \ve})$ is given by
\beq
\label{E:AdLimP1}
\lim_{\ve \to 0} p_1(TM, \nabla^{TM, \ve}) = p_1(\V, \nabla^\V) + \pi^* p_1(TB, \nabla^{TB}),
\eeq
where the Pontrjagin forms on the right-hand side of \eqref{E:AdLimP1} are those obtained from the respective curvature $2$-forms of the bundles.  As $H^4_{dR}(M) = 0$, there are $3$-forms $\hat p_1(\V, \nabla^\V)$ and $\hat p_1(\pi^* TB, \nabla^{TB})$ on $M$ such that
\begin{align*}
d \hat p_1(\V, \nabla^\V) &= p_1(\V, \nabla^\V), \\
d \hat p_1( \pi^* TB, \nabla^{TB}) &= p_1(\pi^* TB, \nabla^{TB}) = \pi^* p_1(TB, \nabla^{TB}).
\end{align*} 
From the variation formula for Chern-Weil classes, it then follows that
\begin{align}
\label{E:AdLimP1Int}
\lim_{\ve \to 0} \int_M p_1(TM, \nabla^{TM, \ve}) \wedge \, & \hat p_1(TM, \nabla^{TM, \ve})  \\
\nonumber 
&= \int_M \left(p_1(\V, \nabla^\V) + \pi^* p_1(TB, \nabla^{TB})\right) \\
\nonumber
&\hspace*{20mm} \wedge \left(\hat p_1(\V, \nabla^\V) + \hat p_1(\pi^* TB, \nabla^{TB}) \right).
\end{align}

Finally, if the fibres of $\pi : (M, g_M) \to (B, g_B)$ are totally geodesic and have positive scalar curvature, recall that $(M, g_{M, \ve})$ has positive scalar curvature for $\ve$ sufficiently small.  Hence, the Weizenb\"ock formula applied to $\D_{M, \ve}$ ensures that $h(\D_{M, \ve}) = \dim \Ker(\D_{M, \ve}) = 0$ for $\ve$ sufficiently small.  This fact, together with Theorem \ref{T:AdiaLimD} and \eqref{E:AdLimP1Int}, yields another expression for the Eells-Kuiper invariant.

\begin{cor}
\label{C:AdLimEK}
If $(M, g_M)$ is a Riemannian $7$-manifold satisfying Condition ($\mc A$), such that the fibres of the Riemannian submersion $\pi : (M, g_M) \to (B, g_B)$ are totally geodesic and have positive scalar curvature, then
\begin{align*}
\mu(M) = \ & 
\frac{1}{2} \int_{\Lambda B} \hat A_{\Lambda B}(TB, \nabla^{TB})\, 2 \, \eta_{\Lambda B}(\D_{\sph^3}) \\
&\ + \frac{1}{2^5 \! \cdot \! 7} \int_{\Lambda B} \hat L_{\Lambda B}(TB, \nabla^{TB})\, 2 \, \eta_{\Lambda B}(\B_{\sph^3}) \\
&\ + \frac{1}{2^5 \! \cdot \! 7} \lim_{\ve \to 0} \tau_\ve \\
&\ - \frac{1}{2^7 \! \cdot \! 7}  \int_M \left(p_1(\V, \nabla^\V) + \pi^* p_1(TB, \nabla^{TB})\right) \\
&\hspace*{25mm} \wedge \left(\hat p_1(\V, \nabla^\V) + \hat p_1(\pi^* TB, \nabla^{TB}) \right) \in \Q/\Z.
\end{align*}
\end{cor}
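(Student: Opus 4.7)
The starting point is the Atiyah--Patodi--Singer formula \eqref{E:EKeta} for $\mu(M)$. Although each summand on its right-hand side depends on the chosen Riemannian metric on $M$, their sum is a diffeomorphism invariant modulo $\Z$. The plan is to exploit this by substituting the rescaled family $g_{M,\ve}$ from \eqref{E:geps} into \eqref{E:EKeta} and passing to the adiabatic limit $\ve \to 0$; the value of $\mu(M)$ is unchanged, but each of the three terms becomes tractable via the machinery assembled in Subsection \ref{SS:AdiLims}. Concretely, one shows that each of the three summands has a well-defined limit as $\ve \to 0$, identifies those limits with the three integrals in the statement, and reassembles.

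I would treat the three contributions separately. For the spin-Dirac piece, the hypothesis that the fibres are totally geodesic with positive scalar curvature, combined with the Gray--O'Neill formulae, forces $(M, g_{M,\ve})$ to have positive scalar curvature for all sufficiently small $\ve$; the Weizenb\"ock formula then gives $\Ker(\D_{M,\ve}) = 0$, so $h(\D_{M,\ve})$ drops out of \eqref{E:EKeta}. The limit $\lim_{\ve \to 0}\eta(\D_{M,\ve})$ is then furnished by the first formula of Theorem~\ref{T:AdiaLimD}; no effective-horizontal term appears because the fibrewise operator $\D_{\sph^3}$ is invertible (by Weizenb\"ock on the round $\sph^3$ fibres), and positive scalar curvature kills the very small eigenvalues as noted in Subsection \ref{SS:AdiLims}. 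Similarly, $\lim_{\ve \to 0}\eta(\B_{M,\ve})$ is supplied by the second formula of Theorem~\ref{T:AdiaLimD}: the effective-operator term is absent because $\dim B = 4$ (this is exactly the cited Corollary 1.10 of \cite{Gojems}), while the very small eigenvalues assemble into $\lim_{\ve \to 0}\tau_\ve$ via \eqref{E:QuadForm}.

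The remaining local Pontrjagin integral is handled directly by \eqref{E:AdLimP1Int}, whose derivation from the Chern--Weil variation formula and the splitting of $\lim_{\ve \to 0}\nabla^{TM,\ve}$ along $\V \oplus \pi^*TB$ has already been recorded. Substituting the three limits back into \eqref{E:EKeta}, bookkeeping the prefactors $\tfrac12$, $\tfrac{1}{2^5\cdot 7}$ and $-\tfrac{1}{2^7\cdot 7}$, and reducing modulo $1$ yields the displayed identity in $\Q/\Z$. In this sense the proof is essentially a synthesis step rather than a new calculation; the single place where care is required -- and what I would expect to be the main obstacle were the supporting results not already in place -- is ensuring that one genuinely has \emph{termwise} convergence (not merely convergence of the sum) so that the three integrals over $\Lambda B$ and $M$ may legitimately be substituted into the mod-$1$ formula. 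This termwise convergence is precisely the content of Theorem~\ref{T:AdiaLimD} together with \eqref{E:AdLimP1Int}, so the corollary follows.
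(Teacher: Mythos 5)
Your proposal is correct and follows the same route as the paper: substitute the rescaled metrics $g_{M,\ve}$ into the APS expression \eqref{E:EKeta}, kill $h(\D_{M,\ve})$ via Weitzenb\"ock and positive scalar curvature, and replace the three remaining terms by their adiabatic limits using Theorem~\ref{T:AdiaLimD} and \eqref{E:AdLimP1Int}. The paper's proof is precisely this synthesis step (stated in the sentence immediately preceding the corollary), so there is nothing to add.
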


The formula in Corollary \ref{C:AdLimEK} will be used in Section \ref{S:EK} to compute the Eells-Kuiper invariant given in Theorem \ref{T:thmC}.


\section{Construction, curvature and cohomology}
\label{S:CCC}


\subsection{The Grove-Ziller construction} \hspace*{1mm}\\
\label{SS:GZ}

In order to construct the manifolds $\Mab$, recall first the method employed by Grove and Ziller \cite{GZ} in their construction of metrics with non-negative sectional curvature on all $\sph^3$-bundles over $\sph^4$.  There is an effective action of $\SO(3)$ on $\sph^4$ of cohomogeneity one, such that the double cover $\sph^3$ of $\SO(3)$ acts ($\Z_2$-)ineffectively on $\sph^4$ with cohomogeneity one and group diagram:
$$
\xymatrix{
& \sph^3 & \\
\Pin(2) \ar@{-}[ur] & & \Pjn(2) \ar@{-}[ul] \\
&  Q \ar@{-}[ur] \ar@{-}[ul] & 
}
$$
where $\sph^3$ is taken to be the group $\Sp(1)$ of unit quaternions and
\begin{align*}
Q &= \{\pm 1, \pm i, \pm j, \pm k\}, \\
\Pin(2) &= \{e^{i \theta} \mid \theta \in \R\} \cup \{e^{i \theta} j \mid \theta \in \R\},  \\
\Pjn(2) &= \{e^{j \theta} \mid \theta \in \R\} \cup \{i \, e^{j \theta} \mid \theta \in \R\}.
\end{align*}
The notation $\Pjn(2)$ is intended to be suggestive since, clearly, the groups $\Pin(2)$ and $\Pjn(2)$ are isomorphic, the only difference being that the roles of $i$ and $j$ are switched.  In particular, the singular orbits $\sph^3/\Pin(2)$ and $\sph^3/\Pjn(2)$ are both diffeomorphic to $\RP^2 = \SO(3)/\O(2)$ and are of codimension $2$ in $\sph^4$.

Now, for $a_2, a_3, b_2, b_3 \in \Z$ with $a_i, b_i \equiv 1$ mod $4$, $i = 2,3$, consider the homomorphisms
$$
\vphi_- : \Pin(2) \to \sph^3 \x \sph^3 \ \textrm{ and } \ \vphi_+ : \Pjn(2) \to \sph^3 \x \sph^3
$$
with images
\begin{align*}
\im(\vphi_-) &= \{(e^{i a_2 \theta}, e^{i a_3 \theta}) \mid \theta \in \R\} \cup \{(e^{i a_2 \theta} j, e^{i a_3 \theta} j) \mid \theta \in \R\},  \\
\im(\vphi_+) &= \{(e^{j b_2 \theta}, e^{j b_3 \theta}) \mid \theta \in \R\} \cup \{(i\, e^{j b_2 \theta}, i\, e^{j b_3 \theta}) \mid \theta \in \R\}
\end{align*}
in $\sph^3 \x \sph^3$ respectively.  Let $\ul a = (1, a_2, a_3)$ and $\ul b = (1, b_2, b_3)$.  Then, as described in Subsection \ref{SS:Cohom1}, the homomorphisms $\vphi_\pm$ give rise to a manifold $\Pab$ admitting a ($\Z_2$-ineffective) cohomogeneity-one action by $\sph^3 \x \sph^3 \x \sph^3$ with group diagram
\beq
\label{E:Pab}
\xymatrix{
& \sph^3 \x \sph^3 \x \sph^3 & \\
\Pina \ar@{-}[ur] & & \Pjnb \ar@{-}[ul] \\
&  \Delta Q \ar@{-}[ur] \ar@{-}[ul] & 
}
\eeq
where the principal isotropy group $\Delta Q$ denotes the diagonal embedding of $Q$ into $\sph^3 \x \sph^3 \x \sph^3$, and the singular isotropy groups are given by
\begin{align*}
\Pina &= \{(e^{i \theta}, e^{i a_2 \theta}, e^{i a_3 \theta}) \mid \theta \in \R\} \cup \{(e^{i \theta} j, e^{i a_2 \theta} j, e^{i a_3 \theta} j) \mid \theta \in \R\},  \\
\Pjnb &= \{(e^{j \theta}, e^{j b_2 \theta}, e^{j b_3 \theta}) \mid \theta \in \R\} \cup \{(i \, e^{j \theta}, i\, e^{j b_2 \theta}, i\, e^{j b_3 \theta}) \mid \theta \in \R\}.
\end{align*}
Note that the restriction $a_i, b_i \equiv 1$ mod $4$ is to ensure only that $\Delta Q$ is a subgroup of both $\Pina$ and $\Pjnb$.  Furthermore, since the singular orbits of the cohomogeneity-one action on $\Pab$ are of codimension $2$, it follows from Theorem \ref{T:GZ} that each $\Pab$ admits an $(\sph^3 \x \sph^3 \x \sph^3)$-invariant metric $g_{GZ}$ of non-negative sectional curvature.
 
By construction, the action of the subgroup $\{1\} \x \sph^3 \x \sph^3$ on $\Pab$ is free, meaning that $\Pab$ is the total space of a principal $(\sph^3 \x \sph^3)$-bundle over $\sph^4$.  Grove and Ziller \cite{GZ} showed that all principal $(\sph^3 \x \sph^3)$-bundles over $\sph^4$ are attained in this way.

Since every $\sph^3$-bundle over $\sph^4$ arises as an associated bundle to a principal $(\sph^3 \x \sph^3)$-bundle, the above construction also yields a metric with non-negative curvature on all $\sph^3$-bundles over $\sph^4$.  Indeed, by the associated bundle construction, an $\sph^3$-bundle over $\sph^4$ can be written as 
\beq
\label{E:assoc}
\Pab \x_{\sph^3 \x \sph^3} \sph^3 = \Pab \x_{\sph^3 \x \sph^3} ((\sph^3 \x \sph^3)/\Delta \sph^3) \, .
\eeq
If $\Pab$ is equipped with the Grove-Ziller metric $g_{GZ}$ as above and $\sph^3$ with the round metric, then the product metric on $\Pab \x \sph^3$ is non-negatively curved.  By the Gray-O'Neill formula for Riemannian submersions, the quotient map 
$$
\Pab \x \sph^3 \to \Pab \x_{\sph^3 \x \sph^3} \sph^3
$$ 
induces a metric of non-negative curvature on $\Pab \x_{\sph^3 \x \sph^3} \sph^3$, as claimed.  In particular, Grove and Ziller \cite{GZ} conclude that all Milnor spheres admit a metric with non-negative sectional curvature.

The point of departure from the Grove-Ziller construction just discussed comes from the following 

\begin{obs}
The subgroup $\{1\} \x \Delta \sph^3 \In \{1\} \x \sph^3 \x \sph^3$ acts freely on (the left of) $\Pab$ such that the quotient $ (\{1\} \x \Delta \sph^3) \backslash \Pab$ is diffeomorphic to the corresponding $\sph^3$-bundle over $\sph^4$, namely, $\Pab \x_{\sph^3 \x \sph^3} \sph^3$.
\end{obs}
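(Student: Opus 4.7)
The Key Observation makes two assertions, and my plan is to handle them in turn: first, freeness of the $\{1\} \x \Delta \sph^3$-action on $\Pab$; second, identification of the resulting quotient with the associated $\sph^3$-bundle from \eqref{E:assoc}.

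For the freeness, I would observe that $\{1\} \x \Delta \sph^3$ is contained in $\{1\} \x \sph^3 \x \sph^3$, whose freeness is already established in the Grove-Ziller construction recalled above. If one wishes to see this directly from the group diagram \eqref{E:Pab}, inspect each of the three isotropy subgroups: an element of $\Pina$ of the form $(1, x, y)$ must come from the identity component (the other component has first coordinate $e^{i\theta} j = j\cos\theta + k\sin\theta \neq 1$), forcing $\theta \in 2\pi\Z$ and hence $x = y = 1$; the analysis of $\Pjnb$ is symmetric; and the intersection with $\Delta Q$ is trivially $\{(1,1,1)\}$. Since $\{1\} \x \sph^3 \x \sph^3$ is normal in $\sph^3 \x \sph^3 \x \sph^3$, the same conclusion persists under conjugation, so every point-stabiliser in $\Pab$ intersects $\{1\} \x \Delta \sph^3$ trivially.

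For the identification of the quotient, I would invoke the standard principal bundle identity: for a principal $G$-bundle $P \to B$ and a closed subgroup $L \In G$, there is a natural diffeomorphism $P/L \cong P \x_G (G/L)$. Applying this with $P = \Pab$, $G = \sph^3 \x \sph^3$ (acting as $\{1\} \x \sph^3 \x \sph^3$), and $L = \Delta \sph^3$, together with the familiar diffeomorphism $(\sph^3 \x \sph^3)/\Delta \sph^3 \cong \sph^3$ sending $(p,q) \Delta \sph^3 \mapsto p q^{-1}$, gives
$$
(\{1\} \x \Delta \sph^3) \backslash \Pab \;\cong\; \Pab \x_{\sph^3 \x \sph^3} \bigl((\sph^3 \x \sph^3)/\Delta \sph^3\bigr) \;\cong\; \Pab \x_{\sph^3 \x \sph^3} \sph^3,
$$
which is precisely the associated $\sph^3$-bundle appearing in \eqref{E:assoc}. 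The only care needed is bookkeeping of left-versus-right action conventions when applying the associated-bundle identity, which is routine.

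I do not anticipate any substantial obstacle; both steps are formal. The content of the Key Observation lies not in its proof but in its reinterpretation: the Grove-Ziller $\sph^3$-bundles over $\sph^4$ arise as quotients of $\Pab$ by a free isometric $\sph^3$-subaction. This is what makes it natural to seek other free $\sph^3$-subactions of $\sph^3 \x \sph^3 \x \sph^3$ on suitable analogues of $\Pab$, and hence opens the door to the broader six-parameter family $\Mab$.
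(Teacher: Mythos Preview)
Your proposal is correct and follows essentially the same route as the paper: freeness is immediate from the inclusion in the freely acting subgroup $\{1\}\times\sph^3\times\sph^3$, and the identification of the quotient rests on the standard associated-bundle identity $P\times_G(G/H)\cong H\backslash P$. The paper differs only in that it briefly sketches this identity via the diffeomorphism $P\times_G G\to P$, $(p,g)\mapsto g^{-1}\cdot p$, and the commuting right $H$-action, rather than invoking it as known.
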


This observation, also noted in Section 5 of \cite{GZ}, follows from \eqref{E:assoc} and the simple, often-used fact that, if $G$ is a Lie group acting on itself by left multiplication and on an arbitrary manifold $P$ via a left action $\vphi : G \x P \to P \, ; \, (g,p) \mapsto g \cdot p$, then there is a diffeomorphism $P \x_G G \to P$ induced by the smooth map $P \x G \to P\, ;$ $(p,g) \mapsto \vphi(g^{-1}, p) = g^{-1}\! \cdot p$, where $G$ acts diagonally on $P \x G$.  Now, if $H \In G$ is any closed subgroup, the action of $H$ on $G$ via right multiplication commutes with the diagonal action of $G$ on $P \x G$, hence induces a diffeomorphism 
$$
P \x_G (G/H) \to (P \x_G G)/H \to H \backslash P \, .
$$


\subsection{The manifolds $\Mab$} \hspace*{1mm}\\
\label{SS:Mab}

In light of the final observation above and the suggestive notation for $\ul a$ and $\ul b$ used in the description of $\Pab$, it is natural to investigate the action of the group $\{1\} \x \Delta \sph^3 \In \sph^3 \x \sph^3 \x \sph^3$ in a more general setting, namely, when the first entry of either or both of the triples $\ul a$ and $\ul b$ is different to $1$.  For the sake of notation, from now on let $G = \sph^3 \x \sph^3 \x \sph^3$.

For $\ul a = (a_1, a_2, a_3)$, $\ul b = (b_1, b_2, b_3) \in \Z^3$, with $a_i, b_i \equiv 1$ mod $4$ and $\gcd(a_1, a_2, a_3) = \gcd(b_1, b_2, b_3) = 1$, define $\Pab$ to be the cohomogeneity-one $G$-manifold with group diagram \eqref{E:Pab}, where the singular isotropy groups are now given by
\begin{align*}
\Pina &= \{(e^{i a_1\theta}, e^{i a_2 \theta}, e^{i a_3 \theta}) \mid \theta \in \R\} \cup \{(e^{i a_1 \theta} j, e^{i a_2 \theta} j, e^{i a_3 \theta} j) \mid \theta \in \R\},  \\
\Pjnb &= \{(e^{j b_1 \theta}, e^{j b_2 \theta}, e^{j b_3 \theta}) \mid \theta \in \R\} \cup \{(i \, e^{j b_1 \theta}, i\, e^{j b_2 \theta}, i\, e^{j b_3 \theta}) \mid \theta \in \R\}.
\end{align*}
The $\gcd$ conditions on the triples $\ul a$ and $\ul b$ ensure simply that the homomorphisms $\Pin(2) \to \Pina$ and $\Pjn(2) \to \Pjnb$ into $G$ are injective.  

\begin{lem}
\label{L:free}
The subgroup $\{1\} \x \Delta \sph^3 \In G$ acts freely on $\Pab$ if and only if 
\beq
\label{E:free}
\gcd(a_1, a_2 \pm a_3) = 1 \ \textrm{ and } \ \gcd(b_1, b_2 \pm b_3) = 1.
\eeq
\end{lem}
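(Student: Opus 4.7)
The plan is to convert freeness into an algebraic statement inside $G$ and then analyse it using the fact that conjugacy classes in $\sph^3$ are determined by the real part. Since every isotropy subgroup of the cohomogeneity-one $G$-manifold $\Pab$ is a $G$-conjugate of one of $\Delta Q$, $\Pina$, or $\Pjnb$, the subaction of $L := \{1\} \x \Delta \sph^3$ is free if and only if no non-trivial element of $L$ is $G$-conjugate to any element of $\Delta Q \cup \Pina \cup \Pjnb$. Because $G$-conjugacy in $\sph^3 \x \sph^3 \x \sph^3$ reduces to simultaneous $\sph^3$-conjugacy in each factor, and two elements of $\sph^3$ are conjugate exactly when they share the same real part, a non-trivial element $(1, p, p) \in L$ is $G$-conjugate to $(k_1, k_2, k_3)$ precisely when $k_1 = 1$ and $\mathrm{Re}(k_2) = \mathrm{Re}(k_3) = \mathrm{Re}(p) \neq 1$.

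With this reduction in hand, the argument splits into three cases. For $K = \Delta Q$, the requirement $k_1 = 1$ forces the diagonal element to be $(1,1,1)$, so no non-trivial element of $L$ occurs and no constraint arises. For $K = \Pina$, the ``$j$-coset'' elements $(e^{i a_1 \theta} j, \ldots)$ have vanishing real part in the first slot, hence are never conjugate to anything in $L$, and only the torus elements $(e^{i a_1 \theta}, e^{i a_2 \theta}, e^{i a_3 \theta})$ can contribute. For these, conjugacy with a non-trivial $(1,p,p) \in L$ amounts to finding $\theta \in \R$ with $a_1 \theta \in 2\pi\Z$, with $\cos(a_2 \theta) = \cos(a_3 \theta)$, and with $a_2 \theta \notin 2\pi\Z$. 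The first two conditions translate to $(a_2 \mp a_3)\theta \in 2\pi \Z$ for some choice of sign, and after clearing denominators the existence of such a $\theta$ becomes equivalent to $\gcd(a_1, a_2 - a_3) > 1$ or $\gcd(a_1, a_2 + a_3) > 1$.

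The one small subtlety I foresee is verifying that, whenever $d := \gcd(a_1, a_2 \pm a_3)$ exceeds $1$, the choice $\theta = 2\pi/d$ genuinely produces $p \neq 1$, i.e.\ $d \nmid a_2$. This is where the standing hypothesis $\gcd(a_1, a_2, a_3) = 1$ intervenes, since simultaneous divisibility of $a_1$, $a_2 \pm a_3$, and $a_2$ by $d$ would force $d \mid a_3$ and hence $d \mid \gcd(a_1, a_2, a_3) = 1$. The analysis of $\Pjnb$ is entirely parallel, with the roles of $i$ and $j$ interchanged and $\ul a$ replaced by $\ul b$. Combining all three cases and negating the criterion ``some non-trivial element of $L$ is $G$-conjugate into an isotropy'' produces precisely the $\gcd$ equalities stated in the lemma.
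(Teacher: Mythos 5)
Your proof is correct, and it takes a genuinely different route from the one in the paper. The paper argues by direct manipulation of the isotropy condition: given a non-trivial element $(1,q,q)$ fixing $[q_1,q_2,q_3] \in G/\Pina$, it produces an $\ul\alpha \in \Pina$ satisfying $(q_1, q q_2, q q_3) = (q_1 \alpha_1, q_2 \alpha_2, q_3 \alpha_3)$, deduces $\alpha_1 = 1$, and then analyses the quaternionic equation $q_2^{-1}q_3 = \alpha_2^{-1} q_2^{-1} q_3 \, \alpha_3$ by writing $q_2^{-1}q_3 = x + yj$, concluding $e^{i(a_2 - a_3)\theta} = 1$ or $e^{i(a_2 + a_3)\theta} = 1$. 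Your proof instead abstracts the freeness question to a statement about conjugacy classes: $L$ acts freely iff no non-trivial $\ell \in L$ is $G$-conjugate into $\Delta Q \cup \Pina \cup \Pjnb$, and then exploits the fact that conjugacy in $\sph^3$ is determined by the real part (trace in $\SU(2)$). This makes the principal-orbit and off-component cases collapse immediately (real part $0$ in the first slot, or $q = 1$ in $Q$), and turns the singular-orbit analysis into a purely number-theoretic statement about $\theta$, with the gcd characterisation falling out of $\frac{1}{a}\Z \cap \frac{1}{b}\Z = \frac{1}{\gcd(a,b)}\Z$. You correctly identify the role of the standing hypothesis $\gcd(a_1,a_2,a_3)=1$ in ensuring $d \nmid a_2$, which is exactly the point the paper also leans on. The two proofs ultimately track the same dichotomy $(a_2 \mp a_3)\theta \in 2\pi\Z$; your conjugacy-class framing is more conceptual and avoids the explicit quaternion computation, at the cost of having to state and justify the reduction to conjugacy, which the paper bypasses by working directly in coset coordinates.
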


\begin{proof}
First, suppose that the action is free and that one of the $\gcd$ conditions does not hold, say $\gcd(a_1, a_2 - a_3) = d \neq 1$.  This implies that $e^{2\pi i a_1/d} = 1$ and $e^{2\pi i (a_2 - a_3)/d} = 1$ and, furthermore, since $\gcd(a_1, a_2, a_3) = 1$, that $d$ divides neither $a_2$ nor $a_3$.  

As the action of $\{1\} \x \Delta \sph^3$ on an $G$-orbit in $\Pab$ is via
$$
(1,q,q) \cdot [q_1, q_2, q_3] = [q_1, q\, q_2, q\, q_3],
$$
it follows that the non-trivial element $\ul q = (1, e^{2\pi i a_2/d}, e^{2\pi i a_2/d}) \in \{1\} \x \Delta \sph^3$ fixes the point $[1,1,1] \in G/\Pina \In \Pab$.  This contradicts the freeness assumption, hence $d = 1$.  The arguments for the other $\gcd$ conditions in \eqref{E:free} are similar.

On the other hand, suppose now that $\gcd(a_1, a_2 \pm a_3) = 1$ and $\gcd(b_1, b_2 \pm b_3) = 1$.  Since the action of $\{1\} \x \Delta \sph^3$ on a principal orbit $G/\Delta Q$ is clearly free, it is sufficient to establish freeness of the action on the singular orbits.  

Suppose that $(1,q,q) \in \{1\} \x \Delta \sph^3$ lies in the isotropy subgroup of some $[q_1, q_2, q_3] \in G/\Pina$, that is, that
$$
(1,q,q) \cdot [q_1, q_2, q_3] = [q_1, q\, q_2, q\, q_3] = [q_1, q_2, q_3].
$$ 
Therefore, there is some $\ul \alpha = (\alpha_1, \alpha_2, \alpha_3) \in \Pina$ such that
\beq
\label{E:alpha}
(q_1, q\, q_2, q\, q_3) = (q_1 \, \alpha_1, q_2 \, \alpha_2, q_3 \, \alpha_3).
\eeq
The identity $q_1 = q_1 \, \alpha_1$ implies that $\alpha_1 = 1$, hence that $\ul \alpha$ lies in the identity component of $\Pina$.  In other words, there is some $\theta \in \R$ such that $\ul \alpha = (e^{i a_1\theta}, e^{i a_2 \theta}, e^{i a_3 \theta}) = (1, e^{i a_2 \theta}, e^{i a_3 \theta})$.  To conclude that the action on the orbit $G/\Pina$ is free, it suffices now to show that $e^{i \theta} = 1$, because it can then be deduced from \eqref{E:alpha} that $q = 1$.

From \eqref{E:alpha} it is clear that $q_2^{-1} q_3 = \alpha_2^{-1} q_2^{-1} q_3 \, \alpha_3$.  As $q_2^{-1} q_3 \in \sph^3$, there exist $x, y \in \C$ with $|x|^2 + |y|^2 = 1$ such that $q_2^{-1} q_3 = x + y j$.  Hence,
$$
x + y j = \alpha_2^{-1} (x + y \, j) \alpha_3 = e^{i(a_3 - a_2)\theta} x + e^{i(a_2 + a_3)\theta} y j.
$$
Since $x$ and $y$ cannot vanish simultaneously, it follows that either $e^{i(a_2 - a_3)\theta} = 1$ or $e^{i(a_2 + a_3)\theta} = 1$.  Together with $e^{i a_1 \theta} = 1$ and $\gcd(a_1, a_2 \pm a_3) = 1$, one concludes that $e^{i \theta} = 1$, as claimed.

The argument for freeness along the other singular orbit is analogous.
\end{proof}

By Lemma \ref{L:free}, whenever $\gcd(a_1, a_2 \pm a_3) = 1$ and $\gcd(b_1, b_2 \pm b_3) = 1$, there is a smooth, $7$-dimensional manifold $\Mab$ defined via
$$
\Mab = (\{1\} \x \Delta \sph^3) \backslash \Pab \, .
$$

\begin{lem}
\label{L:nonneg}
The manifold $\Mab$ admits an $\SO(3)$-invariant Riemannian metric of non-negative sectional curvature.
\end{lem}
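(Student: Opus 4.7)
The plan is to exhibit $(\Mab, g_{\Mab})$ as a Riemannian quotient of the cohomogeneity-one manifold $\Pab$ by the free $H$-action of Lemma~\ref{L:free}, and then identify a residual $\SO(3)$ symmetry. No new curvature analysis is required; the statement follows by assembling Theorem~\ref{T:GZ}, Lemma~\ref{L:free}, the Gray--O'Neill formula, and one short group-theoretic observation.

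First, in the group diagram \eqref{E:Pab} the singular isotropy groups $\Pina$ and $\Pjnb$ are $1$-dimensional, so both singular orbits have codimension $2$ in the $10$-dimensional manifold $\Pab$. Theorem~\ref{T:GZ} therefore furnishes a $G$-invariant metric $g_{GZ}$ on $\Pab$ of non-negative sectional curvature. Under the standing $\gcd$ hypothesis, Lemma~\ref{L:free} guarantees that $H = \{1\} \times \Delta\sph^3 \In G$ acts freely, hence freely and isometrically, on $(\Pab, g_{GZ})$; consequently $\Mab = H\backslash\Pab$ is a smooth $7$-manifold carrying a unique Riemannian metric $g_{\Mab}$ making $\Pab \to \Mab$ a Riemannian submersion, and the Gray--O'Neill formula
$$
K_{\Mab}(X,Y) \;=\; K_{\Pab}(\tilde X,\tilde Y) \;+\; \tfrac{3}{4}\,\bigl|[\tilde X,\tilde Y]^{\V}\bigr|^{2}
$$
immediately gives that $g_{\Mab}$ has non-negative sectional curvature.

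The residual symmetry comes from the first factor: the subgroup $\sph^3 \times \{1\} \times \{1\} \In G$ commutes with $H$, so its isometric action on $(\Pab, g_{GZ})$ descends to an isometric $\sph^3$-action on $(\Mab, g_{\Mab})$. To see that this factors through $\SO(3) = \sph^3/\{\pm 1\}$, I would verify that the central element $(-1,-1,-1) \in G$ lies in every isotropy group of the $G$-action on $\Pab$: by centrality it suffices to check this at one point of each orbit type, and with $\theta = \pi$ (using that $a_i, b_i$ are odd, since $a_i, b_i \equiv 1 \bmod 4$) one has $(e^{i\pi a_1}, e^{i\pi a_2}, e^{i\pi a_3}) = (-1,-1,-1) \in \Pina$, the analogous computation with $j$ in place of $i$ gives $(-1,-1,-1) \in \Pjnb$, and clearly $(-1,-1,-1) \in \Delta Q$. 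Writing $(-1,-1,-1) = (-1,1,1)\cdot(1,-1,-1)$ with $(1,-1,-1) \in H$, one concludes that $(-1,1,1)$ acts trivially on $\Mab$, so the $\sph^3$-action descends to the claimed isometric $\SO(3)$-action. The only step that is not a direct quotation of an earlier result is this last verification, which is elementary; the congruence $a_i, b_i \equiv 1 \bmod 4$ (already needed for $\Delta Q$ to lie in both singular isotropies) conveniently provides the oddness required.
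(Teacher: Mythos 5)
Your proof is correct and follows essentially the same route as the paper: apply Theorem~\ref{T:GZ} to the codimension-$2$ cohomogeneity-one manifold $\Pab$, pass to the quotient by the free isometric $\{1\}\times\Delta\sph^3$-action via Lemma~\ref{L:free} and Gray--O'Neill, and observe that the commuting subgroup $\sph^3\times\{(1,1)\}$ descends to an isometric action. The only difference is that you explicitly verify the descent to $\SO(3)$ (by checking $(-1,-1,-1)$ lies in every isotropy group of the $G$-action on $\Pab$, so $(-1,1,1)$ acts trivially modulo $H$), where the paper merely asserts the ineffective kernel is $\{(\pm 1,1,1)\}$.
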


\begin{proof}
By Theorem \ref{T:GZ}, the cohomogeneity-one manifolds $\Pab$ admit a $G$-invariant metric $g_{GZ}$ of non-negative curvature.  Since the free action by $\{1\} \x \Delta \sph^3 \In G$ is isometric, it follows from the Gray-O'Neill formula for Riemannian submersions that $g_{GZ}$ induces a metric $\vg$ of non-negative curvature on the quotient $\Mab$.  Moreover, the action of the subgroup $\sph^3 \x \{(1, 1)\} \In G$ on $\Pab$ is isometric and commutes with the action of $\{1\} \x \Delta \sph^3$, hence descends to an isometric $\sph^3$ action on $(\Mab, \vg)$ with ineffective kernel $\{(\pm 1, 1, 1)\}$.  The effective action on $(\Mab, \vg)$ is, therefore, an $\SO(3)$ action.
\end{proof}

For the topological computations to follow, it is important to remark that, by construction and just as for a cohomogeneity-one manifold, there is a codimension-one singular Riemannian foliation of $\Mab$ by biquotients, such that the leaf space is $[-1,1]$ and $\Mab$ decomposes as a union of two-dimensional disk bundles over the two singular leaves, glued along their common boundary, a regular leaf.  This follows easily from the Slice Theorem applied to $\Pab$.  Indeed, the action of $\{1\} \x \Delta \sph^3$ preserves the $G$-orbits of $\Pab$, and the image of an orbit $G/U$, $U \in \{\Delta Q, \Pina, \Pjnb\}$, is a leaf given by
\beq
\label{E:BiqDiff}
(\{1\} \x \Delta \sph^3) \backslash G / U \cong (\sph^3 \x \sph^3) \bq U \, ,
\eeq
where this diffeomorphism is induced by 
$$
(q_1 \, u_1, q_2 \, u_2, q_3 \, u_3) \mapsto (q_1 \, u_1, u_2^{-1} q_2^{-1} q_3 \, u_3),
$$ 
for $(q_1, q_2, q_3) \in G$ and $(u_1, u_2, u_3) \in U$. Viewing $\Mab$ in this way, the $\gcd$ conditions \eqref{E:free} required in the definition are simply the conditions ensuring that each of the biquotient actions on $\sph^3 \x \sph^3$ is free.

In contrast to the Grove-Ziller situation, where $a_1 = b_1 = 1$ and the manifold $\Mab$ is naturally the total space of a fibre bundle, the quotient $(\{1\} \x \sph^3 \x \sph^3) \backslash \Pab$ is not a manifold, in general.

\begin{lem}
\label{L:orbi}
The action by the subgroup $\{1\} \x \sph^3 \x \sph^3 \In G$ has trivial isotropy at points on principal orbits, while the isotropy group at a point $[q_1, q_2, q_3]$ on a singular orbit, that is, on either $G/\Pina$ or $G/\Pjnb$, is given by
\begin{align*}
\Z_{|a_1|} &\cong \{(1, q_2 \, \xi^{a_2} \bar q_2, q_3 \, \xi^{a_3} \bar q_3) \mid \xi \in \sph^1_i \In \Pin(2), \xi^{a_1} = 1 \},\\
\textrm{ or }\ \Z_{|b_1|} &\cong \{(1, q_2 \, \xi^{b_2} \bar q_2, q_3 \, \xi^{b_3} \bar q_3) \mid \xi \in \sph^1_j \In \Pjn(2), \xi^{b_1} = 1 \}
\end{align*}
respectively, where $\sph^1_i = \{e^{i \theta} \mid \theta \in \R\}$ and $\sph^1_j = \{e^{j \theta} \mid \theta \in \R\}$.  

Hence, for $(a_1, b_1) \neq (1,1)$ the quotient 
$$
\Bab = (\{1\} \x \sph^3 \x \sph^3) \backslash \Pab
$$ 
is a $4$-dimensional, smooth orbifold.  In this case, the singular set consists of at most two copies, $\RP^2_\pm$, of $\RP^2$ for which the normal bundles in $\Bab$ have cone angles $2\pi/|a_1|$ and $2\pi/|b_1|$ respectively.  Moreover, the projection $\pi : \Mab \to \Bab$ is a Seifert fibration with fibre $\sph^3 \cong (\sph^3 \x \sph^3)/\Delta \sph^3$.

Orbifold charts can be chosen on $\Bab$ such that the action of the corresponding isotropy group at a point of $\RP^2_\pm$ is trivial tangent to $\RP^2_\pm$, equivalent to the action generated by multiplication by $e^{8\pi i/a_1}$ normal to $\RP^2_-$, and generated by multiplication by $e^{8\pi i/b_1}$ normal to $\RP^2_+$.

The actions of the isotropy groups on the fibre $\sph^3$ are equivalent to
$$
(\xi, q) \mapsto \xi^{a_2} q \, \xi^{-a_3} \ \ \text{ and } \ \ (\xi, q) \mapsto \xi^{b_2} q \, \xi^{-b_3},
$$
where $q \in \sph^3$ and $\xi \in \Z_{|a_1|}$, $\xi \in \Z_{|b_1|}$ respectively.

\end{lem}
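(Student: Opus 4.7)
The plan is to proceed in four stages: (i) direct computation of the isotropy subgroups on the $G$-orbits in $\Pab$; (ii) identification of $\Bab$ as a smooth $4$-orbifold, together with the Seifert fibration $\pi:\Mab\to\Bab$ and its singular strata; (iii) determination of the slice representation on the normal disk, from which the cone angle and the normal $\Z_{|a_1|}$-action follow; and (iv) descent of the stabiliser action to the fibre $\sph^3$.

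For (i), at $[q_1,q_2,q_3]\in G/\Pina$ the stabiliser condition $(1,h_2,h_3)(q_1,q_2,q_3)=(q_1,q_2,q_3)\alpha$ for some $\alpha\in\Pina$ forces $\alpha_1=1$; the non-identity component of $\Pina$ has first entry $e^{ia_1\theta}j\neq 1$, so this restricts $\alpha$ to an identity-component element $(1,\xi^{a_2},\xi^{a_3})$ with $\xi^{a_1}=1$, and the remaining entries give $h_j=q_j\xi^{a_j}\bar q_j$. The $G/\Pjnb$ case is analogous, and for $G/\Delta Q$ the condition $\alpha_1=1$ with $\alpha\in\Delta Q$ forces $\alpha=1$, yielding trivial isotropy. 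The finiteness of these isotropies together with Remark~\ref{R:Seifert} makes $\Bab$ a smooth $4$-orbifold and exhibits $\pi$ as a Seifert fibration with generic fibre $(\sph^3\x\sph^3)/\Delta\sph^3\cong\sph^3$. Normalising via the $\sph^3\x\sph^3$ action so that $q_2=q_3=1$, the image of $G/\Pina$ in $\Bab$ is parametrised by $q_1\in\sph^3$ modulo the projection $\Pina\to\sph^3$ onto the first factor, whose image is exactly $\Pin(2)$; hence $\RP^2_-\cong\sph^3/\Pin(2)\cong\RP^2$, and likewise $\RP^2_+\cong\sph^3/\Pjn(2)\cong\RP^2$.

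The main technical step (iii) is to compute how the identity component $\sph^1\subset\Pina$ acts on the normal disk $D^2_-\subset\C$ to a point of the singular $G$-orbit. Being a circle action on $\C$, it must have the form $e^{i\theta}\cdot v=e^{im\theta}v$ for some integer $m$, and since the isotropy of a boundary point of $D^2_-$ in $\Pina$ is exactly the principal isotropy $\Delta Q$, one has $|m|=|\Delta Q\cap\sph^1|$. The hypothesis $a_j\equiv 1\bmod 4$ gives $i^{a_j}=i$, and a direct enumeration shows that $(q,q,q)\in\sph^1$ precisely for $q\in\{\pm 1,\pm i\}$, realised by $\theta=0,\pi/2,\pi,3\pi/2$ respectively; thus $|m|=4$. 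Consequently, the generator of $\Z_{|a_1|}$, corresponding to $\theta=2\pi/a_1$, acts on the normal disk by multiplication by $e^{8\pi i/a_1}$, and since $\gcd(4,a_1)=1$ (which follows from $a_1\equiv 1\bmod 4$) this rotation is faithful, producing cone angle $2\pi/|a_1|$; the argument for $\RP^2_+$ is identical upon interchanging the roles of $i$ and $j$.

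Finally, for (iv), at the lift $[1,1,1]\in G/\Pina$ the $\sph^3\x\sph^3$-orbit is $(\sph^3\x\sph^3)/\Z_{|a_1|}$, with $\Z_{|a_1|}$ acting as $(h_2,h_3)\mapsto(h_2\xi^{a_2},h_3\xi^{a_3})$; identifying the fibre $\Delta\sph^3\backslash(\sph^3\x\sph^3)\cong\sph^3$ via $(h_2,h_3)\mapsto h_2^{-1}h_3$, this descends to $q\mapsto\xi^{-a_2}q\xi^{a_3}$, which agrees with the stated formula $\xi^{a_2}q\xi^{-a_3}$ after replacing the generator $\xi$ by $\xi^{-1}$. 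The principal obstacle is step (iii): tracking the factor of $4$ in $e^{8\pi i/a_1}$ requires careful bookkeeping of the double covers $\sph^3\to\SO(3)$ and $\Pin(2)\to\O(2)$, which is made concrete here via the combinatorial fact $\Delta Q\cap\sph^1=\Z_4$ enabled by $a_j\equiv 1\bmod 4$.
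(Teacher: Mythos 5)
Your proposal is correct and follows essentially the same route as the paper's proof: compute isotropy directly on the $G$-orbits of $\Pab$, identify the singular image $\sph^3/\Pin(2)_{a_1}\cong\RP^2$, extract the normal slice representation from the fact that the $\Pina$-slice has principal isotropy $\Delta Q$ (your $|m|=|\Delta Q\cap\sph^1|=4$ is the same observation, phrased as a count), and read off the fibre action from the right $\Z_{|a_1|}$-action on $\sph^3\times\sph^3$ (as the paper does via its biquotient identification). The only cosmetic difference is that you compute the explicit rotation $e^{8\pi i/a_1}$ first and then conclude faithfulness from $\gcd(4,a_1)=1$, whereas the paper deduces freeness on the normal circle from freeness away from singular orbits before pinning down the exponent; both are equivalent.
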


\begin{proof}
As in the proof of Lemma \ref{L:free}, it suffices to restrict attention to the $\{1\} \x \sph^3 \x \sph^3$ action on the $G$-orbits in $\Pab$.  It is a simple exercise to show that the action on the principal orbits is free.

Consider the action of $(1,x_2,x_3) \in \{1\} \x \sph^3 \x \sph^3$ on the singular orbit $G/\Pina$.  If
$$
(1,x_2, x_3) \cdot [q_1, q_2, q_3] =  [q_1, x_2 \, q_2, x_3 \, q_3] =  [q_1, q_2, q_3], 
$$
then there is some $\ul \alpha = (\alpha_1, \alpha_2, \alpha_3) \in \Pina$ such that 
$$
(q_1, x_2 \, q_2, x_3 \, q_3) =  (q_1 \, \alpha_1, q_2 \, \alpha_2, q_3 \, \alpha_3).
$$
Therefore, $\alpha_1 = 1$, $x_2 = q_2 \, \alpha_2 \, \bar q_2$ and $x_3 = q_3 \, \alpha_3 \, \bar q_3$.  The condition $\alpha_1 = 1$ ensures that $\ul \alpha$ is in the identity component of $\Pina$, hence $1 = \alpha_1 = e^{i a_1 \theta}$.  In other words, $e^{i \theta} \in \sph^1_i$ is an $a_1^\textrm{th}$ root of unity.  The description of the isotropy group at $[q_1, q_2, q_3]$ now follows from the definition of $\Pina$ and the fact that all entries in $\ul a$ are $1$ mod $4$.

An entirely analogous argument yields the isotropy group at points on $G/\Pjnb$.

When $(a_1, b_1) \neq (1,1)$, the non-trivial isotropy groups of the $\{1\} \x \sph^3 \x \sph^3$ action are finite, hence the action is almost free.  The quotient of a smooth manifold by an almost-free action is always a smooth orbifold.

The singular set of the orbifold $\Bab$ consists of the image of those points of $\Pab$ at which there is non-trivial isotropy, namely, the image of the singular orbits $G/\Pina$ and $G/\Pjnb$.

Since the $\{1\} \x \sph^3 \x \sph^3$ action on $G$ commutes with the action of $\Pina$, it follows that
\begin{align*}
(\{1\} \x \sph^3 \x \sph^3) \backslash (G/\Pina) &\cong ((\{1\} \x \sph^3 \x \sph^3) \backslash G) / \Pina \\
 &\cong \sph^3 / \Pin(2)_{a_1} \, ,
\end{align*}
where $\Pin(2)_{a_1} = \{e^{i a_1 \theta} \mid \theta \in \R \} \cup \{e^{i a_1 \theta} j \mid \theta \in \R \}$.  The action of $\Pin(2)_{a_1}$ on $\sph^3$ is free up to the ineffective kernel $\Gamma \In \{\xi \in \sph^1_i \In \Pin(2) \mid \xi^{a_1} = 1\} \cong \Z_{|a_1|}$.  Since $\sph^1_i / \Gamma \cong \sph^1$, the group $\Pin(2)_{a_1} / \Gamma \cong \Pin(2)$ acts freely on $\sph^3$ with quotient $\RP^2_-$.

Notice that the $(\{1\} \x \sph^3 \x \sph^3)$-orbit of a point $[q_1, q_2, q_3] \in G/\Pina$ contains the point $[q_1, 1, 1]$.  For $\ve > 0$ sufficiently small, the intersection of the set $\{[x, 1, 1] \in G/\Pina \mid x \in \sph^3\} \In G/\Pina$ with the $\ve$-ball $B_\ve([q_1, 1, 1]) \In \Pab$ projects diffeomorphically onto a chart of $\RP^2_-$.   As the isotropy group $\Z_{|a_1|}$ at $[q_1, 1, 1]$ fixes all nearby points of the form $[q, 1, 1] \in G/\Pina$, the isotropy representation can be non-trivial only on the normal $2$-disk to $G/\Pina \In \Pab$ at $[q_1, 1, 1]$.  However, the action of $\{1\} \x \sph^3 \x \sph^3$ on $\Pab$ is free away from the singular orbits, hence the $\Z_{|a_1|}$ action on the normal $\ve$-circle at $[q_1, 1, 1]$ must be free.  Therefore, the normal space at any point in $\RP^2_- \In \Bab$ must have cone angle $2\pi/|a_1|$, as it is the quotient of the normal $2$-disk to $G/\Pina$ by the $\Z_{|a_1|}$ isotropy action.  

On the other hand, since the restriction to the unit circle of the $\Pina$ slice action on the normal $2$-disk at $[q_1, 1, 1] \in G/\Pina$ has isotropy $\Delta Q$, it must be equivalent to the action
$$
\Pin(2) \x \DD^2 \to \DD^2\, ;\, (\alpha, z) \mapsto 
\begin{cases}
e^{4i\theta} z, & \alpha = e^{i\theta} \\
e^{4i\theta} \bar z, & \alpha = e^{i\theta}j
\end{cases}
$$
of $\Pin(2)$ on the standard $2$-disk $\DD^2 \In \C$.  Therefore, the free $\Z_{|a_1|}$-isotropy action on the normal disk at $[q_1, 1, 1] \in G/\Pina$ is generated by multiplication by $e^{8 \pi i/a_1}$, as claimed.

The action of $\Z_{|a_1|}$ on the fibre $\sph^3$ follows from the description \eqref{E:BiqDiff} of the leaves of $\Mab$ as biquotients.

Similar arguments deliver the corresponding conclusions for the normal bundle to the image $\RP^2_+ \In \Bab$ of $G/\Pjnb$.

The fact that $\pi : \Mab \to \Bab$ is a Seifert fibration with fibre $\sph^3$ follows immediately from Remark \ref{R:Seifert}.
\end{proof}

Note, in particular, that the orbifold $\Bab$ inherits an (ineffective) action of $\sph^3$ of cohomogeneity one with principal isotropy subgroup $Q$ and singular isotropy groups $\Pin(2)_{a_1}$ and $\Pjn(2)_{b_1}$.

\begin{cor}[{\cite[Prop.\ 4.1]{Gojems}}]
\label{C:InerOrb}
The inertia orbifold  $\Lambda B$ associated to $\Bab$ is described by a disjoint union
$$
\Lambda B = \Bab \sqcup \left(\sph^2_- \x \left\{1, \dots, \frac{|a_1|-1}{2}\right\} \right) \sqcup \left(\sph^2_+ \x \left\{1, \dots, \frac{|b_1|-1}{2}\right\} \right),
$$
where $\sph^2_\pm$ denotes the orientable double cover of $\RP^2_\pm$ respectively.  If $b \in \RP^2_\pm \In \Bab$ and $\gamma_\pm$ denotes the generator of the isotropy group at $b$, then the pre-images of $b$ in the twisted sector $\sph^2_\pm \x \{s\}$ are given by the two points $(b, [\gamma_\pm^{\ell}])  \in \Lambda B$, where $\ell \in \left\{1, \dots, \frac{|c|-1}{2}\right\}$ with $\ell \equiv \pm s \mod c$, for $c = a_1$ or $c= b_1$ respectively.

Moreover, the twisted sectors $\sph^2_\pm \x \{s\}$ have multiplicity $m(\gamma_-^s) = |a_1|$ and $m(\gamma_+^s) = |b_1|$ respectively.
\end{cor}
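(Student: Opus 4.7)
\smallskip
\noindent\textbf{Proof proposal.} The plan is to apply the definition of the inertia orbifold directly, working locally using the orbifold charts provided by Lemma \ref{L:orbi} and then assembling the global picture. Recall that $\Lambda B$ consists of pairs $(b, [\gamma])$ with $\gamma$ in the isotropy group $\Gamma_b$ at $b$. First I would separate the contributions according to the stratification of $\Bab$ into its regular part, its singular locus $\RP^2_-$, and its singular locus $\RP^2_+$. On the regular stratum, $\Gamma_b$ is trivial, so only $\gamma = \id$ contributes, and the identity components over all of $\Bab$ glue to give the stated copy of $\Bab$ in $\Lambda B$.

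Next I would treat the twisted sectors over $\RP^2_-$ (the case of $\RP^2_+$ being identical with $a_1$ replaced by $b_1$). By Lemma \ref{L:orbi}, the isotropy at $b \in \RP^2_-$ is the cyclic group $\Gamma = \Z_{|a_1|}$ generated by an element $\gamma_-$ which acts trivially along $\RP^2_-$ and as multiplication by $e^{8\pi i/a_1}$ on the normal disk. Since $\Gamma$ is abelian, each conjugacy class is a singleton, so there are $|a_1|-1$ non-identity sectors at each $b$, labelled by the powers $\gamma_-^\ell$, $1 \leq \ell \leq |a_1|-1$. For each such $\ell$, the fixed-point set $V^{\gamma_-^\ell}$ of $\gamma_-^\ell$ is the tangent plane to $\RP^2_-$, and $Z_\Gamma(\gamma_-^\ell) = \Gamma$. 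Since $\Gamma$ acts trivially on $V^{\gamma_-^\ell}$, the entire group $\Gamma$ lies in the ineffective kernel of this action, yielding multiplicity $m(\gamma_-^\ell) = |a_1|$, as claimed.

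The main step is then the global analysis of how the local twisted sectors patch together over $\RP^2_-$. The key observation is that $\RP^2_-$ is non-orientable, and any local trivialisation of the normal bundle over a small disk determines a generator $\gamma_-$ as a rotation in a chosen direction. Parallel-transporting along an orientation-reversing loop in $\RP^2_-$ reverses the sign of this rotation, so $\gamma_-^\ell$ gets identified with $\gamma_-^{-\ell}$ along such loops. Hence the non-identity twisted sectors decompose into connected components indexed by the unordered pairs $\{\ell, -\ell\} \subset \Z_{|a_1|} \setminus \{0\}$; since $a_1 \equiv 1 \mod 4$ is odd, there are exactly $\frac{|a_1|-1}{2}$ such pairs. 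Each component, as an orbifold, is the orientable double cover of $\RP^2_-$ (the local charts $Z_\Gamma(\gamma_-^\ell) \backslash V^{\gamma_-^\ell}$ are manifold charts, since the action is trivial on $V^{\gamma_-^\ell}$), which is $\sph^2_-$. The preimages of $b \in \RP^2_-$ in the sector labelled $s$ are precisely the two points corresponding to $\gamma_-^\ell$ with $\ell \equiv \pm s \mod a_1$.

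The hard part is in justifying the global identification $\gamma_-^\ell \sim \gamma_-^{-\ell}$ rigorously; this requires carefully describing the bundle of isotropy groups over $\RP^2_\pm$ and tracking how intertwining homomorphisms between overlapping orbifold charts act on generators. Once this is done, the analogous statements for $\RP^2_+$ follow by the same argument, and the disjoint-union decomposition of $\Lambda B$ in the statement is the result of collecting the identity sector and the twisted sectors over the two singular strata.
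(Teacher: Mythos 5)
Your overall structure matches the paper's: you separate $\Lambda B$ by strata, read off the local isotropy from Lemma~\ref{L:orbi}, and derive the multiplicity from the triviality of the $\Z_{|a_1|}$-action tangent to $\RP^2_-$. The multiplicity computation is correct as stated. Where you diverge is in the crucial global step, the identification $\gamma_-^\ell \sim \gamma_-^{-\ell}$ along orientation-reversing loops, and this is also where you leave a real gap: you observe that $\RP^2_-$ is non-orientable and assert that parallel transport around an orientation-reversing loop ``reverses the sign of the rotation'' $\gamma_-$, but a loop that reverses orientation in the \emph{base} need not, a priori, reverse the orientation of the \emph{normal} plane, and you explicitly defer this as ``the hard part.''

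The missing ingredient is the orientability of the ambient orbifold $\Bab$. Since $T\Bab|_{\RP^2_-} \cong T\RP^2_- \oplus \mc{N}_-$ is an oriented bundle and $T\RP^2_-$ is non-orientable, the normal bundle $\mc{N}_-$ must also be non-orientable, with the same orientation double cover $\sph^2_-$; and since Lemma~\ref{L:orbi} identifies $\gamma_-$ as multiplication by $e^{8\pi i/a_1}$ with respect to a \emph{chosen} orientation of the normal plane, the generator is carried to $\gamma_-^{-1}$ around any orientation-reversing loop. You should also state that $\pi_1(\RP^2_-) \cong \Z_2$, so the induced monodromy on the isotropy group factors through $\{\pm 1\} \subset \mathrm{Aut}(\Z_{|a_1|})$ and no further identifications occur --- this is what justifies the count of $\frac{|a_1|-1}{2}$ components. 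The paper sidesteps this abstraction entirely by exhibiting an explicit loop: the curve $\tilde b(t) = [q_1 g(t), g(t), g(t)]$ with $g(t) = e^{jt}$, $t \in [0, \pi/2]$, projects to a non-trivial loop in $\RP^2_-$, and conjugation gives a path of generators $\gamma_-(t) = g(t)\,\gamma_-\,\ol{g(t)}$ with $\gamma_-(\pi/2) = \gamma_-^{-1}$. Your conceptual route is sound and arguably cleaner, but as written it defers rather than proves the key step; you should either carry out the orientability argument explicitly (invoking the orientation of $\Bab$) or produce a concrete loop as the paper does.
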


\begin{proof}  The four-dimensional component of $\Lambda B$ consists, by definition, of all points fixed by the the identity in $\{1\} \x \sph^3 \x \sph^3$.

For the remaining components, it suffices to find a curve in $\sph^2_\pm \x \{s\}$ joining $(b, [\gamma_\pm^s])$ with $(b, [ (\gamma_\pm^s)^{-1}] )$, that is, a loop in $\RP^2_\pm$ along which the generator of the isotropy group at $b$ changes from $\gamma_\pm$ to $\gamma_\pm^{-1}$.  Indeed, this is the only non-trivial change that can occur.  Consequently, there can be only $\frac{|c| - 1}{2}$ additional components for each of $c = a_1$ and $c = b_1$.  Without loss of generality, assume $c = a_1$.

Consider the curve $g : [0, \frac{\pi}{2}] \to \sph^3$ given by $g(t) = e^{j t}$, with endpoints $g(0) = 1$, $g(\frac{\pi}{2}) = j \in \Pin(2)$.  Let $\tilde b :  [0, \frac{\pi}{2}] \to G/\Pina$ be the loop in $G/\Pina$ defined by $\tilde b(t) = [q_1 g(t), g(t), g(t)]$, where $b(\frac{\pi}{2}) = [q_1 j, j, j] = [q_1, 1, 1] = \tilde b(0)$.  Since $g(t) \not\in \Pin(2)$ for $t \in (0, \frac{\pi}{2})$, the projection of $\tilde b$ to $\RP^2_-$ is also a non-trivial loop.  By Lemma \ref{L:orbi}, passing around this loop yields a path $\gamma_-(t) = g(t) \gamma_- \ol{g(t)}$ of generators of the isotropy groups at $\tilde b(t)$, with endpoints $\gamma_-(0) = \gamma_-$ and $\gamma_-(\frac{\pi}{2}) = \gamma_-^{-1}$. 

Finally, the multiplicity statements follow directly from the definition \eqref{E:multiplicity}, together with the facts that the isotropy groups are abelian and, via Lemma \ref{L:orbi}, act trivially on local charts of $\RP^2_\pm$.
\end{proof}


\subsection{The cohomology of \texorpdfstring{$\Mab$}{M}} \hspace*{1mm}\\
\label{SS:cohomology}

Unfortunately, Lemma \ref{L:orbi} implies that the manifold $\Mab$ is, in general, not the total space of an $\sph^3$-bundle over $\sph^4$ in any obvious way, if at all.  In \cite[Prop.\ 3.3]{GZ}, being associated to a principal bundle over $\sph^4$, with total space of cohomogeneity one, was an important part of the authors' cohomology computations.  On the other hand, in \cite[Sec.\ 13]{GWZ} the cohomology rings for a particular family of $7$-dimensional cohomogeneity-one manifolds was computed.  Although these manifolds are foliated by homogeneous spaces instead of biquotients, they strongly resemble the manifolds $\Mab$.  In order to compute the cohomology ring of the manifolds $\Mab$, ideas from both \cite{GWZ} and \cite{GZ} will be used, although it is necessary to work quite a bit harder.

It will be useful in the sequel to consider the following manifold:  Given $\Pab$, let $\hPab$ be the cohomogeneity-one $(G \x \sph^3)$-manifold given (as in Subsection \ref{SS:Cohom1}) by the homomorphisms $\vphi_-:\Pin(2) \to \sph^3 \, ; \, \alpha \mapsto \alpha$, and $\vphi_+:\Pjn(2) \to \sph^3\, ; \, \beta \mapsto \beta$.  In other words, and in analogy with the description of $\Pab$, the singular isotropy groups ($\cong \Pin(2)$) for $\hPab$ are described by the $4$-tuples $(a_1, a_2, a_3, 1)$ and $(b_1, b_2, b_3, 1)$ respectively.  

It is clear that, by construction, the action of the subgroup $\{(1,1,1)\} \x \sph^3 \In G \x \sph^3$ is free, inducing a principal $\sph^3$-bundle 
\begin{align}
\label{E:S3bund}
\sph^3 &\to \hPab \to \Pab \, . \\
\intertext{Notice, however, that the action of $G \x \{1\} \In G \x \sph^3$ on $\hPab$ is also free, and that the quotient is $\sph^4$.  Therefore, $\hPab$ is, in addition, the total space of a principal bundle}
\label{E:Gbund}
G &\to \hPab \to \sph^4.
\end{align}

\begin{lem}
\label{L:2conn}
The manifolds $\Pab$ and $\Mab$ are $2$-connected. 
\end{lem}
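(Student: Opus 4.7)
The plan is to exploit the auxiliary manifold $\hPab$ introduced just above the lemma statement, which simultaneously carries the structure of a principal $G$-bundle over $\sph^4$ via \eqref{E:Gbund} and a principal $\sph^3$-bundle over $\Pab$ via \eqref{E:S3bund}. The strategy is a cascade of long exact sequences of homotopy groups: first deduce that $\hPab$ is 2-connected from the first bundle, then descend to $\Pab$ via the second, and finally descend to $\Mab$ via the principal $\sph^3$-bundle $\Pab \to \Mab$ produced by the free $\{1\} \times \Delta \sph^3$ action of Lemma \ref{L:free}.

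First I would apply the long exact sequence of homotopy groups to the principal bundle $G \to \hPab \to \sph^4$. Since $G = \sph^3 \x \sph^3 \x \sph^3$ is a product of 3-connected spaces, we have $\pi_1(G) = \pi_2(G) = 0$, and since $\sph^4$ is 3-connected, $\pi_1(\sph^4) = \pi_2(\sph^4) = 0$. The relevant segment
$$
\pi_2(G) \to \pi_2(\hPab) \to \pi_2(\sph^4) \to \pi_1(G) \to \pi_1(\hPab) \to \pi_1(\sph^4)
$$
then forces $\pi_1(\hPab) = \pi_2(\hPab) = 0$, so $\hPab$ is 2-connected.

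Next I would apply the long exact sequence to the principal $\sph^3$-bundle $\sph^3 \to \hPab \to \Pab$ from \eqref{E:S3bund}. Since $\sph^3$ is 2-connected and $\hPab$ is 2-connected by the previous step, the segment
$$
\pi_2(\hPab) \to \pi_2(\Pab) \to \pi_1(\sph^3) \to \pi_1(\hPab) \to \pi_1(\Pab) \to \pi_0(\sph^3)
$$
yields $\pi_1(\Pab) = \pi_2(\Pab) = 0$. Finally, the same argument applied to the principal $\sph^3$-bundle $\sph^3 \to \Pab \to \Mab$ arising from Lemma \ref{L:free} gives $\pi_1(\Mab) = \pi_2(\Mab) = 0$, completing the proof.

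There is no real obstacle here: the entire argument is a formal chase through three long exact sequences, and all three principal bundle structures have already been established in the preceding discussion. The only potential subtlety is ensuring that the $(G \x \{1\})$-action on $\hPab$ really is free with quotient $\sph^4$, but this is asserted in the paragraph introducing $\hPab$ and can be verified directly from the group diagram of $\hPab$ by observing that the fourth coordinate of any non-identity element of the singular isotropy $\Pin(2)$ or $\Pjn(2)$ subgroups is non-trivial, so that $(G \x \{1\})$ meets every isotropy group trivially.
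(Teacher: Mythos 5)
Your argument is correct and follows exactly the paper's proof: the cascade of long exact homotopy sequences for $G \to \hPab \to \sph^4$, then $\sph^3 \to \hPab \to \Pab$, then $\sph^3 \to \Pab \to \Mab$. (One minor slip: $\sph^3$ is $2$-connected, not $3$-connected, since $\pi_3(\sph^3) = \Z$; this does not affect the argument, as you only need $\pi_1(G) = \pi_2(G) = 0$, which still holds.)
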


\begin{proof}
From the long exact homotopy sequence for the bundle \eqref{E:S3bund}, $\Pab$ is $2$-connected if and only if $\hPab$ is.  However, $\hPab$ is $2$-connected because of \eqref{E:Gbund}, since both $G$ and $\sph^4$ are $2$-connected.

As $\Pab$ is a principal $\sph^3$-bundle over $\Mab$, it follows from the corresponding long exact homotopy sequence that $\Mab$ is $2$-connected. 
\end{proof}

In the argument to compute the cohomology of $\Mab$, it will be important to understand the cohomology of the singular (biquotient) leaves $(\sph^3 \x \sph^3) \bq \Pina$ and $(\sph^3 \x \sph^3) \bq \Pjnb$.

\begin{lem}
\label{L:singbiq}
If $X = (\sph^3 \x \sph^3) \bq K$, $K \in \{\Pina, \Pjnb\}$ is a singular leaf of $\Mab$, then
$X$ has the same cohomology groups as $\sph^3 \x \RP^2$, that is, 
$$
H^j(X ; \Z) = 
\begin{cases}
\Z, & j = 0,3, \\
0, & j = 1,4,\\
\Z_2, & j = 2, 5.
\end{cases}
$$
Moreover, if $\wt X = (\sph^3 \x \sph^3) \bq K^o$, where $K^o \cong \sph^1$ is the identity component of $K$, let $\rho: \wt X \to X$ be the projection given by taking the quotient by the free action of $K/K^o \cong \Z_2$.  Then the two-fold covering $\rho$ induces an isomorphism $\rho^* : H^3(X; \Z) \to H^3( \wt X; \Z)$.
\end{lem}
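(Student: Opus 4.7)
The plan is to introduce the intermediate quotient $\wt X := (\sph^3 \x \sph^3)/K^o$, where $K^o \cong \sph^1$ is the identity component of $K \in \{\Pina, \Pjnb\}$. By an argument completely analogous to Lemma \ref{L:free}, $K^o$ acts freely on $\sph^3 \x \sph^3$, so $\wt X$ is a smooth closed $5$-manifold, and $\Z_2 \cong K/K^o$ acts freely on $\wt X$ with quotient $X$; this produces precisely the double cover $\rho : \wt X \to X$ of the statement. To compute $H^*(\wt X;\Z)$, I would apply the Gysin sequence to the principal circle bundle $\sph^1 \to \sph^3 \x \sph^3 \to \wt X$. The homotopy sequence of this bundle, combined with the $2$-connectedness of $\sph^3 \x \sph^3$, shows that $\wt X$ is simply connected; inserting $H^*(\sph^3 \x \sph^3;\Z)$ (concentrated in degrees $0,3,6$) into the Gysin sequence then forces $H^i(\wt X;\Z) \cong \Z$ for $i \in \{0,2,3,5\}$ and zero otherwise, with $H^2$ generated by the Euler class $e$ of the circle bundle.

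Next I would determine the $\Z_2$-action on $H^*(\wt X;\Z)$ using the lift $(j,j,j) \in \Pina$ (respectively $(i,i,i) \in \Pjnb$) of the generator of $K/K^o$. Since $\Ad(j)(i) = -i$, conjugation by this lift inverts the circle $K^o$, which reverses the orientation of the $\sph^1$-fibres of $\sph^3 \x \sph^3 \to \wt X$; this forces $e \mapsto -e$ and hence a sign action on $H^2(\wt X)$. A direct Jacobian computation shows that the lift acts orientation-preservingly on $\sph^3 \x \sph^3$, so combined with fibre inversion the descending action on $\wt X$ is orientation-reversing and therefore acts by $-1$ on $H^5(\wt X) \cong \Z$. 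Finally, both right-multiplication by $j$ and conjugation by $j$ have degree $+1$ on $\sph^3$, so the action on $H^3(\sph^3 \x \sph^3) \cong \Z^2$ is trivial, and by $\Z_2$-equivariance of the injection $\pi^*: H^3(\wt X) \hookrightarrow H^3(\sph^3 \x \sph^3)$ the action on $H^3(\wt X)$ is trivial as well.

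With these module structures in hand, the Cartan--Leray spectral sequence
$$E_2^{p,q} = H^p(\Z_2;\, H^q(\wt X;\Z)) \;\Longrightarrow\; H^{p+q}(X;\Z)$$
of the double cover $\rho$ assembles the answer. In low degrees the $E_2$-page produces the claimed contributions directly; the key technical point is to show that the differential $d_3 : E_3^{1,2} \to E_3^{4,0}$ (both groups equal to $\Z_2$) is an isomorphism, thereby eliminating the unwanted $\Z_2$ in $H^3(X)$ and in $H^4(X)$. This can be verified by naturality with respect to the transparent special case $a_1 = 1$ (respectively $b_1 = 1$), where an explicit change of variables $(x,y) \mapsto (x, xyx^{-1})$ identifies $\wt X$ with $\sph^2 \x \sph^3$ (the $\Z_2$-action being antipodal on $\sph^2$ and trivial on $\sph^3$) and hence $X \cong \RP^2 \x \sph^3$, whose integral cohomology is precisely that claimed. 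The final statement that $\rho^* : H^3(X;\Z) \to H^3(\wt X;\Z)$ is an isomorphism is then the edge-homomorphism claim: since $\Z_2$ acts trivially on $H^3(\wt X) \cong \Z$, one has $H^3(X) \cong E_\infty^{0,3} = H^3(\wt X)^{\Z_2} \cong \Z$ with $\rho^*$ realised as the identity inclusion of invariants.
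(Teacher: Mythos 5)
Your overall strategy is genuinely different from the paper's. The paper introduces the auxiliary cohomogeneity-one $(G\times\sph^3)$-manifold $\hPab$, shows the relevant singular orbit $Y$ is $G$-equivariantly $G\times\RP^2$, and then runs the Serre spectral sequence of the homotopy fibration $Y\to X\to B_{\sph^3\times\sph^3}$ (together with Smith theory for the bound $b_3(X)\leq 1$ and the vanishing of $H^8(X)$ for the surjectivity of $d_4$). You instead set up the Cartan--Leray spectral sequence of the double cover $\rho:\wt X\to X$, which is an attractive and more self-contained route. Your computation of $H^*(\wt X;\Z)$ via the Gysin sequence, and your determination of the $\Z_2$-module structure (trivial on $H^0$ and $H^3$, sign on $H^2$ and $H^5$), are both correct.

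The gap is in the step you single out yourself: showing that $d_3:E_3^{1,2}=\Z_2\to E_3^{4,0}=\Z_2$ is an isomorphism. You propose to deduce this ``by naturality with respect to the transparent special case $a_1=1$.'' This does not work. The abstract $\Z_2$-module structure on $H^*(\wt X;\Z)$ does \emph{not} determine the Cartan--Leray differentials; those depend on the $\Z_2$-equivariant homotopy type of $\wt X$, and there is no map between the spaces for different parameter values with respect to which the spectral sequence would be natural. (Even the diffeomorphism $\wt X\cong\sph^2\times\sph^3$ quoted in the paper is not canonical and need not intertwine the $\Z_2$-actions for different $\ul a$.) Moreover, the explicit change of variables $(x,y)\mapsto(x,xyx^{-1})$ you invoke only trivialises the $K^o$-action when $(c_1,c_2,c_3)=(1,1,1)$; it does not simplify the general case $c_1=1$. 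Concretely, the spectral sequence is perfectly consistent with $d_3(\beta)=0$ (compensated by $d_4$ from $E^{0,3}$ and $E^{1,5}$), in which case the filtration on $H^3(X)$ would give an extension $0\to\Z_2\to H^3(X)\to\Z\to0$, i.e.\ $H^3(X)\cong\Z\oplus\Z_2$, and then $H^4(X)=0$ would still hold. So some further geometric input is genuinely required to exclude the torsion.

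The paper supplies this input by exhibiting $H^3(X;\Z)$ as the kernel of a map $\Z^3\to\Z^2$ (from the Serre spectral sequence of $Y\to X\to B_{\sph^3\times\sph^3}$), which is automatically free. Within your framework the cleanest fix is probably the mod $2$ Gysin sequence of the double cover $\rho$: using $H^*(\wt X;\Z_2)$ concentrated in degrees $0,2,3,5$ and the already-computed $H^{\leq 2}(X;\Z_2)$, the sequence forces $w^3=0$ and hence that $\rho^*:H^3(X;\Z_2)\to H^3(\wt X;\Z_2)=\Z_2$ is injective. Combined with the coefficient exact sequence $0\to H^3(X;\Z)\otimes\Z_2\to H^3(X;\Z_2)$ and the surjection $H^3(X;\Z)\twoheadrightarrow E_\infty^{0,3}\cong\Z$, this forces $H^3(X;\Z)\cong\Z$, which in turn forces $d_3$ to be an isomorphism and completes your argument. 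You should replace the naturality claim by something of this kind.
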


\begin{proof}
Consider again the cohomogeneity-one $(G \x \sph^3)$-manifold $\hPab$.  By construction, a singular orbit $Y$ of $\hPab$ is a principal $(\sph^3 \x \sph^3)$-bundle over a singular leaf $X = (\sph^3 \x \sph^3) \bq K$ of $\Mab$, where the principal $\sph^3 \x \sph^3$ action is that of the subgroup $\{1\} \x \Delta \sph^3 \x \sph^3 \In G \x \sph^3$.  On the other hand, $Y$ is also a principal $G$-bundle over a copy of $\RP^2 \In \sph^4$.  Therefore, since the classifying space $B_G = (\HP^\infty)^3$ for principal $G$-bundles is $3$-connected, $Y$ is trivial as a principal $G$-bundle, that is, $Y$ is $G$-equivariantly diffeomorphic to $G \x \RP^2$.  

Associated to the principal bundle $\sph^3 \x \sph^3 \to Y \stackrel{\sigma}{\lra} X$, there is a homotopy fibration 
\beq
\label{E:homfib}
Y \stackrel{\sigma}{\lra} X \to B_{\sph^3 \x \sph^3} =  \HP^\infty \x \HP^\infty \, .
\eeq
The identities $H^0(X; \Z) = \Z$, $H^1(X; \Z) = 0$ and $H^2(X; \Z) = \Z_2$ follow immediately from the corresponding Serre spectral sequence $(E_r, d_r)$.  From the differential 
$$
d_4 : E_4^{0,3} = H^3(Y; \Z) = \Z^3 \to E_4^{4,0} = H^4(B_{\sph^3 \x \sph^3}; \Z) = \Z^2
$$
on the $E_4$ page, one obtains
\begin{align*}
H^3(X; \Z) &\cong \Ker (d_4 : E_4^{0,3} \to E_4^{4,0}) \, ,\\
H^4(X; \Z) &\cong H^4(B_{\sph^3 \x \sph^3}; \Z))/ \im (d_4 : E_4^{0,3} \to E_4^{4,0})\, .
\end{align*}
As $d_4 : E_4^{0,3} \to E_4^{4,0}$ cannot be injective, it follows that $H^3(X; \Z) = \Z^{b_3(X)}$, for $b_3(X) \in \{1,2,3\}$. 

On the other hand, consider the two-fold covering 
$$
\rho: \wt X = (\sph^3 \x \sph^3) \bq K^o \to X = (\sph^3 \x \sph^3) \bq K \, .
$$
Since $K^o \cong \sph^1$, the Gysin sequence for the fibration 
$$
K^o \to \sph^3 \x \sph^3 \to \wt X
$$ 
yields $H^3(\wt X; \Z) = \Z$. In fact, although it is not important here, $\wt X$ is always diffeomorphic to $\sph^3 \x \sph^2$ (see \cite{DV, GGK}).  Therefore, from Smith Theory one obtains 
$$
H^3(X ; \Q) \cong H^3(\wt X; \Q)^{\Z_2} \cong \Q^{\Z_2}.
$$
This clearly implies that $b_3(X) \leq 1$, hence, that $H^3(X; \Z) = \Z$.

Moreover, since $\Ker(d_4 : E_4^{0,3} \to E_4^{4,0}) \cong \Z$, it is apparent that there are generators $x_1, x_2, x_3 \in H^3(Y; \Z) = \Z^3$ and $\alpha_1, \alpha_2 \in H^4(B_{\sph^3 \x \sph^3}; \Z) = \Z^2$ such that 
\begin{align}
\nonumber 
d_4(x_1) &= r_1 \, \alpha_1 + s_1 \, \alpha_2 \, , \\
\label{E:d4}
d_4(x_2) &= r_2 \, \alpha_1 + s_2 \, \alpha_2 \, , \\
\nonumber
d_4(x_3) &= 0,
\end{align}
for some $r_1, r_2, s_1, s_2 \in \Z$ with $C = \det \bsm r_1 & r_2 \\ s_1 & s_2 \esm \neq 0$.  In particular, since $H^5(X; \Z) = \Ker(d_4 : E_4^{0,5} \to E_4^{4,2})$, it is easy to deduce from \eqref{E:d4} that $H^5(X;\Z) = \Z_2$.

The fact that $H^4(X; \Z) = 0$ will follow from the surjectivity of the differential $d_4 : E_4^{0,3} \to E_4^{4,0}$, which is equivalent to the identity $C = \pm 1$.  As $X$ is a five-dimensional manifold, it is clear that $H^8(X; \Z) = 0$.  In particular, no terms on the diagonal $E_4^{k,l}$, $k + l = 8$, of the spectral sequence for \eqref{E:homfib} can survive to the $E_\infty$ page.  Therefore, as all other differentials with range $E_4^{8,0}$ are trivial, the differential $d_4 : E_4^{4,3} \to E_4^{8,0}$ is necessarily surjective.  This is the case if and only if the $\gcd$ of the determinants of all $(3 \x 3)$-minors of a matrix representation of $d_4 : E_4^{4,3} \to E_4^{8,0}$ is $1$.  This latter equivalence can be proven by reducing such a matrix to Smith normal form via integral row operations.  With respect to the bases $\{x_i \alpha_j \mid 1 \leq i \leq 3, 1 \leq j \leq 2\}$ and $\{ \alpha_1^2, \alpha_1 \alpha_2, \alpha_2^2\}$ for $E_4^{4,3}$ and $E_4^{8,0}$ respectively, the matrix representation of $d_4 : E_4^{4,3} \to E_4^{8,0}$ is
$$
\bpm
r_1 & 0 & r_2 & 0 & 0 & 0 \\
s_1 & r_1 & s_2 & r_2 & 0 & 0\\
0 & s_1 & 0 & s_2 & 0 &0
\epm .
$$
Then the $\gcd$ of the determinants of all $(3 \x 3)$-minors is divisible by $C$, from which it follows that $C = \pm 1$, as desired.

It remains to show that $\rho^* : H^3(X; \Z) \to H^3( \wt X; \Z)$ is an isomorphism.  To this end, recall that, by definition, there is an injective homomorphism $K \to G \x \sph^3$, such that $Y = (G \x \sph^3)/K$.  Define, therefore, $\wh\rho : \wt Y \to Y$ to be the two-fold covering of $Y$ induced by the free action of $\Z_2 \cong K/K^o$ on $\wt Y  = (G \x \sph^3)/K^o$.  By the same arguments as for $Y$, it follows that $\wt Y$ is $G$-equivariantly diffeomorphic to $G \x \sph^2$.  Moreover, since the actions of $G$ and $K/K^o$ commute, there is a commutative diagram of homotopy fibrations
$$
\xymatrix{
\wt Y \ar[r] \ar[d]_{\wh \rho} & \sph^2 \ar[r] \ar[d] & B_G \ar[d]^{B_\id} \\
Y \ar[r]  & \RP^2 \ar[r] & B_G
}
$$
Let $(\wt{\mc E}_r, \wt \delta_r)$ and $(\mc E_r, \delta_r)$ denote the Serre spectral sequences for the upper and lower homotopy fibrations respectively.  It is clear from these spectral sequences that the differentials 
\begin{align*}
\delta_4 : \mc E_4^{0,3} = H^3(Y; \Z) &\to \mc E_4^{4,0} = H^4(B_G; \Z) , \\
\wt \delta_4 : \wt{\mc E}_4^{0,3} = H^3(\wt Y; \Z) &\to \wt{\mc E}_4^{4,0} = H^4(B_G; \Z)
\end{align*}
are isomorphisms.  By naturality, one has 
$$
\wt \delta_4 \circ \wh \rho^* = (B_\id)^* \circ \delta_4 : \mc E_4^{0,3} \to \wt{\mc E}_4^{4,0},
$$
where $(B_\id)^* : H^*(B_G\; \Z) \to H^* (B_G; \Z)$ is the isomorphism induced by the identity $\id: G \to G$.  Hence,
\beq
\label{E:iso}
\wh \rho^* = \wt\delta_4^{-1} \circ (B_\id)^* \circ \delta_4 : H^3(Y; \Z) = \mc E_4^{0,3} \to \wt{\mc E}_4^{0,3} = H^3(\wt Y; \Z)
\eeq
is an isomorphism.

Furthermore, note that there is a principal $(\sph^3 \x \sph^3)$-bundle $\wt \sigma : \wt Y \to \wt X$, hence a homotopy fibration $\wt Y \stackrel{\wt \sigma}{\lra} \wt X \to B_{\sph^3 \x \sph^3}$, such that the following diagram commutes:
\beq
\label{E:CoverIso}
\xymatrix{
\wt Y \ar[r]^{\wt \sigma} \ar[d]_{\wh \rho} & \wt X \ar[r] \ar[d]_\rho & B_{\sph^3 \x \sph^3} \ar[d]^{B_\id} \\
Y \ar[r]^{\sigma}  & X \ar[r] & B_{\sph^3 \x \sph^3}
}
\eeq
Now, from the argument to determine $H^3(X; \Z)$, it is clear that the edge homomorphism $\sigma^* : H^3(X; \Z) = \Z \to H^3(Y; \Z) = \Z^3$ from the spectral sequence $(E_r, d_r)$ for the lower homotopy fibration is injective and maps the generator $x$ of $H^3(X; \Z)$ to a generator of $H^3(Y; \Z)$.

On the other hand, an identical argument for the Serre spectral sequence of the upper homotopy fibration in \eqref{E:CoverIso} shows that the edge homomorphism $\wt \sigma^* : H^3(\wt X; \Z) = \Z \to H^3(\wt Y; \Z) = \Z^3$ is injective and maps the generator $\wt x$ of $H^3(\wt X; \Z)$ to a generator of $H^3(\wt Y; \Z)$.

Finally, suppose that $\rho^* : H^3(X; \Z) \to H^3( \wt X; \Z)$ is given by $\rho^*(x) = \lambda \, \wt x$, for some $\lambda \in \Z$.  By the commutativity of \eqref{E:CoverIso}, 
$$
\lambda \, \wt \sigma^*(\wt x) = \wt \sigma^*(\rho^*(x)) = \wh\rho^*(\sigma^*(x)) .
$$
Together with \eqref{E:iso}, this implies that $\lambda \, \wt\sigma^*(\wt x)$ is a generator of $H^3(\wt Y; \Z)$.  However, $\wt \sigma^*(\wt x)$ is itself a generator and $\wt \sigma^*$ is injective, hence $\lambda = \pm 1$.  Therefore, $\rho^*$ is an isomorphism, as asserted.
\end{proof}

It is also possible to understand the topology of the regular (biquotient) leaves $(\sph^3 \x \sph^3)\bq \Delta Q$.

\begin{lem}
\label{L:regbiq}
The regular leaf $(\sph^3 \x \sph^3)\bq \Delta Q$ of $\Mab$ is diffeomorphic to $(\sph^3/Q) \x \sph^3$ and has cohomology groups
$$
H^j((\sph^3 \x \sph^3)\bq \Delta Q ; \Z) = 
\begin{cases}
\Z, & j = 0,6, \\
0, & j = 1,4,\\
\Z_2 \oplus \Z_2, & j = 2, 5,\\
\Z \oplus \Z, & j = 3.
\end{cases}
$$
Moreover, the homomorphism $\tau^* : H^3((\sph^3 \x \sph^3)\bq \Delta Q; \Z) \to H^3(\sph^3 \x \sph^3; \Z)$ induced by the eight-fold covering $\tau : \sph^3 \x \sph^3 \to (\sph^3 \x \sph^3)\bq \Delta Q$ is injective with image a lattice of index $8$ in $H^3(\sph^3 \x \sph^3; \Z)$.  Indeed, there is a basis $\{x_1, x_2\}$ of $H^3(\sph^3 \x \sph^3; \Z)$, such that $\im(\tau^*)$ is generated by $8 x_1$ and $x_2$.
\end{lem}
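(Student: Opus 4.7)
\emph{Proof plan.} The plan is to give an explicit model for the biquotient $(\sph^3 \x \sph^3) \bq \Delta Q$ as a product $(\sph^3/Q) \x \sph^3$, compute the cohomology of the right-hand side by K\"unneth, and then read off $\tau^*$ from the explicit identification.

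Starting from the identification $(\{1\} \x \Delta \sph^3) \backslash G \to \sph^3 \x \sph^3$, $[q_1, q_2, q_3] \mapsto (q_1, q_2^{-1} q_3)$ already used in \eqref{E:BiqDiff}, the right $\Delta Q$-action on $G$ descends to the (free) $Q$-action on $\sph^3 \x \sph^3$ given by
$$
(p_1, p_2) \cdot u = (p_1 u,\, u^{-1} p_2 u), \qquad u \in Q.
$$
I would then consider the map
$$
\Phi : \sph^3 \x \sph^3 \to (\sph^3/Q) \x \sph^3,\qquad (p_1, p_2) \mapsto ([p_1],\, p_1 p_2 p_1^{-1}),
$$
which is $\Delta Q$-invariant since $(p_1 u)(u^{-1} p_2 u)(p_1 u)^{-1} = p_1 p_2 p_1^{-1}$. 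An inverse on orbits is obtained by choosing any lift $p_1 \in \sph^3$ of $[p_1] \in \sph^3/Q$ and setting $p_2 = p_1^{-1} r p_1$; two choices of lift differ by the $Q$-action. Local smoothness of the covering $\sph^3 \to \sph^3/Q$ makes this inverse smooth, so $\Phi$ induces the asserted diffeomorphism $(\sph^3 \x \sph^3) \bq \Delta Q \cong (\sph^3/Q) \x \sph^3$, and simultaneously identifies $\tau$ with $\Phi$.

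The cohomology computation now reduces to that of the product. Since $\sph^3/Q$ is a closed oriented $3$-manifold with $\pi_1 = Q$ and the quaternion group satisfies $[Q, Q] = \{\pm 1\}$, one has $Q^{\textrm{ab}} = \Z_2 \oplus \Z_2$, so $H_1(\sph^3/Q; \Z) = \Z_2 \oplus \Z_2$. Poincar\'e duality together with universal coefficients then give $H^*(\sph^3/Q; \Z) = (\Z, 0, \Z_2 \oplus \Z_2, \Z)$, and the K\"unneth formula delivers the tabulated cohomology of $(\sph^3/Q) \x \sph^3$.

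For the final statement, let $a \in H^3(\sph^3/Q; \Z)$ and $b \in H^3(\sph^3; \Z)$ be positive generators, and $\bar a = \pi_1^* a$, $\bar b = \pi_2^* b$ the resulting basis of $H^3((\sph^3/Q) \x \sph^3; \Z)$. The composite $\pi_1 \circ \Phi$ is the $8$-fold covering $\sph^3 \to \sph^3/Q$ precomposed with projection to the first factor, so $\Phi^* \bar a = 8 x_1$, where $x_1, x_2$ denote the natural generators of $H^3$ of the two $\sph^3$-factors of $\sph^3 \x \sph^3$. To compute $\Phi^* \bar b$, I would write $\Phi^* \bar b = \lambda x_1 + \mu x_2$ and recover the coefficients by restriction: the conjugation map $(p_1, p_2) \mapsto p_1 p_2 p_1^{-1}$ is constant on $\sph^3 \x \{1\}$ (forcing $\lambda = 0$) and is the identity on $\{1\} \x \sph^3$ (forcing $\mu = 1$). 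Hence $\Phi^* \bar b = x_2$ and $\im(\tau^*)$ is the sublattice spanned by $\{8 x_1, x_2\}$, of index $8$. The main step I expect to require care is constructing the explicit identification $\Phi$ of the biquotient with a genuine product (rather than a set-theoretic bijection); once this is in place, the cohomology of $\sph^3/Q$ is standard and the computation of $\tau^*$ reduces to pulling back under a covering and a twisted diagonal, both handled by restriction to coordinate sub-manifolds.
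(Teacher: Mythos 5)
Your proposal is correct, and the route is genuinely different from the paper's, so the comparison is worth recording.  The paper establishes the product structure abstractly: it observes that the principal $(\sph^3\times\sph^3)$-bundle $G/\Delta Q\to \sph^3/Q$ is trivial because the classifying space $B_{\sph^3\times\sph^3}$ is $3$-connected, and then passes to the quotient by $\{1\}\times\Delta\sph^3$.  You instead write down an explicit trivialisation $\Phi(p_1,p_2)=([p_1],\,p_1 p_2 p_1^{-1})$ in the $(p_1,p_2)=(q_1,q_2^{-1}q_3)$ coordinates; this is the same identification in spirit but arrives at it by an honest section rather than obstruction theory, and has the bonus of giving $\tau$ a concrete formula.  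For $H^*(\sph^3/Q;\Z)$ the paper runs a Gysin sequence for $\sph^3\to\sph^3/Q\to B_Q$ using Atiyah's computation of $H^*(B_Q;\Z)$, while you obtain the same answer more elementarily from $H_1(\sph^3/Q)=Q^{\mathrm{ab}}=\Z_2\oplus\Z_2$ together with Poincar\'e duality and universal coefficients; both work.  The real divergence is in the last step: the paper determines $\tau^*$ as the edge homomorphism of the Serre spectral sequence for $\sph^3\times\sph^3\to(\sph^3\times\sph^3)/\Delta Q\to B_Q$, using surjectivity of $d_4\colon\Z^2\to\Z_8$ to exhibit a basis $\{x_1,x_2\}$ with $\Ker d_4=\langle 8x_1,x_2\rangle$.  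You instead read off $\tau^*$ directly from $\Phi$ by restricting to the slices $\sph^3\times\{1\}$ and $\{1\}\times\sph^3$, which is legitimate because restriction to these slices is an isomorphism on $H^3(\sph^3\times\sph^3;\Z)$, and which pins down the image of $\tau^*$ with respect to the standard K\"unneth generators rather than a basis chosen a posteriori by the spectral sequence.  Your argument is more concrete and self-contained; the paper's spectral-sequence formulation sits more naturally alongside the parallel computation for the singular leaves in Lemma~\ref{L:singbiq}, where such an explicit trivialisation is not available and the edge-homomorphism framework is reused.
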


\begin{proof}
Recall that the regular leaves of $\Mab$ are quotients of principal orbits of $\Pab$, that is,
$$
(\{1\} \x \Delta \sph^3) \backslash G / \Delta Q \cong (\sph^3 \x \sph^3)\bq \Delta Q \, .
$$
The subaction by $\{1\} \x \sph^3 \x \sph^3 \In G$ on a principal orbit $G/\Delta Q$ of $\Pab$ is free with quotient $\sph^3/Q$, hence yields a principal bundle
$$
\sph^3 \x \sph^3 \to G/\Delta Q \to \sph^3/Q \, .
$$
As the classifying space $B_{\sph^3 \x \sph^3}$ is $3$-connected, it follows that $G/\Delta Q$ is trivial as a principal $(\sph^3 \x \sph^3)$-bundle.  In other words, the orbit $G/ \Delta Q$ is $(\{1\} \x \sph^3 \x \sph^3)$-equivariantly diffeomorphic to $(\sph^3/Q) \x (\sph^3 \x \sph^3)$.  Therefore, the free subaction of $\{1\} \x \Delta \sph^3 \In \{1 \} \x \sph^3 \x \sph^3$ yields a diffeomorphism
$$
(\sph^3 \x \sph^3)\bq \Delta Q \cong (\sph^3/Q) \x (\Delta \sph^3 \backslash (\sph^3 \x \sph^3)) \cong (\sph^3/Q) \x \sph^3.
$$

The classifying space $B_Q$ for $Q$ has $\pi_1(B_Q) = Q$.  As the (free) action of $Q$ on $\sph^3$ is orientation preserving, it follows that the induced action on $H^*(\sph^3; \Z)$ is trivial.  Therefore, there is a Gysin sequence for the homotopy fibration $\sph^3 \to \sph^3/Q \to B_Q$.  Now, from \cite[p.\ 59]{At}, it is known that
$$
H^j(B_Q; \Z) =
\begin{cases}
\Z, & j = 0,\\
\Z_2 \oplus \Z_2, & j \equiv 2 \!\!\! \mod 4,\\
\Z_8, & j > 0, \ j \equiv 0 \!\!\! \mod 4,\\
0, & \textrm{otherwise},
\end{cases}
$$
where the periodicity is generated by taking cup products with the generator in degree $4$.  From the Gysin sequence for $\sph^3 \to \sph^3/Q \to B_Q$, the cohomology groups of $\sph^3/Q$ are computed to be
$$
H^j(\sph^3/Q; \Z) =
\begin{cases}
\Z, & j = 0,3,\\
0, & j = 1,\\
\Z_2 \oplus \Z_2, & j = 2.
\end{cases}
$$
The cohomology groups of $(\sph^3 \x \sph^3)\bq \Delta Q$ can now be computed from the K\"unneth formula applied to the product $(\sph^3/Q) \x \sph^3$.

Finally, consider the Serre spectral sequence $(E_r, d_r)$ for the homotopy fibration $\sph^3 \x \sph^3 \stackrel{\tau}{\to} (\sph^3 \x \sph^3) \bq \Delta Q \to B_Q$.  Since $H^4((\sph^3 \x \sph^3) \bq \Delta Q; \Z) = 0$, the differential 
$$
d_4 : E_4^{0,3} = H^3(\sph^3 \x \sph^3; \Z) = \Z \oplus \Z \to E_4^{4,0} = H^4(B_Q; \Z) = \Z_8  
$$
must be surjective.  Hence, there is a basis $\{x_1, x_2\}$ of $H^3(\sph^3 \x \sph^3; \Z)$ such that $d_4(x_1)$ is a generator of $H^4((\sph^3 \x \sph^3) \bq\Delta Q; \Z)$ and $d_4(x_2) = 0$.  Clearly, this implies that the kernel of $d_4$ is generated by $8 x_1$ and $x_2$.  Consequently, the edge homomorphism $\tau^* : H^3((\sph^3 \x \sph^3)\bq \Delta Q; \Z) \to H^3(\sph^3 \x \sph^3; \Z)$ is injective with image generated by $8 x_1$ and $x_2$.
\end{proof}

\begin{thm}
\label{T:cohom} 
For $\ul a = (a_1, a_2, a_3), \ul b = (b_1, b_2, b_3) \in \Z^3$, with $a_i, b_i \equiv 1$ mod $4$ and satisfying \eqref{E:free}, define
$$
n = n(\ul a, \ul b) = \frac{1}{8} \, \det \bpm a_1^2 & b_1^2 \\ a_2^2 - a_3^2 & b_2^2 - b_3^2 \epm.
$$
If $n \neq 0$, then $H^4(\Mab; \Z)$ is cyclic of order $|n|$.  In contrast, if $n = 0$, then $H^3(\Mab; \Z) \cong H^4(\Mab; \Z) \cong \Z$.
\end{thm}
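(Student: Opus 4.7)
The approach is a Mayer--Vietoris computation using the cohomogeneity-one-type decomposition of $\Mab = D_- \cup D_+$ noted after Lemma~\ref{L:nonneg}.  Here $D_\pm$ is an open disk-bundle neighbourhood that deformation retracts onto the singular biquotient leaf $X_\pm = (\sph^3 \x \sph^3)\bq K$ (with $K = \Pina$ or $\Pjnb$ respectively), and $D_- \cap D_+$ deformation retracts onto the regular leaf $X_0 = (\sph^3 \x \sph^3)\bq \Delta Q$.  Feeding the cohomology groups of Lemmas~\ref{L:singbiq} and~\ref{L:regbiq} into the Mayer--Vietoris sequence and invoking $H^1(\Mab) = H^2(\Mab) = 0$ from Lemma~\ref{L:2conn}, exactness forces the map $H^2(X_-) \oplus H^2(X_+) \to H^2(X_0)$ to be an injection between groups of order four, hence an isomorphism.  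Combined with $H^4(X_\pm) = 0$, this collapses the relevant portion of the sequence to
\[
0 \to H^3(\Mab) \to H^3(X_-) \oplus H^3(X_+) \xrightarrow{\,\alpha\,} H^3(X_0) \to H^4(\Mab) \to 0,
\]
so the theorem reduces to an explicit computation of $\alpha \colon \Z^2 \to \Z^2$.

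To pin down $\alpha$, the plan is to lift to $H^3(\sph^3 \x \sph^3) = \Z \oplus \Z$ via the eight-fold covering $\tau$ of Lemma~\ref{L:regbiq}, whose image is the sublattice $\langle 8 x_1, x_2 \rangle$.  Let $\pi_\pm \colon \sph^3 \x \sph^3 \to \wt X_\pm = (\sph^3 \x \sph^3)\bq K^o$ denote the principal $\sph^1$-bundle induced by the identity component of $K$, and $\rho \colon \wt X_\pm \to X_\pm$ the double cover from Lemma~\ref{L:singbiq}.  Because both compositions $\rho_\pm \circ \tau$ and $\rho \circ \pi_\pm$ describe the same quotient $\sph^3 \x \sph^3 \to X_\pm$ by $K$, one has, for any generator $y_\pm \in H^3(X_\pm)$,
\[
\tau^*(\rho_\pm^*(y_\pm)) \;=\; \pi_\pm^*(\rho^*(y_\pm)) \;\in\; H^3(\sph^3 \x \sph^3).
\]
Since $\rho^*$ is an isomorphism on $H^3$ (Lemma~\ref{L:singbiq}), $\rho^*(y_\pm)$ is a generator of $H^3(\wt X_\pm) \cong \Z$, and the Gysin sequence for $\pi_\pm$ determines $\pi_\pm^*(\rho^*(y_\pm))$ as a specific primitive element of $H^3(\sph^3 \x \sph^3)$.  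The biquotient $\sph^1$-action $(x, y) \mapsto (x e^{i a_1 \theta}, e^{-i a_2 \theta} y e^{i a_3 \theta})$ enters the Chern/Euler data of $\pi_\pm$ quadratically in the weights, so the resulting expression should depend on $(a_1^2, a_2^2, a_3^2)$ (resp.\ $(b_1^2, b_2^2, b_3^2)$) and must land in $\im(\tau^*)$ as required by the identity above.

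Writing the matrix of $\alpha$ in the basis of $\im(\tau^*)$ coming from $\{8 x_1, x_2\}$, the determinant should work out to $\pm n$, so that $\mathrm{coker}(\alpha) \cong \Z/|n|\Z$ when $n \neq 0$.  Cyclicity of $H^4(\Mab)$ follows from a Smith-normal-form analysis, with the hypotheses $\gcd(a_1, a_2 \pm a_3) = \gcd(b_1, b_2 \pm b_3) = 1$ ensuring $\gcd(a_1^2, a_2^2 - a_3^2) = \gcd(b_1^2, b_2^2 - b_3^2) = 1$ and hence that the $\gcd$ of all entries of the matrix of $\alpha$ equals one.  In the degenerate case $n = 0$, the matrix has rank exactly one (its first row $(a_1^2, b_1^2)$ being non-zero), so both $\ker(\alpha)$ and the free part of $\mathrm{coker}(\alpha)$ become $\Z$, yielding $H^3(\Mab) = H^4(\Mab) = \Z$.

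The principal obstacle is the explicit Gysin computation of $\pi_\pm^*$: identifying a generator of $H^3(\wt X_\pm) \cong \Z$, computing its image in $H^3(\sph^3 \x \sph^3)$ in the basis $\{x_1, x_2\}$ coming from the spectral-sequence analysis of Lemma~\ref{L:regbiq}, and tracking the effect of the $\Z_2$-deck transformation of $\rho$ on these classes.  All remaining steps---assembling the Mayer--Vietoris sequence, verifying injectivity of the $H^2$-map, and running the Smith-normal-form analysis---are routine by comparison.
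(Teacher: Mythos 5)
Your proposal is essentially the paper's own proof: the same two-disk-bundle Mayer--Vietoris decomposition, the same observation that $H^2(\Mab)=0$ and $H^4(X_\pm)=0$ collapse the long exact sequence to
\[
0 \to H^3(\Mab) \to H^3(X_-) \oplus H^3(X_+) \to H^3(X_0) \to H^4(\Mab) \to 0,
\]
and the same commutative square relating the eight-fold cover $\tau \colon \sph^3 \times \sph^3 \to X_0$, the $\sph^1$-quotient $\psi \colon \sph^3 \times \sph^3 \to \wt X_\pm$, the double cover $\rho \colon \wt X_\pm \to X_\pm$, and the circle-bundle projection $X_0 \to X_\pm$. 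The Smith-normal-form analysis of the resulting $2\times 2$ integer matrix and the handling of the $n=0$ case are likewise what the paper does.

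The one point that you flag as your ``principal obstacle''---pinning down $\psi^*$ on $H^3$ in the basis $\{y_1, y_2\}$---is indeed where all of the real work happens. The paper does not use the Gysin sequence of the circle bundle directly; it uses Eschenburg's biquotient method, comparing the Serre spectral sequences of $\sph^3 \times \sph^3 \to \wt X \to B_{\sph^1}$ and $(\sph^3\times\sph^3)^2 \to \sph^3\times\sph^3 \to B_{(\sph^3\times\sph^3)^2}$ and computing $(B_f)^*$ on $H^4$ to read off $\psi^*(\wt x) = (c_2^2 - c_3^2) y_1 + c_1^2 y_2$. You would also need the auxiliary fact (established in Lemma~\ref{L:singbiq} via a second classifying-space comparison) that $\rho^*$ is an isomorphism on $H^3$, together with the explicit normal form $\im(\tau^*) = \langle 8y_1, y_2\rangle$ from Lemma~\ref{L:regbiq}; you cite both but do not verify that the Gysin sequence alone recovers the former. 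Once you accept that $\psi^*$ comes out as above, the rest of your outline is correct and the computation of $\alpha$, its determinant $\pm n$, and the cokernel proceed exactly as you describe.
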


\begin{proof}
Since, by Lemma \ref{L:2conn}, $\Mab$ is $2$-connected, the Hurewicz and Universal Coefficients Theorems, together with Poincar\'e Duality, imply that the only interesting cohomology groups are $H^3(\Mab; \Z)$ and $H^4(\Mab; \Z)$, with $H^3(\Mab; \Z)$ being isomorphic to the free part of $H^4(\Mab; \Z)$.  In order to compute these, a Mayer-Vietoris argument will be used.  

Recall, from the discussion following Lemma \ref{L:nonneg}, that $\Mab$ decomposes as the union of two $2$-disk bundles $M_-$ and  $M_+$, over the singular (biquotient) leaves $(\sph^3 \x \sph^3) \bq \Pina$ and $(\sph^3 \x \sph^3) \bq \Pjnb$ respectively, which are glued along their common  (biquotient) boundary $(\sph^3 \x \sph^3) \bq \Delta Q$.  In particular, there are circle bundles
\begin{align*}
\sph^1 = \Pina/\Delta Q &\lra (\sph^3 \x \sph^3) \bq \Delta Q \stackrel{\pi_-}{\lra} (\sph^3 \x \sph^3) \bq \Pina \,, \\[1mm]
\sph^1 = \Pjnb/\Delta Q &\lra (\sph^3 \x \sph^3) \bq \Delta Q \stackrel{\pi_+}{\lra} (\sph^3 \x \sph^3) \bq \Pjnb \, ,
\end{align*}
obtained via the identifications
\begin{align}
\begin{split}
\label{E:circBun}
\partial M_- &= (\sph^3 \x \sph^3) \x_{\Pina} (\Pina/\Delta Q) \cong (\sph^3 \x \sph^3) \bq \Delta Q \,, \\[1mm]
\partial M_+ &= (\sph^3 \x \sph^3) \x_{\Pjnb} (\Pjnb/\Delta Q) \cong (\sph^3 \x \sph^3) \bq \Delta Q  \, .
\end{split}
\end{align}

Since the circle-bundle projection maps $\pi_\pm$ respect deformation retractions of $M_-$, $M_+$ and $M_- \cap M_+$ onto the respective leaves, the relevant part of the Mayer-Vietoris sequence (with integer coefficients) becomes
\begin{align}
\nonumber
0 \lra H^3(\Mab) &\lra H^3((\sph^3 \x \sph^3) \bq \Pina) \oplus H^3((\sph^3 \x \sph^3) \bq \Pjnb) \\
\label{E:MV}
&\xrightarrow{\pi_-^* - \pi_+^*} H^3((\sph^3 \x \sph^3) \bq \Delta Q) 
\lra H^{4}(\Mab) \lra 0 \,,
\end{align}
where Lemmas \ref{L:singbiq} and \ref{L:regbiq} have been applied.  In particular, $H^4(\Mab; \Z)$ is given by the cokernel of the homomorphism $\pi_-^* - \pi_+^* : \Z \oplus \Z \to \Z \oplus \Z$.

Following the proofs of \cite[Prop.\ 3.3]{GZ} and \cite[Thm.\ 13.1]{GWZ}, let $X = (\sph^3 \x \sph^3) \bq K$ be a singular leaf (as in Lemma \ref{L:singbiq}), $\rho : \wt X =  (\sph^3 \x \sph^3) \bq K^o \to X$ its two-fold cover and $\pi \in \{\pi_-, \pi_+\}$ the corresponding circle-bundle projection map $\pi:  (\sph^3 \x \sph^3) \bq \Delta Q \to X$.  Then there is a commutative diagram
$$
\xymatrix{
\sph^3 \x \sph^3 \ar[r]^(0.6){\psi} \ar[d]_{\tau} & \wt X \ar[d]^\rho \\
 (\sph^3 \x \sph^3) \bq \Delta Q \ar[r]^(0.7){\pi} & X
}
$$
given by the respective projection maps.  The induced diagram in integral cohomology is
\beq
\label{E:CommDiag}
\xymatrix{
H^3(\sph^3 \x \sph^3; \Z)  & H^3(\wt X; \Z) \ar[l]_(0.4){\psi^*}  \\
H^3((\sph^3 \x \sph^3) \bq \Delta Q; \Z)  \ar[u]^{\tau^*} & H^3(X; \Z) \ar[l]_(0.32){\pi^*} \ar[u]_{\rho^*}
}
\eeq
Using the procedure laid out in \cite{Es}, one can compute the homomorphism $\psi^*$ explicitly.  Let $\ul c = \{c_1, c_2, c_3) \in \{\ul a, \ul b\}$ be the triple describing the isomorphism $\sph^1 \to K^o \In G$.  As $\wt X$ is a biquotient and $K^o \cong \sph^1$, there is a smooth map 
$$
f: \sph^1  \to (\sph^3 \x \sph^3)^2 \, ;\, z \mapsto ((1,z^{c_2}), (z^{c_1}, z^{c_3}))
$$
defining the free action of $K^o$ on $\sph^3 \x \sph^3$, that is, $z \cdot (q_1, q_2) = (q_1 \bar z^{c_1}, z^{c_2} q_2 \bar z^{c_3})$.  

If $T = \sph^1 \x \sph^1$ is the standard maximal torus of $\sph^3 \x \sph^3$, then $T^2$ is a maximal torus of $(\sph^3 \x \sph^3)^2$ and $\im(f) \In T^2$.  Let $H^*(B_{\sph^1}; \Z) = \Z[u]$ and  $H^*(B_{T}; \Z) = \Z[t_1, t_2]$.  Then $H^*(B_{\sph^3 \x \sph^3}; \Z) = H^*(B_{T}; \Z)^W = \Z[\bar y_1, \bar y_2]$, where $W$ is the Weyl group of $\sph^3 \x \sph^3$ and $\bar y_i = t_i^2$, $i = 1,2$.  From the Serre spectral sequence $(E_r, d_r)$ for the universal principal bundle $\sph^3 \x \sph^3 \to E_{\sph^3 \x \sph^3} \to B_{\sph^3 \x \sph^3}$, generators $y_1, y_2 \in H^3 (\sph^3 \x \sph^3; \Z)$ can be chosen such that $d_4(y_i) = \bar y_i$, $i = 1,2$. Moreover, from the K\"unneth formula, it follows that $H^*(B_{(\sph^3 \x \sph^3)^2}; \Z) = \Z[\bar y_1 \ox 1, \bar y_2 \ox 1, 1 \ox \bar y_1, 1\ox \bar y_2]$.

Consider the following commutative diagram of (homotopy) fibrations:
$$
\xymatrix{
\sph^1 \ar[r] \ar[d]_f & \sph^3 \x \sph^3 \ar[r]^(0.35)\psi \ar[d]_{=} & \wt X = (\sph^3 \x \sph^3) \bq K^o \ar[r]^(0.7){\beta_1} \ar[d] & B_{\sph^1} \ar[d]^{B_f}\\
(\sph^3 \x \sph^3)^2 \ar[r] & \sph^3 \x \sph^3 \ar[r] & B_{\Delta(\sph^3 \x \sph^3)} \ar[r]^{\beta_2} & B_{(\sph^3 \x \sph^3)^2}
}
$$
In the Serre spectral sequence $(\bar E_r, \bar d_r)$ for the homotopy fibration $\beta_2$, the differential $\bar d_4 : \bar E_4^{0,3} = H^3(\sph^3 \x \sph^3 ; \Z) \to \bar E_4^{4,0} = H^4(B_{(\sph^3 \x \sph^3)^2}; \Z)$ is given by $\bar d_4(y_i) = \bar y_i \ox 1 - 1 \ox \bar y_i$, $i = 1,2$.  By naturality, the differential $\delta_4 : \mc E_4^{0,3} = H^3(\sph^3 \x \sph^3 ; \Z) \to \mc E_4^{4,0} = H^4(B_{\sph^1}; \Z)$ in the Serre spectral sequence $(\mc E_r, \delta_r)$ for the homotopy fibration $\beta_1$ is given by $\delta_4(y_i) = (B_f)^*(\bar y_i \ox 1 - 1 \ox \bar y_i)$ for $i = 1,2$.  

On the other hand, from the methods in \cite{Es} and the definition of $f : \sph^1 \to (\sph^3 \x \sph^3)^2$, the homomorphism $(B_f)^* : H^4(B_{(\sph^3 \x \sph^3)^2}; \Z) \to H^4(B_{\sph^1}; \Z)$ can be shown to be given by 
\begin{align*}
(B_f)^*(\bar y_1 \ox 1) &= 0, \hspace*{8.5mm} (B_f)^*(\bar y_2 \ox 1) = c_2^2 \, u^2, \\
(B_f)^*(1 \ox \bar y_1) &= c_1^2 \, u^2, \ \ (B_f)^*(1 \ox \bar y_2) = c_3^2 \, u^2.
\end{align*}

Therefore, $\delta_4(y_1) = - c_1^2 \, u^2$ and $\delta_4(y_2) = (c_2^2 - c_3^2) \, u^2$.  By the freeness conditions \eqref{E:free}, the coefficients of $u^2$ are relatively prime.  Hence, $\Ker \delta_4 \In H^3(\sph^3 \x \sph^3 ; \Z)$ is generated by $(c_2^2 - c_3^2)\, y_1 + c_1^2\, y_2$.  Since $\psi^* : H^3(\wt X; \Z) \to H^3(\sph^3 \x \sph^3; \Z)$ is an edge homomorphism for $(\mc E_r, \delta_r)$, it follows that there is a generator $\wt x$ of $H^3(\wt X; \Z) = \Z$ such that
\beq
\label{E:psi}
\psi^*(\wt x) = (c_2^2 - c_3^2)\, y_1 + c_1^2\, y_2 \, .
\eeq

Recall that, by Lemma \ref{L:singbiq}, $\rho^*: H^3(X; \Z) \to H^4(\wt X; \Z)$ is an isomorphism.  Thus, there is a generator $x$ of $H^3(X; \Z) = \Z$ such that $\rho^*(x) = \wt x$.  Now, since the diagram \eqref{E:CommDiag} is commutative, $\psi^*(\wt x)$ lies in the image of $\tau^* : H^3((\sph^3 \x \sph^3)/\Delta Q ; \Z) \to H^3(\sph^3 \x \sph^3 ; \Z)$.  However, $\tau^*$ is independent of the choice of triple $\ul c \in \Z^3$.  Hence, by considering the triples $\ul c = (1,1,1)$ and $\ul c = (1,-3,1)$, respectively, it is clear that $y_2$ and $8y_1 + y_2$ lie in the image of $\tau^*$ (compare the proof of \cite[Thm.\ 13.1]{GWZ}).  From this, together with Lemma \ref{L:regbiq}, it can be concluded that $8 y_1$ and $y_2$ are generators of $\im(\tau^*) \In H^3(\sph^3 \x \sph^3 ; \Z)$, and that there is a basis $\{v_1, v_2\}$ of $H^3((\sph^3 \x \sph^3)/\Delta Q ; \Z)$ such that $\tau^*(v_1) = 8y_1$ and $\tau^*(v_2) = y_2$.

It is now possible to compute the homomorphism $\pi^* : H^3(X; \Z) \to H^3((\sph^3 \x \sph^3)/\Delta Q ; \Z)$.  By \eqref{E:CommDiag} and \eqref{E:psi},
\begin{align*}
\tau^*(\pi^*(x)) &= \psi^*(\rho^*(x)) \\
&= \psi^*(\wt x)\\
&= (c_2^2 - c_3^2)\, y_1 + c_1^2\, y_2\\
&= \frac{c_2^2 - c_3^2}{8}\, 8 y_1 + c_1^2\, y_2\\
&= \tau^*\left( \frac{1}{8}(c_2^2 - c_3^2)\, v_1 + c_1^2\, v_2\right)\, .
\end{align*}
Since $\tau^*$ is injective, by Lemma \ref{L:regbiq}, it follows that
\beq
\label{E:pi}
\pi^*(x) = \frac{1}{8}(c_2^2 - c_3^2)\, v_1 + c_1^2\, v_2 \,.
\eeq

Note that, since $\tau^*$ and the basis $\{y_1, y_2\}$ of $H^3(\sph^3 \x \sph^3; \Z)$ are independent of the choice of singular leaf $X$, the basis $\{v_1, v_2\}$ of $H^3((\sph^3 \x \sph^3)/\Delta Q ; \Z)$ is independent of the choice of $X$.  Therefore, there are generators $x_{\ul a}$ and $x_{\ul b}$ of $H^3((\sph^3 \x \sph^3) \bq \Pina)$ and $H^3((\sph^3 \x \sph^3) \bq \Pjnb)$ respectively, such that \eqref{E:pi} can be applied to each of the singular leaves and the homomorphism
$$
H^3((\sph^3 \x \sph^3) \bq \Pina) \oplus H^3((\sph^3 \x \sph^3) \bq \Pjnb) \xrightarrow{\pi_-^* - \pi_+^*} H^3((\sph^3 \x \sph^3) \bq \Delta Q)
$$
is given by
\begin{align*}
(\pi_-^* - \pi_+^*)(x_{\ul a}) &= \pi_-^*(x_{\ul a}) = \frac{1}{8}(a_2^2 - a_3^2)\, v_1 + a_1^2\, v_2 \, , \\[1mm]
(\pi_-^* - \pi_+^*)(x_{\ul b}) &= -\pi_+^*(x_{\ul b}) = \frac{1}{8}(b_3^2 - b_2^2)\, v_1 - b_1^2\, v_2 \,.
\end{align*}
In order to compute the cokernel of $\pi_-^* - \pi_+^*$, note that the freeness conditions \eqref{E:free} ensure that $(\pi_-^* - \pi_+^*)(x_{\ul a})$ is a generator of $H^3((\sph^3 \x \sph^3)/\Delta Q ; \Z)$ and that there exist $r,s \in \Z$ such that $r \, a_1^2 + s \, (a_2^2 - a_3^2) = 1$.  Then a new basis for  $H^3((\sph^3 \x \sph^3)/\Delta Q ; \Z)$ is given by $w_1 = (\pi_-^* - \pi_+^*)(x_{\ul a})$ and $w_2 = r \, v_1 - 8s \, v_2$.  With respect to the basis $\{w_1, w_2\}$, $(\pi_-^* - \pi_+^*)(x_{\ul b})$ has the form
$$
(\pi_-^* - \pi_+^*)(x_{\ul b}) = (s \, (b_3^2 - b_2^2) - r\, b_1^2)\, w_1 - n \, w_2 \, ,
$$
where $n = n(\ul a, \ul b)$ is as defined in the statement of the theorem.  Therefore, if $n \neq 0$, 
\begin{align*}
H^4(\Mab; \Z) &\cong H^3((\sph^3 \x \sph^3)/\Delta Q ; \Z) / \im(\pi_-^* - \pi_+^*) \\
&\cong (\scal{w_1} \oplus \scal{w_2})/ \scal{w_1, (s \, (b_3^2 - b_2^2) - r\, b_1^2)\, w_1 - n \, w_2} \\
&\cong \scal{w_2} / \scal{n \, w_2} \\
&\cong \Z_{|n|} \, ,
\end{align*}
as desired.  Finally, it is clear that $H^3(\Mab; \Z) \cong H^4(\Mab; \Z) \cong \Z$ whenever $n=0$.
\end{proof}

\begin{proof}[Proof of Theorem \ref{T:thmB}]
Clearly, the statement follows immediately from Theorem \ref{T:cohom}, together with Lemma \ref{L:nonneg}.
\end{proof}

\begin{cor}
\label{C:spheres}
If $n = \pm 1$, then $\Mab$ is homeomorphic to $\sph^7$.  In particular, this is the case whenever $\ul a = (k, -3, 1)$ and $\ul b = (1, l, l)$, with $k, l \equiv 1$ mod $4$.
\end{cor}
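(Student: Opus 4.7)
The plan is to combine Theorem \ref{T:cohom} with the generalised Poincar\'e conjecture in dimension seven. By Lemma \ref{L:2conn}, the manifold $\Mab$ is $2$-connected, hence in particular simply connected and orientable. If $n = \pm 1$, then Theorem \ref{T:cohom} gives $H^4(\Mab;\Z) = \Z_{|n|} = 0$. By Poincar\'e duality and the Universal Coefficients Theorem applied to the $7$-dimensional, closed, oriented, $2$-connected manifold $\Mab$, the group $H^3(\Mab;\Z)$ is isomorphic to the free part of $H^4(\Mab;\Z)$, so $H^3(\Mab;\Z) = 0$ as well. Together with the vanishing of $H^1, H^2, H^5, H^6$ (again by Hurewicz plus Poincar\'e duality), this shows that $\Mab$ is an integral homology $7$-sphere. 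Since $\Mab$ is simply connected, Smale's solution of the generalised Poincar\'e conjecture \cite{Sm1} implies that $\Mab$ is homeomorphic to $\sph^7$.

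It remains only to verify that the particular choice $\ul a = (k,-3,1)$, $\ul b = (1,l,l)$ with $k, l \equiv 1 \bmod 4$ satisfies the hypotheses of Theorem \ref{T:thmB} and yields $n = -1$. The congruence conditions $a_i, b_i \equiv 1 \bmod 4$ are immediate. For the $\gcd$ conditions \eqref{E:free}, one has $\gcd(b_1, b_2 \pm b_3) = 1$ trivially, while $\gcd(a_1, a_2 \pm a_3) = \gcd(k, -2)$ or $\gcd(k,-4)$, both of which equal $1$ since $k$ is odd. Finally,
$$
n = \frac{1}{8} \det \bpm k^2 & 1 \\ 9 - 1 & l^2 - l^2 \epm = \frac{1}{8}(0 - 8) = -1,
$$
so $|n| = 1$ and the preceding paragraph applies.

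The main subtlety in the argument is purely bookkeeping: one must confirm that the general vanishing $H^4 = 0$ suffices to identify $\Mab$ with $\sph^7$ up to homeomorphism, for which the key inputs are $2$-connectedness (Lemma \ref{L:2conn}), Poincar\'e duality, and Smale's theorem. No further geometric input is required.
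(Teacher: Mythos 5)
Your proof is correct and follows essentially the same route as the paper: deduce from $n=\pm 1$ and $2$-connectedness that $\Mab$ has the integral cohomology of $\sph^7$, conclude it is a homotopy sphere, and invoke Smale's theorem \cite{Sm1}. The only small imprecision is that you pass directly from ``simply connected homology sphere'' to Smale's theorem, which strictly applies to homotopy spheres; the missing intermediate step (a simply connected homology $7$-sphere is a homotopy sphere, via Hurewicz and the homology Whitehead theorem) is standard and is what the paper spells out with the degree-$1$ map onto $\sph^7$, while your explicit verification that the parameter family yields $n=-1$ is a useful addition the paper leaves to the reader.
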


\begin{proof}
Since $n = \pm 1$, $\Mab$ has the cohomology ring of a sphere, hence is a homology sphere.  As any closed, orientable manifold admits a degree $1$ map onto $\sph^n$ (by collapsing the complement of a disk to a point), there is a map inducing an isomorphism on homology.  It follows now from the homology version of the Whitehead Theorem that $\Mab$ is a homotopy sphere.  By \cite{Sm1}, $\Mab$ is then homeomorphic to $\sph^7$.
\end{proof}


\section{The Eells-Kuiper invariant of \texorpdfstring{$\Mab$}{M}}
\label{S:EK}


\subsection{A smooth metric on \texorpdfstring{$\Mab$}{M}}\hspace*{1mm}\\
\label{SS:InerOrb}

In order to compute the Eells-Kuiper invariant of $\Mab$ by applying Corollary \ref{C:AdLimEK}, it is necessary to define a suitable metric on $\Mab$.  This metric can be written down explicitly and is not the same as the metric of non-negative sectional curvature obtained in Lemma \ref{L:nonneg}.

Recall that the manifold $\Mab$ decomposes as the union of two-dimensional disk bundles $M_-$ and $M_+$ over the biquotients $(\sph^3 \x \sph^3) \bq \Pina$ and  $(\sph^3 \x \sph^3) \bq \Pjnb$ respectively, which are glued along their common boundary, the biquotient $(\sph^3 \x \sph^3) \bq \Delta Q$.  In particular, there is an action of $\Pina \cong \Pin(2)$ on the disk $\DD^2_\ve :=\{z \in \C \mid |z| < 1 + \ve\}$, $\ve > 0$, such that the disk bundle over $(\sph^3 \x \sph^3) \bq \Pina$ is given by
$$
\DD^2_\ve \to M_- = (\sph^3 \x \sph^3) \x_{\Pina} \DD^2_\ve \to (\sph^3 \x \sph^3) \bq \Pina \,.
$$

As seen in Lemma \ref{L:orbi}, by making use of the identifications given in \eqref{E:BiqDiff} (as in \eqref{E:circBun}), it turns out that the action on $\DD^2_\ve$ is nothing more than the slice representation for the isotropy group $\Pina$ of the cohomogeneity-one manifold $\Pab$\,.  As such, this action is determined by the (ineffective) transitive action of $\Pina$ on the boundary circle $\sph^1 \cong \Pina/\Delta Q$ of the normal disk to the singular orbit $G/\Pina \In \Pab$, and is equivalent to the action
\beq
\label{E:SliceAct}
\Pin(2) \x \sph^1 \to \sph^1\, ;\, (\alpha, z) \mapsto 
\begin{cases}
e^{4i\theta} z, & \alpha = e^{i\theta} \\
e^{4i\theta} \bar z, & \alpha = e^{i\theta}j
\end{cases}
\eeq
of $\Pin(2)$ on the unit circle in $\C$ with isotropy subgroup $Q$ at $1 \in \C$.  Clearly, there is an analogous action of $\Pjnb$ on $\DD^2_\ve$ which yields an analogous description of $M_+$.

Furthermore, the (equivariant) diffeomorphism 
$$
\DD^2_\ve\backslash \{0\} \to \sph^1 \x (0,1+\ve) \,;\, z \mapsto (z/|z|, |z|)
$$ 
and the transitive action \eqref{E:SliceAct} induce a diffeomorphism
\begin{align}
\begin{split}
\label{E:LeafProd-}
\Phi_- : (\sph^3 \x \sph^3) \x_{\Pina} (\DD^2_\ve \backslash\{0\}) &\to (\sph^3 \x \sph^3) \x_{\Pina} (\sph^1 \x (0,1+\ve)) \\
&\to (\sph^3 \x \sph^3) \bq \Delta Q \x (-1, \ve) \, ,
\end{split}
\end{align} 
given by mapping a point $[q_1, q_2, |z|] \in (\sph^3 \x \sph^3) \x_{\Pina} (\DD^2_\ve \backslash\{0\})$ to the point $([q_1, q_2],|z|-1) \in (\sph^3 \x \sph^3) \bq \Delta Q \x (-1, \ve)$.  

Similarly, there is a diffeomorphism
\begin{align}
\begin{split}
\label{E:LeafProd+}
\Phi_+ : (\sph^3 \x \sph^3) \x_{\Pjnb} (\DD^2_\ve \backslash\{0\}) &\to (\sph^3 \x \sph^3) \bq \Delta Q \x (-\ve,  1) \\
[q_1, q_2, |z|] &\mapsto ([q_1, q_2], 1-|z|) \,.
\end{split}
\end{align} 

Assume now that $\ve \in (0,\frac{1}{4})$ and let $\tau : \Mab \to [-1,1]$ be the projection onto the leaf space of the codimension-one foliation by biquotients, such that
\begin{align}
\begin{split}
\label{E:tau}
\tau|_{M_-}([q_1, q_2, r]) &= r -1 \,, \\[1mm]
\tau|_{M_+}([q_1, q_2, r]) &= 1 - r \,.
\end{split}
\end{align}
Then $\tau^{-1}([-1,\ve)) = M_-$, $\tau^{-1}((-\ve, 1]) = M_+$ and $\tau^{-1}(-\ve, \ve) = M_- \cap M_+$, where $M_-$ and $M_+$ are glued along neighbourhoods of their boundaries via the diffeomorphism
\begin{align}
\begin{split}
\label{E:glue}
\Phi_+^{-1} \circ \Phi_- : \tau^{-1}(-\ve, \ve) &\to \tau^{-1}(-\ve, \ve) \\
[q_1, q_2, r] &\mapsto [q_1, q_2, 2 - r] \,.
\end{split}
\end{align}

In particular, there is a diffeomorphism 
\beq
\label{E:RegPart}
\Phi : \Mab \backslash \tau^{-1}(\{-1, 1\}) \to (\sph^3 \x \sph^3) \bq \Delta Q \x (-1,1)
\eeq 
such that $\Phi|_{M_- \backslash \tau^{-1}(-1)} = \Phi_-$ and $\Phi|_{M_+ \backslash \tau^{-1}(1)} = \Phi_+$.  Given \eqref{E:RegPart}, points in $\Mab$ will often be conveniently represented as equivalence classes $[q_1, q_2, t]$, with $(q_1, q_2) \in \sph^3 \x \sph^3$ and $t \in [-1,1]$.

The manifold $\Mab$ can now be equipped with a smooth metric by pulling back via $\Phi$ a smooth metric $g_t + dt^2$ on $(\sph^3 \x \sph^3) \bq \Delta Q \x (-1,1)$ defined such that $g_t$ is a one-parameter family of smooth metrics on the biquotient $(\sph^3 \x \sph^3) \bq \Delta Q$ which deforms to smooth metrics on the singular biquotients $(\sph^3 \x \sph^3) \bq \Pina$ and $(\sph^3 \x \sph^3) \bq \Pjnb$ smoothly as $t$ tends to $-1$ and $1$, respectively.

By an abuse of notation in what follows, although the intended meaning should be clear, the symbols $\alpha$, $\beta$ and $\gamma$ will be used to denote both the indices $1,2,3$ and the imaginary unit quaternions $i,j,k$, where $1$ is identified with $i$, $2$ with $j$ and $3$ with $k$.  With this convention, define $\delta_{\alpha \beta}$ to be the Kronecker delta,
\beq
\label{E:signs}
\eps_{\alpha \beta} := 
\begin{cases}
\phantom{-} 1, & \alpha = \beta \,, \\
-1, & \alpha \neq \beta\,,
\end{cases}
\ \text{ and } \ 
\eps_{\alpha \beta \gamma} := 
\begin{cases}
\phantom{-}1, & \text{if } (\alpha, \beta, \gamma) = \,\circlearrowright \!(1,2,3),\\
-1, & \text{if } (\alpha, \beta, \gamma) = \,\circlearrowright \!(2,1,3),\\
\phantom{-}0, & \text{otherwise}.
\end{cases}
\eeq

Consider now the left-invariant vector fields $E_\alpha$ and $F_\alpha$ on $\sph^3 \x \sph^3$ defined via
\begin{align}
\begin{split}
\label{E:LIVF}
E_\alpha(q_1, q_2) &:= \frac{d}{ds} (q_1 \exp(s\alpha), q_2)|_{s = 0} = (q_1 \alpha, 0) \,, \\
F_\alpha(q_1, q_2) &:= \frac{d}{ds} (q_1, q_2 \exp(s\alpha))|_{s = 0} = (0, q_2 \alpha) \,,
\end{split}
\end{align} 
and let $X_\alpha$ denote the right-invariant vector field given by
\beq
\label{E:RIVF}
X_\alpha(q_1, q_2) := \frac{d}{ds} (q_1, \exp(s\alpha) q_2)|_{s = 0} = (0, \alpha q_2) \,.
\eeq

Equip $\sph^3 \x \sph^3$ with the standard, bi-invariant product metric $\<\,,\>_0$ so that the six vector fields $E_\alpha$ and $F_\beta$ describe a global orthonormal basis.  As the right-invariant vector fields $X_\alpha$ can be written in terms of the basis $F_\beta$, there are smooth coefficient functions
\beq
\label{E:Xcoeffs}
\vphi_{\alpha \beta} : \sph^3 \x \sph^3 \to \R \,;\, (q_1, q_2) \mapsto \<X_\alpha, F_\beta\>_0 = \<\Ad_{\bar q_2} \alpha, \beta\>_0 \,,
\eeq
such that the $(3\x 3)$-matrix $(\vphi_{\alpha \beta}(q_1, q_2))_{\alpha, \beta}$ is an element of $\SO(3)$.  The derivatives of the functions $\vphi_{\alpha \beta}$ are given by $E_\gamma (\vphi_{\alpha \beta}) = 0$ and
\begin{align}
\begin{split}
\label{E:phiDerivs}
(F_\gamma (\vphi_{\alpha \beta}))(q_1, q_2) &=  -\<[\gamma, \Ad_{\bar q_2} \alpha], \beta \>_0 \\
&= \<\Ad_{\bar q_2} \alpha, [\gamma, \beta] \>_0 \\
&= 2 \, \sum_{\delta = 1}^3 \eps_{\gamma \beta \delta} \, \vphi_{\alpha \delta}(q_1, q_2) \,.\\
\end{split}
\end{align}

Recall from \eqref{E:BiqDiff} that the free (right) action of $\Delta Q$ on $\sph^3 \x \sph^3$ is given by the anti-homomorphism
$$
\rho : Q \to \Diff(\sph^3 \x \sph^3) \,,
$$ 
where 
$$
\rho(\pm 1)(q_1. q_2) 
= (\pm q_1, q_2) \ \text{ and } \ \rho(\pm \alpha) 
= (\pm q_1 \alpha, \bar \alpha q_2 \alpha) \,.
$$

Although the vector fields $E_\alpha$, $F_\alpha$ and $X_\alpha$ are not $Q$ invariant, it is easy to describe their behaviour under the $Q$ action.

\begin{lem}
\label{L:Qinv}
The vectors fields $E_\alpha$, $F_\alpha$ and $X_\alpha$ satisfy the identities
$$
\rho(\pm 1)_* E_\alpha = E_\alpha, \qquad  
\rho(\pm 1)_* F_\alpha = F_\alpha, 
\qquad \rho(\pm 1)_* X_\alpha = X_\alpha,
$$
and
$$
\rho(\pm \beta)_* E_\alpha = \eps_{\alpha \beta} \, E_\alpha, \qquad
\rho(\pm \beta)_* F_\alpha = \eps_{\alpha \beta} \, F_\alpha, 
\qquad \rho(\pm \beta)_* X_\alpha = \eps_{\alpha \beta} \, X_\alpha.
$$
Furthermore, the functions $\vphi_{\alpha \beta}$ satisfy
$$
\vphi_{\alpha \beta} \circ \rho(\pm 1) = \vphi_{\alpha \beta} \ \text{ and } \ 
\vphi_{\alpha \beta} \circ \rho(\pm \gamma) = \eps_{\alpha \gamma} \eps_{\beta \gamma} \, \vphi_{\alpha \beta}.
$$
\end{lem}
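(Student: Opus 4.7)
The plan is to reduce every assertion to one algebraic identity for pure imaginary unit quaternions and then to carry out three essentially mechanical pushforward and substitution computations.  The single input to establish first is that, for any $\alpha, \beta \in \{i,j,k\}$,
$$
\Ad_\beta\,\alpha = \beta\alpha\bar\beta = \eps_{\alpha\beta}\,\alpha:
$$
the case $\alpha = \beta$ is trivial, and the case $\alpha \neq \beta$ follows from the multiplication table of $\HH$ (for instance $j\,i\,\bar j = -jij = -i$).  Since $\bar\beta = -\beta$ for a pure imaginary unit, one moreover has $\Ad_{\bar\beta} = \Ad_\beta$, so the same identity holds with $\bar\beta$ in place of $\beta$.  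Every statement in the lemma will drop out of this one identity together with the definitions.

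Next comes the vector-field part.  For $\rho(\pm 1)$ the map acts as left translation on the first factor and trivially on the second, so differentiating the defining curves of $E_\alpha$, $F_\alpha$ and $X_\alpha$ gives invariance directly.  For $\rho(\pm\beta)$ with $\beta \in \{i,j,k\}$, one differentiates the defining curve composed with $\rho(\pm\beta)$.  For $E_\alpha$ this reads
$$
\rho(\pm\beta)_* E_\alpha(q_1,q_2) = (\pm q_1 \alpha \bar\beta,\, 0) = (\pm q_1\bar\beta \cdot \Ad_\beta\alpha,\, 0) = E_{\Ad_\beta\alpha}\big(\rho(\pm\beta)(q_1,q_2)\big),
$$
and the key identity converts $E_{\Ad_\beta\alpha}$ into $\eps_{\alpha\beta} E_\alpha$.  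The arguments for $F_\alpha$ and $X_\alpha$ are structurally identical: the defining curve sits on the second factor, and one writes $\beta q_2 \alpha \bar\beta = (\beta q_2 \bar\beta)\cdot \Ad_\beta\alpha$, respectively $\beta \alpha q_2 \bar\beta = \Ad_\beta\alpha \cdot (\beta q_2 \bar\beta)$, to extract the adjoint explicitly.

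Finally, for the functions $\vphi_{\alpha\beta}$ the $\rho(\pm 1)$ case is immediate because $\vphi_{\alpha\beta}$ depends only on $q_2$.  For $\rho(\pm\gamma)$, substituting $\rho(\pm\gamma)(q_1, q_2) = (\pm q_1\bar\gamma,\, \gamma q_2 \bar\gamma)$ and using $\overline{\gamma q_2 \bar\gamma} = \gamma \bar q_2 \bar\gamma$ yields
$$
\vphi_{\alpha\beta}\big(\rho(\pm\gamma)(q_1,q_2)\big) = \<\Ad_\gamma \Ad_{\bar q_2} \Ad_{\bar\gamma}\alpha,\, \beta\>_0 = \<\Ad_{\bar q_2}(\Ad_{\bar\gamma}\alpha),\, \Ad_\gamma\beta\>_0,
$$
where the second equality uses that $\Ad_\gamma$ is an isometry of the imaginary quaternions with $\Ad_\gamma^{-1} = \Ad_{\bar\gamma} = \Ad_\gamma$.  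Applying the key identity to both $\Ad_{\bar\gamma}\alpha$ and $\Ad_\gamma\beta$ produces the sign $\eps_{\alpha\gamma}\eps_{\beta\gamma}$.  No step presents a real obstacle; the whole lemma is a disciplined sign-tracking exercise once $\Ad_\beta\alpha = \eps_{\alpha\beta}\alpha$ is in hand, and the one point of care will be not to conflate the tangent vector $\pm q_1 \alpha\bar\beta$ (which lives at the image point $\pm q_1\bar\beta$) with a value of the left-invariant frame at the source point.
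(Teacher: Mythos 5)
Your proposal is correct and follows essentially the same route as the paper: reduce to $\rho(\beta)_*$ by the homomorphism property, differentiate the defining curves, and extract the sign from $\Ad_\beta\alpha = \eps_{\alpha\beta}\alpha$. The only cosmetic difference is in the last step, where you substitute directly into $\vphi_{\alpha\beta} = \<\Ad_{\bar q_2}\alpha,\beta\>_0$ while the paper instead cites the already-established vector-field identities together with bi-invariance of $\<\,,\>_0$ --- these are the same computation dressed differently.
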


\begin{proof}
Since $\beta^2 = -1$ and $\rho$ is an anti-homomorphism, all non-trivial cases follow from the case $\rho(\beta)_*$.  For $E_\alpha$ one has
\begin{align*}
(\rho(\beta)_* E_\alpha)(\rho(\beta) (q_1, q_2)) &= \frac{d}{ds} \rho(\beta)(q_1 \exp(s \alpha), q_2)|_{s = 0} \\
&= \frac{d}{ds} (q_1 \exp(s \alpha) \beta, \bar \beta q_2 \beta)|_{s = 0}\\
&= \frac{d}{ds} (q_1 \beta (\bar \beta \exp(s \alpha) \beta), \bar \beta q_2 \beta)|_{s = 0}\\
&= \frac{d}{ds} (q_1 \beta \exp(s \Ad_{\bar \beta} \alpha), \bar \beta q_2 \beta)|_{s = 0}\\
&= \frac{d}{ds} (q_1 \beta \exp(s \, \eps_{\alpha \beta} \, \alpha), \bar \beta q_2 \beta)|_{s = 0}\\
&= \eps_{\alpha \beta} \, E_\alpha(q_1 \beta, \bar \beta q_2 \beta).
\end{align*}

The computations in the cases of $F_\alpha$ and $X_\alpha$ are analogous.  The identities for the functions $\vphi_{\alpha \beta}$ follow from those for the vector fields since $\< \,, \>_0$ is bi-invariant and, for example,
$$
\vphi_{\alpha \beta} \circ \rho(\gamma) = \<X_\alpha \circ \rho(\gamma), F_\beta \circ \rho(\gamma) \>_0 = \eps_{\alpha \gamma} \, \eps_{\beta \gamma} \<\rho(\gamma)_* X_\alpha , \rho(\gamma)_* F_\beta \>_0 \,.
$$
\end{proof}

Consider finally a local basis $e_\alpha$, $f_\alpha$ of vector fields on a neighbourhood of a point $[q_1, q_2] \in (\sph^3 \x \sph^3) \bq \Delta Q$ given by the projections under the quotient map of the restrictions of the vector fields $E_\alpha$ and $F_\alpha$ to a neighbourhood of a fixed representative $(q_1, q_2) \in \sph^3 \x \sph^3$.  In particular, 
\begin{align}
\begin{split}
\label{E:efVF}
e_\alpha([q_1, q_2]) &= \frac{d}{ds} [q_1 \exp(s\alpha), q_2]|_{s = 0} \,, \\[1mm]
f_\alpha([q_1, q_2]) &= \frac{d}{ds} [q_1, q_2 \exp(s\alpha)]|_{s = 0} \,,
\end{split}
\end{align} 
Similarly, let $x_\alpha$ denote the vector field given near $[q_1, q_2] \in (\sph^3 \x \sph^3) \bq \Delta Q$ by projection of $X_\alpha$ restricted to a neighbourhood of $(q_1, q_2)$, so that
\beq
\label{E:xVF}
x_\alpha([q_1, q_2]) = \frac{d}{ds} [q_1, \exp(s\alpha) q_2]|_{s = 0} \,.
\eeq
Observe that near a different representative $\rho(\ell)(q_1, q_2) \in \sph^3 \x \sph^3$, $\ell \in Q$, the pre-images of $e_\alpha$, $f_\alpha$, and $x_\alpha$ are given by the vector fields $(\rho(\ell)_* E_\alpha)$, $(\rho(\ell)_* F_\alpha)$ and $(\rho(\ell)_* X_\alpha)$ respectively.  That is, repeating the above construction of $e_\alpha$, $f_\alpha$ using instead the representative $\rho(\ell)(q_1, q_2)$ would produce a local basis $e_\alpha'$, $f_\alpha'$ near $[q_1, q_2] \in (\sph^3 \x \sph^3) \bq \Delta Q$ which differs from the previous one at most by a sign.

It is perhaps important to emphasise at this point that, when compared with the notation used in \cite{Gojems}, the roles of $e_\alpha$ and $f_\alpha$ have been switched.  In the present article, the notation $f_\alpha$ is intended to suggest that the vector field is tangent to the fibre of the Seifert fibration.

With the chosen representative $(q_1, q_2)$ of $[q_1, q_2]$ in mind, it is convenient to abuse the notation in \eqref{E:Xcoeffs} and define
\beq
\label{xcoeffs}
\vphi_{\alpha \beta} :  (\sph^3 \x \sph^3) \bq \Delta Q \to \R \,;\, [q_1, q_2] \mapsto \vphi_{\alpha \beta}(q_1, q_2) \,.
\eeq
Then $x_\alpha = \sum_{\beta = 1}^3 \vphi_{\alpha \beta} \, f_\beta$ and, moreover, the derivatives of the functions $\vphi_{\alpha \beta}$ on $(\sph^3 \x \sph^3) \bq \Delta Q$ are given by $e_\gamma (\vphi_{\alpha \beta}) = 0$ and
\beq
\label{E:phiderivs}
f_\gamma (\vphi_{\alpha \beta}) = 2 \, \sum_{\delta = 1}^3 \eps_{\gamma \beta \delta} \, \vphi_{\alpha \delta} \,.
\eeq

\begin{prop}
\label{P:metric}
For $\ve \in (0, \frac{1}{4})$, let $\sigma : \R \to \R$ denote a smooth function such that $\sigma|_{(-\infty, \ve-1)} \equiv 1$, $\sigma|_{(-\ve, \infty)} \equiv 0$ and $\sigma'(x) \leq 0$ for all $x \in \R$.  Furthermore, let
\begin{align*}
\lambda_-&: [-1,1] \to \R \,;\, t \mapsto 
 (1+t) \, \sigma(t) + \frac{|a_1|}{4}(1 - \sigma(t)) \,, \\[1mm]
\lambda_+&: [-1,1] \to \R \,;\, t \mapsto 
 \, (1-t) \, \sigma(-t) + \frac{|b_1|}{4}(1 - \sigma(-t)) \,,
\end{align*}
and let $h_\pm, u_\pm, v_\pm : (\sph^3 \x \sph^3) \bq \Delta Q \x (-1,1) \to \R$ be defined by
\begin{align*}
h_- &:= \frac{4}{|a_1|} \lambda_- \circ \tau \circ \Phi^{-1} \,, \ \ \ 
h_+ := \frac{4}{|b_1|} \lambda_+ \circ \tau \circ \Phi^{-1} \,, \\[1mm]
u_- &:= \frac{a_2}{a_1} \lambda_-' \circ \tau \circ \Phi^{-1} \,, \ \ \
u_+ := -\frac{b_2}{b_1} \lambda_+' \circ \tau \circ \Phi^{-1} \,, \\[1mm]
v_- &:= \frac{a_3}{a_1} \lambda_-' \circ \tau \circ \Phi^{-1} \,, \ \ \
v_+ := -\frac{b_3}{b_1} \lambda_+' \circ \tau \circ \Phi^{-1} \,,
\end{align*}
Then the metric $g_t + dt^2$ on $(\sph^3 \x \sph^3) \bq \Delta Q \x (-1,1)$ given by 
\begin{align*}
g_t(e_\alpha, e_\beta) &= \delta_{\alpha \beta} \left( 
\stack{1 + \delta_{1 \alpha} (h_-^2 + u_-^2 - 2 u_- v_- \,\vphi_{11} + v_-^2 - 1)}
{\phantom{1} + \delta_{2 \alpha} (h_+^2 + u_+^2 - 2 u_+ v_+ \,\vphi_{22} + v_+^2 - 1)} 
\right) , \\[1mm]
g_t(f_\alpha, f_\beta) &= \delta_{\alpha \beta} \,, \\[1mm]
g_t(e_\alpha, f_\beta) &= \delta_{1 \alpha} (u_- \, \vphi_{1 \beta} - v_- \, \delta_{1 \beta}) + \delta_{2 \alpha} (u_+ \, \vphi_{2 \beta} - v_+ \, \delta_{2 \beta}), 
\end{align*}
with respect to the local basis $e_\alpha$, $f_\alpha$ of vector fields on a neighbourhood of a point $[q_1, q_2] \in (\sph^3 \x \sph^3) \bq \Delta Q$ constructed above, is well defined and smooth.  Moreover, $\Phi^*(g_t + dt^2)$ extends to a smooth metric $g_M$ on $\Mab$. 
\end{prop}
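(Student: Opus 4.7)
My plan is to verify three things: (1) well-definedness of $g_t$ on the biquotient $(\sph^3 \x \sph^3) \bq \Delta Q$, (2) positive-definiteness on the regular part, and (3) smooth extension of $\Phi^*(g_t + dt^2)$ across $\tau^{-1}(\{\pm 1\})$.

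For (1), the local frames $e_\alpha, f_\alpha$ and the functions $\vphi_{\alpha\beta}$ depend on a choice of lift to $\sph^3 \x \sph^3$, with two lifts related by the $\Delta Q$-action $\rho$. Using Lemma \ref{L:Qinv}, I would check term-by-term that each coefficient of $g_t$ is $Q$-invariant: the sign $\eps_{\alpha\gamma}\eps_{\beta\gamma}$ acquired by the pair $(E_\alpha, F_\beta)$ under $\rho(\pm\gamma)$ is exactly cancelled by the same sign acquired by $\vphi_{\alpha\beta}$, while Kronecker-$\delta$ coefficients are nonzero only when indices match, contributing factors $\eps_{\alpha\gamma}^2 = 1$. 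For (2), the assumption $\ve \in (0, 1/4)$ forces the supports of $\lambda_-'$ and $\lambda_+'$ in $(-1,1)$ to be disjoint, so all products $u_\mp u_\pm, u_\mp v_\pm, v_\mp u_\pm, v_\mp v_\pm$ vanish identically. Taking the Schur complement of the Gram matrix of $g_t + dt^2$ against the orthonormal block $(e_3, f_1, f_2, f_3, \partial_t)$, and using $\sum_\beta \vphi_{\alpha\beta}^2 = 1$ together with $\sum_\beta \vphi_{1\beta}\vphi_{2\beta} = 0$, reduces the $(e_1, e_2)$-block to $\mathrm{diag}(h_-^2, h_+^2)$, which is positive-definite because $\lambda_\pm > 0$ on $(-1,1)$. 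Smoothness on the regular open set is then immediate from smoothness of $\sigma$, $\tau \circ \Phi^{-1}$ and the $\vphi_{\alpha\beta}$.

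The principal obstacle is step (3); I treat $t = -1$, the case $t = +1$ being symmetric. Near $\tau^{-1}(-1)$ I would use the disk-bundle chart $M_- = (\sph^3 \x \sph^3) \x_{\Pina} \DD^2_\ve$ with slice representation \eqref{E:SliceAct}, giving polar coordinates $z = r e^{i\theta}$ on the slice disk with $r = 1 + t$. On the subinterval $t \in (-1, \ve - 1)$, where $\sigma \equiv 1$, one has $\lambda_-(t) = r$ and $\lambda_-'(t) \equiv 1$, while $h_+, u_\pm, v_\pm$ are constant. The fundamental vector field of the collapsing action $\sph^1 = \Pina/\Delta Q$ on the principal leaves is $Y = a_1 e_1 - a_2 x_1 + a_3 f_1$, which the slice representation identifies with $4 \partial_\theta$. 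A direct calculation using $\sum_\beta \vphi_{1\beta}^2 = 1$ gives
$$
g_t(Y, Y) = a_1^2 h_-^2 + (1 - \lambda_-')^2 (a_2^2 - 2 a_2 a_3 \vphi_{11} + a_3^2) = 16 r^2,
$$
and a parallel calculation using the normalisations $u_- = a_2/a_1$, $v_- = a_3/a_1$ shows $g_t(Y, e_2) = g_t(Y, e_3) = g_t(Y, f_\gamma) = 0$ on this interval, while the metric on the complementary basis $(e_2, e_3, f_1, f_2, f_3)$ reduces to the identity. Thus in the basis $(Y, e_2, e_3, f_1, f_2, f_3, \partial_t)$ the metric is diagonal $\mathrm{diag}(16 r^2, 1, 1, 1, 1, 1, 1)$; under $Y = 4 \partial_\theta$ the fibre-disk part becomes $r^2 d\theta^2 + dr^2 = dx^2 + dy^2$, the flat Euclidean metric, smooth across $r = 0$. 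The remaining components are constant in $t$ and smooth in the $\Pina$-slice coordinates, so $\Phi^*(g_t + dt^2)$ extends smoothly to the zero section of $M_-$. The key combinatorial point making this extension work is the normalisation $h_- = 4r/|a_1|$: the numerator $4$ matches the winding number of \eqref{E:SliceAct} that rescales $\partial_\theta$ to $Y$, while the denominator $|a_1|$ accommodates the cone angle of $\Bab$ along $\RP^2_-$ computed in Lemma \ref{L:orbi}.
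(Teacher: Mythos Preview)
Your treatment of steps (1) and (2) matches the paper's: $\Delta Q$-invariance via Lemma~\ref{L:Qinv} is exactly how the paper shows that the metric $\hat g$ on $\sph^3 \times \sph^3 \times (-1,1)$ descends to the biquotient, and your Schur-complement argument makes explicit the positive-definiteness that the paper leaves implicit. Your diagonalisation in step (3), giving Gram matrix $\mathrm{diag}(16r^2,1,\dots,1)$ in the frame $(Y,e_2,e_3,f_1,f_2,f_3,\partial_t)$ on $\tau^{-1}(-1,\ve-1)$, is also correct, as is the identification $Y = 4\,\partial_\theta$.

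However, the final inference ``the remaining components are constant in $t$ and smooth in the $\Pina$-slice coordinates'' hides a genuine issue, and this is where the paper takes a different route. The local fields $e_2,e_3,f_2,f_3$, transported to $M_-$ via $\Phi_-^{-1}$, depend on the angular coordinate $\theta$: using $[q_1,q_2,re^{i\theta}] = [q_1 e^{i a_1\theta/4},\, e^{-i a_2\theta/4} q_2\, e^{i a_3\theta/4},\, r]$ and pushing $E_2$ back through the $\Pina$ action, one finds that $(\Phi_-^{-1})_* e_2$ at the point with angle $\theta$ is the projection of $\cos(a_1\theta/2)\,E_2 + \sin(a_1\theta/2)\,E_3$, and similarly $f_2,f_3$ rotate by angle $a_3\theta/2$. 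Thus this frame does \emph{not} extend continuously across $r=0$, and having the identity as Gram matrix in a $\theta$-dependent orthonormal frame does not by itself give smoothness of the metric there. What actually makes $g_M$ extend is that these $\theta$-rotations preserve the identity metric on $\mathrm{span}(e_2,e_3)$ and on $\mathrm{span}(f_2,f_3)$; establishing that is precisely a $\Pina$-isometry check. The paper therefore works upstairs: it writes an explicit bilinear form $\tilde g_r + dr^2$ on $\sph^3 \times \sph^3 \times \DD^2_\ve$ (whose smoothness at $r=0$ is immediate in Cartesian coordinates), verifies directly that both $\tilde\rho(e^{it})$ and $\tilde\rho(j)$ act by isometries, and then takes the smooth quotient metric on $M_-$, which agrees with $\Phi_-^* g$ on the regular part. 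Once you supply that invariance check, your more direct argument becomes complete and equivalent to the paper's.
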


Some remarks on the metric $g_M$ are in order.  By an abuse of notation, define vector fields $e_0 := \Phi^{-1}_*(\pd{t})$, $e_\alpha := \Phi^{-1}_*(e_\alpha)$, $f_\alpha := \Phi^{-1}_*(f_\alpha)$ and $x_\alpha := \Phi^{-1}_*(x_\alpha)$ on $\Mab$\,.   By a further abuse of notation, let $h_\pm$, $u_\pm$ and $v_\pm$ denote the smooth functions on $\Mab$ given by $h_\pm \circ \Phi$,  $u_\pm \circ \Phi$ and $v_\pm \circ \Phi$ respectively.  For $r \in (0, \ve)$, the vector fields tangent to the fibres of the radius-$r$ circle bundles over the zero sections of $M_-$ and $M_+$ are given by
\begin{align*}
\tpd{\theta}_- &= \frac{1}{4}(a_1 e_1 - a_2 x_1 + a_3 f_1) ,\\[1mm]
\tpd{\theta}_+ &= \frac{1}{4}(b_1 e_2 - b_2 x_2 + b_3 f_2) ,
\end{align*}
respectively, with lengths $g_M (\pd{\theta}_\pm, \pd{\theta}_\pm) = (1 \pm t)^2 = r^2$.  The vanishing of these vector fields for $r$ going to $0$ corresponds to the  roles of $\Pina$ and $\Pjnb$ at the singular biquotients. 

A (local) orthonormal frame of vector fields on $(\Mab, g_M)$ is described by 
\begin{align}
\begin{split}
\label{E:ONB}
\bar e_0 &:= e_0 , \qquad \bar e_1 := \frac{1}{h_-}(e_1 - u_- \, x_1 + v_- \, f_1), \\[1mm]
\bar e_2 &:= \frac{1}{h_+}(e_2 - u_+ \, x_2 + v_+ \, f_2), \qquad \bar e_3 := e_3,  \\[1mm]
\bar f_ \alpha &:= f_\alpha \,
\end{split}
\end{align}
away from the singular leaves $\tau^{-1}\{\pm 1\}$.  Notice that 
\beq
\label{E:compatibility}
\bar e_1|_{\tau^{-1}(-1, \ve -1)} = \frac{|a_1|}{ a_1 (1+t)} \, \pd{\theta}_- \,, \ \quad \ 
\bar e_2|_{\tau^{-1}(1-\ve, 1)} = \frac{|b_1|}{b_1 (1-t)} \, \pd{\theta}_+ \,,
\eeq
so the singularity in $\bar e_1$ and $\bar e_2$ along $\tau^{-1}\{-1\}$ and $\tau^{-1}\{1\}$, respectively, comes only from the normalisation by $\frac{1}{h_\pm}$.  Moreover, note that $\bar e_1|_{M_+} = e_1$ and $\bar e_2|_{M_-} = e_2$.  For convenience, define also $\bar x_\alpha := x_\alpha$.  

In addition, since $h_\pm |_{M_\mp} \equiv 1$, $u_\pm|_{M_\mp} \equiv 0$ and $v_\pm|_{M_\mp} \equiv 0$,  the metric on $M_- \cap M_+ = \tau^{-1}(-\ve, \ve)$ is quite simple: it is the product of a normal biquotient and an interval, where the vector fields $e_0$, $e_\alpha$, and $f_\alpha$ are orthonormal.  

Finally, the minus sign in the definition of $u_+$ and $v_+$ is to ensure that the isometry $\Psi$ in Remark \ref{R:Isom} below is compatible with Lie brackets.

\begin{proof}[Proof of Proposition \ref{P:metric}]
The strategy of the proof is to define a smooth $\Delta Q$-invariant metric $\hat g = \hat g_t + dt^2$ on $\sph^3 \x \sph^3 \x (-1,1)$ and equip $(\sph^3 \x \sph^3) \bq \Delta Q \x (-1,1)$ with the induced smooth submersion metric $g = g_t + dt^2$.  The smoothness as $t \to \pm 1$ will be obtained by defining a smooth quotient metric on $M_\pm$ which coincides with $\Phi_\pm^*(g|_{M_\pm})$ near the zero section.  

Consider $\sph^3 \x \sph^3 \x (-1,1)$ equipped with the metric $\hat g := \hat g_t + dt^2$, where
\begin{align*}
\hat g_t(E_\alpha, E_\beta) &= \<E_\alpha, E_\beta \>_0 \\
&\hspace*{10mm} + \left(h_-^2 + u_-^2 - 2 u_- v_- \, \vphi_{11} + v_-^2 - 1\right) \! \<E_1, E_\alpha \>_0 \, \<E_1, E_\beta \>_0 \\[1mm]
&\hspace*{10mm} + \left(h_+^2 + u_+^2 - 2 u_+ v_+ \, \vphi_{22} + v_+^2 - 1\right) \! \<E_2, E_\alpha \>_0 \, \<E_2, E_\beta \>_0 \,, \\[1mm]
\hat g_t(F_\alpha, F_\beta) &= \<F_\alpha, F_\beta\>_0\,, \\[1mm]
\hat g_t(E_\alpha, F_\beta) &= \hat g_r(F_\beta, E_\alpha) = \<E_1, E_\alpha\>_0 \,  \< u_- X_1 - v_- F_1, F_\beta \>_0 \\[1mm]
&\hspace*{35mm} + \<E_2, E_\alpha\>_0 \,  \< u_+ X_2 - v_+ F_2, F_\beta \>_0 \,.
\end{align*}
It is clear that $\hat g$ is a smooth metric on $\sph^3 \x \sph^3 \x (-1,1)$, since it is a positive-definite, symmetric, bi-linear form and all terms used to define $\hat g_t$ are smooth functions on $\sph^3 \x \sph^3 \x (-1,1)$.

Observe now that $\Delta Q$ acts on $(\sph^3 \x \sph^3 \x (-1, 1), \hat g)$ by isometries.  This follows from Lemma \ref{L:Qinv} and the bi-invariance of $\< \,, \>_0$.  For example, from
\begin{align*}
\<E_1, \rho(\gamma)_* E_\alpha\>_0  \circ \rho(\gamma) 
&= \eps_{1 \gamma} \<\rho(\gamma)_* E_1, \rho(\gamma)_* E_\alpha\>_0  \circ \rho(\gamma) \\
&= \eps_{1 \gamma} \<E_1, E_\alpha\>_0 \, , \\
\intertext{together with}
\< u X_1 - v F_1, \rho(\gamma)_* F_\beta \>_0 \! \circ\! \rho(\gamma) 
&=  \eps_{1 \gamma} \<\rho(\gamma)_*(u X_1 - v F_1), \rho(\gamma)_* F_\beta \>_0 \! \circ\! \rho(\gamma) \\
&=  \eps_{1 \gamma} \<u X_1 - v F_1, F_\beta \>_0 \,,
\end{align*} 
follows $\hat g_t(\rho(\gamma)_* E_\alpha, \rho(\gamma)_* F_\beta) \circ \rho(\gamma) = \hat g_t(E_\alpha, F_\beta)$.  Consequently, the metric $\hat g$ induces a smooth metric $g := g_t + dt^2$ on $(\sph^3 \x \sph^3) \bq \Delta Q \x (-1,1)$ such that the quotient map is a Riemannian submersion.

It remains, therefore, to show that the metric $\Phi^* g$ extends to a smooth metric at $t = \pm 1$.  It is sufficient to concentrate on $t = -1$, since the arguments for $t = 1$ are completely analogous and involve only replacing the triple $\ul a$ with $\ul b$\,, and the functions $h_-$, $u_-$ and $v_-$ with $h_+$, $u_+$ and $v_+$ respectively.  

Let $E_\alpha$, $F_\alpha$ and $X_\alpha$ be the vector fields on the $\sph^3 \x \sph^3$ factor of $\sph^3 \x \sph^3 \x \DD^2_\ve$ defined as in \eqref{E:LIVF} and \eqref{E:RIVF}, and let $\pd{r}$ and $\pd{\theta}$ denote the polar-coordinate radial and angular vector fields on the $\DD^2_\ve$ factor respectively, that is, 
\begin{align*}
\tpd{r}(q_1, q_2, r e^{i\theta}) &:= \frac{d}{ds} (q_1, q_2, (r + s) e^{i\theta}) |_{s=0} = (0,0, e^{i \theta}) \,, \\[1mm]
\tpd{\theta}(q_1, q_2, r  e^{i\theta}) &:= \frac{d}{ds} (q_1, q_2, r e^{i(\theta + s)}) |_{s=0} = (0,0, r i e^{i \theta}) \,.
\end{align*}

Choose the same representative $(q_1, q_2) \in \sph^3 \x \sph^3$ of $[q_1, q_2] \in (\sph^3 \x \sph^3) \bq \Delta Q$ as in \eqref{E:efVF} and \eqref{E:xVF}.  Notice that $E_\alpha$, $F_\alpha$ and $X_\alpha$ are tangent to the set $\mc Z$ of points in $\sph^3 \x \sph^3 \x \DD^2_\ve$ with $\theta = 0$.   Define vector fields in a neighbourhood of $[q_1, q_2, r] \in M_-$ as the projections of the restrictions of $E_\alpha$, $F_\alpha$ and $X_\alpha$ to the intersection of $\mc Z$ with a neighbourhood of $(q_1, q_2, r) \in \sph^3 \x \sph^3 \x \DD^2_\ve$.  By an abuse of notation, denote these projections again by $e_\alpha$, $f_\alpha$ and $x_\alpha$ respectively, since they are $\Phi_-$-related to the previously defined vector fields $e_\alpha$, $f_\alpha$ and $x_\alpha$ on $(\sph^3 \x \sph^3) \bq \Delta Q \x (-1,\ve)$. 

If $g_\DD := dr^2 + r^2 d\theta^2$ denotes the standard metric on $\DD^2_\ve$ in polar coordinates, let $\tilde g_r + dr^2$ be the bi-linear form on $\sph^3 \x \sph^3 \x \DD^2_\ve$ defined by
\begin{align*}
\tilde g_r(E_\alpha, E_\beta) &= \<E_\alpha, E_\beta \>_0 \\
&\hspace*{4mm}
+ \frac{1}{a_1^2} \!\left(16 r^2 + a_2^2 - 2 a_2 a_3 \, \vphi_{11} + a_3^2 - a_1^2 + 1 \right) \!\! \<E_1, E_\alpha \>_0 \<E_1, E_\beta \>_0 , \\[1mm]
\tilde g_r(F_\alpha, F_\beta) &= \<F_\alpha, F_\beta\>_0\,, \\[1mm]
\tilde g_r(E_\alpha, F_\beta) &= \tilde g_r(F_\beta, E_\alpha) = \<E_1, E_\alpha\>_0 \,  \< \frac{a_2}{a_1} X_1 -  \frac{a_3}{a_1} F_1, F_\beta \>_0 \,, \\[1mm]
\tilde g_r(E_\alpha, \tpd{\theta}) &= \tilde g_r(\tpd{\theta}, E_\alpha) = \frac{4}{a_1} \<E_1, E_\alpha\>_0 \, g_\DD (\tpd{\theta}, \tpd{\theta}) \,, \\[1mm]
\tilde g_r(F_\alpha, \tpd{\theta}) &= \tilde g_r(\tpd{\theta}, F_\alpha) = 0 \,,  \\[1mm]
\tilde g_r(\tpd{\theta}, \tpd{\theta}) &= g_\DD (\tpd{\theta}, \tpd{\theta}) \,.
\end{align*}

The bi-linear form $\tilde g_r + dr^2$ is symmetric and positive definite.  By reverting to Cartesian coordinates on $\DD^2_\ve$, it is easily verified that $\tilde g_r + dr^2$ describes a smooth metric on $\sph^3 \x \sph^3 \x \DD^2_\ve$ for all $r \in[0,1+\ve)$.    

Let $V$ be the vector field on $\sph^3 \x \sph^3 \x \DD^2_\ve$ tangent to the free $\Pina$ action and given by
\beq
V = -a_1 E_1 + a_2 X_1 - a_3 F_1 + 4 \tpd{\theta} \,. 
\eeq 

Notice that $\tilde g_r (V, V) \equiv 1$ and that,  near $(q_1, q_2, r) \in \sph^3 \x \sph^3 \x \DD^2_\ve$, the vector fields $E_2$, $E_3$, $F_\alpha$, $\pd r$ and $\pd{\theta}$ are all orthogonal to $V$ with respect to $\tilde g_r + dr^2$, hence horizontal.  Observe, however, that the horizontal vector field which projects to $e_1$ is given by
$$
E_1 + \frac{1}{a_1} V = \frac{a_2}{a_1} X_1 - \frac{a_3}{a_1} F_1 + \frac{4}{a_1} \tpd{\theta} \,
$$
and is of length $\frac{1}{a_1^2}(16 r^2 + a_2^2 - 2 a_2 a_3 \, \vphi_{11} + a_3^2)$.  Therefore, as $\lambda_-|_{(-1,\ve-1)} = r$, if the action of $\Pina$ on $(\sph^3 \x \sph^3 \x \DD^2_\ve, \tilde g_r + dr^2)$ is by isometries, the induced smooth submersion metric on $M_-$ will coincide with the metric $\Phi_-^* (g)|_{M_-}$ for $r \in (0, \ve)$, as desired.  (For the $t = 1$ case, note in addition that $(\Phi_+^{-1})_*(\pd{t}) = - \pd{r}$.)

From \eqref{E:BiqDiff} and \eqref{E:SliceAct}, the $\Pina$ action on $\sph^3 \x \sph^3 \x \DD^2_\ve$ is given by 
$$
\tilde \rho : \Pin(2) \to \Diff(\sph^3 \x \sph^3 \x \DD^2_\ve) \,,
$$
which is completely determined by
\begin{align*}
\tilde \rho(e^{it})(q_1, q_2, z) &= (q_1 e^{a_1 i t}, e^{-a_2 i t} q_2 e^{a_3 i t}, e^{4it} z) \,, \\
\tilde \rho(j)(q_1, q_2, z) &= ( q_1 j, - j q_2 j, \bar z) \,.
\end{align*}

The invariance under $\tilde \rho(j)$ of the first three expressions in the definition of $\tilde g_r$ follows via Lemma \ref{L:Qinv} precisely as for the corresponding terms in the case of $\hat g_t$.  As $j$ acts by conjugation on $\DD^2_\ve$, it follows that $\tilde \rho (j)_* \pd{r} = \pd{r}$ and $\tilde \rho (j)_* \pd{\theta} = - \pd{\theta}$, which, together with the identities in Lemma \ref{L:Qinv}, yield that the remaining terms in the definition of $\tilde g_r$ are invariant under $\tilde \rho(j)$.

On the other hand, the transformation rules under $\tilde \rho(e^{it})$ are given by
\begin{align*}
\tilde \rho(e^{it})_* E_\alpha 
&= 
\begin{cases}
E_1 \,,& \alpha = 1,\\
\cos(2 a_1 t) E_2 - \sin(2 a_1 t) E_3 \,, & \alpha = 2, \\
\sin(2 a_1 t) E_2 + \cos(2 a_1 t) E_3 \,, & \alpha = 3,
\end{cases} \\
\tilde \rho(e^{it})_* F_\alpha &= 
\begin{cases}
F_1 \,,& \alpha = 1,\\ 
\cos(2 a_3 t) F_2 - \sin(2 a_3 t) F_3 \,, & \alpha = 2, \\
\sin(2 a_3 t) F_2 + \cos(2 a_3 t) F_3 \,, & \alpha = 3,
\end{cases}
\end{align*}
as well as $\tilde \rho(e^{it})_* \tpd{\theta} = \tpd{\theta}$, $\tilde \rho(e^{it})_* \tpd{r} = \tpd{r}$ and, in particular, $\tilde \rho(e^{it})_* X_1 = X_1$.  It then follows that $\vphi_{11} \circ \tilde \rho(e^{it}) = \vphi_{11}$ and, moreover, that 
\begin{align*}
\<X_1 , \tilde \rho(e^{it})_* F_\beta\>_0  \circ \tilde \rho(e^{it}) 
&= \<\tilde \rho(e^{it})_* X_1, \tilde \rho(e^{it})_* F_\beta \>_0  \circ \tilde \rho(e^{it}) \\
&= \< X_1 , F_\beta \>_0,
\end{align*}
Similarly, $\<F_1 , \tilde \rho(e^{it})_* F_\beta\>_0  \circ \tilde \rho(e^{it}) = \< F_1 , F_\beta \>_0$.  The $\Pina$ invariance of $\tilde g_r + dr^2$ is now a simple consequence of these identities.
\end{proof}

\begin{rem}
\label{R:Isom}
Consider the manifold $M^7_{\ul b, \ul a}$ given by swapping $\ul a$ and $\ul b$.  Equip $M^7_{\ul b, \ul a}$ with a metric $g_M'$ defined in the same way as $g_M$ by simply switching the roles of $\ul a$ and $\ul b$ in $\lambda_\pm$, $h_\pm$, $u_\pm$ and $v_\pm$.  Let $\ell := \frac{1}{\sqrt 2}(i+j) \in \sph^3$.  Then, since the diffeomorphism
$$
\sph^3 \x \sph^3 \to \sph^3 \x \sph^3 \,;\, (q_1, q_2) \mapsto (\ell q_1 \bar \ell, \ell q_2 \bar \ell)
$$
respects $\Delta Q$ fibres and intertwines the $\Pina$ action on $\sph^3 \x \sph^3$ with the action of $\Pjn_{\ul a}(2)$, there is an induced orientation-reversing isometry 
\begin{align}
\begin{split}
\label{E:Mirror}
\Psi: (\Mab, g_M) &\to (M^7_{\ul b, \ul a}, g_M') \\[1mm]
[q_1, q_2, t] &\mapsto [\ell q_1 \bar \ell, \ell q_2 \bar \ell, -t]
\end{split}
\end{align}
mapping $\tau^{-1}(t) \In \Mab$ to $\tau^{-1}(-t) \In M^7_{\ul b, \ul a}$ for each $t \in [-1,1]$.  In particular, $\Psi_*$ maps $e_0$ to $-e_0$ and
\[
\begin{aligned}
e_1 \mapsto e_2 , &&&&&&& e_2 \mapsto e_1 , &&&&& e_3 \mapsto -e_3 , \\
f_1 \mapsto f_2 , &&&&&&& f_2 \mapsto f_1 , &&&&& f_3 \mapsto -f_3 , \\
x_1 \mapsto x_2 , &&&&&&& x_2 \mapsto x_1 , &&&&& x_3 \mapsto -x_3 .
\end{aligned}
\] 
Furthermore, notice that $\lambda_\pm(-t) = \lambda_\mp(t)$ and $\lambda'_\pm(-t) = - \lambda'_\mp(t)$, so that, for example, $u_-$, $v_-$ on $M^7_{\ul b, \ul a}$ correspond to $u_+$, $v_+$ on $\Mab$.  The isometry $\Psi$ hence ensures that the computations to follow need only be performed on $M_-$ (and that the expressions involving parentheses containing terms with `$+$' subscripts may be ignored).
\end{rem}

Given that the orbifold $\Bab$ admits a (cohomogeneity-one) $\sph^3$ action, hence is foliated by $\sph^3$-orbits, one can define vector fields $e_0$, $e_1$, $e_2$ and $e_3$ locally on $\Bab$ exactly as in the case of $\Mab$, that is, as the projections of local left-invariant vector fields.  In the same way as in Proposition \ref{P:metric}, one can obtain a metric on $\Bab$.

\begin{cor}
\label{C:metricB}
With the analogous notation as in Proposition \ref{P:metric},  the metric $\check g_t + dt^2$ on $\sph^3 / Q \x (-1,1)$ given by
$$
\check g_t(e_\alpha, e_\beta) = \delta_{\alpha \beta} \left( 
1 + \delta_{1 \alpha} (h_-^2 - 1) + \delta_{2 \alpha} (h_+^2 - 1) 
\right) , 
$$
is smooth and pulls back to a (globally) smooth metric $g_B$ on $\Bab$.  In particular, away from the singular orbits of $\Bab$, a local orthonormal frame of vector fields on $(\Bab, g_B)$ is described by
$$
\check e_0 := e_0, \ \ \ \check e_1 := \frac{1}{h_-} \, e_1,  \ \ \ \check e_2 := \frac{1}{h_+} \, e_2, \ \ \ \check e_3 := e_3 \,.
$$
Moreover,  the Seifert fibration $\pi : (\Mab, g_M) \to (\Bab, g_B)$ is a Riemannian submersion and the vector fields $\bar e_0, \dots, \bar e_3$ on $\Mab$ are the horizontal lifts of the orthonormal vector fields $\check e_0, \dots, \check e_3$.
\end{cor}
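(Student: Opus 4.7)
The plan is to argue that $g_B$ is the (orbifold) submersion metric induced by $\pi : (\Mab, g_M) \to \Bab$ and inherits its smoothness from $g_M$. First I would verify that $\{f_1, f_2, f_3\}$ spans the vertical distribution of $\pi$: this is immediate since the Seifert fibres arise from modding the regular leaves $(\sph^3 \x \sph^3) \bq \Delta Q$ by the right $\sph^3$-action on the second factor, whose infinitesimal generators descend to $f_1, f_2, f_3$.

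Next, I would check that the local frame $\{\bar e_0, \bar e_1, \bar e_2, \bar e_3\}$ introduced in \eqref{E:ONB} is orthonormal with respect to $g_M$ and orthogonal to the vertical distribution. Using the explicit formulas of Proposition \ref{P:metric}, the identity $x_\alpha = \sum_\beta \vphi_{\alpha \beta} f_\beta$, and $\sum_\beta \vphi_{1 \beta}^2 = 1$ (which follows from $(\vphi_{\alpha\beta}) \in \SO(3)$), a direct substitution yields $g_M(\bar e_\alpha, f_\beta) = 0$ and $g_M(\bar e_\alpha, \bar e_\beta) = \delta_{\alpha \beta}$ on the regular part. Since $\pi_* e_0 = e_0$, $\pi_* e_\alpha = e_\alpha$ and $\pi_* x_\alpha = \pi_* f_\alpha = 0$, this gives $\pi_* \bar e_\alpha = \check e_\alpha$ with $\check e_\alpha$ as in the statement. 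The submersion metric therefore satisfies $g_B(\check e_\alpha, \check e_\beta) = \delta_{\alpha \beta}$, which combined with the definitions of $h_\pm$ reproduces the claimed formula for $\check g_t + dt^2$ on the regular stratum $\sph^3/Q \x (-1,1)$.

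The final and hardest step is smooth extension across the singular orbifold strata $\RP^2_\pm \In \Bab$. The plan is to mimic the strategy used in the proof of Proposition \ref{P:metric}: build a $\Pin(2)_{a_1}$-invariant smooth metric on the local model $\sph^3 \x \DD^2_\ve$ whose induced orbit-space metric agrees with $\check g_t + dt^2$ off the zero section, and proceed analogously for $\RP^2_+$ with $\Pjn(2)_{b_1}$ acting on a second copy of $\sph^3 \x \DD^2_\ve$. Gluing to $\check g_t + dt^2$ via the diffeomorphisms $\Phi_\pm$ then produces the global orbifold metric $g_B$. The main obstacle is this smoothness verification at $\RP^2_\pm$; however, it is strictly simpler than in Proposition \ref{P:metric}, since the only $t$-dependent factor involved is the scaling function $\lambda_\pm$, without any of the twisting contributions from $u_\pm$, $v_\pm$, or the functions $\vphi_{\alpha \beta}$. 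Once smoothness of $g_B$ is established, the Riemannian submersion property of $\pi$ is immediate from the identity $\pi_* \bar e_\alpha = \check e_\alpha$ proved in the second step.
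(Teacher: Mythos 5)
Your proposal is correct and essentially follows the route the paper implicitly takes: the sentence immediately preceding Corollary \ref{C:metricB} reads ``In the same way as in Proposition \ref{P:metric}, one can obtain a metric on $\Bab$,'' so the intended argument is precisely your step three (build a $Q$-invariant metric on $\sph^3 \times (-1,1)$, push it down to $\sph^3/Q$, and verify smooth extension across $\RP^2_\pm$ via $\Pin(2)_{a_1}$- and $\Pjn(2)_{b_1}$-invariant metrics on $\sph^3 \times \DD^2_\varepsilon$, which is indeed strictly simpler than the proof of Proposition \ref{P:metric} because the $u_\pm$, $v_\pm$ and $\vphi_{\alpha\beta}$ terms drop out), combined with the orthogonality and orthonormality identities from \eqref{E:ONB} for the Riemannian-submersion statement. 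Your step two is exactly the content of \eqref{E:ONB}, and the computation $g_M(\bar e_\alpha, f_\beta) = 0$, $g_M(\bar e_\alpha, \bar e_\beta) = \delta_{\alpha\beta}$, $\pi_* \bar e_\alpha = \check e_\alpha$ is correct.

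One remark on internal consistency: your opening paragraph suggests defining $g_B$ as the pushdown of $g_M$ and inheriting smoothness from Proposition \ref{P:metric}, but if you commit to that route then your third step is redundant, while if you instead define $g_B$ by the explicit formula (which is what your third step in fact does) the opening paragraph becomes a heuristic rather than part of the proof. Since the Seifert fibration $\pi$ is not the quotient by a global group action on $\Mab$, the ``inherit smoothness'' shortcut requires checking that $g_M$ is bundle-like with respect to the foliation before the transverse orbifold metric is even well-defined; your step two does establish this on the regular part, but the singular strata still require the local-model analysis. So the direct approach you carry out in step three is the safer and more self-contained path, and it is the one the paper expects.
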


\medskip

\subsection{Chern-Weil forms for $\pi : (\Mab, g_M) \to (\Bab, g_B)$} \hspace*{1mm}\\
\label{SS:ChernWeil}

The basic tool used to determine the various invariants involved in the computation of the Eells-Kuiper invariant is Chern-Weil theory.  The necessary ingredients are gathered together in this section.  By Remark \ref{R:Isom}, only the computations for $M_-$ need to be carried out explicitly, and all expressions in Proposition \ref{P:metric} involving parentheses containing terms with `$+$' subscripts may be ignored.  

With this in mind, and to simplify the expressions to follow, it is convenient to define smooth functions $h, u, v : \Mab \to \R$ such that 
\beq
\begin{aligned}
h|_{M_-} &= h_-|_{M_-} \,, \ \ & 
u|_{M_-} &= u_-|_{M_-} \,, \ \ &
v|_{M_-} &= v_-|_{M_-} \,, \\
h|_{M_+} &= h_+|_{M_+} \,, \ \ &
u|_{M_+} &= u_+|_{M_+} \,, \ \ &
v|_{M_+} &= v_+|_{M_+} \,.
\end{aligned}
\eeq
For the sake of notation, the shorthand $h'$, $u'$ and $v'$ will be used to denote $\bar e_0(h) = \PD{h}{t}$, $\bar e_0(u) = \PD{u}{t}$ and $\bar e_0(v) = \PD{v}{t}$ respectively.  Notice, in particular, that
\beq
\begin{aligned}
\label{E:uvh}
u|_{M_-} &= \frac{a_2 |a_1|}{4 a_1} h'|_{M_-} \,, \ \ &
v|_{M_-} &= \frac{a_3 |a_1|}{4 a_1} h'|_{M_-} \,, \\
u|_{M_+} &= - \frac{b_2 |b_1|}{4 b_1} h'|_{M_+} \,, \ \ &
v|_{M_+} &= - \frac{b_3 |b_1|}{4 b_1} h'|_{M_+} \,. 
\end{aligned}
\eeq

\begin{lem}
\label{L:LieBracketsM}
The vector fields $\bar e_0, \dots, \bar e_3$ and $\bar f_1, \dots, \bar f_3$ on $\Mab$ satisfy the following Lie bracket identities:
\begin{equation*}
  \begin{aligned}\relax
    [\bar e_0,\bar e_1]|_{M_-}
    &\rlap{$\displaystyle\mathord{}=-\frac{h'}{h}\,\bar e_1
	-\frac{u'}{h}\,\bar x_1+\frac{v'}{h}\,\bar f_1\;,$}&&&
    [\bar e_1,\bar e_2]|_{M_-}
    &=\frac2{h}\,\bar e_3\;,\\
    [\bar e_2,\bar e_3]|_{M_-}
    &\rlap{$\displaystyle\mathord{}=2 \, h \, \bar e_1
	+2 \, u \, \bar x_1-2 \, v \, \bar f_1\;,$}&&&
    [\bar e_3,\bar e_1]|_{M_-}
    &=\frac2{h}\,\bar e_2\;,\\
    [\bar f_1,\bar f_2]|_{M_-}
    &=2 \, \bar f_3\;,&
    [\bar f_2,\bar f_3]|_{M_-}
    &=2 \, \bar f_1\;,&
    [\bar f_3,\bar f_1]|_{M_-}
    &=2 \, \bar f_2\;,\\
    [\bar e_1,\bar f_1]|_{M_-}
    &=0\;,& 
    [\bar e_1,\bar f_2]|_{M_-}
    &=2 \, \frac{v}{h}\,\bar f_3\;,&
    [\bar e_1,\bar f_3]|_{M_-}
    &=-2 \, \frac{v}{h}\,\bar f_2\;,\\
    [\bar x_1,\bar x_2]|_{M_-}
    &=-2 \, \bar x_3\;,&
    [\bar x_2, \bar x_3]|_{M_-}
    &=-2 \, \bar x_1\;,&
    [\bar x_3,\bar x_1]|_{M_-}
    &=-2 \, \bar x_2\;,\\
    [\bar e_1,\bar x_1]|_{M_-}
    &=0\;,& 
    [\bar e_1,\bar x_2]|_{M_-}
    &=2 \, \frac {u}{h}\,\bar x_3\;,&
    [\bar e_1,\bar x_3]|_{M_-}
    &=-2 \, \frac {u}{h}\,\bar x_2\;.
  \end{aligned}
\end{equation*}
All other Lie brackets of these vector fields vanish.
\end{lem}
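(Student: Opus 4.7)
The plan is to unwrap the substitutions in \eqref{E:ONB} and reduce every Lie bracket to a standard computation in the Lie algebra of $\sph^3 \x \sph^3$ combined with derivatives of the coefficient functions $h, u, v$. The underlying global vector fields $E_\alpha, F_\alpha$ are left-invariant on the first and second factors of $\sph^3 \x \sph^3$ respectively, while $X_\alpha$ is right-invariant on the second factor; the quaternion commutator $[i,j] = 2k$ and its cyclic permutations then give
\[
[E_\alpha, E_\beta] = 2 \sum_\gamma \eps_{\alpha\beta\gamma} E_\gamma, \quad [F_\alpha, F_\beta] = 2 \sum_\gamma \eps_{\alpha\beta\gamma} F_\gamma, \quad [X_\alpha, X_\beta] = -2 \sum_\gamma \eps_{\alpha\beta\gamma} X_\gamma,
\]
while $[E_\alpha, F_\beta] = [E_\alpha, X_\beta] = [F_\alpha, X_\beta] = 0$ (pairings of fields tangent to different factors, or of left- versus right-invariant fields on the same factor). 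These identities descend locally to the projected fields $e_\alpha, f_\alpha, x_\alpha$ on $(\sph^3 \x \sph^3) \bq \Delta Q$ and extend trivially along the $t$-direction.

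In addition, $\bar e_0 = \Phi^{-1}_*(\pd t)$ commutes with each of $e_\alpha, f_\alpha, x_\alpha$, all of which are tangent to the biquotient slices $\tau^{-1}(t)$. Since $h, u, v$ are pullbacks along $\tau \circ \Phi^{-1}$ from $(-1, 1)$, one has $e_\alpha(h) = f_\alpha(h) = x_\alpha(h) = 0$ and analogously for $u, v$, while $\bar e_0(h) = h'$, $\bar e_0(u) = u'$, $\bar e_0(v) = v'$. On $M_-$, Proposition \ref{P:metric} gives $h_+ \equiv 1$ and $u_+ \equiv v_+ \equiv 0$, so that $\bar e_2|_{M_-} = e_2$, $\bar e_3 = e_3$, $\bar f_\alpha = f_\alpha$, $\bar x_\alpha = x_\alpha$, and the only nontrivial substitution from \eqref{E:ONB} is $\bar e_1|_{M_-} = \tfrac{1}{h}(e_1 - u\, x_1 + v\, f_1)$.

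With these preliminaries, every identity in the lemma follows from a direct application of the Leibniz rule. For example,
\[
[\bar e_0, \bar e_1]|_{M_-} = -\tfrac{h'}{h^2}(e_1 - u x_1 + v f_1) + \tfrac{1}{h}(-u' x_1 + v' f_1) = -\tfrac{h'}{h} \bar e_1 - \tfrac{u'}{h} \bar x_1 + \tfrac{v'}{h} \bar f_1,
\]
and $[\bar e_2, \bar e_3]|_{M_-} = [e_2, e_3] = 2 e_1 = 2h\, \bar e_1 + 2u\, \bar x_1 - 2v\, \bar f_1$ after solving $e_1 = h\, \bar e_1 + u\, x_1 - v\, f_1$. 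The brackets of $\bar e_1$ with $\bar f_\beta$ and $\bar x_\beta$ pick up contributions only from the $v f_1$ and $-u x_1$ summands via $[f_1, f_\beta] = 2 \sum_\gamma \eps_{1\beta\gamma} f_\gamma$ and $[x_1, x_\beta] = -2 \sum_\gamma \eps_{1\beta\gamma} x_\gamma$, yielding the claimed formulas. All remaining identities and vanishing assertions follow from exactly the same observations: only the intra-family brackets among $\{e_\alpha\}, \{f_\alpha\}, \{x_\alpha\}$ are nonzero, and no derivative of $h, u, v$ is activated outside of a bracket involving $\bar e_0$. No substantive obstacle arises; the remaining cases are a matter of careful bookkeeping along the lines above.
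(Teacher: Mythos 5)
Your proposal is correct and follows essentially the same approach as the paper's proof: establish the standard bracket identities for the projected left- and right-invariant fields $e_\alpha$, $f_\alpha$, $x_\alpha$, note they commute with $\bar e_0=\partial_t$ and annihilate $h,u,v$, and then expand using the substitution $\bar e_1=\tfrac1h(e_1-u\,x_1+v\,f_1)$ from \eqref{E:ONB}. The sample computations you carry out match the claimed formulas, and the remaining cases are indeed routine bookkeeping of the same kind.
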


\begin{proof}
These identities are similar to those obtained in \cite[(4.13)]{Gojems}.

The undecorated vector fields $e_\alpha$ and $f_\alpha$ are the projections of (local) left-invariant vector fields on different factors of $\sph^3 \x \sph^3$, hence satisfy $[e_\alpha, e_\beta] = 2 \, e_\gamma$ and $[f_\alpha, f_\beta] = 2 \, f_\gamma$, for cyclic permutations $(\alpha, \beta, \gamma)$ of $(1,2,3)$, as well as $[e_\alpha, f_\beta] = 0$.  On the other hand, the vector fields $x_\alpha$ are the projections of right-invariant vector fields and thus satisfy $[x_\alpha, x_\beta] = - 2 \, x_\gamma$, for cyclic permutations $(\alpha, \beta, \gamma)$ of $(1,2,3)$.

As their flows are projections of commuting left and right actions respectively, all Lie brackets of $x_\alpha$ with $e_\beta$ or $f_\beta$ will vanish.  Moreover, since $e_0$ commutes with all $e_\alpha$, $f_\alpha$ and $x_\alpha$ on $(\sph^3 \x \sph^3) \bq \Delta Q \x (-1, 1)$, the same is true on $\Mab$ via \eqref{E:RegPart}.

The Lie bracket identities in the lemma now follow from \eqref{E:ONB}.
\end{proof}

Observe that the Lie brackets in Lemma \ref{L:LieBracketsM} are compatible with the isometry $\Psi$ of Remark \ref{R:Isom}.  This was the reason for the `$-$' sign in the definitions of $u_+$ and $v_+$.  

\begin{lem}
\label{L:totgeo}
The Seifert fibration $\pi : (\Mab, g_M) \to (\Bab, g_B)$ has totally geodesic fibres.
\end{lem}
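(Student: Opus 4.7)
The plan is to verify the vanishing of O'Neill's $T$-tensor, which for a Riemannian submersion is equivalent to the fibres being totally geodesic. Since the vertical distribution is spanned away from the singular leaves by $\bar f_1, \bar f_2, \bar f_3$ and the horizontal distribution by $\bar e_0, \dots, \bar e_3$ (as recorded in Corollary \ref{C:metricB}), the task reduces to showing that $\<\nabla_{\bar f_\alpha} \bar f_\beta, \bar e_\mu\> = 0$ on the open dense regular set $\Mab \setminus \tau^{-1}\{\pm 1\}$; the full statement then follows by continuity. Moreover, by the orientation-reversing isometry $\Psi$ of Remark \ref{R:Isom}, it suffices to do the computation on $M_-$.

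First I would apply the Koszul formula to $\nabla_{\bar f_\alpha} \bar f_\beta$ paired against $\bar e_\mu$. Orthonormality of the frame $\{\bar e_0, \dots, \bar e_3, \bar f_1, \bar f_2, \bar f_3\}$ immediately kills the three derivative terms, since $\< \bar f_\alpha, \bar e_\mu \> \equiv 0$ and $\< \bar f_\alpha, \bar f_\beta \> = \delta_{\alpha\beta}$ is constant. What remains is
\[
2\<\nabla_{\bar f_\alpha} \bar f_\beta, \bar e_\mu\>
= \<[\bar f_\alpha, \bar f_\beta], \bar e_\mu\>
+ \<[\bar e_\mu, \bar f_\alpha], \bar f_\beta\>
- \<[\bar f_\beta, \bar e_\mu], \bar f_\alpha\>.
\]

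Next I would read off the relevant brackets from Lemma \ref{L:LieBracketsM}. The bracket $[\bar f_\alpha, \bar f_\beta] = 2\bar f_\gamma$ is vertical, so the first term vanishes. For the remaining two terms I rewrite the combination as the symmetrisation $\<[\bar e_\mu, \bar f_\alpha], \bar f_\beta\> + \<[\bar e_\mu, \bar f_\beta], \bar f_\alpha\>$ in $(\alpha, \beta)$. Inspection of Lemma \ref{L:LieBracketsM} shows that on $M_-$ all nontrivial $[\bar e_\mu, \bar f_\alpha]$ are vertical, and only $\mu = 1$ produces nonzero brackets, namely $[\bar e_1, \bar f_2] = 2(v/h)\bar f_3$ and $[\bar e_1, \bar f_3] = -2(v/h)\bar f_2$. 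Thus the matrix $M_{\alpha\beta}^{(\mu)} := \<[\bar e_\mu, \bar f_\alpha], \bar f_\beta\>$ is antisymmetric in $\alpha, \beta$ for every $\mu$, so its symmetrisation vanishes identically.

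Combining these observations gives $\<\nabla_{\bar f_\alpha} \bar f_\beta, \bar e_\mu\> = 0$ for all $\alpha, \beta, \mu$ on $M_-$, hence (via $\Psi$) on $M_+$, and finally on all of $\Mab$ by density. This is precisely the condition that the second fundamental form of the fibres vanish, so the $\sph^3$-fibres of the Seifert fibration are totally geodesic. I do not anticipate a genuine obstacle here: the entire argument is driven by the already-computed Lie brackets, and the key structural input is simply that the only horizontal vector field with nonvanishing brackets against the $\bar f_\alpha$ is $\bar e_1$, and that those brackets correspond to an infinitesimal rotation in the $(\bar f_2, \bar f_3)$-plane — which is automatically skew-symmetric.
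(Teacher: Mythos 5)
Your proof is correct and follows essentially the same route as the paper's: both apply the Koszul formula with respect to the orthonormal frame of Proposition \ref{P:metric} and let Lemma \ref{L:LieBracketsM} do all the work. The only cosmetic difference is that the paper computes $g_M(\nabla_{\bar f_\alpha}\bar e_\beta,\bar f_\gamma)$ rather than $g_M(\nabla_{\bar f_\alpha}\bar f_\beta,\bar e_\mu)$ — equivalent by metric compatibility — and does not spell out the antisymmetry observation, which you have made usefully explicit.
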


\begin{proof}
It is sufficient to show that $\nabla^{TM}_{\bar f_\alpha} \bar e_\beta$ is always orthogonal to the vector fields $\bar f_\gamma$\,, that is, orthogonal to the fibres, since this implies that the second fundamental form of the fibres vanishes.  From the Koszul formula one has
$$
2 \, g_M(\nabla^{TM}_{\bar f_\alpha} \bar e_\beta, \bar f_\gamma) = g_M([\bar f_\alpha, \bar e_\beta], \bar f_\gamma) - g_M([\bar e_\beta, \bar f_\gamma], \bar f_\alpha) + g_M([\bar f_\gamma, \bar f_\alpha], \bar e_\beta)
$$
and the result now follows from Lemma \ref{L:LieBracketsM}.
\end{proof}

Since the vector fields $\check e_0, \dots, \check e_3$ on $\Bab$ are $\pi$-related to the vector fields $\bar e_0, \dots, \bar e_3$ on $\Mab$, the corresponding identities on $\Bab$ follow immediately.

\begin{lem}
\label{L:LieBracketsB}
Let $B_- := \pi(M_-) \In \Bab$.  Then the vector fields $\check e_0, \dots, \check e_3$ on $\Bab$ satisfy the following Lie bracket identities:
\begin{equation*}
  \begin{aligned}
    [\check e_0,\check e_1]|_{B_-}
    &=-\frac{h'}{h}\,\check e_1 \;,&&&
    [\check e_0,\check e_2]|_{B_-}
    &= 0 \;,\\
    [\check e_0,\check e_3]|_{B_-}
    &= 0 \;,&&&
    [\check e_1,\check e_2]|_{B_-}
    &=\frac2{h}\,\check e_3\;,\\
    [\check e_2,\check e_3]|_{B_-}
    &=2 \, h \, \check e_1 \;,&&&
    [\check e_3,\check e_1]|_{B_-}
    &=\frac2{h}\,\check e_2 \;.
  \end{aligned}
\end{equation*}
\end{lem}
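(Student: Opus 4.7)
The strategy is to exploit the fact, stated in Corollary \ref{C:metricB}, that each $\bar e_i$ on $\Mab$ is the horizontal lift of $\check e_i$ on $\Bab$ under the Seifert fibration $\pi$, so that $\bar e_i$ and $\check e_i$ are $\pi$-related. Since Lie brackets of $\pi$-related vector fields are $\pi$-related to the bracket of their pushforwards, we obtain
\[
d\pi_p \bigl([\bar e_i, \bar e_j]_p\bigr) = [\check e_i, \check e_j]_{\pi(p)}
\]
for every $p \in \Mab$. Thus the brackets downstairs can be read off by pushing down the corresponding brackets computed in Lemma \ref{L:LieBracketsM}.

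The key observation is that the vertical subbundle $\V \subset T\Mab$ is spanned by the vector fields $\bar f_\alpha$ (equivalently, by the $\bar x_\alpha$), so $d\pi(\bar f_\alpha) = d\pi(\bar x_\alpha) = 0$. Applied to the bracket identities on $M_-$ from Lemma \ref{L:LieBracketsM}, this immediately gives $[\check e_1, \check e_2]|_{B_-} = \tfrac{2}{h}\check e_3$ and $[\check e_3, \check e_1]|_{B_-} = \tfrac{2}{h}\check e_2$ without any simplification, while for $[\bar e_0, \bar e_1]|_{M_-}$ and $[\bar e_2, \bar e_3]|_{M_-}$ the $\bar x_1$ and $\bar f_1$ contributions are killed by $d\pi_*$, yielding
\[
[\check e_0, \check e_1]|_{B_-} = -\tfrac{h'}{h}\,\check e_1, \qquad [\check e_2, \check e_3]|_{B_-} = 2h\,\check e_1.
\]
Here we use that the smooth functions $h$ and $h'$ depend only on $t$ and so are pullbacks via $\pi$ of smooth functions on $\Bab$ (a consequence of the definitions in Proposition \ref{P:metric}), so the coefficients make sense downstairs.

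For the remaining brackets $[\check e_0, \check e_2]|_{B_-}$ and $[\check e_0, \check e_3]|_{B_-}$, the fact that $h_+\equiv 1$, $u_+\equiv 0$, $v_+\equiv 0$ on $M_-$ means $\bar e_2|_{M_-} = e_2$ and $\bar e_3|_{M_-} = e_3$ are undeformed projections; by Lemma \ref{L:LieBracketsM} their brackets with $\bar e_0$ vanish on $M_-$, hence so do the corresponding brackets on $B_-$.

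There is no substantive obstacle: the content of the lemma is essentially the observation that pushforward annihilates the vertical correction terms appearing in Lemma \ref{L:LieBracketsM}. The only thing requiring care is to note that all coefficient functions appearing in the surviving terms descend to $\Bab$, which holds by construction of $g_M$ as the pullback of $g_B + dt^2$ adapted across the singular strata.
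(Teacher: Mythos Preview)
Your proposal is correct and follows exactly the approach the paper uses: the paper simply remarks that the $\check e_i$ are $\pi$-related to the $\bar e_i$ and that the identities therefore follow immediately from Lemma~\ref{L:LieBracketsM}. Your version spells out the details (annihilation of the vertical $\bar f_\alpha$, $\bar x_\alpha$ terms under $d\pi$, and the vanishing of $[\bar e_0,\bar e_2]$, $[\bar e_0,\bar e_3]$ on $M_-$) that the paper leaves implicit.
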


Let $\bar e^0, \dots, \bar e^3, \bar f^1, \dots, \bar f^3$ be the local frame of the cotangent bundle $T^*M$ of $\Mab$ which is dual to $\bar e_0, \dots, \bar e_3, \bar f_1, \dots, \bar f_3$.  In the computations to follow, it will be necessary to understand the exterior differentials of these $1$-forms.  Given $1$-forms $v^{\alpha_1}, \dots, v^{\alpha_k}$, the shorthand $v^{\, \alpha_1\dots \alpha_k}$ will be used to denote $v^{\alpha_1} \wedge \cdots \wedge v^{\alpha_k}$.  Finally, for $(\alpha, \beta, \gamma)$ a cyclic permutation of $(1,2,3)$, let 
\begin{align}
\begin{split}
\label{E:xDual}
\bar x^\alpha &:= \vphi_{\alpha 1} \bar f^1 + \vphi_{\alpha 2} \bar f^2 + \vphi_{\alpha 3} \bar f^3 \,,\\
\bar x^{\alpha \beta} &:= \vphi_{\gamma 1} \bar f^{23} - \vphi_{\gamma 2} \bar f^{13} + \vphi_{\gamma 3} \bar f^{12} \,, 
\end{split}
\end{align}
be the forms dual to $\bar x_\alpha$ and $\bar x_\alpha \wedge \bar x_\beta$ respectively.

\begin{lem}
\label{L:ExtDiff}
The exterior differentials of a function $y := y \circ \tau : M_- \to \R$ and of the $1$-forms $\bar e^0, \dots, \bar e^3, \bar f^1, \dots, \bar f^3$ are given on $M_-$ by 
\begin{equation*}
  \begin{aligned}
    d y &= y' \bar e^0 \,, &   
    d\bar e^0 &=0\,,\\
    d\bar e^1 &= \frac{h'}{h}\,\bar e^{01}-2h\,\bar e^{23}\,,&
     d\bar f^1 &= \frac{u'\vphi_{11}-v'}{h} \bar e^{01}
	-2(u \, \vphi_{11}-v)\,\bar e^{23} - 2 \bar f^{23}\,,  \\
    d\bar e^2 &= \frac{2}{h}\,\bar e^{13}\,,&
    d\bar f^2 &= \frac{u'\vphi_{12}}{h}\,\bar e^{01}-2u \, \vphi_{12}\,\bar e^{23}
	+2\frac{v}{h}\,\bar e^1\bar f^3+2\bar f^{13}\,,  \\
    d\bar e^3 &= -\frac{2}{h}\,\bar e^{12}\,,&
    d\bar f^3 &= \frac{u'\vphi_{13}}{h}\,\bar e^{01}-2u \, \vphi_{13}\bar e^{23}
	-2\frac {v}{h}\,\bar e^1\bar f^2 - 2\bar f^{12} \,.  
  \end{aligned}
\end{equation*}
\end{lem}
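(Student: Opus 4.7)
The entire lemma reduces to a direct application of the Cartan formula
\[
d\omega(X,Y) = X(\omega(Y)) - Y(\omega(X)) - \omega([X,Y])
\]
for a $1$-form $\omega$, combined with the Lie bracket identities of Lemma \ref{L:LieBracketsM}. First, for a function $y = y\circ\tau$ depending only on $t$, note that on $M_-$ the vector field $\bar e_0$ is $\partial/\partial t$, so $\bar e^0 = dt$ and $dy = y'\, dt = y'\,\bar e^0$ immediately. In particular, $d\bar e^0 = d(dt) = 0$ on $M_-$, which is consistent with the fact (visible from Lemma \ref{L:LieBracketsM}) that no Lie bracket of vectors from the dual basis has a $\bar e_0$ component.

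For each of the remaining $1$-forms $\omega \in \{\bar e^1,\bar e^2,\bar e^3,\bar f^1,\bar f^2,\bar f^3\}$, the plan is to evaluate $d\omega$ on every pair $(Z_i,Z_j)$, $i<j$, of elements of the dual basis $\{\bar e_0,\dots,\bar e_3,\bar f_1,\bar f_2,\bar f_3\}$. Since the pairings $\omega(\bar e_i)$ and $\omega(\bar f_j)$ are each identically $0$ or $1$, the first two terms in Cartan's formula vanish, and one is left with
\[
d\omega(Z_i,Z_j) = -\omega([Z_i,Z_j]).
\]
Consequently, $d\omega$ is assembled by reading off, from Lemma \ref{L:LieBracketsM}, the component of each Lie bracket in the direction dual to $\omega$. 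This is purely mechanical bookkeeping: each bracket listed in Lemma \ref{L:LieBracketsM} contributes exactly one term in the corresponding $d\omega$.

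The only subtlety concerns the forms $\bar f^\beta$. Several of the brackets in Lemma \ref{L:LieBracketsM} (namely those involving $[\bar e_0,\bar e_1]$ and $[\bar e_2,\bar e_3]$) have a $\bar x_1$ component, and via the relation $\bar x_1 = \vphi_{11}\bar f_1 + \vphi_{12}\bar f_2 + \vphi_{13}\bar f_3$ (from $x_\alpha = \sum_\beta \vphi_{\alpha\beta} f_\beta$ and the definitions in \eqref{E:ONB}) this produces the factors $\vphi_{1\beta}$ appearing in $d\bar f^\beta$. Concretely, $\bar f^\beta(\bar x_1) = \vphi_{1\beta}$ is simply the $\beta$-entry of the first row of $(\vphi_{\alpha\beta})$, which enters each $d\bar f^\beta$ exactly once for the $\bar e^{01}$ coefficient and once for the $\bar e^{23}$ coefficient. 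All other contributions are direct: the $2 v/h$ coefficients on $\bar e^1 \bar f^3$ in $d\bar f^2$ and on $\bar e^1 \bar f^2$ in $d\bar f^3$ come from $[\bar e_1,\bar f_2]$ and $[\bar e_1,\bar f_3]$, while the pure $\bar f^{ij}$ terms come from $[\bar f_\alpha,\bar f_\beta] = 2\bar f_\gamma$.

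There is no real obstacle here: no derivatives of the structure functions $\vphi_{\alpha\beta}$ are required, since these functions only enter through the pointwise evaluation $\bar f^\beta(\bar x_1) = \vphi_{1\beta}$ inside the term $-\omega([Z_i,Z_j])$. The only book-keeping burden is to check all $\binom{7}{2} = 21$ pairs for each of the six $1$-forms; once this is done, collecting terms yields exactly the formulas stated.
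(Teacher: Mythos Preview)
Your proof is correct and follows the same approach as the paper: apply the Cartan formula together with the Lie bracket table of Lemma~\ref{L:LieBracketsM} and the identification $\bar f^\beta(\bar x_1) = g_M(\bar x_1,\bar f_\beta) = \vphi_{1\beta}$. Your additional observation that the derivatives \eqref{E:phiderivs} of $\vphi_{\alpha\beta}$ are not actually needed here is accurate---since the dual-frame values $\omega(Z_i)$ are constants, only the pointwise evaluation $\bar f^\beta(\bar x_1)=\vphi_{1\beta}$ enters, and the paper's mention of \eqref{E:phiderivs} is superfluous for this lemma (those derivatives are genuinely used only later, e.g.\ in computing $d\bar x^1$ in the proof of Lemma~\ref{L:StabIsom}).
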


\begin{proof}
The expressions for the exterior differentials follow from the Cartan formula together with Lemma \ref{L:LieBracketsM}, the relation $g_M(\bar x_1, \bar f_\beta) = \vphi_{1 \beta}$ and the derivatives \eqref{E:phiderivs}. 
\end{proof}

For the computation of the adiabatic limit of the $\eta$-invariants of the spin-Dirac operator $\D$ and the odd signature operator $\B$, it is not necessary to determine the curvature of the full Levi-Civita connection $\nabla^{TM}$ of $(\Mab, g_M)$.  Indeed, one need only compute the Chern-Weil forms \eqref{E:CWforms} of the Levi-Civita connection $\nabla^{TB}$ of $(\Bab, g_B)$, and of two connections $\nabla^W$ and $\nabla^{\V}$ related to the fibres of the Seifert fibration.

\begin{lem}
\label{L:TBforms}
The Pontrjagin and Euler forms of $TB$ with respect to the Levi-Civita connection $\nabla^{TB}$ of $(\Bab,g_B)$ are given by
\begin{align*}
    p_1\bigl(TB,\nabla^{TB}\bigr)
    &=\frac{1}{\pi^2}\,\biggl(\frac{h' h''}{h}+4h' h^2 - 4 h' \biggr)\,\check e^{0123}\;,\\
    e\bigl(TB,\nabla^{TB}\bigr)
    &=\frac1{4\pi^2}\biggl(6h^{\prime 2}+3h'' h - \frac{4h''}{h} \biggr)\,\check e^{0123}\;.
\end{align*}
Moreover, it follows that
$$
    \int_B p_1\bigl(TB,\nabla^{TB}\bigr) = \frac2{b_1^2} - \frac2{a_1^2} \ \ \text{ and } \
    \int_B e\bigl(TB,\nabla^{TB}\bigr) = \frac1{|a_1|}+\frac1{|b_1|}\,.
$$
\end{lem}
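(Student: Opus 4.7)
The plan is to compute the Chern--Weil forms directly from the Levi--Civita connection of $(\Bab, g_B)$ in the orthonormal frame $\check e_0, \check e_1, \check e_2, \check e_3$ provided by Corollary \ref{C:metricB}, and then to reduce the resulting integrals to boundary terms in $t$.

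First, I would apply the Koszul formula to the Lie brackets recorded in Lemma \ref{L:LieBracketsB} to obtain explicit expressions for the connection $1$-forms $\omega_{\alpha\beta}$ of $\nabla^{TB}$ on $B_- := \pi(M_-)$. Cartan's formula applied to the same Lie brackets yields the exterior derivatives of the dual coframe $\check e^0, \dots, \check e^3$, entirely analogous to the $M$-side calculation carried out in Lemma \ref{L:ExtDiff}. I would then compute the curvature $2$-forms $\Omega_{\alpha\beta} = d\omega_{\alpha\beta} + \sum_\gamma \omega_{\alpha\gamma} \wedge \omega_{\gamma\beta}$, substitute into $p_1 = \frac{1}{8\pi^2}\tr(\Omega^2)$ and $e = \frac{1}{4\pi^2}\Pf(\Omega)$ as in \eqref{E:CWforms}, and simplify to arrive at the stated closed expressions (with $h = h_-$). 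The corresponding expressions on $B_+$ follow either by repeating the calculation verbatim or by invoking the orientation-reversing isometry of Remark \ref{R:Isom} that interchanges the roles of $\ul a$ and $\ul b$, together with that of $h_-$ and $h_+$.

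For the integrals, I would use that the volume form factors as $\check e^{0123} = h_- h_+ \, dt \wedge e^{123}$, where $e^{123}$ is a bi-invariant $3$-form on the principal orbit $\sph^3/Q$ whose total volume is $\pi^2/4$ under the present normalisation. Since the formulas for $p_1$ and $e$ on $B_-$ and on $B_+$ both vanish identically on the overlap region (where $h_-$ and $h_+$ are each equal to $1$), the contributions from the two halves of $B$ split cleanly. On $B_-$, where $h_+ \equiv 1$, multiplication by the volume prefactor reduces the integrand in $t$ to a total derivative; for instance, the $p_1$-integrand admits the primitive $\tfrac{1}{2}(h_-')^2 + h_-^4 - 2 h_-^2$, and the $e$-integrand the primitive $3 h_-^2 h_-' - 4 h_-'$. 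Evaluating at $t = -1$, where $h_-(-1) = 0$ and $h_-'(-1) = 4/|a_1|$, and at an interior point where $h_- \equiv 1$ (so that the contribution there cancels against the corresponding interior contribution coming from $B_+$), together with the analogous boundary evaluation at $t = 1$, yields the two asserted totals.

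The main obstacle is the bookkeeping in the curvature calculation, together with tracking orientation conventions between $B_-$ and $B_+$. I expect that the correct signs for the $B_+$ contribution will depend on the signs of $a_1$ and $b_1$ themselves (which, under the congruence hypothesis $\equiv 1 \pmod 4$, are allowed to be negative), and that it is precisely these signs which distinguish the stated answers $\tfrac{1}{a_1} - \tfrac{1}{b_1}$ and $\tfrac{2}{b_1^2} - \tfrac{2}{a_1^2}$ from the naive absolute-value versions obtained by integrating positive quantities.
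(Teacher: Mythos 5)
Your approach is essentially the one the paper uses, except that the paper spares itself the Koszul/Cartan computation by recognising $(\Bab, g_B)$ as the orbifold already treated in \cite[Sec.\ 4.c]{Gojems} and citing the curvature matrix \eqref{E:TBcurv} and the corresponding integrals from there; redoing the calculation from Lemma~\ref{L:LieBracketsB} as you propose lands in the same place. Your volume normalisation and your primitives $\tfrac{1}{2}(h')^2 + h^4 - 2h^2$ and $3h'h^2 - 4h'$ are correct.

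The last step, however, goes wrong. You expect the signs of $a_1, b_1$ to surface in the $B_+$ boundary terms, but $h_\pm$ are defined in Proposition~\ref{P:metric} purely in terms of $|a_1|$ and $|b_1|$: the boundary data are $h_-(-1)=0$, $h_-'(-1)=4/|a_1|$, $h_+(1)=0$, $h_+'(1)=-4/|b_1|$, independent of $\sign(a_1)$ and $\sign(b_1)$. Nor does an orientation flip between $B_-$ and $B_+$ contribute a sign: the frame change $(\check e_0, \check e_1, \check e_2, \check e_3) \mapsto (\check e_0, \check e_2, \check e_1, -\check e_3)$ which brings the $B_+$ Lie brackets into the $B_-$ normal form has determinant $+1$, so the closed formulas for $p_1$ and $e$ hold unchanged on both halves. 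Evaluating the boundary terms then gives $\int_B p_1(TB,\nabla^{TB}) = \tfrac{2}{b_1^2} - \tfrac{2}{a_1^2}$ as stated (only $(h')^2$ survives since $h(\pm 1)=0$), whereas
$$
\int_B e\bigl(TB,\nabla^{TB}\bigr) = \frac{1}{16}\Bigl[3h'h^2 - 4h'\Bigr]_{-1}^{1} = \frac{1}{|a_1|} + \frac{1}{|b_1|}\,,
$$
not $\tfrac{1}{a_1} - \tfrac{1}{b_1}$. The absolute-value answer is the geometrically correct one: when $a_1=b_1=1$ the orbifold $\Bab$ is smooth $\sph^4$, and Gauss--Bonnet forces $\int_B e = \chi(\sph^4) = 2$, which the absolute-value formula gives while the expression in the lemma gives $0$. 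The Euler number displayed in the lemma therefore appears to be a misprint; it is harmless, as $\int_B e(TB,\nabla^{TB})$ is not used elsewhere in the paper (Remark~\ref{R:nmInvs} and Section~\ref{S:EK} only invoke $\int_B p_1(TB)$, $\int_B e(W)$ and $\int_B \tfrac{p_1}{2}(TB\oplus W)$).
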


\begin{proof}
The orbifold $(\Bab, g_B)$ is, up to a slightly different choice of the function $h$, isometric to the one considered in \cite[Section 4.c]{Gojems}, with $p_-=a_1$ and $p_+=b_1$.  Therefore, via \cite[(4.16)]{Gojems} the curvature $2$-form $\Omega^{TB}$ of $TB$ is given, with respect to the orthonormal basis $\bar e_0, \dots, \bar e_3$, by
\beq
\label{E:TBcurv}
\Omega^{TB}=
\scalebox{0.795}{$
\bpm
0 & -\frac{h''}{h} \check e^{01} + 2 h' \check e^{23} & h' \check e^{13} & - h' \check e^{12} \\
\frac{h''}{h} \check e^{01} - 2 h' \check e^{23} & 0 & - h' \check e^{03} + h^2 \check e^{12} &   h' \check e^{02} + h^2 \check e^{13} \\
- h' \check e^{13} & h' \check e^{03} - h^2 \check e^{12} & 0 & 2 h' \check e^{01} + (4 - 3 h^2) \check e^{23} \\
h' \check e^{12} & - h' \check e^{02} - h^2 \check e^{13} & - 2 h' \check e^{01} - (4 - 3 h^2) \check e^{23} & 0
\epm
$} .
\eeq
Together with the isometry (analogous to) $\Psi$ in \eqref{E:Mirror}, it follows that the Euler and Pontrjagin forms have been determined in \cite[(4.17)]{Gojems}.  The calculation of the integrals now follows as in \cite[(4.18)]{Gojems}.
\end{proof}

Consider now the Seifert fibration $\pi : (\Mab, g_M) \to (\Bab, g_B)$ as an orbi-bundle with structure group $\SO(4)$.  Associated to the vertical bundle $\V = \ker (d \pi)$ there is a fibre-bundle connection $1$-form $\omega^\pi \in \Hom(TM, \V)$ which acts as the identity on $\V$ and is uniquely determined by the horizontal bundle $\H = \ker (\omega^\pi)$.  Recall that $\H = \Span\{\bar e_0, \bar e_1, \bar e_2, \bar e_3\}$.  The following lemma will prove useful when computing the contribution of the twisted sectors $\Lambda B \backslash B$ to the adiabatic-limit formulae in Theorem \ref{T:AdiaLimD}.

\begin{lem}
\label{L:Omegapi}
The curvature $2$-form $\Omega^\pi$ associated to $\omega^\pi$ is given on $M_-$ by
$$
  \Omega^\pi|_{M_-}
  =\biggl(\frac{u'}h\,\bar e^{01}-2u\,\bar e^{23}\biggr)\,\bar x_1
  -\biggl(\frac{v'}h\,\bar e^{01}-2v\,\bar e^{23}\biggr)\,\bar f_1 \,.
$$
In particular, $\Omega^\pi|_{M_-}$ is smooth at $\tau^{-1}\{-1\}$ and the two summands of $\Omega^\pi|_{M_-}$ correspond to elements of the two summands of the Lie algebra $\so(4) \cong \so(3) \oplus \so(3)$.
\end{lem}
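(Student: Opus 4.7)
The plan is to compute $\Omega^\pi$ directly from the definition of the Ehresmann connection together with the exterior derivatives already supplied by Lemma \ref{L:ExtDiff}. Since the coframe $\bar e^0,\ldots,\bar e^3,\bar f^1,\bar f^2,\bar f^3$ is dual to the orthonormal frame \eqref{E:ONB} and $\bar f^i$ annihilates the horizontal distribution $\H$, the vertical-valued connection $1$-form reads $\omega^\pi=\sum_{i=1}^{3}\bar f^i\otimes \bar f_i$. Its curvature, regarded as a $\V$-valued horizontal $2$-form, is then obtained by restricting $d\omega^\pi$ to $\H\times\H$; concretely, $\Omega^\pi=\sum_{i=1}^{3}(d\bar f^i)^{\H\H}\otimes \bar f_i$, where $(\,\cdot\,)^{\H\H}$ retains only the summands proportional to some $\bar e^{jk}$.

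I would then extract the $\bar e^{01}$ and $\bar e^{23}$ terms from the formulae for $d\bar f^1,d\bar f^2,d\bar f^3$ in Lemma \ref{L:ExtDiff} and substitute into the expression above. Regrouping according to common coefficients yields
\[
\Omega^\pi|_{M_-}=\frac{u'}{h}\,\bar e^{01}\,\Bigl(\textstyle\sum_\beta\vphi_{1\beta}\bar f_\beta\Bigr)-\frac{v'}{h}\,\bar e^{01}\,\bar f_1-2u\,\bar e^{23}\,\Bigl(\textstyle\sum_\beta\vphi_{1\beta}\bar f_\beta\Bigr)+2v\,\bar e^{23}\,\bar f_1,
\]
at which point the bracketed sums collapse to $\bar x_1$ by the definition \eqref{E:xDual}, giving the stated formula. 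The main computational nuisance is exactly this bookkeeping, namely checking that the mixed $\vphi_{11}$-contributions correctly cancel against the $v$-term on $\bar f_1$.

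For smoothness at $\tau^{-1}\{-1\}$, I would argue from the explicit choice of $\lambda_-$ in Proposition \ref{P:metric}: since $\sigma\equiv 1$ on $(-\infty,\ve-1)$, one has $\lambda_-(t)=1+t$ in a neighbourhood of $t=-1$, whence $\lambda_-''\equiv 0$ and therefore $u'\equiv 0\equiv v'$ there. Consequently the potentially singular ratios $u'/h$ and $v'/h$ vanish identically near $\tau^{-1}\{-1\}$, while the remaining pieces $-2u\,\bar e^{23}\,\bar x_1+2v\,\bar e^{23}\,\bar f_1$ extend smoothly, since $\bar f_1$ and $\bar x_1$ descend from the $\Pina$-invariant vector fields $F_1$ and $X_1$ on $\sph^3\x\sph^3\x\DD^2_\ve$ identified in the proof of Proposition \ref{P:metric}, and $\bar e^{23}$ is a smooth $2$-form across the singular orbit.

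Finally, for the identification of the two summands with the factors of $\mf{so}(4)\cong\mf{so}(3)\oplus\mf{so}(3)$: under the double cover $\Sp(1)\x \Sp(1)\to \SO(4)$ acting on $\HH$ by $(p,q)\cdot x=pxq^{-1}$, the splitting corresponds to left versus right multiplication on the fibre $\sph^3$. The right-invariant fields $\bar x_\alpha$ infinitesimally generate left multiplication and hence take values in the first $\mf{so}(3)$ factor, while the left-invariant fields $\bar f_\alpha$ generate right multiplication and lie in the second. Thus the summand of $\Omega^\pi$ carrying $\bar x_1$ sits in one $\mf{so}(3)$ factor and the summand carrying $\bar f_1$ in the other, as claimed.
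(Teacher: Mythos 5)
Your proof is correct and essentially equivalent to the paper's: the paper computes $\Omega^\pi(X,Y)=-[X^\H,Y^\H]^\V$ directly from Lemma~\ref{L:LieBracketsM}, while you compute $\sum_i(d\bar f^i)^{\H\H}\otimes\bar f_i$ from Lemma~\ref{L:ExtDiff}, and these two are interchangeable via the Cartan formula (indeed Lemma~\ref{L:ExtDiff} is itself derived from Lemma~\ref{L:LieBracketsM} in the paper). The smoothness argument and the $\so(4)\cong\so(3)\oplus\so(3)$ identification also match the paper.
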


\begin{proof}
Let $\pr_\H : TM \to \H$ denote orthogonal projection.  The curvature $2$-form $\Omega^\pi = (d \omega^\pi) \circ \pr_\H$ is given by twice the O'Neill tensor of $\pi$, namely, if $X$ and $Y$ are vector fields on $\Mab$, then $\Omega^\pi (X, Y) = -[X^\H, Y^\H]^\V$.  The desired expression for $\Omega^\pi|_{M_-}$ now follows from Lemma \ref{L:LieBracketsM}, while the smoothness at $\tau^{-1}\{-1\}$ is a result of the vanishing of $u'$ and $v'$ on $\tau^{-1}(-1, \ve - 1)$.
\end{proof}

By taking the cone over the fibres of $\pi : \Mab \to \Bab$, one obtains a vector orbi-bundle $W\to B$ of rank~$4$, that is, $W = \Pab \x_{\sph^3 \x \sph^3} \HH$, where the action on the fibre is the usual one (cf.\ Lemma \ref{L:orbi}): 
$$
(\sph^3 \x \sph^3) \x \HH \to \HH \,:\, ((y_1, y_2), q) \mapsto y_1 \, q \, \bar y_2 \,.
$$ 
This action is effectively an $\SO(4)$ action and is defined in such a way that the vector fields $\bar f_\alpha$ (respectively, $\bar x_\alpha$) on $\Mab$ correspond to right (respectively, left) multiplication by $\alpha$ on $\HH$. 

In particular, for $\alpha = i$ and the identification of $\HH$ with $\C^2$ via $q = z + j w \mapsto (z,w)$, left and right multiplication on $\HH$ are given by the elements
\beq
\label{E:SO4elts}
L_{i} = \bpm
0 & -1 & 0 & 0 \\
1 & 0 & 0 & 0\\
0 & 0 & 0 & -1\\
0 & 0 & 1 & 0
\epm \,, 
\quad
R_{i} = \bpm
0 & -1 & 0 & 0 \\
1 & 0 & 0 & 0\\
0 & 0 & 0 & 1\\
0 & 0 & -1 & 0
\epm 
\in \so(4) \,.
\eeq

This orientation for $\HH$ has been chosen to agree that of \cite[(4.20), (4.21)]{Gojems}, where the case~$a_3 = q_-$, $b_3 = q_+$ and~$a_2=b_2=0$ was considered.  

\begin{lem}
\label{L:EulerPontW}
If $\nabla^W$ denotes the connection on $W$ induced by $\omega^\pi$, then the smooth Euler and Pontrjagin forms of $(W, \nabla^W)$ are given by
\begin{align*}
e(W,\nabla^W)
    &= \frac{u'u-v'v}{\pi^2h}\,\check e^{0123} \,, \\[1mm]
\frac{p_1}2 \bigl( W,\nabla^W \bigr)
    &=\frac{uu'+vv'}{\pi^2 h}\,\check e^{0123}\,.
\end{align*}
The corresponding Euler and Pontrjagin numbers are
\begin{align*}
\int_B e(W,\nabla^W)
    &=  \frac{1}{8 a_1^2 b_1^2} \det \bpm a_1^2 & b_1^2 \\ a_2^2 - a_3^2 & b_2^2 - b_3^2 \epm ,\\
\int_B \frac{p_1}2 \bigl( W,\nabla^W \bigr)
    &= \frac{1}{8 a_1^2 b_1^2} \det \bpm a_1^2 & b_1^2 \\ a_2^2 + a_3^2 & b_2^2 + b_3^2 \epm .
\end{align*}
\end{lem}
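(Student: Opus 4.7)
The plan is to translate the vertical-valued curvature $\Omega^\pi$ of Lemma \ref{L:Omegapi} into the $\so(4)$-valued curvature $\Omega^W$ of $(W,\nabla^W)$ via the identification of $\bar x_\alpha$ and $\bar f_\alpha$ with the generators $L_\alpha, R_\alpha \in \so(4)$ of the two summands in the splitting $\so(4) = \so(3)_L \oplus \so(3)_R$ mentioned just before the lemma. Under this identification,
$$
\Omega^W|_{M_-} = A\, L_i - B\, R_i, \quad A := \tfrac{u'}{h}\bar e^{01} - 2u\,\bar e^{23}, \quad B := \tfrac{v'}{h}\bar e^{01} - 2v\,\bar e^{23}.
$$

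From the explicit matrices in \eqref{E:SO4elts}, one checks $L_i^2 = R_i^2 = -\id$ and $L_i R_i = R_i L_i = \mathrm{diag}(-1,-1,1,1)$, so in particular $\tr(L_i R_i)=0$. This makes the Chern-Weil forms trivial to extract: $\tr((\Omega^W|_{M_-})^2) = -4(A^2 + B^2)$, while the block-diagonal structure of $L_i, R_i$ under $\HH = \Span\{e_1,e_2\} \oplus \Span\{e_3,e_4\}$ gives $\Pf(\Omega^W|_{M_-}) = \pm(A^2 - B^2)$, with the sign fixed by the chosen orientation on $\HH$. Using $(\bar e^{01})^2 = (\bar e^{23})^2 = 0$ and $\bar e^{01} \wedge \bar e^{23} = \bar e^{23} \wedge \bar e^{01} = \bar e^{0123}$, one finds $A^2 = -\frac{4uu'}{h}\bar e^{0123}$ and $B^2 = -\frac{4vv'}{h}\bar e^{0123}$. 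Substituting into \eqref{E:CWforms} produces the stated forms on $M_-$; as they involve only horizontal forms (no $\bar f^\alpha$ factors), they descend to $B_-$ with $\bar e^{0123}$ replaced by $\check e^{0123}$. The isometry $\Psi$ of Remark \ref{R:Isom} then extends the identities to $M_+$ via the swap $\ul a \leftrightarrow \ul b$.

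For the integrals, the plan is to push down to the orbit space $[-1,1]$ of the cohomogeneity-one $\sph^3$-action on $B$. Writing $\check e^{0123} = dt \wedge \check e^{123}$ with $\check e^{123} = h_- h_+\, e^{123}$, and using $\int_{\sph^3/Q} e^{123} = \pi^2/4$ for the round principal orbit, fiberwise integration produces a weight $\frac{\pi^2}{4} h_- h_+$ at each time $t$. Since $h_- h_+ = h$ on each piece $M_\pm$, this exactly cancels the $1/h$ in the Chern-Weil forms, leaving
$$
\int_B e(W,\nabla^W) = \tfrac{1}{4}\int_{-1}^1 (uu'-vv')\,dt = \tfrac{1}{8}\bigl[u^2-v^2\bigr]_{-1}^{\,1},
$$
and analogously for $\tfrac{p_1}{2}(W,\nabla^W)$. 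Differentiating the piecewise formula for $\lambda_\pm$ in Proposition \ref{P:metric} gives $\lambda_-'(-1) = 1$ and $\lambda_+'(1) = -1$, whence $(u,v)|_{t=-1} = (a_2/a_1,\, a_3/a_1)$ and $(u,v)|_{t=1} = (b_2/b_1,\, b_3/b_1)$; rewriting the resulting differences as $2 \times 2$ determinants yields the stated expressions.

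The main obstacle is keeping orientation and sign conventions consistent in the passage from the vertical-valued $\Omega^\pi$ to the $\so(4)$-valued $\Omega^W$: the exact assignment $\bar x_\alpha \leftrightarrow \pm L_\alpha$, $\bar f_\alpha \leftrightarrow \pm R_\alpha$, together with the chosen orientation on $\HH \cong \C^2$, must be compatible with the Pfaffian convention in \eqref{E:CWforms} and with the global orientation of $W$. Once this bookkeeping is pinned down, both identities reduce to the elementary one-variable integration above.
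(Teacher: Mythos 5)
Your proposal is correct and follows essentially the same route as the paper: write $R^W|_{B_-}$ in the form $A\,L_i - B\,R_i$ using Lemma \ref{L:Omegapi} and \eqref{E:SO4elts}, read off the Pontrjagin form from $\tr((R^W)^2)$ (using $L_i^2=R_i^2=-\id$, $\tr(L_iR_i)=0$) and the Euler form from the Pfaffian, then integrate by fibrewise reduction to $[-1,1]$ using the leaf volume $\tfrac{\pi^2 h}{4}$ and evaluating $u,v$ at the endpoints. The only place your account is no more precise than the paper's is the Pfaffian sign, which you explicitly flag as needing to be pinned down by the orientation on $\HH$ and $\Psi$ — exactly the same point the paper handles tersely.
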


\begin{proof}
By Lemma \ref{L:Omegapi} and \eqref{E:SO4elts}, the curvature $R^W \in \Omega^2(B; \End(W))$ of $\nabla^W$ is given by
$$
R^W|_{B_-} = \biggl(\frac{u'}h\,\check e^{01}-2u\,\check e^{23}\biggr)\, L_i
  -\biggl(\frac{v'}h\,\check e^{01}-2v\,\check e^{23}\biggr)\, R_i \,.
$$
This is clearly smooth at $\tau^{-1}\{-1\}$, since $u'$ and $v'$ vanish on $\tau^{-1}(-1, \ve - 1)$.

Given the isometry $\Psi$ of \eqref{E:Mirror}, the definition $e(W, \nabla^W) = \frac{1}{4 \pi^2} \Pf(R^W)$ yields the Euler form.  The expression for the Euler number follows directly from 
$$
\int_B e(W,\nabla^W) =  \int_{-1}^1\frac{uu'-vv'}4 \, dt \,,
$$  
which derives from the fact that the leaf $\sph^3/Q \cong \tau^{-1}\{t\} \In \Bab$ has volume $h \, \Vol(\sph^3)/8 = \frac{\pi^2 h}4$ with respect to $g_B$, see~\cite[Section~4.c]{Gojems}.

To compute the half-Pontrjagin form and number of~$(W, \nabla^W)$, recall that the elements~$\bar x_1$ and~$\bar f_1$ act on $\HH$ via $L_i$ and $R_i$ respectively.  As both square to $- \Id$ while, on the other hand, the product of the two is trace free, one obtains the desired expressions from $\frac{p_1}2 \bigl( W,\nabla^W \bigr) =\frac1{16\pi^2} \tr \bigl((R^W)^2\bigr)$ and
$$
\int_B \frac{p_1}2 \bigl( W,\nabla^W \bigr)
    =\int_{-1}^1\frac{uu'+vv'}4 \, dt \,.
$$
\end{proof}

The Pontrjagin form $p_1(TB, \nabla^{TB})$ has been computed in Lemma \ref{L:TBforms}.  In the adiabatic limit \eqref{E:AdLimP1Int} there is a second Pontrjagin form which must also be computed, namely, that of the vertical bundle $\V \to \Mab$.  The compression of the Levi-Civita connection $\nabla^{TM}$ on $(\Mab, g_M)$ to $\V$ yields a connection $\nabla^\V$ on $\V$ defined by $\nabla^\V_X V = (\nabla^{TM}_X V)^\V$ for $V \in \Gamma(\V)$ and $X \in TM$.

\begin{lem}
\label{L:PontV}
The smooth Pontrjagin form $p_1(\V, \nabla^\V)$ is given on $M_-$ by
\begin{align*}
p_1(\V, \nabla^\V)|_{M_-}  &=\frac{uu'+(uv)' \vphi_{11}+vv'}{\pi^2 h}\,\bar e^{0123}\\
    &\qquad
	+\frac{1}{2 \pi^2} \left( \biggl(\frac{u'}{h}\,\bar e^{01} - 2u \,\bar e^{23}\biggr)\,\bar x^{23}
        +\biggl(\frac{v'}{h}\,\bar e^{01} - 2 v \,\bar e^{23} \biggr)  \,\bar f^{23} \right) 
\end{align*}
On $M_+$, $p_1(\V, \nabla^\V)$ is given by replacing $\ul a$ with $\ul b$ and $\vphi_{11}$ with $\vphi_{22}$, and by pulling back via the isometry $\Psi$ of \eqref{E:Mirror}.
\end{lem}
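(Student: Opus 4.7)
I would compute $p_1(\V,\nabla^\V)=\frac{1}{8\pi^2}\tr((\Omega^\V)^2)$ directly in the orthonormal frame $(\bar f_1,\bar f_2,\bar f_3)$ of $\V|_{M_-}$, in three steps: determine the connection $1$-form $\omega^\V$, assemble the curvature $2$-form $\Omega^\V=d\omega^\V+\omega^\V\wedge\omega^\V$, and take the trace of its square. The $M_+$-case will then follow by pulling back via the isometry $\Psi$ of \eqref{E:Mirror}.

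For the first step, since $\nabla^\V$ is the $\V$-compression of $\nabla^{TM}$ and the fibres are totally geodesic by Lemma~\ref{L:totgeo}, the Koszul formula in an orthonormal frame simplifies to $2g_M(\nabla^\V_X\bar f_\alpha,\bar f_\beta)=g_M([X,\bar f_\alpha],\bar f_\beta)-g_M([X,\bar f_\beta],\bar f_\alpha)$. Reading off the Lie bracket table in Lemma~\ref{L:LieBracketsM}, I note that only $\bar e_1$ among the horizontal fields has non-zero bracket with a $\bar f_\alpha$, while the fibrewise brackets $[\bar f_\alpha,\bar f_\beta]=2\bar f_\gamma$ reproduce the Levi-Civita connection of the bi-invariant unit round $\sph^3$. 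This yields on $M_-$ the skew-symmetric matrix whose only independent entries are $\omega^\V_{12}=-\bar f^3$, $\omega^\V_{13}=\bar f^2$, and $\omega^\V_{23}=-\bar f^1-\tfrac{2v}{h}\bar e^1$, the twist in the last term arising from $[\bar e_1,\bar f_2]=\tfrac{2v}{h}\bar f_3$ and $[\bar e_1,\bar f_3]=-\tfrac{2v}{h}\bar f_2$.

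Next I would compute $\Omega^\V$ entry by entry with the help of Lemma~\ref{L:ExtDiff}. Several terms cancel: the $\tfrac{2v}{h}\bar e^1\bar f^2$ appearing in $d\bar f^3$ (via Lemma~\ref{L:ExtDiff}) cancels against the wedge product $\omega^\V_{13}\wedge\omega^\V_{32}$ in $\Omega^\V_{12}$, and likewise for $\Omega^\V_{13}$; in $\Omega^\V_{23}$ the $\bar e^{01}$-contributions of $d(\tfrac{2v}{h})\wedge\bar e^1$ and $\tfrac{2v}{h}\,d\bar e^1$ cancel among themselves. The outcome is
\begin{align*}
\Omega^\V_{12} &= -\tfrac{u'\vphi_{13}}{h}\bar e^{01}+2u\vphi_{13}\bar e^{23}+\bar f^{12},\\
\Omega^\V_{13} &= \tfrac{u'\vphi_{12}}{h}\bar e^{01}-2u\vphi_{12}\bar e^{23}+\bar f^{13},\\
\Omega^\V_{23} &= -\tfrac{u'\vphi_{11}+v'}{h}\bar e^{01}+2(u\vphi_{11}+v)\bar e^{23}+\bar f^{23}.
\end{align*}

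Finally, skew-symmetry gives $\tr((\Omega^\V)^2)=-2\sum_{\alpha<\beta}(\Omega^\V_{\alpha\beta})^2$. The coefficient of $\bar e^{0123}$ aggregates to $-\tfrac{4}{h}[uu'(\vphi_{12}^2+\vphi_{13}^2)+(u'\vphi_{11}+v')(u\vphi_{11}+v)]$, which collapses to $-\tfrac{4}{h}[uu'+(uv)'\vphi_{11}+vv']$ upon invoking the row-orthonormality relation $\vphi_{11}^2+\vphi_{12}^2+\vphi_{13}^2=1$ of $(\vphi_{\alpha\beta})\in\SO(3)$; this produces the first summand of the claim. The remaining $\bar e^{0*}\bar f^{**}$ terms reorganise via the identity $\vphi_{13}\bar f^{12}-\vphi_{12}\bar f^{13}+\vphi_{11}\bar f^{23}=\bar x^{23}$ from \eqref{E:xDual} into the second summand, upon noting that the coefficient of $\bar f^{23}$ splits between an $\bar x^{23}$-contribution and a leftover multiple of $\bar f^{23}$. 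The main technical obstacle is bookkeeping signs through the wedge products and recognising the $\SO(3)$-covariant combination that produces $\bar x^{23}$; once this is in place, the calculation is mechanical.
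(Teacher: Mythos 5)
Your argument reproduces the paper's proof step by step: the same Koszul-derived connection $1$-form $\omega^\V$, the same curvature matrix $\Omega^\V$ computed via Lemma~\ref{L:ExtDiff} with the same cancellations, and the same trace computation using the row-orthonormality $\vphi_{11}^2+\vphi_{12}^2+\vphi_{13}^2=1$ of $(\vphi_{\alpha\beta})\in\SO(3)$ together with the identity $\bar x^{23}=\vphi_{11}\bar f^{23}-\vphi_{12}\bar f^{13}+\vphi_{13}\bar f^{12}$ from~\eqref{E:xDual}. One small slip to flag: the ``simplified Koszul formula'' you state, $2g_M(\nabla^\V_X\bar f_\alpha,\bar f_\beta)=g_M([X,\bar f_\alpha],\bar f_\beta)-g_M([X,\bar f_\beta],\bar f_\alpha)$, drops the term $-g_M([\bar f_\alpha,\bar f_\beta],X)$ and is therefore only valid for horizontal $X$ (where that term vanishes because $[\bar f_\alpha,\bar f_\beta]\in\V$); for vertical $X$ you instead invoke the bi-invariant $\sph^3$ Levi-Civita connection, which is what actually yields the correct $\bar f^3$, $\bar f^2$, $\bar f^1$ entries, so the final $\omega^\V$ agrees with the paper's.
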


\begin{proof}
With respect to the orthonormal basis $\bar f_1, \bar f_2, \bar f_3$ of $\V$, the connection $1$-form is given by 
$$
\omega^\V = (g_M(\bar f_\alpha, \nabla^\V_{\cdot} \bar f_\beta))_{\alpha, \beta}.
$$  
On $M_- \backslash \tau^{-1}\{-1\}$ it then follows via the Koszul formula and Lemma \ref{L:LieBracketsM} that
$$
\omega^\V = 
\bpm
0& -\bar f^3 & \bar f^2\\
    \bar f^3 & 0 & - \frac{2v}h\,\bar e^1 - \bar f^1\\
    -\bar f^2 &  \frac{2v}h\,\bar e^1 + \bar f^1 & 0
\epm .
$$
By applying Lemma \ref{L:ExtDiff} one derives the curvature $2$-form to be
\begin{align*}
\Omega^\V &= d \omega^\V + \omega^\V \wedge \omega^\V \\
&= \bpm
0 \phantom{\Bigg|} & \stack{\bar f^{12} - \smash{\frac{u' \vphi_{13}}h} \, \bar e^{01}}{+ 2u \vphi_{13} \bar e^{23}}
    &\stack{ \bar f^{13} + \smash{\frac{u' \vphi_{12}}h} \, \bar e^{01}}{- 2u \vphi_{12} \bar e^{23}}\\
    \stack{-\bar f^{12} + \smash{\frac{u' \vphi_{13}}h} \, \bar e^{01}}{- 2u \vphi_{13} \bar e^{23}} & 0 \phantom{\Bigg|} &
    \stack{ \bar f^{23} - \smash{\frac{u' \vphi_{11} + v'}h} \, \bar e^{01}}{+ 2(u \vphi_{11} + v) \, \bar e^{23}}\\
    \stack{-\bar f^{13} - \smash{\frac{u' \vphi_{12}}h} \, \bar e^{01}}{+ 2u \vphi_{12} \bar e^{23}} &
    \stack{-\bar f^{23} + \smash{\frac{u' \vphi_{11} + v'}h} \, \bar e^{01}}{- 2(u \vphi_{11} + v) \, \bar e^{23}} & 0 \phantom{\Bigg|}
\epm 
\end{align*}

Since $u'$ and $v'$ vanish on $\tau^{-1}(-1, \ve -1)$, it is clear that $\Omega^\V$ can be extended smoothly to $\tau^{-1}\{-1\}$, hence that the Pontrjagin form $p_1(\V, \nabla^\V)$ is given on $M_-$ by

\begin{multline*}
  p_1\bigl(\V, \nabla^\V \bigr)|_{M_-}
  =\frac1{8\pi^2}\,\tr\bigl((\Omega^\V)^2\bigr)\\
  \begin{aligned}
    &=\frac1{4\pi^2}\,\Biggl(\biggl(\frac{4uu'}h(\vphi_{13}^2+\vphi_{12}^2+\vphi_{11}^2)+\frac{4(uv)'}h\vphi_{11}+\frac{4vv'}h\biggr)\,\bar e^{0123}\\
    &\kern4em
    +\bar e^{01}\,\biggl(\frac{2u'}h
	\,\bigl(\vphi_{11} \bar f^{23} - \vphi_{12}\bar f^{13} + \vphi_{13} \bar f^{12}\bigr)
	+\frac{2v'}h \, \bar f^{23}\biggr)\\
    &\kern4em\
	- \bar e^{23} \, \Bigl(4u\,\bigl(\vphi_{11} \bar f^{23} - \vphi_{12} \bar f^{13} + \vphi_{13} \bar f^{12} \bigr) + 4v \, \bar f^{23} \Bigr)\Biggr)\\
    &=\frac{uu' + (uv)' \vphi_{11} + vv'}{\pi^2 h} \, \bar e^{0123}\\
    &\qquad
	+\frac{1}{2 \pi^2} \left( \biggl(\frac{u'}{h}\,\bar e^{01} - 2u \,\bar e^{23}\biggr)\,\bar x^{23}
        +\biggl(\frac{v'}{h}\,\bar e^{01} - 2 v \,\bar e^{23} \biggr)  \,\bar f^{23} \right) 
  \end{aligned}
\end{multline*}
as claimed.
\end{proof}

\begin{rem}
\label{R:nmInvs}
Note that $\pi^*W$ is stably isomorphic to the vertical bundle $\V$, since $\pi : \Mab \to \Bab$ is the unit-sphere orbi-bundle associated to the vector orbi-bundle $W \to \Bab$.  Moreover, both the order $|n|$ of $H^4(\Mab; \Z)$ and the number $m$ appearing in the expression for the Eells-Kuiper invariant given in Theorem \ref{T:thmC} can be written in terms of orbifold characteristic numbers.  Indeed, from Lemma \ref{L:EulerPontW} one has 
$$
\int_B e(W, \nabla^W) = \frac{n}{a_1^2 b_1^2} \,,
$$
while, on the other hand,  Lemmas \ref{L:TBforms} and \ref{L:EulerPontW} yield
$$
\int_B \frac{p_1}{2}(TB \oplus W, \nabla^{TB} \oplus \nabla^W) 
= \frac{1}{8 a_1^2 b_1^2} \det \bpm a_1^2 & b_1^2 \\ a_2^2 + a_3^2 + 8 & b_2^2 + b_3^2 + 8 \epm = m \,.
$$ 
Given that both $TB$ and $W$ are orbi-bundles, there is no reason to expect that $\frac{n}{a_1^2 b_1^2}$ and $m$ should be integers.  However, whenever $a_1 = b_1 = 1$, that is, whenever $\pi$ is a classical $\sph^3$-bundle over $\sph^4$, these are integers.
\end{rem}

\medskip
\subsection{Evaluation of the Pontrjagin term}  \hspace*{1mm}\\
\label{SS:AdLimPont}

Recall from \eqref{E:AdLimP1Int} that the adiabatic limit of the Pontrjagin term in \eqref{E:EKeta} is given by
\begin{align*}
\lim_{\ve \to 0} \int_M p_1(TM, \nabla^{TM, \ve}) \wedge \, & \hat p_1(TM, \nabla^{TM, \ve})  \\
\nonumber 
&= \int_M \left(p_1(\V, \nabla^\V) + \pi^* p_1(TB, \nabla^{TB})\right) \\
&\hspace*{20mm} \wedge \left(\hat p_1(\V, \nabla^\V) + \hat p_1(\pi^* TB, \nabla^{TB}) \right) ,
\end{align*}
where
\begin{align*}
d \hat p_1(\V, \nabla^\V) &= p_1(\V, \nabla^\V), \\
d \hat p_1( \pi^* TB, \nabla^{TB}) &= \pi^* p_1(TB, \nabla^{TB}).
\end{align*} 
By Lemmas \ref{L:TBforms} and \ref{L:PontV}, only the $3$-form 
$$
\hat p_1 := \hat p_1(\V, \nabla^\V) + \hat p_1( \pi^* TB, \nabla^{TB})
$$ 
in the integrand remains to be determined.  Given an exact form $\zeta$, a form $\xi$ with $d \xi = \zeta$ will be called a \emph{primitive} of $\zeta$.  

\begin{lem}
\label{L:StabIsom}
On $M_-$ one has the identity
$$
p_1 \bigl(\V, \nabla^\V \bigr)|_{M_-} = \pi^* p_1\bigl(W, \nabla^W \bigr)|_{M_-} + d\xi_- \,,
$$
where
\begin{align*}
\xi_- &:= \frac{1}{4\pi^2} \left(\biggl(\frac{u'}h\,\bar e^{01}-2u\,\bar e^{23}\biggr)\,\bar x^1
  -\biggl(\frac{v'}h\,\bar e^{01}-2v\,\bar e^{23}\biggr) \,\bar f^1 \right) \,. 
\end{align*}
In particular, $\xi_-$ is smooth at $t = -1$ and $\xi_-|_{\tau^{-1}(-\ve, \ve)} \equiv 0$.  After switching the roles of $\ul a$ and $\ul b$, and pulling back by $\Psi$ \eqref{E:Mirror}, one obtains on $M_+$ a similar smooth primitive $\xi_+$ of $p_1 \bigl(\V, \nabla^\V \bigr)|_{M_+} - \pi^* p_1\bigl(W, \nabla^W \bigr)|_{M_+}$.
\end{lem}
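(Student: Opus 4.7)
The identity to be proved is a Chern--Simons type transgression: over $M$, the pullback $\pi^* W$ splits as $\V\oplus\underline{\R}$, with the trivial summand spanned by the outward unit normal to the $\sph^3$-fibres, so $p_1(\V,\nabla^\V)$ and $\pi^*p_1(W,\nabla^W)$ are cohomologous and the lemma exhibits an explicit primitive of their difference.  The plan is to verify this by a direct calculation in the frame $(\bar e_\mu, \bar f_\alpha)$ of Proposition~\ref{P:metric}.  First I will introduce the shorthand
$$
\alpha := \frac{u'}{h}\,\bar e^{01} - 2u\,\bar e^{23}, \qquad \beta := \frac{v'}{h}\,\bar e^{01} - 2v\,\bar e^{23},
$$
both pulled back from $B_-$.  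In this notation Lemma~\ref{L:Omegapi} reads $\Omega^\pi = \alpha\otimes\bar x_1 - \beta\otimes\bar f_1$, the curvature $R^W = \alpha\,L_i - \beta\,R_i$ via \eqref{E:SO4elts}, and $\xi_- = \frac{1}{4\pi^2}(\alpha\wedge\bar x^1 - \beta\wedge\bar f^1)$.

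The first task is to check, via Lemma~\ref{L:ExtDiff}, that $d\alpha = d\beta = 0$: the pieces $d(\tfrac{u'}{h}\bar e^{01})$ and $d(2u\,\bar e^{23})$ both reduce to $2u'\bar e^{023}$ and cancel.  Next I will rewrite $d\bar f^1$ and $d\bar x^1$ in terms of $\alpha$, $\beta$ and the quadratic vertical $2$-forms $\bar f^{23}$, $\bar x^{23}$.  From Lemma~\ref{L:ExtDiff} one reads off immediately that
$$
d\bar f^1 = \vphi_{11}\,\alpha - \beta - 2\,\bar f^{23}.
$$
The parallel identity
$$
d\bar x^1 = \alpha - \vphi_{11}\,\beta + 2\,\bar x^{23}
$$
will be the computational heart of the proof, and, I expect, the main obstacle: expanding $\bar x^1 = \sum_\beta \vphi_{1\beta}\bar f^\beta$ via \eqref{E:phiderivs} and Lemma~\ref{L:ExtDiff}, the $\bar e^1\wedge\bar f^\beta$ terms generated by $\bar e_1(\vphi_{1\beta}) = \tfrac{v}{h}\,f_1(\vphi_{1\beta})$ must cancel exactly against the $\tfrac{v}{h}\,\bar e^1\wedge\bar f^\beta$ contributions coming from $\sum_\beta\vphi_{1\beta}\,d\bar f^\beta$, while the $\bar f^{ij}$ terms collapse via the orthogonality relation $\sum_\beta \vphi_{1\beta}^2 = 1$ for a row of $(\vphi_{\alpha\beta})\in\SO(3)$ together with the definition \eqref{E:xDual} of $\bar x^{23}$.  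Once these cancellations are unpacked the rest is mechanical.

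With $d\alpha = d\beta = 0$ in hand, $d\xi_- = \tfrac{1}{4\pi^2}(\alpha\wedge d\bar x^1 - \beta\wedge d\bar f^1)$.  Substituting the two structure equations and using the elementary wedges $\alpha\wedge\alpha = -\tfrac{4uu'}{h}\bar e^{0123}$, $\beta\wedge\beta = -\tfrac{4vv'}{h}\bar e^{0123}$, $\alpha\wedge\beta = -\tfrac{2(uv)'}{h}\bar e^{0123}$ will yield
$$
d\xi_- = \frac{-uu' + (uv)'\vphi_{11} - vv'}{\pi^2 h}\,\bar e^{0123} + \frac{1}{2\pi^2}\bigl(\alpha\wedge\bar x^{23} + \beta\wedge\bar f^{23}\bigr),
$$
which should be precisely the difference of Lemma~\ref{L:PontV} and Lemma~\ref{L:EulerPontW}: on the $W$ side, $\tr(L_i^2) = \tr(R_i^2) = -4$ and $\tr(L_iR_i) = 0$ produce $\pi^*p_1(W,\nabla^W) = -\tfrac{1}{2\pi^2}(\alpha\wedge\alpha + \beta\wedge\beta) = \tfrac{2(uu'+vv')}{\pi^2 h}\bar e^{0123}$, and subtracting this from the expression in Lemma~\ref{L:PontV} matches $d\xi_-$ term by term.

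For smoothness, on $\tau^{-1}(-1,\ve-1)$ the function $\lambda_-$ is affine, so $u' = v' \equiv 0$; the factors $u'/h$ and $v'/h$ appearing in $\alpha,\beta$ then vanish identically, while $u$ and $v$ are constant there, so $\xi_-$ extends smoothly across $\tau^{-1}\{-1\}$.  On $\tau^{-1}(-\ve,\ve)$ the function $\lambda_-$ is constant, hence $u = v = u' = v' = 0$, and $\xi_-$ vanishes identically on this overlap.  Finally, the corresponding primitive $\xi_+$ on $M_+$ is obtained by pulling back through the orientation-reversing isometry $\Psi$ of \eqref{E:Mirror} the form constructed above on the $M_-$ piece of $M^7_{\ul b,\ul a}$ (in which the roles of $\ul a$ and $\ul b$ have been swapped); since $d$ commutes with $\Psi^*$ and $\Psi$ intertwines the two vertical bundles, the identity for $\xi_+$ transfers directly from the corresponding identity for $\xi_-$ on $M^7_{\ul b,\ul a}$.
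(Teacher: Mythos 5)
Your argument is correct and runs along the same lines as the paper's: both reduce the identity to the structure equation $d\bar x^1 = \frac{u'-v'\vphi_{11}}{h}\bar e^{01} - 2(u-v\vphi_{11})\bar e^{23} + 2\bar x^{23}$, then match the wedge products against Lemmas~\ref{L:EulerPontW} and~\ref{L:PontV}, with the same smoothness reasoning from the vanishing of $u'$, $v'$ near $t=-1$ and on the overlap. Your $\alpha,\beta$ shorthand and the explicit observation $d\alpha = d\beta = 0$ tidy the bookkeeping but do not change the substance of the computation.
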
 

\begin{proof}
The existence of such a form $\xi_-$ follows from Remark \ref{R:nmInvs}, since the Pontrjagin classes of stably isomorphic bundles must agree, hence their representatives differ by an exact form.

In order to compute $d \xi_-$, some further exterior differentials are needed.  Given $d \vphi_{\alpha \beta} (v) = v(\vphi_{\alpha \beta})$, one derives from  \eqref{E:phiderivs} that
 \begin{align*}
    d \vphi_{11}
    &= 2 \vphi_{12} \bar f^3 - 2 \vphi_{13} \bar f^2\,,\\
    d \vphi_{12}
    &= 2 \frac vh \vphi_{13} \bar e^1 + 2 \vphi_{13} \bar f^1 - 2 \vphi_{11} \bar f^3\,,\\
    d \vphi_{13}
    &= - 2 \frac vh \vphi_{12} \bar e^1 + 2 \vphi_{11} \bar f^2 - 2 \vphi_{12} \bar f^1\,,   
\intertext{which, together with Lemma \ref{L:ExtDiff} and \eqref{E:xDual}, yield}
     d \bar x^1
    &= \frac{u'-v'\vphi_{11}} h\, \bar e^{01}
    - 2 (u - v \vphi_{11}) \, \bar e^{23} + 2 \bar x^{23}\,.
  \end{align*}
Lemma \ref{L:ExtDiff} now gives
\begin{align}
    \begin{split}
    \label{E:xiDiff}
      d \biggl(\Bigl( \frac{u'}h \, \bar e^{01} - 2u \, \bar e^{23} \Bigr)
      \, \bar x^1\biggr)
      &=\frac{2(u v)' \vphi_{11} - 4 u u'} h \, \bar e^{0123}
      + 2\Bigl(\frac{u'} h \bar e^{01} - 2u \, \bar e^{23} \Bigr) \bar x^{23}\,,\\
      d \biggl(\Bigl( \frac{v'}h \, \bar e^{01} - 2v \, \bar e^{23} \Bigr)
      \, \bar f^1\biggr)
      &=\frac{4 v v' - 2 (u v)' \vphi_{11}} h \, \bar e^{0123}
      - 2 \Bigl(\frac{v'}h \bar e^{01} - 2v \, \bar e^{23} \Bigr) \bar f^{23}\,,
    \end{split}
\end{align}
which, with Lemmas \ref{L:EulerPontW} and \ref{L:PontV}, yields
\begin{align*}
d \xi_-  &= - \left(\frac{uu' - (uv)' \vphi_{11} + vv'}{\pi^2 h}\right) \bar e^{0123}\\
    &\hspace*{20mm}
	+\frac{1}{2 \pi^2} \left( \biggl(\frac{u'}{h}\,\bar e^{01} - 2u \,\bar e^{23}\biggr)\,\bar x^{23}
        +\biggl(\frac{v'}{h}\,\bar e^{01} - 2 v \,\bar e^{23} \biggr)  \,\bar f^{23} \right)  \\
&= p_1 \bigl(\V, \nabla^\V \bigr)|_{M_-} - \pi^* p_1\bigl(W, \nabla^W \bigr)|_{M_-} 
\end{align*}
as desired.  The smoothness of $\xi_-$ at $t = -1$ now follows from the vanishing of $h''$, $u'$ and $v'$ on $\tau^{-1}(-1, \ve -1)$.
\end{proof}

\begin{lem}
\label{L:P1prim}
The $3$-form
\begin{multline*}
\kappa_- := \xi_- +  \frac{1}{\pi^2}(h^3 - 2h) \bar e^{123} \\
+ \frac{1}{2 \pi^2 h} \left( (h')^2 + 2(u^2 + v^2) - 2\left( \frac{a_2^2 + a_3^2 + 8}{a_1^2} \right) \right) \bar e^{123}
\end{multline*}
on $M_-$ is a smooth primitive of $p_1(\V, \nabla^\V)|_{M_-} + \pi^* p_1(TB, \nabla^{TB})|_{M_-}$, that is,
$$
d \kappa_- = p_1(\V, \nabla^\V)|_{M_-} + \pi^* p_1(TB, \nabla^{TB})|_{M_-} \,.
$$
By swapping $\ul a$ with $\ul b$ and pulling back via the isometry $\Psi$ of \eqref{E:Mirror}, one obtains an analogous $3$-form $\kappa_+$ on $M_+$ which is a smooth primitive of $p_1(\V, \nabla^\V)|_{M_+} + \pi^* p_1(TB, \nabla^{TB})|_{M_+}$.

In particular, on $M_- \cap M_+ = \tau^{-1}(-\ve, \ve)$ one has
$$
\kappa_-|_{\tau^{-1}(-\ve, \ve)} - \kappa_+|_{\tau^{-1}(-\ve, \ve)} = \frac{8 m}{\pi^2} \, \bar e^{123} \,.
$$
\end{lem}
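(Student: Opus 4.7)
The plan is to combine Lemma \ref{L:StabIsom} with an explicit primitive for $\pi^* p_1(W,\nabla^W) + \pi^* p_1(TB, \nabla^{TB})$, since then $\xi_-$ plus that primitive will be a primitive for $p_1(\V,\nabla^\V) + \pi^* p_1(TB, \nabla^{TB})$. Using Lemmas \ref{L:TBforms} and \ref{L:EulerPontW}, this combined Pontrjagin form equals
\[
\frac{1}{\pi^2}\left(\frac{2(uu'+vv')}{h} + \frac{h' h''}{h} + 4h' h^2 - 4h'\right)\bar e^{0123}
\]
on $M_-$. So the main task is to exhibit a smooth 3-form on $M_-$ whose exterior derivative is this.

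First I would compute, using Lemma \ref{L:ExtDiff}, the single identity $d\bar e^{123} = \frac{h'}{h}\,\bar e^{0123}$, from which two Leibniz computations follow: for any smooth $f=f(t)$,
\[
d\bigl(h^n\,\bar e^{123}\bigr) = (n+1)\,h^{n-1} h'\,\bar e^{0123},\qquad d\Bigl(\tfrac{f}{h}\,\bar e^{123}\Bigr) = \tfrac{f'}{h}\,\bar e^{0123},
\]
the second relying on the cancellation $(-f h'/h^2 + f h'/h^2)\bar e^{0123}=0$. The first identity, with $n=3$ and $n=1$, handles the terms $4h'h^2 - 4h'$ by means of $(h^3-2h)\,\bar e^{123}$; the second, applied to $f = \tfrac12 (h')^2 + u^2 + v^2$, handles the terms $\tfrac{h'h''}{h} + \tfrac{2(uu'+vv')}{h}$. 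Adding $\xi_-$ from Lemma \ref{L:StabIsom} then produces a form whose differential is exactly the combined Pontrjagin form.

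The main obstacle, and the reason for the explicit constant $-\tfrac{a_2^2+a_3^2+8}{a_1^2}$ in the definition of $\kappa_-$, is smoothness at the singular leaf $\tau^{-1}\{-1\}$, where $h$ vanishes. A primitive of the form $\tfrac{f}{h}\bar e^{123}$ is only smooth there if $f$ itself vanishes at $t=-1$. From the definitions of $\lambda_-$, $u_-$, $v_-$ one evaluates $h(-1)=0$, $h'(-1)=4/|a_1|$, $u(-1)=a_2/a_1$, $v(-1)=a_3/a_1$, so that
\[
\Bigl(\tfrac12(h')^2 + u^2 + v^2\Bigr)\bigg|_{t=-1} = \frac{a_2^2+a_3^2+8}{a_1^2}.
\]
Subtracting this constant does not change the exterior derivative but does render the quotient smooth across $t=-1$. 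Together with the smoothness of $\xi_-$ and the polynomial term at $t=-1$ (from Lemma \ref{L:StabIsom}), this gives the first claim. The case of $\kappa_+$ is handled symmetrically via the isometry $\Psi$ of \eqref{E:Mirror}.

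For the difference formula on $M_- \cap M_+ = \tau^{-1}(-\ve,\ve)$, observe that in this region the cutoff $\sigma$ yields $h\equiv 1$, $h'\equiv 0$, $u\equiv 0$, $v\equiv 0$, while $\xi_\pm \equiv 0$ by Lemma \ref{L:StabIsom}. Direct substitution then gives
\[
\kappa_-|_{\tau^{-1}(-\ve,\ve)} = -\frac{1}{\pi^2}\left(1 + \frac{a_2^2+a_3^2+8}{a_1^2}\right)\bar e^{123},
\]
with an analogous expression for $\kappa_+$ in terms of $\ul b$, and the stated identity follows after recognising the combination as $\tfrac{8m}{\pi^2}\,\bar e^{123}$ via Remark \ref{R:nmInvs}.
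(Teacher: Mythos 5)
Your overall strategy is the same as the paper's: use Lemma \ref{L:StabIsom} to write $p_1(\V,\nabla^\V) = \pi^*p_1(W,\nabla^W) + d\xi_-$, find an explicit primitive of the basic $4$-form $\pi^*p_1(W,\nabla^W)+\pi^*p_1(TB,\nabla^{TB})$ using $d\bar e^{123}=\frac{h'}{h}\bar e^{0123}$ and the resulting Leibniz identities, then evaluate the difference $\kappa_--\kappa_+$ on the overlap. The algebra and the boundary evaluations are correct, and the paper does exactly this.

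The one place where your argument has a genuine gap is the smoothness of the term $\frac{f}{2\pi^2 h}\,\bar e^{123}$ across $\tau^{-1}\{-1\}$. You assert that subtracting the constant $\frac{a_2^2+a_3^2+8}{a_1^2}$ ``renders the quotient smooth'' because the numerator $f:=(h')^2+2(u^2+v^2)-2\frac{a_2^2+a_3^2+8}{a_1^2}$ vanishes at $t=-1$. But vanishing of $f$ at a single point is not sufficient: $h$ also vanishes (linearly) at $t=-1$, and more seriously the coframe form $\bar e^1$ has a first-order pole there (only $h\,\bar e^1 = g_M(h\bar e_1,\cdot)$ extends smoothly, cf.\ \eqref{E:compatibility}), so $\bar e^{123}$ is not a smooth form near the singular leaf. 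Writing $\frac{f}{h}\bar e^{123} = \frac{f}{h^2}\,(h\bar e^1)\wedge\bar e^{23}$, one would in general need $f$ to vanish to second order. What actually saves the argument — and what the paper's proof uses — is that on the whole collar $\tau^{-1}(-1,\ve-1)$ one has $\sigma\equiv1$, so $h'$, $u$ and $v$ are \emph{constant} there and $f\equiv 0$ identically, not just at $t=-1$. Thus the term is the zero form near the singular leaf and smoothness is immediate. Likewise, the smoothness of $(h^3-2h)\bar e^{123}=(h^2-2)\,(h\bar e^1)\wedge\bar e^{23}$ is not a consequence of Lemma \ref{L:StabIsom} as you suggest (that lemma concerns $\xi_-$ only), but follows from $h\bar e^1$ and $\bar e^{23}$ extending smoothly. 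Once these two points are supplied your proof is complete and agrees with the paper's.
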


\begin{proof}
The smoothness of $\kappa_-$ along $\tau^{-1} \{-1\}$ is a consequence of the smoothness of the forms $h \, \bar e^1 = g_M(h \, \bar e_1, \cdot )$ and $\bar e^{23}$ at the singular leaf, together with Lemma \ref{L:StabIsom} and the vanishing of the second $\bar e^{123}$ term on $\tau^{-1}(-1, \ve -1)$.

By Lemmas \ref{L:TBforms}, \ref{L:EulerPontW} and \ref{L:StabIsom}, as well as the definitions of $u$ and $v$, one has
\begin{align*}
p_1(\V, \nabla^\V)|_{M_-} &+ \pi^* p_1(TB, \nabla^{TB})|_{M_-}  \\
&= d \xi_- + \pi^*p_1(W, \nabla^W)|_{M_-} + \pi^* p_1(TB, \nabla^{TB})|_{M_-} \\
&= d \xi_- + \left( \frac{(a_2^2 + a_3^2 + 8) h' h''}{8 \pi^2 h} + \frac{4h'(h^2 - 1)}{\pi^2} \right) \bar e^{0123} \\
&= d \xi_- + \frac{1}{\pi^2} \, d \left( (h^3 - 2h) \bar e^{123} \right) \\
&\hspace*{20mm} 
+ d \left(\left(\frac{(a_2^2 + a_3^2 + 8)}{8 \pi^2}\right) \left( \frac{(h')^2}{2h} - \frac{8}{a_1^2 h} \right) \bar e^{123} \right) \\
&= d \kappa_- \,,
\end{align*}
where the second-last equality follows by applying Lemma \ref{L:ExtDiff} to obtain $d \bar e^{123} = \frac{h'}{h} \bar e^{0123}$.
\end{proof}

As a consequence of Lemma \ref{L:P1prim}, to obtain a smooth, global primitive $\hat p_1$ for $p_1(\V, \nabla^\V) + \pi^* p_1(TB, \nabla^{TB})$ it suffices to find closed $3$-forms $\nu_-$ and $\nu_+$ on $M_-$ and $M_+$ respectively, such that $(\kappa_- + \nu_-) - (\kappa_+ + \nu_+ ) = 0$ on $\tau^{-1}(-\ve, \ve)$.

\begin{lem}
\label{L:closedform}
The $3$-form 
\begin{align*}
\nu_- &:= \frac{a_1^2 b_1^2 m}{\pi^2 n} \left(
\bar f^{123} - \frac{1}{h} \left( (u^2 - v^2) - \frac{a_2^2 - a_3^2}{a_1^2 }\right) \bar e^{123} \right. \\
&\hspace*{25mm}
\left. - \frac{1}{2} \left(\biggl(\frac{u'}h\,\bar e^{01}-2u\,\bar e^{23}\biggr)\,\bar x^1
  + \biggl(\frac{v'}h\,\bar e^{01}-2v\,\bar e^{23}\biggr) \,\bar f^1 \right)
\right)
\end{align*}
on $M_-$ is smooth and closed.  Moreover, if $\nu_+$ is the corresponding closed $3$-form on $M_+$ obtained by swapping $\ul a$ and $\ul b$ and pulling back via the isometry $\Psi$ of \eqref{E:Mirror}, then
$$
\nu_-|_{\tau^{-1}(-\ve, \ve)} - \nu_+|_{\tau^{-1}(-\ve, \ve)} = - \frac{8m}{\pi^2} \bar e^{123} \,.
$$
\end{lem}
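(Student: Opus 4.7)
The first summand $\bar f^{123}$ is smooth on all of $\Mab$ since the $\bar f_\alpha$ are globally defined vector fields. For the $\bar e^{123}$-summand, note that by \eqref{E:uvh} one has $u^2-v^2 = \frac{a_2^2-a_3^2}{16}(h')^2$ on $M_-$; on $\tau^{-1}(-1,\ve-1)$, where $\sigma\equiv 1$, the derivative $h'$ equals the constant $4/|a_1|$, so $(u^2-v^2)-\frac{a_2^2-a_3^2}{a_1^2}$ vanishes identically on a neighbourhood of $\tau^{-1}\{-1\}$, and the quotient by $h$ extends smoothly through the singular leaf. The remaining summands involving $\bar x^1$ and $\bar f^1$ have exactly the same shape as $\xi_-$ in Lemma \ref{L:StabIsom} (up to a sign between the two pieces), and smoothness at $\tau^{-1}\{-1\}$ follows by the same argument, while away from the singular leaves there is nothing to check.

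\textbf{Closedness.} This is the substantive calculation. Using Lemma \ref{L:ExtDiff} one first expands $d\bar f^{123}$; most cross-terms annihilate one another, and the surviving $\vphi_{1\beta}\bar f^{\beta\gamma}$-combinations reassemble, via the definition \eqref{E:xDual} of $\bar x^{23}$, into
\[
d\bar f^{123} = \Bigl(\tfrac{u'}{h}\bar e^{01}-2u\,\bar e^{23}\Bigr)\bar x^{23} - \Bigl(\tfrac{v'}{h}\bar e^{01}-2v\,\bar e^{23}\Bigr)\bar f^{23}.
\]
Next, using $d\bar e^{123}=(h'/h)\bar e^{0123}$ from Lemma \ref{L:ExtDiff}, the exterior derivative of the $\frac{1}{h}((u^2-v^2)-\frac{a_2^2-a_3^2}{a_1^2})\,\bar e^{123}$ summand simplifies (after the $h'/h$ contributions cancel) to $\frac{2(uu'-vv')}{h}\bar e^{0123}$. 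The derivatives of the third pair of summands can be read off directly from \eqref{E:xiDiff}. Summing the three contributions with the signs dictated by $\nu_-$, the $\bar e^{0123}$-pieces from the second and third groups cancel exactly, as do the $\bar x^{23}$- and $\bar f^{23}$-pieces from the first and third groups, giving $d\nu_-=0$. The principal obstacle is recognising the repackaging in $d\bar f^{123}$ above: it is this coincidence that forces the specific normalisation $\frac{a_1^2b_1^2m}{\pi^2n}$ appearing in the definition of $\nu_-$.

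\textbf{Restriction formula.} On $\tau^{-1}(-\ve,\ve)$ both $\sigma(t)$ and $\sigma(-t)$ vanish, so $h\equiv 1$ and $h'\equiv 0$, which in turn forces $u\equiv v\equiv 0$ by \eqref{E:uvh}. The last two lines in the definition of $\nu_-$ therefore collapse, leaving
\[
\nu_-|_{\tau^{-1}(-\ve,\ve)} = \tfrac{a_1^2b_1^2m}{\pi^2n}\Bigl(\bar f^{123} + \tfrac{a_2^2-a_3^2}{a_1^2}\bar e^{123}\Bigr).
\]
Swapping $\ul a\leftrightarrow\ul b$ sends $m\mapsto -m$ and $n\mapsto -n$, so the prefactor is invariant, and from the $\Psi_*$-action recorded in Remark \ref{R:Isom} one reads off $\Psi^*\bar f^{123}=\bar f^{123}$ and $\Psi^*\bar e^{123}=\bar e^{123}$. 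The analogous expression for $\nu_+$ therefore has $b$'s in place of $a$'s, and the difference is $\frac{m}{\pi^2n}\bigl(b_1^2(a_2^2-a_3^2)-a_1^2(b_2^2-b_3^2)\bigr)\bar e^{123} = -\frac{8m}{\pi^2}\bar e^{123}$ by the defining identity for $n$.
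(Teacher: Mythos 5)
Your proof is correct and follows essentially the same route as the paper's: smoothness from the vanishing of $u'$, $v'$ and the $\bar e^{123}$-coefficient near the singular leaf, closedness from the displayed identities of Lemma \ref{L:ExtDiff}, \eqref{E:xiDiff} and \eqref{E:df123}, and the restriction formula by evaluating on $\tau^{-1}(-\ve,\ve)$ where $h\equiv1$, $u\equiv v\equiv 0$ and then invoking Remark \ref{R:Isom}. One small aside should be corrected: the prefactor $\frac{a_1^2 b_1^2 m}{\pi^2 n}$ is not forced by the closedness computation (the bracketed $3$-form is closed for any constant multiple); it is chosen so that the resulting gluing $\kappa_-+\nu_-=\kappa_++\nu_+$ on the overlap holds, as in Lemma \ref{L:P1prim} and Proposition \ref{P:P1prim}.
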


\begin{proof}
The smoothness of $\nu_-$ at $\tau^{-1}\{-1\}$ is clear, since $u'$, $v'$ and the coefficient of $\bar e^{123}$ all vanish identically on $\tau^{-1}(-1, \ve - 1)$.

From Lemma \ref{L:ExtDiff} it can be shown that
\beq
\label{E:df123}
d \bar f^{123} = \biggl(\frac{u'}h\,\bar e^{01}-2u\,\bar e^{23}\biggr)\,\bar x^{23}
  -\biggl(\frac{v'}h\,\bar e^{01}-2v\,\bar e^{23}\biggr) \,\bar f^{23} \,.
\eeq
Together with \eqref{E:xiDiff} and the identity $d \bar e^{123} = \frac{h'}{h} \bar e^{0123}$, it is now easy to confirm that $d \nu_- = 0$.
\end{proof}

\begin{prop}
\label{P:P1prim}
The $3$-form 
$$
\hat p_1 := 
\begin{cases}
\kappa_- + \nu_- \,, & \text{ on } M_- \,,\\
\kappa_+ + \nu_+ \,, & \text{ on } M_+ \,.
\end{cases}
$$
is a smooth, global primitive for $p_1(\V, \nabla^\V) + \pi^* p_1(TB, \nabla^{TB})$.
\end{prop}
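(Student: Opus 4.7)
The plan is to verify that $\hat p_1$ is both well-defined and smooth on $\Mab$, and then to check that it differentiates to the required Chern--Weil form. All the hard analytic work has been done in Lemmas \ref{L:P1prim} and \ref{L:closedform}; what remains is essentially bookkeeping on the overlap $M_- \cap M_+ = \tau^{-1}(-\ve, \ve)$.

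First I would check that $\hat p_1$ is well-defined globally, i.e.\ that the two local expressions $\kappa_- + \nu_-$ and $\kappa_+ + \nu_+$ agree on the overlap $\tau^{-1}(-\ve, \ve)$. By the explicit identities provided at the end of Lemma \ref{L:P1prim} and Lemma \ref{L:closedform} respectively, one has on $\tau^{-1}(-\ve, \ve)$
\begin{align*}
(\kappa_- + \nu_-) - (\kappa_+ + \nu_+)
&= (\kappa_- - \kappa_+) + (\nu_- - \nu_+) \\
&= \frac{8 m}{\pi^2}\, \bar e^{123} - \frac{8 m}{\pi^2}\, \bar e^{123} = 0,
\end{align*}
so the piecewise definition is consistent.

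Next I would establish smoothness of $\hat p_1$ on all of $\Mab$. On $M_-$ the form $\kappa_-$ is smooth (in particular along the singular leaf $\tau^{-1}\{-1\}$) by Lemma \ref{L:P1prim}, and $\nu_-$ is smooth there by Lemma \ref{L:closedform}; the analogous statements hold on $M_+$. Since the two expressions coincide on the open overlap, the glued form $\hat p_1$ is smooth across this overlap as well, hence smooth on $\Mab$.

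Finally, I would verify the primitive property. On $M_-$, using that $\nu_-$ is closed (Lemma \ref{L:closedform}) and that $d\kappa_- = p_1(\V, \nabla^\V) + \pi^* p_1(TB, \nabla^{TB})$ there (Lemma \ref{L:P1prim}), one gets
\[
d \hat p_1|_{M_-} = d\kappa_- + d\nu_- = \bigl(p_1(\V, \nabla^\V) + \pi^* p_1(TB, \nabla^{TB})\bigr)\bigr|_{M_-},
\]
and the identical argument applies on $M_+$. Since the two open sets $M_\pm$ cover $\Mab$, this proves the global identity $d \hat p_1 = p_1(\V, \nabla^\V) + \pi^* p_1(TB, \nabla^{TB})$. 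There is no real obstacle here, as the proposition is essentially a matching/gluing statement: the only nontrivial content was isolated in the preceding lemmas, and in particular in the compatible $\frac{8m}{\pi^2}\bar e^{123}$ discrepancy which the closed correction terms $\nu_\pm$ were precisely designed to cancel.
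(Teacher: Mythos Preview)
Your proof is correct and follows essentially the same approach as the paper's own proof, which simply cites Lemmas \ref{L:P1prim} and \ref{L:closedform} and notes the cancellation $(\kappa_- + \nu_-) - (\kappa_+ + \nu_+) = 0$ on the overlap. You have spelled out the well-definedness, smoothness, and primitive verification more explicitly, but the underlying argument is identical.
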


\begin{proof}
The result follows immediately from Lemmas \ref{L:P1prim} and \ref{L:closedform}.  In particular, on the intersection $M_- \cap M_+ = \tau^{-1}(-\ve, \ve)$ one has 
$$
(\kappa_- + \nu_-)|_{\tau^{-1}(-\ve, \ve)} - (\kappa_+ + \nu_+ )|_{\tau^{-1}(-\ve, \ve)} = 0 \,.
$$
\end{proof}

It is finally possible to evaluate the Pontrjagin term in the formula for the Eells-Kuiper invariant given in Corollary \ref{C:AdLimEK}.

\begin{thm}
\label{T:PontTerm}
If $n \neq 0$ then, with respect to the metric $g_M$ on $\Mab$ given in Proposition \ref{P:metric}, the adiabatic limit of $\int_M p_1(TM, \nabla^{TM}) \wedge \hat p_1(TM, \nabla^{TM})$ is given by
\begin{align*}
&\frac{1}{2^7 \! \cdot \! 7}  \int_M \left(p_1(\V, \nabla^\V) + \pi^* p_1(TB, \nabla^{TB})\right) 
\wedge \left(\hat p_1(\V, \nabla^\V) + \hat p_1(\pi^* TB, \nabla^{TB}) \right) \\[1mm]
&\hspace*{25mm} 
= \frac{1}{2^7 \! \cdot \! 7}\left( \frac{4 a_1^2 b_1^2 m^2}{n} - \frac{n}{a_1^2 b_1^2} \right) .
\end{align*}
\end{thm}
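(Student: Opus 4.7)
The plan is to compute the right-hand integral by using Proposition \ref{P:P1prim}'s formula $\hat p_1 = \kappa_\pm + \nu_\pm$ on $M_\pm$ and decomposing it into a piece I can evaluate directly and a piece that reduces via Stokes' theorem. Write $\omega := p_1(\V, \nabla^\V) + \pi^* p_1(TB, \nabla^{TB})$. Since all of $u_\pm, v_\pm, u_\pm', v_\pm'$ vanish on the overlap $\tau^{-1}(-\ve, \ve)$, both $\xi_\pm$ vanish there, so they glue to a globally smooth $3$-form $\xi$ on $M$. The $\bar e^{123}$-piece of $\kappa_\pm$ (i.e.\ $\kappa_\pm - \xi_\pm$) wedged with $\omega$ vanishes pointwise for dimensional reasons: $\omega$'s horizontal $\bar e^{0123}$-factor kills $\bar e^{123}$, while $\omega$'s vertical factors $A\bar x^{23} + B\bar f^{23}$ have their horizontal $2$-form parts $A, B$ supported on $\{\bar e^{01}, \bar e^{23}\}$, which exhaust the $4$-dimensional horizontal space alongside $\bar e^{123}$. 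Hence $\int_M \omega\wedge \hat p_1$ splits as $\int_M \omega\wedge \xi + \sum_\pm \int_{M_\pm}\omega\wedge\nu_\pm$.

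For the $\xi$-term, the only nonzero contribution comes from $\omega$'s vertical part wedged with the $A\bar x^1, B\bar f^1$-parts of $\xi_\pm$.  A direct wedge-product calculation, using the (nonzero, since $A$ and $B$ are $2$-forms) identities $A\wedge A = -\tfrac{4uu'}{h}\bar e^{0123}$ and $B\wedge B = -\tfrac{4vv'}{h}\bar e^{0123}$ together with $\bar x^{23}\wedge \bar x^1 = \bar f^{23}\wedge \bar f^1 = \bar f^{123}$, yields
\[
\omega \wedge \xi_\pm \;=\; \frac{v_\pm v_\pm' - u_\pm u_\pm'}{2\pi^4 h_\pm}\,\bar e^{0123}\wedge\bar f^{123},
\]
with the $\vphi_{11}$-dependent cross terms cancelling in pairs. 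Using the identity $\bar e^{0123}\wedge\bar f^{123} = h(t)\,dt\wedge e^{123}\wedge f^{123}$ (where the latter is the bi-invariant volume form of volume $\pi^4/2$), direct integration of $\int_{-1}^{0}(v_- v_-' - u_- u_-')\,dt$ with endpoint values $u_-(-1) = a_2/a_1,\ v_-(-1) = a_3/a_1$, and its analogue on $M_+$, simplifies via the definition of $n$ to $\int_M \omega \wedge \xi = -\tfrac{n}{a_1^2 b_1^2}$.

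For the $\nu$-term, since $d\nu_\pm = 0$ (Lemma \ref{L:closedform}) and $d\kappa_\pm = \omega$ on $M_\pm$, one has $\omega\wedge\nu_\pm = d(\kappa_\pm\wedge\nu_\pm)$. Splitting $M$ along $\tau^{-1}(0)$ into $M_-' := \tau^{-1}[-1,0]$ and $M_+' := \tau^{-1}[0,1]$ and applying Stokes yields boundary integrals at $\tau^{-1}(0)$ with opposite induced orientations for the two halves. Since the level set $\tau = 0$ lies in the overlap region, $\kappa_\pm|_{\tau=0}$ is a constant multiple of $\bar e^{123}$ alone (with coefficients $-\tfrac{a_1^2 + a_2^2 + a_3^2 + 8}{\pi^2 a_1^2}$ and its $\ul b$-analogue), while $\nu_\pm|_{\tau=0}$ has leading $\bar f^{123}$-coefficient $\tfrac{a_1^2 b_1^2 m}{\pi^2 n}$. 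The difference $\kappa_-\wedge\nu_- - \kappa_+\wedge\nu_+$ evaluates, via the definition of $m$, to $\tfrac{8 a_1^2 b_1^2 m^2}{\pi^4 n}\bar e^{123}\wedge\bar f^{123}$; multiplying by $\Vol((\sph^3\times\sph^3)/\Delta Q) = \pi^4/2$ gives $\int_M \omega \wedge \nu = \tfrac{4 a_1^2 b_1^2 m^2}{n}$. Adding the two contributions and dividing by $2^7\cdot 7$ yields the claimed formula.

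The main hazard in this calculation is the wedge algebra: carelessly applying $\alpha\wedge\alpha = 0$ (valid for $1$-forms but not for the $2$-forms $A, B$) would annihilate the entire $\xi$-contribution and produce only the $\tfrac{4 a_1^2 b_1^2 m^2}{n}$ term, missing the Euler-theoretic $-\tfrac{n}{a_1^2 b_1^2}$ correction. Bookkeeping for the orientation reversal between $M_-'$ and $M_+'$ on their common boundary, together with the observation that $\vphi_{11}$ has vanishing integral against the bi-invariant Haar measure on each fibre, are the other places demanding care.
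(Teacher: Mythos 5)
Your argument is correct and uses the same overall strategy as the paper---both rely on the smooth global primitive $\hat p_1 = \kappa_\pm + \nu_\pm$ from Proposition~\ref{P:P1prim} and on the explicit forms computed in Lemmas~\ref{L:TBforms}, \ref{L:EulerPontW}, \ref{L:PontV}, \ref{L:StabIsom}, \ref{L:P1prim} and \ref{L:closedform}---but you rearrange the algebra in a way that streamlines the bookkeeping. The paper's proof expands $d\kappa_-\wedge(\kappa_-+\nu_-)$ into the four pieces of \eqref{E:lastterm}--\eqref{E:3rdterm}, which requires exhibiting $d(\kappa_--\xi_-)\wedge(\kappa_-+\nu_-)$ as an explicit seven-form that is a sum of total $t$-derivatives (see \eqref{E:1stterm}), and then tracking boundary contributions at both $t=\pm1$ and $t=0$ which only cancel or recombine into $m$ and $n$ at the very end. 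You instead split $\omega\wedge\hat p_1 = \omega\wedge(\kappa_\pm-\xi_\pm) + \omega\wedge\xi_\pm + \omega\wedge\nu_\pm$: the first vanishes pointwise by the degree count you give (since $\kappa_\pm-\xi_\pm$ is a multiple of $\bar e^{123}$, which together with any horizontal factor of $\omega$ already exceeds the four-dimensional horizontal rank); the second term agrees with the paper's $d\xi_\pm\wedge\xi_\pm$ contribution and your computations $A\wedge A = -\tfrac{4uu'}{h}\bar e^{0123}$, $\bar x^{23}\wedge\bar x^1 = \bar f^{123}$, and the endpoint values $u_\pm,v_\pm$ are all correct; and the third is handled by writing $\omega\wedge\nu_\pm = d(\kappa_\pm\wedge\nu_\pm)$ and applying Stokes to the two halves separately, so that only the interface $\tau^{-1}(0)$ contributes. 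This sidesteps the explicit integrand in \eqref{E:1stterm} entirely, makes the separation between $-n/(a_1^2 b_1^2)$ and $4a_1^2b_1^2m^2/n$ visible earlier, and your values $\kappa_\pm|_{\tau=0}$, $\nu_\pm|_{\tau=0}$ and the leaf volume $\pi^4/2$ reproduce exactly the paper's $C_0 = -a_1^2b_1^2m/(2n)$ once recombined. One small correction to your closing remark: the vanishing of $\int_{\text{fibre}}\vphi_{11}$ is not a ``place demanding care'' in either argument---as you yourself already note, the $\vphi_{11}$-dependent cross terms $\mp AB\,\vphi_{11}\,\bar f^{123}$ cancel pointwise before any integration over the fibre is performed.
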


\begin{proof}
By Lemmas \ref{L:TBforms} and \ref{L:PontV}, together with Proposition \ref{P:P1prim}, the integrand is given on $\tau^{-1}[-1,0] \In M_-$ by
\begin{align*}
d \kappa_- \wedge (\kappa_- + \nu_-) &= d ((\kappa_- - \xi_-) + \xi_-) \wedge ((\kappa_- - \xi_-) + \xi_- + \nu_-) \\
&= d (\kappa_- - \xi_-) \wedge (\kappa_- + \nu_-) + d \xi_- \wedge \xi_-  \\
&\hspace*{13mm}
+ d \xi_- \wedge (\kappa_- - \xi_- + \nu_-)  \\
&= d (\kappa_- - \xi_-) \wedge (\kappa_- + \nu_-) + d \xi_- \wedge \xi_-  \\
&\hspace*{13mm}
+ d(\xi_- \wedge (\kappa_- - \xi_- + \nu_-)) + \xi_- \wedge d (\kappa_- - \xi_- + \nu_-) .
\end{align*}
Given that $\nu_-$ is closed and
$$
d (\kappa_- - \xi_-) = d (\kappa_- - \xi_- + \nu_-) = \pi^*p_1(W, \nabla^W)|_{M_-} + \pi^* p_1(TB, \nabla^{TB})|_{M_-}
$$ 
involves only $\bar e^{0123}$ terms, it follows from Lemma \ref{L:StabIsom} that 
\beq
\label{E:lastterm}
\xi_- \wedge d (\kappa_- - \xi_- + \nu_-) = \xi_- \wedge d (\kappa_- - \xi_-) = 0\,,
\eeq
and from Lemmas \ref{L:TBforms}, \ref{L:EulerPontW}, \ref{L:P1prim} and \ref{L:closedform} that
\begin{align}
\nonumber
 d (\kappa_- - \xi_-) &\wedge (\kappa_- + \nu_-) \\
\nonumber
 &= \frac{a_1^2 b_1^2 m}{\pi^2 n} \left( \pi^*p_1(W, \nabla^W)|_{M_-} + \pi^* p_1(TB, \nabla^{TB})|_{M_-} \right) \wedge \bar f^{123}\\
\label{E:1stterm}
 &= \frac{a_1^2 b_1^2 m}{\pi^4 n} \left(\frac{h' h''}{h}+4h' h^2 - 4 h' + \frac{2(uu' + vv')}{h} \right) \bar e^{0123} \bar f^{123} \,.
\end{align}
On the other hand, from \eqref{E:xDual} it follows that $\bar x^{123} = \bar f^{123}$ and $\bar f^1 \bar x^{23} = \bar x^1 \bar f^{23} = \vphi_{11} \bar f^{123}$.  Therefore, from Lemma \ref{L:StabIsom} one derives
\beq
\label{E:2ndterm}
d \xi_- \wedge \xi_- = - \frac{u u' - v v'}{2 \pi^4 h} \bar e^{0123} \bar f^{123} \,.
\eeq
Finally, since $\xi_-|_{\tau^{-1}(-\ve, \ve)} \equiv 0$, it follows from Stokes' Theorem that
\beq
\label{E:3rdterm}
\int_{\tau^{-1}[-1,0]}  d(\xi_- \wedge (\kappa_- - \xi_- + \nu_-)) = 0 \,.
\eeq
Together with the fact that, with respect to the metric $g_M$, a leaf $\tau^{-1}\{t\} \In \Mab$ has volume $\left(\frac{\pi^2 h}{4}\right) (2 \pi^2) = \frac{\pi^4 h}{2}$ \cite[(4.33)]{Gojems}, equations \eqref{E:lastterm}, \eqref{E:1stterm}, \eqref{E:2ndterm} and \eqref{E:3rdterm} yield
\begin{align*}
&\int_{\tau^{-1}[-1,0]} \left(p_1(\V, \nabla^\V) + \pi^* p_1(TB, \nabla^{TB})\right) 
\wedge \left(\hat p_1(\V, \nabla^\V) + \hat p_1(\pi^* TB, \nabla^{TB}) \right) \\[1mm]
&\hspace*{7mm} 
= \int_{-1}^0 \frac{a_1^2 b_1^2 m}{2 n} \left(\frac{1}{2}((h')^2)' + (h^4)' - 2 (h^2)' + (u^2 + v^2)' \right) - \frac{(u^2 - v^2)'}{8} \, dt \\
&\hspace*{7mm} 
= \left( \frac{a_1^2 b_1^2 m}{2 n} \left(\frac{1}{2}(h')^2 + (h^4)' - 2 h^2 + u^2 + v^2 \right) - \frac{u^2 - v^2}{8} \right) \bigg|_{-1}^0 \\
&\hspace*{7mm} 
= C_0 - \frac{a_1^2 b_1^2 m}{2 n} \left(\frac{a_2^2 + a_3^2 + 8}{a_1^2} \right) + \frac{a_2^2 - a_3^2}{8 a_1^2}  \,,
\end{align*}
where $C_0$ denotes the $t = 0$ boundary term.  Similarly, bearing in mind that the isometry $\Psi$ \eqref{E:Mirror} is orientation reversing, on $\tau^{-1}(0,1] \In M_+$ one obtains
\begin{align*}
&\int_{\tau^{-1}(0, 1]} \left(p_1(\V, \nabla^\V) + \pi^* p_1(TB, \nabla^{TB})\right) 
\wedge \left(\hat p_1(\V, \nabla^\V) + \hat p_1(\pi^* TB, \nabla^{TB}) \right) \\[1mm]
&\hspace*{20mm} 
= \frac{a_1^2 b_1^2 m}{2 n} \left(\frac{b_2^2 + b_3^2 + 8}{b_1^2} \right) - \frac{b_2^2 - b_3^2}{8 b_1^2} - C_0  \,.
\end{align*}
The result now follows from the definitions of $m$ and $n$ by combining the integrals over $\tau^{-1}[-1,0]$ and $\tau^{-1}(0,1]$.
\end{proof}

\medskip
\subsection{The contribution of the $\eta$-forms}\hspace*{1mm}\\
\label{SS:etaForm}

Given the computations in Subsection \ref{SS:ChernWeil},some further terms can be computed in the expression for the Eells-Kuiper invariant given by the adiabatic-limit formula of Corollary \ref{C:AdLimEK}.

\begin{thm}
\label{T:etaFormTerm}
If $n \neq 0$ then, with respect to the metric $g_M$ on $\Mab$ given in Proposition \ref{P:metric}, it follows that
\begin{align*}
\frac{1}{2} \int_{\Lambda B}& \hat A_{\Lambda B}(TB, \nabla^{TB})\, 2 \, \eta_{\Lambda B}(\D_{\sph^3}) 
 + \frac{1}{2^5 \! \cdot \! 7} \int_{\Lambda B} \hat L_{\Lambda B}(TB, \nabla^{TB})\, 2 \, \eta_{\Lambda B}(\B_{\sph^3}) \\
&= - \frac{1}{2^7 \cdot 7} \left(\frac{n}{a_1^2 b_1^2}\right) - D(\ul a) + D(\ul b) \,.
\end{align*}
\end{thm}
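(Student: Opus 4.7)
The approach is to split the integrals over $\Lambda B$ according to the decomposition in Corollary~\ref{C:InerOrb}, separating each integrand into contributions from the identity sector $B \In \Lambda B$ and from the twisted sectors $\sph^2_\pm \x \{1, \dots, \tfrac{|c|-1}{2}\}$ with $c \in \{a_1, b_1\}$. The identity sector will produce the first term $-\tfrac{n}{2^7 \cdot 7\, a_1^2 b_1^2}$, while the twisted sectors over $\sph^2_-$ and $\sph^2_+$ will produce $D(\ul a)$ and $-D(\ul b)$ respectively, the sign difference arising from the orientation-reversing isometry $\Psi$ of \eqref{E:Mirror}.

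For the identity sector, \eqref{E:orbetaD}--\eqref{E:orbetaB} identify $\eta_{\Lambda B}(\D_{\sph^3})|_B$ and $\eta_{\Lambda B}(\B_{\sph^3})|_B$ as explicit negative constants times the top-degree form $e(W,\nabla^W)$. Consequently, only the degree-zero components of $\hat A_{\Lambda B}(TB,\nabla^{TB})$ and $\hat L_{\Lambda B}(TB,\nabla^{TB})$ contribute to the wedge product, namely $1$ and $2^{(\dim B)/2} = 4$ by \eqref{E:orbLhat}. Combining the Dirac and signature contributions via the identity $\tfrac{1}{2^7 \cdot 3 \cdot 5} + \tfrac{1}{2^4 \cdot 3 \cdot 5 \cdot 7} = \tfrac{1}{2^7 \cdot 7}$ gives $-\tfrac{1}{2^7 \cdot 7} \int_B e(W,\nabla^W)$, which evaluates to $-\tfrac{n}{2^7 \cdot 7\, a_1^2 b_1^2}$ by Lemma~\ref{L:EulerPontW}.

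For a twisted sector $\sph^2_- \x \{\ell\}$, Lemma~\ref{L:orbi} and Corollary~\ref{C:InerOrb} describe the representative $\gamma_-^\ell$ as acting on the normal $2$-plane to $\RP^2_-$ by rotation through $\tfrac{8\pi\ell}{a_1}$, as having multiplicity $|a_1|$, and as acting on the fibre $\sph^3$ via $(\xi,q)\mapsto \xi^{a_2} q\, \xi^{-a_3}$; under the identification $\Spin(4) = \sph^3 \x \sph^3$ the latter lifts to $(\xi^{a_2}, \xi^{a_3})$, which acts on $\C^2 \cong \HH$ by diagonal rotations with weights $(a_2 - a_3)$ and $-(a_2 + a_3)$. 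Substituting the formulae of Hitchin--Rovenski~\cite{HR} for the equivariant $\eta$-invariant of $\D_{\sph^3}$ and of \cite[Proof of Prop.~2.12]{APS} for $\B_{\sph^3}$, expanding the equivariant Chern character \eqref{E:ChChar} on the $2$-dimensional $\sph^2_-$, pairing with $\hat A_{\Lambda B}$ and $\hat L_{\Lambda B}$, and integrating over $\sph^2_-$ reduces each contribution to a trigonometric summand rational in the cosines and sines of $\tfrac{4\pi\ell}{a_1}$, $\tfrac{(a_3+a_2)\pi\ell}{a_1}$, and $\tfrac{(a_3-a_2)\pi\ell}{a_1}$. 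Symmetrising the sum via $\ell \leftrightarrow |a_1|-\ell$ to obtain the full range $\ell = 1, \dots, |a_1|-1$ cancels the factor $\tfrac12$ in \eqref{E:orbetaD}--\eqref{E:orbetaB}, and the three cyclic summands appearing in the definition of $\mc D$ arise from the three distinct singular loci in the Hitchin--Rovenski and APS trigonometric formulae, assembling the result into $D(\ul a) = \mc D(a_1; 4, a_3+a_2, a_3-a_2)$. The $\sph^2_+$ twisted sectors are handled identically with $\ul a$ replaced by $\ul b$, contributing $-D(\ul b)$ via $\Psi$.

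The principal difficulty lies in the detailed trigonometric bookkeeping: one must verify that the specific combination of the Hitchin--Rovenski and APS equivariant $\eta$-invariants, weighted by the equivariant Chern-character corrections from~\eqref{E:ChChar} and the orbifold prefactors $\tfrac12$ and $\tfrac{1}{2^5 \cdot 7}$, reassembles exactly into the summand defining $\mc D$, in particular reproducing the numerator coefficients $14$ on $\cos(\tfrac{p_i \pi\ell}{a_1})$ and $1$ on $\cos(\tfrac{p_j\pi\ell}{a_1})\cos(\tfrac{p_k\pi\ell}{a_1})$ with the correct signs.
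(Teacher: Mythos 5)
Your proposal follows essentially the same route as the paper: split $\Lambda B$ into the identity sector and the twisted sectors via Corollary~\ref{C:InerOrb}, use \eqref{E:orbetaD}--\eqref{E:orbetaB} together with the equivariant $\hat A$- and $\hat L$-forms, feed in the equivariant $\eta$-invariant formulae from \cite{HR} and \cite{APS}, and assemble the trigonometric sums into $D(\ul a) - D(\ul b)$ with $\Psi$ accounting for the sign. Your identity-sector computation (extracting degree-zero parts $1$ and $4$ of $\hat A_{\Lambda B}$ and $\hat L_{\Lambda B}$, then combining $\tfrac{1}{2^7\cdot3\cdot5} + \tfrac{1}{2^4\cdot3\cdot5\cdot7} = \tfrac{1}{2^7\cdot7}$) is arithmetically correct and is actually a slightly more elementary route than the paper's, which just cites \cite[Thm.~3.9]{Go} (cf.\ \cite[Prop.~4.2]{Gojems}) for that step.

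Two small imprecisions worth flagging. First, \cite{HR} is Hanson--R\"omer, not ``Hitchin--Rovenski''. Second, your description of where the three cyclic summands of $\mc D$ come from is misleading: they do not arise from ``three distinct singular loci'', but from the Leibniz/Taylor expansion of the product $\prod_{\ell=1}^3$ of sines (and cosines) in the nilpotent variable $\frac{\check e^{23}}{2\pi i}$ --- extracting the degree-$2$ coefficient from a product of three factors produces exactly three terms, one for each factor being differentiated. Relatedly, the ``symmetrisation cancels the $\tfrac12$'' phrasing is backwards: the $\tfrac12$ survives into a prefactor $\tfrac{1}{2^5\cdot7\cdot q^2}$ over $s=1,\dots,\tfrac{|q|-1}{2}$, and doubling the summation range (using invariance under $s\mapsto q-s$) matches the $\tfrac{1}{2^6\cdot7\cdot q^2}$ in the definition of $\mc D$. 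Neither point affects the soundness of the plan, but the second should be cleaned up before carrying out the full trigonometric bookkeeping.
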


\begin{proof}
Recall from Corollary \ref{C:InerOrb} that the inertia orbifold $\Lambda B$ associated to $\Bab$ is described by the disjoint union
$$
\Lambda B = \Bab \sqcup \left(\sph^2_- \x \left\{1, \dots, \frac{|a_1|-1}{2}\right\} \right) \sqcup \left(\sph^2_+ \x \left\{1, \dots, \frac{|b_1|-1}{2}\right\} \right) .
$$
As such, the integrals in the statement can be performed over each of the connected components separately.  On $\Bab \In \Lambda B$ the integrals can be computed using Theorem 3.9 of \cite{Go} (as was done in \cite[Prop.\ 4.2]{Gojems}), which, following Lemma \ref{L:EulerPontW} and Remark \ref{R:nmInvs}, yields
\begin{align*}
\frac{1}{2} \int_{B}& \hat A_{\Lambda B}(TB, \nabla^{TB})\, 2 \, \eta_{\Lambda B}(\D_{\sph^3}) 
 + \frac{1}{2^5 \! \cdot \! 7} \int_{B} \hat L_{\Lambda B}(TB, \nabla^{TB})\, 2 \, \eta_{\Lambda B}(\B_{\sph^3}) \\
&= - \frac{1}{2^7 \cdot 7} \int_B e(W, \nabla^W) = - \frac{1}{2^7 \cdot 7} \left(\frac{n}{a_1^2 b_1^2}\right) .
\end{align*}

It remains, therefore, only to show that the contribution of $\Lambda B \backslash \Bab$ consists of the generalised Dedekind sums $-D(\ul a)$ and $+D(\ul b)$.  In order to do this, it is necessary to determine some equivariant characteristic numbers and the equivariant $\eta$-forms for the pullback of the Seifert fibration $\pi : \Mab \to \Bab$ to the double covers $\sph^2_\pm$ of the components $\RP^2_\pm$ of the singular locus of $\Bab$.  As there is an analogous orientation-reversing isometry on $\Bab$ to that given on $\Mab$ by $\Psi$ in \eqref{E:Mirror}, only the computations for $\RP^2_-$ need to be carried out explicitly.  Observe first that Lemma \ref{L:LieBracketsB} yields $\nabla_{\check e_2} \check e_2 = \nabla_{\check e_3} \check e_3 = 0$ and $\nabla_{\check e_2} \check e_3 = - \nabla_{\check e_3} \check e_2 = h \check e_1$, from which it follows that $\RP^2_-$ is totally geodesic in $\Bab$.

Following the notation of Corollary \ref{C:InerOrb} and recalling the discussion preceding \eqref{E:ChChar}, let $(b, [\gamma_-^s])$ be a point in $\Lambda B \backslash \Bab$, let $\mc N_- \to \RP^2_-$ be the normal bundle of $\RP^2_- \In \Bab$, and let $\tilde{\mc N}_- \to \sph^2_-$ denote the pullback of $\mc N_-$ to $\sph^2_-$. Since $\Bab$ is oriented by $\check e^{0123}$ and the twisted sector $\sph^2_-$ is locally oriented by $\check e^{23}$, the orientation on $\tilde{\mc N}_-$ is given (in a limiting sense) by $\check e^{01}$.  The bundle $\tilde{\mc N}_-$ carries a natural spin structure with an associated spinor bundle $\mc S(\tilde{\mc N}_-)$.  

By Lemma \ref{L:orbi}, in an orbifold chart $V$ the elements $\gamma_-^s$, $s \in \{1, \dots, \frac{|a_1|-1}2\}$, of the isotropy group $\Z_{|a_1|}$ act on $\tilde{\mc N}_-$ via multiplication by $e^{8\pi i s/a_1}\in \sph^1 \cong \SO(2)$.  As $\Z_{|a_1|}$ is an odd cyclic group, this action has a unique lift to $\Spin(2)$, represented by
	\beq
	\label{E:SpinAct}
	\tilde\gamma_-^s = e^{4\pi i s/ a_1}\in \sph^1 \cong \Spin(2)\,.
	\eeq

Similarly to the arguments employed for \cite[(4.22), (4.23)]{Gojems}, the curvature $2$-forms for $\tilde{\mc N}_-$ and $T \sph^2_-$ can be computed in an orbifold chart by considering the upper and lower $(2 \x2)$-blocks of the curvature \eqref{E:TBcurv} and taking limits as $t \to -1$.  It then follows that the corresponding curvatures are given by 
$$
R^{\tilde{\mc N}_-} = -\frac{8 i}{|a_1|} \, \check e^{23} \ \  \text{ and } \ \ R^{T\sph^2_-} = - 4 i \, \check e^{23} \,.
$$ 
In particular, using the (non-standard) convention from \cite{Gojems} that the Clifford actions of $c(\check e_0) c(\check e_1)$ on $\mc S^\pm (\tilde{\mc N}_-)$ and of $c(\check e_2) c(\check e_3)$ on $\mc S^\pm (T \sph^2_-)$ are both given by $\pm i$, one derives (from, for example, \cite[Sec.\ II, Thm.\ 4.15]{LM}) that the curvature of the summands of the spinor bundle is given by
$$
R^{\mc S^\pm (\tilde{\mc N}_-)} = \mp \frac{4i}{|a_1|} \, \check e^{23} 
\ \ \text{ and } \ \ 
R^{\mc S^\pm (T \sph^2_-)} = \mp 2i \, \check e^{23} \,.
$$
On the other hand, by \eqref{E:compatibility} and \eqref{E:SpinAct} the action of $\tilde\gamma_-^s$ on $\mc  S^\pm (\tilde{\mc N}_-)$ is given by 
$$
\gamma_-^s|_{\mc S^\pm(\tilde{\mc N_-})} 
= \exp \left( \frac{a_1}{|a_1|} \frac{4\pi s}{a_1}c(\check e_0) c(\check e_1) \right) \bigg|_{\mc S^\pm(\tilde{\mc N_-})} 
= \exp \left(\pm \frac{a_1}{|a_1|} \frac{4\pi i s}{a_1} \right)
$$ 
respectively.  Therefore, one deduces that
$$
\tilde \gamma_-^s \exp \left(- \frac{R^{\mc S^\pm (\tilde{\mc N}_-)}}{2 \pi i} \right) 
= \exp \left( \pm \frac{a_1}{|a_1|} \frac{4 i}{a_1} \left( \pi s +  \frac{\check e^{23}}{2 \pi i} \right) \right) ,
$$
which in turn, via \eqref{E:ChChar}, yields the equivariant Chern character 
\begin{align}
\nonumber
\ch_{\tilde \gamma_-^s}(\mc S^+ &(\tilde{\mc N}_-) - \mc S^- (\tilde{\mc N}_-), \nabla^{\mc S(\tilde{\mc N}_-)}) \\
\label{E:ChCharSN}
&=  \exp \left(\frac{a_1}{|a_1|} \frac{4 i}{a_1} \left( \pi s + \frac{\check e^{23}}{2 \pi i} \right) \right) 
- \exp \left(- \frac{a_1}{|a_1|} \frac{4 i}{a_1} \left( \pi s + \frac{\check e^{23}}{2 \pi i} \right) \right) \\
\nonumber
&= 2 i \frac{a_1}{|a_1|}  \sin \left( \frac{4}{a_1} \left( \pi s + \frac{\check e^{23}}{2 \pi i} \right) \right).  
\end{align}
From \eqref{E:eqAhat} and \eqref{E:orbAhat}, and given that $\hat A(T \sph^2_-, \nabla^{\sph^2_-}) = 1$ since it has degree $\equiv 0$ mod $4$, it can now be concluded that the orbifold $\hat A$-form on $\sph^2_- \x \{s\} \In \Lambda B$ is given by
\beq
  \label{E:AhatCont}
    \hat A_{\Lambda B} \bigl( TB, \nabla^{TB} \bigr)
    =  - \frac{1}{a_1 \cdot 2i \sin \bigl(\frac {4}{a_1}
		\bigl(\pi s +  \frac{\check e^{23}}{2 \pi i} \bigr)\bigr)} \,.
\eeq

Similarly, since the action of $\gamma_-^s$ tangential to $\sph^2_-$ is trivial, one derives
\begin{align*}
\ch_{\tilde \gamma_-^s}(\mc S^+ &(TB) + \mc S^- (TB), \nabla^{\mc S(TB)}) \\ 
&=  2 \left( \exp \left( \frac{a_1}{|a_1|}  \frac{4 i}{a_1} \left( \pi s + \frac{\check e^{23}}{2 \pi i} \right) \right) 
+ \exp \left(- \frac{a_1}{|a_1|}  \frac{4 i}{a_1} \left( \pi s + \frac{\check e^{23}}{2 \pi i} \right) \right) \right) \\
&= 4 \cos \left(\frac {4}{a_1}
		\left(\pi s + \frac{\check e^{23}}{2 \pi i} \right)\right) ,  
\end{align*}
where the additional factor of $2$ is a consequence of $T \sph^2_-$ being a rank-$2$ bundle.  From \eqref{E:orbLhat} one concludes that the orbifold $\hat L$-form on $\sph^2_- \x \{s\} \In \Lambda B$  is given by 
\begin{align}
\begin{split}
\label{E:LhatCont}
\hat L_{\Lambda B}(TB,\nabla^{TB}) 
&= \hat A_{\Lambda B}(TB,\nabla^{TB}) \ch_{\Lambda B}(\mc S^+ (TB) + \mc S^- (TB), \nabla^{\mc S(TB)}) \\
    &=  \frac{2i}{a_1}\, \cot \biggl( \frac{4}{a_1} \biggl( \pi s
		+ \frac{\check e^{23}}{2 \pi i} \biggr)\biggr) \,.
\end{split}
\end{align}

To compute the equivariant $\eta$-forms of $\Mab|_{\RP^2_-}\to \RP^2_-$, recall from Lemma \ref{L:orbi} that $\gamma_-^s$ acts on the fibre $\sph^3$ via 
\beq
\label{E:FibAct}
(\gamma_-^s, q) \mapsto \gamma_-^{s a_2} q \, \bar \gamma_-^{s a_3} = e^{2\pi i(a_2 - a_3) s/a_1} z + j e^{- 2\pi i(a_2 + a_3) s/a_1}  w \,,
\eeq
where $q = z + j w \in \sph^3$.  This action clearly extends to the fibres of the associated rank-$4$ vector orbi-bundle $W \to B$.  Furthermore, by the proof of Lemma \ref{L:EulerPontW}, the curvature of $W$ at $\RP^2_-$ is given by 
$$
R^W_- := R^W|_{\RP^2_-} = - \frac{2 a_2}{a_1} \, \check e^{23} \, L_i + \frac{2 a_3}{a_1} \, \check e^{23} \, R_i \,,
$$
which then acts on the fibres of $W$ via
\begin{align}
\begin{split}
\label{E:FibCurv}
\exp \left(- \frac{R^W_-}{2 \pi i} \right) & \cdot (z + j w) \\
&= \exp \left(\frac{-2(a_3 - a_2)}{a_1} \frac{\check e^{23}}{2 \pi i} \right)\! z 
+ \exp \left(\frac{-2(a_2 + a_3)}{a_1} \frac{\check e^{23}}{2 \pi i} \right)\! j w \,.
\end{split}
\end{align}

On the other hand, given that the fibres of $\pi$ have positive scalar curvature, hence that the kernel of $\D_{\sph^3}$ is trivial, explicit formulae for the equivariant $\eta$-invariants $\eta_{\gamma_-^s \exp(-R^W_-/2\pi i)}(\D_{\sph^3})$ and $\eta_{\gamma_-^s \exp(-R^W_-/2\pi i)}(\B_{\sph^3})$ of the (untwisted) spin-Dirac operator $\D_{\sph^3}$ and the odd signature operator $\B_{\sph^3}$ can be found in \cite[Eqns.\ (5), (11), (14)]{HR} and \cite[proof of Prop.~2.12]{APS} respectively, as well as in \cite{Go1}.  Therefore, in analogy with the result in \cite[(4.24)]{Gojems},  on the component $\sph^2_-\times\{s\} \In \Lambda B \backslash \Bab$ these formulae, together with \eqref{E:orbetaD}, \eqref{E:orbetaB}, \eqref{E:FibAct} and \eqref{E:FibCurv}, yield 
\begin{align}
  \begin{split}
  \label{E:EquivEta}
    2 \eta_{\Lambda B}(\D_{\sph^3})
    &= - \frac{1}{2 \sin \bigl( \frac{a_2 + a_3}{a_1} \bigl(\pi s + \frac{\check e^{23}}{2\pi i} \bigr)\bigr)
	\sin \bigl( \frac{a_3 - a_2}{a_1} \bigl(\pi s + \frac{ \check e^{23}}{2\pi i} \bigr)\bigr)}\,,\\[1mm]
    2 \eta_{\Lambda B}(\B_{\sph^3})
    &= - \cot \biggl( \frac{a_2 + a_3}{a_1} \biggl(\pi s + \frac{\check e^{23}}{2\pi i} \biggr)\biggr)
	\cot \biggl( \frac{a_3 - a_2}{a_1} \biggl(\pi s + \frac{\check e^{23}}{2\pi i} \biggr)\biggr) \,.
  \end{split}
\end{align}
For the sake of notation below, let 
$$
q = a_1,  \quad  p_1 = 4,  \quad  p_2 = a_2 + a_3 ,  \quad p_3 = a_3 - a_2 
$$
Now, by combining the expressions obtained in \eqref{E:AhatCont}, \eqref{E:LhatCont} and \eqref{E:EquivEta}, one obtains that the integrand on $\sph^2_- \x \{s\}$ is given by
\begin{align*}
&\frac{1}{2} \hat A_{\Lambda B}(TB, \nabla^{TB})\, 2 \, \eta_{\Lambda B}(\D_{\sph^3}) 
 + \frac{1}{2^5 \! \cdot \! 7} \hat L_{\Lambda B}(TB, \nabla^{TB})\, 2 \, \eta_{\Lambda B}(\B_{\sph^3}) \\
&\hspace*{15mm}= - \frac{i}{q \cdot 2^4 \cdot 7} \left(
\frac{14 + \prod_{\ell = 1}^3 \cos \left(\frac{p_\ell \pi s}{q} + \frac{ p_\ell}{q} \frac{\check e^{23}}{2 \pi i} \right)}
{\prod_{\ell = 1}^3 \sin \left(\frac{p_\ell \pi s}{q} + \frac{ p_\ell}{q}  \frac{\check e^{23}}{2 \pi i} \right)} \right) .
\end{align*}
The goal now is to extract the degree-two term from this expression, that is, the term involving the volume form $\check e^{23}$.  By expanding the respective formal power series and noting that $(\check e^{23})^k = 0$ for $k > 1$, one obtains
\begin{align*}
\sin \left(\frac{p_\ell \pi s}{q} + \frac{ p_\ell}{q}  \frac{\check e^{23}}{2 \pi i} \right) 
&= \sin \left(\frac{p_\ell \pi s}{q} \right) + \frac{ p_\ell}{q}  \cos \left(\frac{p_\ell \pi s}{q} \right) \frac{\check e^{23}}{2 \pi i} \,, \\[1mm]
\cos \left(\frac{p_\ell \pi s}{q} + \frac{ p_\ell}{q}  \frac{\check e^{23}}{2 \pi i} \right) 
&= \cos \left(\frac{p_\ell \pi s}{q} \right) - \frac{ p_\ell}{q} \sin \left(\frac{p_\ell \pi s}{q} \right) \frac{\check e^{23}}{2 \pi i} \,.
\end{align*}
This observation yields, in particular, that
\begin{align*}
\frac{\cos \left(\frac{p_\ell \pi s}{q} + \frac{ p_\ell}{q} \frac{\check e^{23}}{2 \pi i} \right)}
{\sin \left(\frac{p_\ell \pi s}{q} + \frac{ p_\ell}{q} \frac{\check e^{23}}{2 \pi i} \right) } \! 
&= \frac{\cos \left(\frac{p_\ell \pi s}{q} + \frac{ p_\ell}{q} \frac{\check e^{23}}{2 \pi i} \right)}
{ \sin \left(\frac{p_\ell \pi s}{q} + \frac{ p_\ell}{q} \frac{\check e^{23}}{2 \pi i} \right) } 
\cdot 
\frac{\sin \left(\frac{p_\ell \pi s}{q} \right) \! - \frac{ p_\ell}{q} \cos \left(\frac{p_\ell \pi s}{q} \right) \! \frac{\check e^{23}}{2 \pi i}}
{\sin \left(\frac{p_\ell \pi s}{q} \right) \! - \frac{ p_\ell}{q} \cos \left(\frac{p_\ell \pi s}{q} \right) \! \frac{\check e^{23}}{2 \pi i}} \\
&= \frac{\sin \left(\frac{p_\ell \pi s}{q} \right) \cos \left(\frac{p_\ell \pi s}{q} \right) - \frac{ p_\ell}{q} \frac{\check e^{23}}{2 \pi i}}
{\sin^2 \left(\frac{p_\ell \pi s}{q} \right)} \;.
\end{align*}

From this one deduces that
\begin{align*}
&\frac{14 + \prod_{\ell = 1}^3 \cos \left(\frac{p_\ell \pi s}{q} +  \frac{p_\ell}{q} \frac{\check e^{23}}{2 \pi i} \right)}
{\prod_{\ell = 1}^3 \sin \left(\frac{p_\ell \pi s}{q} +  \frac{p_\ell}{q} \frac{\check e^{23}}{2 \pi i} \right)} \\
&\hspace*{17mm}= 
\frac{14 \prod_{\ell=0}^{3} \left(\sin \left(\frac{p_\ell \pi s}{q} \right) -  \frac{p_\ell}{q} \cos \left(\frac{p_\ell \pi s}{q} \right) \frac{\check e^{23}}{2 \pi i} \right) }
{\prod_{\ell = 1}^3 \sin^2 \left(\frac{p_\ell \pi s}{q} \right)} \\
&\hspace*{27mm}
+  \frac{\prod_{\ell = 1}^3 \left(\sin \left(\frac{p_\ell \pi s}{q} \right) \cos \left(\frac{p_\ell \pi s}{q} \right) -  \frac{p_\ell}{q} \frac{\check e^{23}}{2 \pi i} \right)}
{\prod_{\ell = 1}^3 \sin^2 \left(\frac{p_\ell \pi s}{q} \right)} \\
&\hspace*{17mm}= 
\left(\ \! - \sum_{\mathclap{\substack{(i,j,k) = \\ \circlearrowright (1,2,3)}}} \; 
\frac{\frac{p_i}{q}  \left( 14 \cos \left(\frac{p_i \pi s}{q} \right) + \cos \left(\frac{p_j \pi s}{q} \right) \cos \left(\frac{p_k \pi s}{q} \right) \right)}
{\sin^2 \left(\frac{p_i \pi s}{q} \right) \sin \left(\frac{p_j \pi s}{q} \right) \sin \left(\frac{p_k \pi s}{q} \right) }
\right) \frac{\check e^{23}}{2 \pi i} \\
&\hspace*{27mm}
+ \frac{ 14 + \prod_{\ell = 1}^3 \cos \left(\frac{p_\ell \pi s}{q} \right) }
{\prod_{\ell = 1}^3 \sin \left(\frac{p_\ell \pi s}{q} \right)} \,.
\end{align*}

Since $\sph^2_-$ has constant curvature $4$, hence volume $\pi$, the contribution of the twisted sectors $\sph^2_- \x \{1, \dots, \frac{|q| - 1}{2}\}$ to the Eells-Kuiper invariant is, therefore, given by
\begin{align*}
&\frac{1}{2} \int_{\Lambda B \backslash \Bab} \hat A_{\Lambda B}(TB, \nabla^{TB})\, 2 \, \eta_{\Lambda B}(\D_{\sph^3}) \\[-1mm]
&\hspace*{50mm}+ \frac{1}{2^5 \! \cdot \! 7}  \int_{\Lambda B \backslash \Bab} \hat L_{\Lambda B}(TB, \nabla^{TB})\, 2 \, \eta_{\Lambda B}(\B_{\sph^3}) \\[2mm]
&\hspace*{15mm}= \frac{1}{q^2 \cdot 2^5 \cdot 7}
\sum_{s = 1}^{\frac{|q| - 1}{2}}
\ \sum_{\mathclap{\substack{(i,j,k) = \\ \circlearrowright (1,2,3)}}} \; 
\frac{ p_i \left( 14 \cos \left(\frac{p_i \pi s}{q} \right) + \cos \left(\frac{p_j \pi s}{q} \right) \cos \left(\frac{p_k \pi s}{q} \right) \right)}
{\sin^2 \left(\frac{p_i \pi s}{q} \right) \sin \left(\frac{p_j \pi s}{q} \right) \sin \left(\frac{p_k \pi s}{q} \right) } \\
&\hspace*{15mm}= \frac{1}{q^2 \cdot 2^6 \cdot 7}
\sum_{s = 1}^{|q| - 1}
\ \sum_{\mathclap{\substack{(i,j,k) = \\ \circlearrowright (1,2,3)}}} \; 
\frac{ p_i \left( 14 \cos \left(\frac{p_i \pi s}{q} \right) + \cos \left(\frac{p_j \pi s}{q} \right) \cos \left(\frac{p_k \pi s}{q} \right) \right)}
{\sin^2 \left(\frac{p_i \pi s}{q} \right) \sin \left(\frac{p_j \pi s}{q} \right) \sin \left(\frac{p_k \pi s}{q} \right) } \\
&\hspace*{15mm}=
\mc D(q; p_1, p_2, p_3) \\
&\hspace*{15mm}=
\mc D \left(a_1; 4, a_2 + a_3, a_3 - a_2 \right) 
= -D(\ul a) ,
\end{align*}
where the second equality follows from the invariance of the summands under the map $s \mapsto q - s$ and the final equality from the remarks preceding Theorem \ref{T:thmC}.  Replacing $\ul a$ with $\ul b$ and applying the isometry $\Psi$ of \eqref{E:Mirror} yields the analogous contribution $+D(\ul b)$ of the twisted sectors $\sph^2_+ \x \{1, \dots, \frac{|b_1| - 1}{2}\}$.
\end{proof}

Despite their complicated appearance, it is sometimes straightforward to compute the generalised Dedekind sums $\mc D(q; p_1, p_2, p_3)$, where $\gcd(q, p_i) = 1$ for $i = 1,2,3$.  For example, in the case $q =1$, it is clear that $\mc D(1; p_1, p_2, p_3)= 0$.  A non-trivial situation which arises in Corollary \ref{C:nonMilnor} is detailed below.

\begin{example}
\label{Ex:DedSum}
Consider the case $q = -3$ and $p_i = 2 x_i$, $i = 1,2,3$, where $x_i \in \Z$ satisfies $\gcd(3, x_i) = 1$ for all $i \in \{ 1,2,3 \}$.  Then, for all $i \in \{ 1,2,3 \}$ and $\ell \in \{1,2\}$, one has $\cos \left(-\frac{2 x_i \pi \ell}{3} \right) = - \frac{1}{2}$ and  
$
\sin \left(-\frac{2 x_i \pi \ell}{3} \right) = - \varrho(x_i \ell) \, \frac{\sqrt3}{2}
$, 
 where $\varrho : \Z \to \{0, \pm 1\}$ is defined by
$$
\varrho(x) = 
\begin{cases}
\phantom{-}0, & \text{if } x \equiv 0 \mod 3,\\
\phantom{-}1, & \text{if } x \equiv 1 \mod 3,\\
-1, & \text{if } x \equiv 2 \mod 3.
\end{cases}
$$

Therefore, for cyclic permutations $(i,j,k)$ of $(1,2,3)$ and $\ell \in \{1,2\}$, one has
$$
14 \cos \left(-\frac{2 x_i \pi \ell}{3} \right) + \cos \left(-\frac{2 x_j \pi \ell}{3} \right) \cos \left(-\frac{2 x_k \pi \ell}{3} \right) = - \frac{27}{4} \,,
$$
whereas
$$
\sin^2 \left(-\frac{2 x_i \pi \ell}{3} \right) \sin \left(-\frac{2 x_j \pi \ell}{3} \right) \sin \left(-\frac{2 x_k \pi \ell}{3} \right) = \varrho(x_j \ell) \varrho(x_k \ell) \, \frac{9}{16} \,.
$$
However, as the sign of $\varrho(x_i \ell)$ changes depending on the choice of $\ell \in \{1,2\}$, it follows that both expressions are independent of $\ell$ and, hence, that
\begin{align*}
\mc D(-3; 2x_1, 2x_2, 2x_3) &= - \frac{2^4 \cdot 3}{2^6 \cdot 3^2 \cdot 7} \ \ \sum_{\mathclap{\substack{(i,j,k) = \\ \circlearrowright (1,2,3)}}} \,  \varrho(x_j) \varrho(x_k) \, x_i  \\
&= - \frac{1}{2^2 \cdot 3 \cdot 7} \ \ \sum_{\mathclap{\substack{(i,j,k) = \\ \circlearrowright (1,2,3)}}} \,  \varrho(x_j) \varrho(x_k) \, x_i \,.
\end{align*}

Notice, in particular, that $\mc D(-3; 2x_1, 2x_2, 2x_3)  \in \frac{1}{28} \Z$\,, since $x_i \not\equiv 0$ mod $3$ for every $i \in \{1,2,3\}$ ensures that the numerator of $\mc D(-3; 2x_1, 2x_2, 2x_3)$ is always divisible by $3$.

As an application of this formula to the situation in Corollary \ref{C:nonMilnor}, consider $D(\ul a) = \mc D(a_1; 4, a_2 + a_3, a_2 - a_3)$ for $\ul a = (-3, 12k - 3, 12 l + 1)$, $k, l \in \Z$.  In this case, $x_1 = 2$, $x_2 = 6(k+l)-1$ and $x_3 = 6(k - l) - 2$, which ensures that $\varrho(x_1) = \varrho(x_2) = -1$ and $\varrho(x_3) = 1$.  It now easily follows that $D(\ul a) =  \frac{4 l +1}{28}$.
\end{example}


\medskip
\subsection{The contribution of the very small eigenvalues} \hspace*{1mm}\\
\label{SS:VerySmallEVs}

Recall that the term $\frac{1}{2^5 \cdot 7} \lim_{\ve \to 0} \tau_\ve$ in the formula for the Eells-Kuiper invariant given in Corollary \ref{C:AdLimEK} is the signature of the quadratic form \eqref{E:QuadForm} coming from the $E_4$-page of a Leray-Serre spectral sequence for the Seifert fibration $\pi : (\Mab, g_M) \to (\Bab, g_B)$.

\begin{thm}
\label{T:VerySmallEVs}
If $n \neq 0$ then, with respect to the metric $g_M$ on $\Mab$ given in Proposition \ref{P:metric}, the contribution of the very small eigenvalues of the odd signature operator $\B$ to the adiabatic-limit formula for the Eells-Kuiper invariant $\mu(\Mab)$ is given by
$$
\frac{1}{2^5 \cdot 7} \lim_{\ve \to 0} \tau_\ve = \frac{|n|}{2^5 \cdot 7 \cdot n} \,.
$$ 
\end{thm}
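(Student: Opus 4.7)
The strategy is to show that the signature $\tau_\ve$ appearing in Corollary \ref{C:AdLimEK} is computed by a $(1 \times 1)$ form whose sign is that of the Euler number of the orbi-bundle $W \to \Bab$.

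First, I would analyse the (rational) Leray–Serre spectral sequence $(E_r, d_r)$ of the Seifert fibration $\pi : \Mab \to \Bab$ with fibre $\sph^3$. Since the fibre is connected and the isotropy actions on $H^*(\sph^3; \R)$ are orientation-preserving, one has $E_2^{p,q} = H^p(\Bab; \R) \otimes H^q(\sph^3; \R)$, so the only potentially non-trivial differential on the relevant page is $d_4: E_4^{p,3} \to E_4^{p+4, 0}$. By Lemma \ref{L:2conn}, $\Mab$ is $2$-connected, and by Theorem \ref{T:cohom}, $H^4(\Mab; \Z) = \Z_{|n|}$ is finite, hence $H^3(\Mab; \R) = H^4(\Mab; \R) = 0$. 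Applied to the edges of the spectral sequence, this forces $d_4 : E_4^{0,3} \to E_4^{4,0}$ to be both injective and surjective. In particular, both groups are one-dimensional, and so the quadratic form \eqref{E:QuadForm} is a form on $\R$, whose signature is simply its sign.

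Next, I would identify $d_4$ geometrically as the orbifold transgression: if $\alpha$ is a $3$-form on $\Mab$ restricting to a fibre-generator with $\int_{\sph^3} \alpha|_{\sph^3} = 1$, then $d\alpha$ is basic, with $d\alpha = \pi^*\omega$, and the class $[\omega] \in H^4(\Bab; \R)$ represents (a non-zero multiple of) the Euler class $e(W)$ of the associated rank-$4$ orbi-bundle $W \to \Bab$ coming from Lemma \ref{L:EulerPontW}. Consequently, for $\alpha$ the fibre generator, fibre integration gives
$$
\langle \alpha, \alpha \rangle = (\alpha \cdot d_4 \alpha)[\Mab] = \int_{\Mab} \alpha \wedge \pi^*\omega = \int_{\Bab} \omega,
$$
which has the same sign as $\int_{\Bab} e(W, \nabla^W) = \frac{n}{a_1^2 b_1^2}$ by Lemma \ref{L:EulerPontW} and Remark \ref{R:nmInvs}.

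Combined with Dai's result (recalled in Subsection \ref{SS:AdiLims}) that $\sum_\nu \mathrm{sign}(\lambda_\nu(\ve)) = \tau_\ve$ equals the signature of this form, one concludes $\lim_{\ve \to 0} \tau_\ve = \mathrm{sign}(n) = |n|/n$ for $n \neq 0$, whence
$$
\frac{1}{2^5 \cdot 7} \lim_{\ve \to 0} \tau_\ve = \frac{|n|}{2^5 \cdot 7 \cdot n},
$$
as claimed. The main technical point, and likely the only subtle step, is justifying the Leray–Serre spectral sequence and transgression identification in the orbifold setting of a Seifert fibration; this should follow from the work recalled in Section \ref{S:Prelim} (cf.\ \cite{Dai, MM, Gojems}) together with the fact that the orbifold isotropies act trivially on the cohomology of the fibre.
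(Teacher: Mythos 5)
Your argument takes essentially the same route as the paper: reduce to the sign of the $1\times1$ quadratic form $\int_M \xi \wedge d\xi$ on $E_4^{0,3}$ and relate it to the Euler number of $W$. The paper does this by constructing an explicit $3$-form $\xi$ with nowhere-vanishing fibre integral and basic $d\xi$, and computing $\int_M \xi\,d\xi = \frac{16\pi^4 n}{a_1^2 b_1^2}$ directly from the local formulas in Section~\ref{SS:ChernWeil}; you instead invoke the identification of the Leray--Serre transgression with the Euler class. That identification is precisely what the paper's computation verifies concretely, since $d\xi = 4\pi^2\,\pi^*e(W,\nabla^W)$.

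Two points in your sketch need tightening. First, for an arbitrary $3$-form $\alpha$ with fibrewise integral $1$ there is no reason $d\alpha$ should be basic; one must choose a particular representative, and this is the content of the paper's explicit construction of $\xi$. Second, you assert $[\omega]$ is a ``non-zero multiple'' of $e(W)$ and then immediately conclude it has the ``same sign as $\int_B e(W,\nabla^W)$''---this requires the multiple to be \emph{positive}, which hinges on the orientation of $\Mab$ being compatible with those of $\Bab$ and the fibre $\sph^3$. That compatibility holds here since $\pi:\Mab \to \Bab$ is the unit-sphere orbi-bundle of $W$ (Remark~\ref{R:nmInvs}) with its induced orientation, but it is the one place a sign could go wrong and should be made explicit; the paper's computation settles it automatically.
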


\begin{proof}
As in \cite[Section 4.g.]{Gojems}, given that the entries on the $E_4$-page are trivial except for $E_4^{ij} = \R$ whenever $i \in \{0,4\}$, $j \in \{0, 3\}$,  it suffices to determine the sign of the integral $\int_M \xi \, d\xi$\,, where $\xi \in \Omega^3(\Mab)$ is a $3$-form such that the fibrewise integral is nowhere zero, and such that $d\xi \in \pi^* \Omega^4(\Bab)$ is basic.  Consider the $3$-form
$$
\xi = \begin{cases}
2 \bar f^{123} - \Bigl( \frac{u'}{h} \, \bar e^{01} - 2 u \, \bar e^{23} \Bigr) \, \bar x^1
 - \Bigl( \frac{v'}{h} \, \bar e^{01} - 2 v \, \bar e^{23} \Bigr) \bar f^1 & \text{ on } M_- \text{\,, and}\\[2mm]
2 \bar f^{123} - \Bigl( \frac{u'}{h} \, \bar e^{02} + 2 u \, \bar e^{13} \Bigr) \, \bar x^2
 - \Bigl( \frac{v'}{h} \, \bar e^{02} + 2 v \, \bar e^{13} \Bigr) \bar f^2 & \text{ on } M_+ \,.
\end{cases}
$$
It is clear that $\xi|_{M_- \cap M_+} = 2 \bar f^{123}$ and that the fibrewise integral is nowhere zero, as desired.  Furthermore, from \eqref{E:xiDiff} and \eqref{E:df123} it follows that
$$
d \xi = \frac{4 u u' - 4 v v'}{h}\,\bar e^{0123} = \frac{2(u^2 - v^2)'}{h} \, \bar e^{0123} \in \pi^*\Omega^4(\Bab) \,.
$$
Since the leaves $\tau^{-1}(t) \in \Mab$ have volume $\frac{\pi^4 h}{2}$, the result now follows from
\begin{align*}
\int_M \xi \, d\xi &= \int_M \frac{4(u^2 - v^2)'}{h} \, \bar e^{0123} \bar f^{123} \\
&= 2 \pi^4 \int_{-1}^1  (u^2 - v^2)' \, dt \\
&= \frac{16 \pi^4 n}{a_1^2 b_1^2} \,.
\end{align*}
\end{proof}


\medskip
\subsection{The Eells-Kuiper invariant}\hspace*{1mm}\\
\label{SS:EKinv}

Combining the results of the previous sections, it is finally possible to compute the Eells-Kuiper invariant of $\Mab$.

\begin{thm}
\label{T:EKinv}
If $n \neq 0$, then the Eells-Kuiper invariant of $\Mab$ is given by
$$
\mu(\Mab) = \frac{|n| - a_1^2 b_1^2 m^2}{2^5 \cdot 7 \cdot n} - D(\ul a) + D(\ul b) \mod 1 \ \in \Q/\Z \,.
$$
\end{thm}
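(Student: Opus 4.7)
The plan is to assemble Theorem \ref{T:EKinv} by invoking Corollary \ref{C:AdLimEK} with the metric $g_M$ constructed in Proposition \ref{P:metric}, and then substituting the individual contributions computed in Theorems \ref{T:PontTerm}, \ref{T:etaFormTerm}, and \ref{T:VerySmallEVs}. The main non-trivial work has already been carried out in the preceding subsections, so what remains is to verify that the hypotheses of the adiabatic-limit formula are met and to carry out the algebraic bookkeeping.

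First I would check the hypotheses of Corollary \ref{C:AdLimEK}. Condition ($\mathcal{A}$) requires a Riemannian submersion $\pi:(\Mab,g_M)\to(\Bab,g_B)$ with $\sph^3$ fibres from a $2$-connected, oriented $7$-manifold with finite $H^4$; this follows from Lemma \ref{L:2conn}, Theorem \ref{T:cohom} (since $n\neq 0$), Lemma \ref{L:orbi}, and Corollary \ref{C:metricB}. Corollary \ref{C:AdLimEK} further needs that the fibres be totally geodesic and of positive scalar curvature. Lemma \ref{L:totgeo} gives the former, while the latter is immediate because every fibre is isometric to a round (rescaled) $\sph^3$ by the construction of $g_M$.

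Next I would plug the four pre-computed pieces into the adiabatic-limit formula. Theorem \ref{T:PontTerm} gives the Pontrjagin contribution, Theorem \ref{T:etaFormTerm} the combined $\hat A$-$\eta$ and $\hat L$-$\eta$ orbifold integrals over $\Lambda B$, and Theorem \ref{T:VerySmallEVs} the very-small-eigenvalues term $\frac{1}{2^5\cdot 7}\lim_{\ve\to 0}\tau_\ve = \frac{|n|}{2^5\cdot 7\cdot n}$. Summing,
\begin{align*}
\mu(\Mab) &= -\frac{1}{2^7\cdot 7}\!\left(\frac{4 a_1^2 b_1^2 m^2}{n}-\frac{n}{a_1^2 b_1^2}\right) \\
&\hspace*{6mm} + \left(-\frac{1}{2^7\cdot 7}\cdot\frac{n}{a_1^2 b_1^2} + D(\ul a)-D(\ul b)\right) + \frac{|n|}{2^5\cdot 7\cdot n}.
\end{align*}
The two $\pm \frac{n}{2^7\cdot 7\cdot a_1^2 b_1^2}$ terms cancel exactly, and $\frac{4}{2^7\cdot 7}=\frac{1}{2^5\cdot 7}$, so the remaining terms collapse to $\frac{|n|-a_1^2 b_1^2 m^2}{2^5\cdot 7\cdot n}+D(\ul a)-D(\ul b)$, yielding the claimed formula modulo $1$.

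There is essentially no obstacle at this stage; all the genuine difficulty has been pushed into the earlier results. The one thing to double-check is that the sign conventions chosen in Section \ref{S:EK} (e.g.\ the orientation of $\Mab$ inherited from $\check e^{0123}$ via $\Phi$, the orientation reversal of the isometry $\Psi$ in Remark \ref{R:Isom}, and the choice \eqref{E:SpinAct} of spin lift) are mutually consistent across Theorems \ref{T:PontTerm}, \ref{T:etaFormTerm}, and \ref{T:VerySmallEVs}, so that adding the contributions literally yields the stated result and not its negative. Once this is verified, the result follows at once, and Theorem \ref{T:thmC} is an immediate restatement.
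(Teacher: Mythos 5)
Your proposal is correct and follows the same approach as the paper: apply Corollary \ref{C:AdLimEK} with the metric $g_M$ of Proposition \ref{P:metric} and substitute the contributions from Theorems \ref{T:PontTerm}, \ref{T:etaFormTerm}, and \ref{T:VerySmallEVs}. The paper states this in a single sentence and leaves the arithmetic implicit; you have simply spelled out the hypothesis-checking and the (correct) algebraic cancellation.
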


\begin{proof}
Equip $\Mab$ with the metric $g_M$ given in Proposition \ref{P:metric}.  Using the adiabatic-limit formula in Corollary \ref{C:AdLimEK}, the claimed expression for $\mu(\Mab)$ now follows immediately from Theorems \ref{T:PontTerm}, \ref{T:etaFormTerm} and \ref{T:VerySmallEVs}.
\end{proof}


\bibliographystyle{alpha}

\begin{thebibliography}{10}

\bibitem{At}
M.\ F.\ Atiyah, \emph{Characters and cohomology of finite groups}, Inst.\ Hautes \'Etudes Sci.\ Publ.\ Math. \textbf{9} (1961), 23--64.

\bibitem{APS}
M.\ F.\ Atiyah, V.\ K. Patodi and I.\ M.\ Singer, \emph{Spectral asymmetry and Riemannian geometry.\ II}, Math.\ Proc.\ Cambridge\ Philos.\ Soc. \textbf{78} (1975), 405--432.

\bibitem{Ba} 
D.\ Barden, \emph{Simply connected five-manifolds}, Ann.\ of Math. \textbf{82} (1965), 365--385.

\bibitem{BGV}
M.\ Berline, E.\ Getzler and N.\ Vergne, \emph{Heat kernels and Dirac operators}, Springer, Berlin-Heidelberg-New York, 1992.

\bibitem{BC1}
J.\ M.\ Bismut and J.\ Cheeger, \emph{$\eta$-invariants and their adiabatic limits}, J.\ Am.\ Math.\ Soc. \textbf{2} (1989), 33--70.

\bibitem{BF}
J.\ M.\ Bismut and D.\ S.\ Freed, \emph{The analysis of elliptic families.\ II.\ Dirac operators, eta invariants, and the holonomy theorem}, Comm.\ Math.\ Phys. \textbf{107} (1986), 103--163.

\bibitem{BL}
J.\ M.\ Bismut and G.\ Lebeau, \emph{Complex immersions and Quillen metrics}, Inst.\ Hautes \'Etudes Sci.\ Publ.\ Math. \textbf{74} (1991), 1--298.

\bibitem{BH}
A.\ Borel and F.\ Hirzebruch, \emph{Characteristic classes and homogeneous spaces, III}, Amer.\ J.\ Math. \textbf{82} (1960), 491--504.

\bibitem{Cr} 
D.\ Crowley, \emph{The classification of highly connected manifolds in dimensions~$7$ and~$15$}, Ph.D. Thesis, Indiana University, 2001, {\tt arXiv:math.AT/0203253}.

\bibitem{CE} 
D.\ Crowley and C.\ Escher, \emph{Classification of $S^3$-bundles over $S^4$}, Differential Geom.\ Appl. \textbf{18} (2003), 363--380.

\bibitem{CG} 
D.\ Crowley and S.\ Goette, \emph{Kreck-Stolz invariants for quaternionic line bundles}, Trans.\ Amer.\ Math.\ Soc. \textbf{365} (2013), 3193--3225.

\bibitem{CN}
D.\ Crowley and J.\ Nordstr\"om. \emph{The classification of $2$-connected $7$-manifolds}, Proc.\ Lond.\ Math.\ Soc. \textbf{119} (2019), 1--54.

\bibitem{Dai}
X.\ Dai, \emph{Adiabatic limits, nonmultiplicativity of signature, and Leray spectral sequence}, J.\ Amer.\ Math.\ Soc. \textbf{4} (1991), 265--321.

\bibitem{De}
O.\ Dearricott, \emph{A $7$-manifold with positive curvature}, Duke Math.\ J. \textbf{158} (2011), 307--346.

\bibitem{DV}
J.\ DeVito, \emph{The classification of compact simply connected biquotients in dimensions $4$ and $5$}, Differential Geom.\ Appl. \textbf{34} (2014), 128--138.

\bibitem{Do1}
H.\ Donnelly, \emph{Spectral geometry and invariants from differential topology}, Bull.\ London Math.\ Soc. \textbf{7} (1975), 147--150.

\bibitem{Do2}
H.\ Donnelly, \emph{Eta invariants for $G$-spaces}, Indiana Univ.\ Math.\ J. \textbf{27} (1978), 889--918.

\bibitem{DPR}
C.\ Dur\'an, T.\ P\"uttmann and A.\ Rigas, \emph{An infinited family of Gromoll-Meyer spheres}, Arch.\ Math. \textbf{95} (2010), 269--282.

\bibitem{EK}
J.\ Eells and N.\ Kuiper, \emph{An invariant for certain smooth manifolds}, Ann.\ Mat.\ Pura Appl. \textbf{60} (1962), 93--110.

\bibitem{Es} 
J.-H.\ Eschenburg, \emph{Cohomology of biquotients}, Manuscripta Math. \textbf{75} (1992), 151--166.

\bibitem{EsKe}
J.-H.\ Eschenburg and M.\ Kerin, \emph{Almost positive curvature on the Gromoll-Meyer sphere}, Proc.\ Amer.\ Math.\ Soc. \textbf{136} (2008), 3263--3270.

\bibitem{GGK}
F.\ Galaz-Garc\'ia and M.\ Kerin, \emph{Cohomogeneity-two torus actions on nonnegatively curved manifolds of low dimension}, Math.\ Z. \textbf{276} (2014), 133--152.

\bibitem{Go1} 
S.\ Goette, \emph{Equivariant $\eta$-invariants on homogeneous spaces}, Math.\ Z. \textbf{232} (1999), 1--42.

\bibitem{Go}
S.\ Goette, \emph{Equivariant $\eta$-invariants and $\eta$-forms}, J.\ Reine Angew.\ Math. \textbf{526} (2000), 181--236.

\bibitem{Gohome}
S.\ Goette, \emph{Eta invariants of homogeneous spaces}, Pure Appl. Math. Q. \textbf{5} (2009), 915--946.

\bibitem{Gojems}
S.\  Goette, \emph{Adiabatic limits of Seifert fibrations, Dedekind sums and the diffeomorphism type of certain $7$-manifolds}, J.\ Eur.\ Math.\ Soc. \textbf{16} (2014), 2499--2555. 

\bibitem{GKS}
S.\ Goette, N.\ Kitchloo, and K.\ Shankar, \emph{Diffeomorphism type of the Berger space $\SO(5)/\SO(3)$}, Amer.\ J.\ Math. \textbf{126} (2004), 395-416. 

\bibitem{GM}
D.\ Gromoll and W.\ Meyer, \emph{An exotic sphere with nonnegative sectional curvature}, Ann.\ of Math. \textbf{100} (1974), 401--406.

\bibitem{GVZ}
K.\ Grove, L.\ Verdiani, and W.\ Ziller, \emph{An exotic $T_1 \sph^4$ with positive curvature}, Geom.\ Funct.\ Anal. \textbf{21} (2011), 499--524.

\bibitem{GWZ}
K.\ Grove, B.\ Wilking and W.\ Ziller, \emph{Positively Curved Cohomogeneity One Manifolds and $3$-Sasakian Geometry}, J.\ Differential Geom. \textbf{78} (2008), 33--111. 

\bibitem{GZ}
K.\ Grove and W.\ Ziller, \emph{Curvature and symmetry of Milnor spheres}, Ann.\ of Math. \textbf{152} (2000), 331--367.

\bibitem{HR}
A.\ J.\ Hanson and H.\ R\"omer, \emph{Gravitational instanton contribution to Spin~3/2 axial anomaly}, Phys.\ Lett. \textbf{80B} (1978), 58--60.

\bibitem{Hermann}
R.\ Hermann, \emph{A sufficient condition that a mapping of Riemannian manifolds be a fibre bundle}, Proc.\ Amer.\ Math.\ Soc. \textbf{11} (1960), 236--242.

\bibitem{Hi}
F.\ Hirzebruch, \emph{Topological methods in algebraic geometry}, Second, Corrected Printing of the Third Edition, Springer-Verlag, Berlin-Heidelberg-New York, 1978.

\bibitem{KZ}
V.\ Kapovitch and W.\ Ziller, \emph{Biquotients with singly generated rational cohomology}, Geom.\ Dedicata \textbf{104} (2004), 149--160.

\bibitem{Kawa}
T.\ Kawasaki, \emph{The index of elliptic operators over V-manifolds}, Nagoya Math.\ J. \textbf{84} (1981), 135--157.

\bibitem{KM}
M.\ Kervaire and J.\ Milnor, \emph{Groups of homotopy spheres}, Ann.\ of Math. \textbf{77} (1963), 504--537.

\bibitem{KiSh}
N.\ Kitchloo and K.\ Shankar, \emph{On complexes equivalent to $\sph^3$-bundles over $\sph^4$}, Internat.\ Math.\ Res.\ Notices \textbf{8} (2001), 381--394.

\bibitem{KS}
M.\ Kreck and S.\ Stolz, \emph{A diffeomorphism classification of 7-dimensional homogeneous Einstein manifolds with $\SU(3)\times\SU(2)\times\U(1)$-symmetry}, Ann.\ of Math. \textbf{127} (1988), 373--388.

\bibitem{LM}
H.\ B.\ Lawson and M.-L.\ Michelsohn, \emph{Spin Geometry}, Princeton University Press, Princeton, NJ, 1989.

\bibitem{Ma}
X.\ Ma, \emph{Functoriality of real analytic torsion forms}, Isr.\ J.\ Math. \textbf{131} (2002), 1--50.

\bibitem{MM}
R.\ Mazzeo and R.\ B.\ Melrose, \emph{The adiabatic limit, Hodge cohomology and Leray's spectral sequence for a fibration}, J.\ Differential Geom. \textbf{31} (1990), 185--213.

\bibitem{Mi1}
J.\ Milnor, \emph{On manifolds homeomorphic to the $7$-sphere}, Ann.\ of Math. \textbf{64} (1956), 399--405.

\bibitem{Mi2}
J.\ Milnor, \emph{Differentiable manifolds which are homotopy spheres}, mimeographed notes, Princeton University, Princeton, N.J. 1959.

\bibitem{Mi3}
J.\ Milnor, \emph{Collected papers of John Milnor: III.  Differential Topology}, Collected Works \textbf{19}, American Mathematical Society, 2007. 

\bibitem{PW}
P.\ Petersen and F.\ Wilhelm, \emph{An exotic sphere with positive sectional curvature}, preprint 2008, {\tt arXiv:0805.0812}.

\bibitem{Rochon}
F.\ Rochon, \emph{Pseudodifferential operators on manifolds with foliated boundaries}, J.\ Funct.\ Anal. \textbf{262} (2012), 1309--1362.

\bibitem{SW}
C.\ Searle and F.\ Wilhelm, \emph{How to lift positive Ricci curvature}, Geom.\ Topol. \textbf{19} (2015), 1409--1475. 

\bibitem{Sm1}
S.\ Smale, \emph{Generalized Poincar\'e's conjecture in dimensions greater than four}, Ann.\ of Math. \textbf{74} (1961), 391--406.

\bibitem{Sm2}
S.\ Smale, \emph{On the structure of $5$-manifolds}, Ann.\ of Math. \textbf{75} (1962), 38--46.

\bibitem{Thurston}
W.\ P.\ Thurston, \emph{The Geometry and Topology of Three-Manifolds}, Electronic version~1.1, 2002; \url{http://www.msri.org/publications/books/gt3m/}

\bibitem{To}
B.\ Totaro, \emph{Cheeger manifolds and the classification of biquotients}, J.\ Differential Geom. \textbf{61} (2002), 397--451.

\bibitem{VW}
L.\ Verdiani and W.\ Ziller, \emph{Concavity and rigidity in non-negative curvature}, J.\ Differential Geom. \textbf{97} (2014), 349--375.

\bibitem{Wa1}
C.\ T.\ C.\ Wall, \emph{Classification of $(n-1)$-connected $2n$-manifolds}, Ann.\ of Math. \textbf{75} (1962), 163--189.

\bibitem{Wa2}
C.\ T.\ C.\ Wall, \emph{Classification problems in differential topology - VI}, Topology \textbf{6} (1967), 273--296.

\bibitem{FW1}
F.\ Wilhelm, \emph{Exotic spheres with lots of positive curvatures}, J.\ Geom.\ Anal. \textbf{11} (2001), 161--186.

\bibitem{FW}
F.\ Wilhelm, \emph{An exotic sphere with positive curvature almost everywhere}, J.\ Geom.\ Anal. \textbf{11} (2001), 519--560.

\bibitem{DWi}
D.\ Wilkins, \emph{Closed $(s-1)$-connected $(2s+1)$-manifolds, $s = 3,7$}, Bull.\ London Math.\ Soc. \textbf{4} (1972), 27--31.

\bibitem{Wr}
D.\ Wraith, \emph{Exotic spheres with positive Ricci curvature}, J.\ Differential Geom. \textbf{45} (1997), 638--649.


\end{thebibliography}

\end{document}